\DeclareMathAlphabet{\mathpzc}{OT1}{pzc}{m}{it}
\newtheoremstyle{lemma}{\topsep}{\topsep}
	{\itshape}
	{}
	{\bfseries}
	{.}
	{\newline}
	{\thmname{#1}\thmnumber{ #2}\thmnote{ #3}}	
\theoremstyle{lemma}
	\newtheorem{lemma}{Lemma}[section]
	\newtheorem{proposition}[lemma]{Proposition}
	\newtheorem{corollary}[lemma]{Corollary}
	\newtheorem{theorem}[lemma]{Theorem}
\newtheoremstyle{definition}{\topsep}{\topsep}
	{}
	{}
	{\bfseries}
	{.}
	{\newline}
	{\thmname{#1}\thmnumber{ #2}\thmnote{ #3}}	
\theoremstyle{definition}
	\newtheorem{definition}[lemma]{Definition}
	\newtheorem{remark}[lemma]{Remark}
	\newtheorem{example}[lemma]{Example}
\newcommand{\he}{\ensuremath{\alpha}}
\newcommand{\N}{\ensuremath{\mathbb{N}}}
\newcommand{\R}{\ensuremath{\mathbb{R}}}
\newcommand{\Sphere}{\ensuremath{\mathbb{S}}}
\newcommand{\HM}{\ensuremath{\mathcal{H}}}
\newcommand{\F}{\ensuremath{\mathcal{F}}}
\newcommand{\U}{\ensuremath{\mathcal{U}}}
\newcommand{\LM}{\ensuremath{\mathcal{L}}}
\newcommand{\A}{\ensuremath{\mathcal{A}}}
\newcommand{\B}{\ensuremath{\mathcal{B}}}
\newcommand{\V}{\ensuremath{\mathcal{V}}}
\newcommand{\M}{\ensuremath{\mathcal{M}}}
\newcommand{\1}{\mathbb{1}}
\newcommand{\0}{\mathbb{0}}
\newcommand{\measurable}{\ensuremath{\mathcal{C}}}
\newcommand{\mhyphen}{\ensuremath{\,\text{--}\,}}
\DeclarePairedDelimiter\abs{\lvert}{\rvert}
\DeclarePairedDelimiter\norm{\lVert}{\rVert}
\newcommand{\dd}{\ensuremath{\,\text{d}}}
\newcommand{\dx}{\ensuremath{\,\text{d}x}}
\newcommand{\dHM}{\ensuremath{\,\text{d}\HM^{\he}}}
\newcommand{\dL}{\ensuremath{\,\text{d}\mathcal{L}}}
\DeclareMathOperator{\dist}{dist}
\DeclareMathOperator{\diam}{diam}
\newcommand{\rest}{\ensuremath{\vert}}
\renewcommand{\phi}{\varphi}
\renewcommand{\epsilon}{\varepsilon}
\begin{document}

\title{Tangency properties of sets with finite geometric curvature energies}
\author{\href{mailto:sebastian.scholtes@rwth-aachen.de}{Sebastian Scholtes}}
\date{\today}
\maketitle

\begin{abstract} 
	We investigate inverse thickness $1/\Delta$ and the integral Menger curvature energies
	$\mathcal{U}_{p}^{\he}$, $\mathcal{I}_{p}^{\he}$ and $\mathcal{M}_{p}^{\he}$, to find that finite $1/\Delta$ or $\mathcal{U}_{p}^{\he}$ implies the existence of 
	an \emph{approximate $\he$-tangent} at all points of the set, when $p\geq \he$ and that finite $\mathcal{I}_{p}^{\he}$ or $\mathcal{M}_{p}^{\he}$ implies the existence of a \emph{weak approximate
	$\he$-tangent} at every point of the set for $p\geq 2\he$ or $p\geq 3\he$, respectively, if some additional density properties hold. 
	This includes the scale invariant case $p=2$ for $\mathcal{I}_{p}^{1}$ and $p=3$ for $\mathcal{M}_{p}^{1}$, for which, to the best of our knowledge, no regularity properties are established up to now.
	Furthermore we prove that for $\he=1$ these exponents are sharp, i.e., that if $p$ lies below the
	threshold value of scale innvariance, then there exists a set containing points with no (weak) approximate $1$-tangent, but such that the corresponding energy is still finite.
	For $\mathcal{I}_{p}^{1}$ and $\mathcal{M}_{p}^{1}$ we give an example of a set which possesses a point that has no approximate $1$-tangent, but finite energy for all 
	$p\in (0,\infty)$ and thus show that the existence of weak approximate $1$-tangents is the most we can expect, in other words our results are also optimal in this respect.
\end{abstract}
\centerline{\small Mathematics Subject Classification (2000): 28A75}

\section{Introduction}

In \cite{Leger1999a} J. C. L{\'e}ger was able to show a remarkable theorem\footnote{L{\'e}ger refers to an unplublished work of G. David that had inspired his work and that he took as a guideline for the proof.}, 
which states that one-dimensional Borel sets in $\R^{n}$ with finite integral 
Menger curvature $\mathcal{M}_{2}^{1}$ are $1$-rectifiable. Here, integral Menger\footnote{Named after Karl Menger, because in \cite{Menger1930a}
Menger introduced the limit of the inverse circumradius, when the three points in the argument converge to a single point, 
as a pointwise curvature.}
curvature of a set $X\subset\R^{n}$ refers to the triple integral
over the squared inverse circumradius\footnote{For other applications of the circumradius see \cite{Scholtes2012a}.}, i.e. 
\begin{align*}
	\mathcal{M}_{p}^{\he}(X)&\vcentcolon=\int_{X}\int_{X}\int_{X}[r(x,y,z)]^{-p}\dHM(x)\dHM(y)\dHM(z),
\end{align*}
for $p=2$ and $\he=1$. The circumradius $r(x,y,z)$ is the radius of the unique circle on which the vertices of the non-degenerate triangle $\{x,y,z\}$
lie; in case of a degenerate triangle it is set to be infinite.
These results for $\M_{2}^{1}$ were later extended to metric spaces in \cite{Hahlomaa2008a}, and in \cite{Lin2001a} to sets of fractional dimension, where
$C^{1}$-$\he$-rectifiability of measurable sets with positive and finite $\HM^{\he}$ measure could be shown
if $\M_{2\he}^{\he}$ is finite and $\he\in(0,1/2]$ under the additional assumption that these sets are $\he$-Ahlfors regular\footnote{It was also shown that these results are sharp, i.e. wrong for $s\in(1/2,1)$, but 
that there is no hope of maintaining these results for $s\in (0,1)$ if one drops the $\he$-Ahlfors regularity.}.
As a consequence L{\'e}ger's theorem also ensures
that an $\HM^{1}$ measurable set $E\subset \R^{n}$ with $\mathcal{M}_{2}^{1}(E)<\infty$ has approximate $1$-tangents at $\HM^{1}$ a.e point. 
By an \emph{approximate $\he$-tangent} at a point $x$
we mean a direction $s\in\mathbb{S}^{n-1}$, such that
\begin{align*}
	\lim_{r\downarrow 0}\frac{\HM^{\he}([X\backslash C_{s,\epsilon}(x)]\cap \overline B_{r}(x))}{(2r)^{\he}}=0\quad\text{for all }\epsilon>0,
\end{align*}
where $C_{s,\epsilon}(x)$ is the double cone with opening angle $\epsilon$ in direction $s$ about $x$, cf. \cite[p.203]{Mattila1995a}; for different tangential regularity properties compare also to \cite{Martin1988a}. 
One might think of it as a kind of geometric or measure theoretic counterpart to differentiability. Roughly speaking it means that
the set is locally well approximated by the approximate tangent.
For example a regular, differentiable curve has approximate $1$-tangents at all points and these tangents coincide with the usual tangent, but 
the arc length parametrisation of the set $S\vcentcolon=\{(x,0)\mid x\in [0,1]\}\cup\{(x,x^{2})\mid x\in [0,1] \}$ has no tangent
at $(0,0)$, despite the set having an approximate $1$-tangent at this point, see Remark \ref{differenceapptangentsandtangents}.\\

Now one could ask if the condition $\M_{2}^{1}(X)<\infty$ also guarantees that the set has approximate $1$-tangents at \emph{all} points, 
or, if this is not the case, which influence, if any, the exponent $p$ of the energy $\M_{p}^{1}$ has on these matters. 
This question and related topics are the subject of this paper.\\

Complementary to this research, where highly irregular sets are permitted, was the investigation of rectifiable curves,
which have a classic tangent $\HM^{1}$ a.e. to begin with, of finite $\M_{p}^{1}$ energy. It turns out, see \cite{Strzelecki2010a},
that for $p>3$ this guarantees that the curve is simple and that the arc length parametrisation is of class $C^{1,1-3/p}$,
which can be interpreted as a geometric Morrey-Sobolev imbedding.
In \cite{Blatt2011a} it could be shown that the space of curves with finite $\mathcal{M}_{p}^{1}$ for $p>3$ is that of Sobolev Slobodeckij 
embeddings of class $W^{2-2/p,p}$. 
The same program has also been conducted for a different kind of energy, the so-called tangent point energy in
\cite{Strzelecki2010c,Blatt2011b}.\\

We would like to point out the important role of integral Menger curvature for $p=2$ in the solution of the Painlev\'e problem
i.e. to find geometric characterisations of removable sets for 
bounded analytic functions, see \cite{Pajot2002a,Dudziak2010a} for a detailed presentation and references.\\

Besides integral Menger curvature there are other interesting curvature energies that have been investigated in the same vein.
In \cite{Gonzalez1999a} Gonzales and Maddocks proposed their notion of \emph{thickness}
\begin{align*}
	\Delta[X]\vcentcolon=\inf_{\substack{x,y,z\in X\\x\not=y\not=z\not=x}}r(x,y,z)
\end{align*}
of a knot $X$, which is the infimum of the circumradius $r(x,y,z)$ over all 
triangles $\{x,y,z\}$ on the curve, and also encouraged to investigate different integral curvature energies
\begin{align*}
	\mathcal{U}_{p}^{\he}(X)&\vcentcolon=\int_{X}[\inf_{\substack{y,z\in X\\x\not=y\not=z\not=x}}r(x,y,z)]^{-p}\dHM(x),\\
	\mathcal{I}_{p}^{\he}(X)&\vcentcolon=\int_{X}\int_{X}[\inf_{\substack{z\in X\\x\not=y\not=z\not=x}}r(x,y,z)]^{-p}\dHM(x)\dHM(y),
\end{align*}
and $\mathcal{M}_{p}^{\he}$, where the inverse circumradius is integrated to the power $p$ and the infimisations are successively replaced by integrations. 
That arc length parametrisations of curves with finite inverse thickness are actually of class $C^{1,1}$ and the
existence of ideal knots, which are minimizers of the inverse thickness in a knot class under the restriction of fixed length, 
was shown in \cite{Gonzalez2002b,Cantarella2002a,Gonzalez2003a}; for further research in this direction see also \cite{Schuricht2003a,Schuricht2004a}.
In the series of works \cite{Strzelecki2007a,Strzelecki2009a,Strzelecki2010a} the integral curvature energies $\mathcal{U}_{p}^{1}$,
$\mathcal{I}_{p}^{1}$ and $\mathcal{M}_{p}^{1}$ have been investigated for closed rectifiable curves, to find that the arc length parametrisations
of curves with finite energy for $p\in [1,\infty)$, $p\in (2,\infty)$ and $p\in (3,\infty)$, respectively, are simple and actually belong to the class 
$C^{1,\beta_{\mathcal{\F}}(p)}$, where $\beta_{\mathcal{U}}(p)=1-1/p$, $\beta_{\mathcal{I}}(p)=1-2/p$ and $\beta_{\mathcal{M}}(p)=1-3/p$.
In \cite{Blatt2011a} it could be shown that the space of curves with finite $\mathcal{I}_{p}^{1}$ for $p>2$ and $\mathcal{M}_{p}^{1}$ for $p>3$ is that of Sobolev Slobodeckij embeddings of class
$W^{2-1/p,p}$ and $W^{2-2/p,p}$, respectively.
Similar kind of energies for surfaces and higher dimensional sets have been examined in
\cite{Strzelecki2005a,Strzelecki2006a,Strzelecki2009b,Lerman2009a,Lerman2011a,Kolasinski2011a,Kolasinski2011b,Strzelecki2011a,Blatt2011e}.\\

As mentioned in the very beginning the purpose of this paper is to investigate which pointwise tangential properties can be
expected of sets in Euclidean space with finite energy. To be more precise we will investigate if a set $X$ possesses an approximate $\he$-tangent
or at least a \emph{weak approximate $\he$-tangent} at every point $x$. A weak approximate $\he$-tangent is a mapping $s:(0,\rho)\to \mathbb{S}^{n-1}$, 
such that 
\begin{align*}
	\lim_{r\downarrow 0}\frac{\HM^{\he}([X\backslash C_{s(r),\epsilon}(x)]\cap \overline B_{r}(x))}{(2r)^{\he}}=0\quad\text{for all }\epsilon>0.
\end{align*}
For the example of the T-shaped set $E\vcentcolon=([-1,1]\times \{0\})\cup (\{0\}\times [0,1])$ it is shown that $\mathcal{M}_{2}^{1}(E)<\infty$ does \emph{not} 
suffice to infer that the set has weak approximate $1$-tangents at all points with positive lower density,
see Lemma \ref{summaryforsetE}. So it seems that these properties might depend on the exponent $p$ and the parameter $\he$ of the integral curvature energies $\mathcal{U}_{p}^{\he}$, $\mathcal{I}_{p}^{\he}$ 
and $\mathcal{M}_{p}^{\he}$. Thus our aim is to find conditions on $p$ and $\he$ that ensure the existence of $\he$-tangents at all points with positive lower density.
We shall solve this question thoroughly, to be honest with one minor additional technical requirement in case of $\mathcal{M}_{p}^{\he}$, namely $\Theta^{*\he}(\HM^{\he},X,x)<\infty$,
that, despite our best efforts, we were not able to remove. We have gathered the findings from different sections of the present paper in the following Theorem.
Note that compared to \cite{Lin2001a} we do not require the set to be measurable and $\he$-Ahlfors regular and have more detailed information on \emph{which} points do possess tangents, but we pay for that 
by a more restrictive requirement on the parameter $p$.
We also want to remark that in \cite[1.5 Corollary, p.13]{Lin1997a} it is shown that for $\he>1$ and an $\HM^{\he}$ measurable set $X\subset \R^{n}$ with
$0<\HM^{\he}(X)<\infty$ we always have $\M_{2\he}^{\he}(X)=\infty$, which somewhat restricts the extent of the next theorem for $\he>1$. On the other hand, however, there are a lot more sets allowed in the theorem
that still could have finite $\M_{2\he}^{\he}$.

\begin{theorem}[(Main result)]
	Let $X\subset\R^{n}$, $x\in \R^{n}$, $\he\in (0,\infty)$.
	\begin{itemize}
		\item
			Let $1/\Delta[X]<\infty$, $\HM^{1}(X)<\infty$. Then $X$ has an approximate $1$-tangent at $x$.
		\item
			Let $p\in[\he,\infty)$, $\mathcal{U}_{p}^{\he}(X)<\infty$.
			Then $X$ has an approximate $\he$-tangent at $x$.
		\item
			Let $p\in[2\he,\infty)$, $\mathcal{I}_{p}^{\he}(X)<\infty$ and $\Theta_{*}^{\he}(\HM^{\he},X,x)>0$.
			Then $X$ has a weak approximate $\he$-tangent at $x$.
		\item
			Let $p\in[3\he,\infty)$, $\mathcal{M}_{p}^{\he}(X)<\infty$ and $0<\Theta_{*}^{\he}(\HM^{\he},X,x)\leq\Theta^{*\he}(\HM^{\he},X,x)<\infty$.
			Then $X$ has a weak approximate $\he$-tangent at $x$.
	\end{itemize}
\end{theorem}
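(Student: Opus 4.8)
All four assertions rest on one mechanism, so the plan is to isolate it once. The elementary identity
\begin{align*}
	\frac{1}{r(x,y,z)}=\frac{2\dist(z,\operatorname{aff}\{x,y\})}{\abs{x-z}\,\abs{y-z}}
\end{align*}
(and its cyclic permutations) shows that $r(x,y,z)$ is \emph{small} exactly when $\{x,y,z\}$ is a ``fat'' triangle --- side lengths mutually comparable, angles bounded away from $0$ and $\pi$ --- in which case $r(x,y,z)\asymp\diam\{x,y,z\}$. The whole proof is an accounting of such triangles: if near $x$, at a sequence of dyadic scales $\rho\downarrow 0$, the set $X$ carries a ``tube'' of fat triangles of diameter $\approx\rho$ along which $k$ of the three vertices range freely over $\HM^\he$--measure $\gtrsim\rho^\he$ each, then the relevant energy is bounded below by $\sum_\rho\rho^{k\he}\rho^{-p}=\sum_\rho\rho^{k\he-p}$, which diverges as soon as $p\ge k\he$. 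Here $k=0$ for thickness (a single forbidden triple), $k=1$ for $\U_p^\he$, $k=2$ for $\mathcal{I}_p^\he$ (the third vertex pinned at $x$), $k=3$ for $\M_p^\he$; these are precisely the thresholds $1,\he,2\he,3\he$ in the statement. So in each case the task splits into: exclude such tubes, then read the tangent off from their absence. I would dispose of the trivial situations first: if $x\notin\overline X$, or if $\HM^\he(X\cap\overline B_r(x))=0$ for all small $r$, the quotient defining a (weak) approximate tangent is $0$ for all small $r$, so any direction serves.

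\textbf{Thickness and $\U_p^\he$.} For thickness, set $R_0\vcentcolon=\Delta[X]>0$, so \emph{every} triple in $X$ has circumradius $\ge R_0$. For $\U_p^\he(X)<\infty$ with $p\ge\he$, Chebyshev applied to $\HM^\he(\{x\in X:\tau(x)\le t\})\le t^p\,\U_p^\he(X)$, where $\tau(x)\vcentcolon=\inf_{y,z}r(x,y,z)$ is the pointwise thickness, gives $\tau>0$ $\HM^\he$--a.e.\ on $X$. In either case I would show: at a point $x$ with $\tau(x)\ge R_0>0$ the identity forces $\dist(z,\operatorname{aff}\{x,y\})\le r^2/R_0$ for all $y,z\in X\cap\overline B_r(x)$ and $r<R_0$; taking $y$ an almost-farthest point of $X\cap\overline B_r(x)$ from $x$ puts $X\cap\overline B_r(x)$ inside a double cone of half-angle $\arcsin(r/R_0)$ about $s_r\vcentcolon=(y-x)/\abs{y-x}$; a comparison between scales shows $s_r$ is Cauchy up to sign, and its limit $s$ satisfies $X\cap\overline B_r(x)\subseteq C_{s,\epsilon}(x)$ for all small $r$ --- much stronger than an approximate $\he$--tangent. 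For thickness this covers every $x\in X$. For $\U_p^\he$ it covers $\HM^\he$--a.e.\ $x\in X$; the residual null set and the points $x\notin X$ with positive mass nearby I would handle by taking $x_k\in X$ with $\tau(x_k)>0$, $x_k\to x$, extracting a subsequential limit of the cone directions and checking it serves at $x$ (alternatively, by running the contradiction scheme below at level $k=1$).

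\textbf{$\mathcal{I}_p^\he$ and $\M_p^\he$.} Argue by contradiction: if $x$ has no weak approximate $\he$--tangent, then for some $\epsilon,\delta>0$ and some $r_k\downarrow 0$,
\begin{align*}
	\HM^\he\bigl([X\setminus C_{s,\epsilon}(x)]\cap\overline B_{r_k}(x)\bigr)\ge\delta\,r_k^\he\qquad\text{for every }s\in\Sphere^{n-1}.
\end{align*}
The hypothesis $\Theta_*^\he(\HM^\he,X,x)>0$ (which in particular puts $x$ in $\overline X$) forces, through a tail estimate on the dyadic annular masses about $x$, infinitely many dyadic annuli to carry $\HM^\he$--mass $\gtrsim\rho^\he$, $\rho$ the annulus radius; for $\M_p^\he$ the extra hypothesis $\Theta^{*\he}(\HM^\he,X,x)<\infty$ caps each such mass at $\asymp\rho^\he$ and thereby bounds the gaps between consecutive productive annuli. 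The spreading in the display, together with ``no weak tangent'', prevents the set near $x$ from collapsing into a slowly turning thin cone; combining this with the productive annuli I would extract, at infinitely many comparable scales $\rho$, the required tube of fat triangles --- two transverse $X$--points completed by $x$ itself (or an $X$--point within $o(\rho)$ of it) for $\mathcal{I}_p^\he$, three $X$--points for $\M_p^\he$ --- contributing $\gtrsim\rho^{k\he-p}$ per scale. Since $p\ge2\he$ (resp.\ $p\ge3\he$) and the productive scales can be chosen with disjoint annuli, the energy is infinite, a contradiction; hence the weak approximate $\he$--tangent exists.

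\textbf{Main obstacle.} Once the identity and the dyadic series are in hand the exponent bookkeeping is automatic; the genuine difficulty is the geometric extraction in the $\mathcal{I}_p^\he$ and $\M_p^\he$ cases --- converting the purely measure-theoretic spreading into an honest tube of fat triangles at truly small scales and, above all, at \emph{matching scales for every free vertex}, since vertices at incomparable scales form thin triangles the energy cannot detect. I expect the hypothesis $\Theta^{*\he}(\HM^\he,X,x)<\infty$ to be essential precisely here for $\M_p^\he$: capping the mass per dyadic shell forces the global angular spreading to appear, in definite proportion, at individual scales rather than being diluted across infinitely many of them, so that the scales do not decouple. A minor nuisance, peculiar to the two non-weak statements, is upgrading ``$\HM^\he$--a.e.\ point, subsequential limit direction'' to the single prescribed direction at the single prescribed point.
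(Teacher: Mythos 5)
Your core mechanism is exactly the paper's: two transverse chunks of $X$ of mass $\gtrsim r^{\he}$ each, at angle $\geq\epsilon$ as seen from the centre, force $r(x,y,z)\lesssim r$ and hence an energy contribution $\gtrsim r^{k\he-p}$ per scale, which is incompatible with finite energy once $p\geq k\he$; the thresholds and the role of $\Theta^{*\he}<\infty$ are correctly located. But the plan defers precisely the steps that carry the difficulty, and two of the routes you propose would not go through as stated. First, the Chebyshev route for $\U_{p}^{\he}$ only yields $\tau>0$ at $\HM^{\he}$--a.e.\ point of $X$, whereas the theorem is an \emph{every}-point statement (including $x\notin X$); your patch --- cone directions at nearby good points $x_{k}$ --- fails because the cone at $x_{k}$ has half-angle $\arcsin(r/\tau(x_{k}))$ and so gives no control at scales comparable to $\abs{x_{k}-x}$ when $\tau(x_{k})\to 0$. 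Your fallback (``run the contradiction scheme at $k=1$'') is the paper's actual proof, but note that for the \emph{strong} approximate tangent the negation only provides, for each direction $s$, some $\epsilon_{s}$ and some $s$-dependent sequence of radii; one needs the compactness of $\Sphere^{n-1}$ and a finite subcover (the paper's Lemma \ref{implicationsoflackofapptangents}) to produce a single cone carrying positive upper density together with definite mass outside a slightly larger cone \emph{at the same radii}.

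Second, your displayed inequality supplies only the ``outside'' half of the fat triangle: mass $\geq\delta r_{k}^{\he}$ outside $C_{s,\epsilon}(x)$ for every $s$. A fat triangle also needs mass $\gtrsim r_{k}^{\he}$ \emph{inside} some $C_{s,\epsilon/2}(x)$ for the same $s$ and the same $r_{k}$; extracting this from $\Theta_{*}^{\he}>0$ is the paper's Lemma \ref{uniformestimatesoncones}, a nested-cones covering construction producing one map $r\mapsto s(r)$ with $\HM^{\he}(X\cap\overline B_{r}(x)\cap C_{s(r),\epsilon}(x))\geq c(\epsilon)r^{\he}$ at \emph{all} small $r$ --- this map is also the candidate weak tangent, and passing from ``no weak tangent'' to your uniform-in-$s$ display requires handling the quantifier over \emph{maps} $s(\cdot)$ (a diagonal argument in $\epsilon$), not just over directions. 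Third, the role of $\Theta^{*\he}<\infty$ for $\M_{p}^{\he}$ is not to control gaps between productive annuli but to guarantee (Lemma \ref{simultaneousestimateannuli}) that at the productive scales a definite fraction of \emph{both} chunks lies in the annulus $\overline B_{r_{n}}(x)\setminus B_{q_{0}r_{n}}(x)$, so that $\dist(L_{x,y},\cdot)\gtrsim r_{n}$ on a whole ball of third vertices; without it the mass may hide arbitrarily close to the centre and the triple integral collapses. Finally, the theorem permits $\HM^{\he}(X)=\infty$: the paper first reduces to finite measure on small balls (Lemmata \ref{finiteenergyfinitemeasureball}, \ref{consequencesfiniteenergyhe<1}) and then contradicts $\F(X\cap B_{2r_{n}}(x))\to 0$; your alternative of summing over disjoint annuli does not work for $\mathcal{I}_{p}^{\he}$, since without an upper density bound the required mass cannot be localised to disjoint annuli.
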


To the best of our knowledge these are the first results regarding regularity that incorporate the critical cases $p=2$ for $\mathcal{I}_{p}^{1}$
and $p=3$ for $\mathcal{M}_{p}^{1}$.
Moreover, we show that the exponents are sharp for $\he=1$, that is, there is a set, namely the T-shaped set $E$ from above, that contains a point without weak approximate $1$-tangent and has finite energy if $p$ 
is below the respective threshold value.

\begin{proposition}[(Exponents are sharp for $\he=1$)]
	For $E\vcentcolon=([-1,1]\times \{0\})\cup (\{0\}\times [0,1])$ we have
	\begin{itemize}
		\item
			$\mathcal{U}_{p}^{1}(E)<\infty$ for $p\in (0,1)$,
		\item
			$\mathcal{I}_{p}^{1}(E)<\infty$ for $p\in (0,2)$,
		\item
			$\mathcal{M}_{p}^{1}(E)<\infty$ for $p\in (0,3)$.
	\end{itemize}
\end{proposition}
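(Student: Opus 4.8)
Write $H=[-1,1]\times\{0\}$ and $V=\{0\}\times[0,1]$ for the two arms, so $E=H\cup V$; points of $H$ are written $(t,0)$ and points of $V$ are written $(0,u)$. The plan rests on two elementary observations. First, a collinear triple is degenerate and has infinite circumradius; since every triple contained in $H$ or contained in $V$ is collinear, only triples that straddle both arms contribute to any of the three energies. Second, writing $\ell_{yz}$ for the line through $y$ and $z$, one has $\frac{1}{r(x,y,z)}=\frac{2\,\dist(x,\ell_{yz})}{|x-y|\,|x-z|}$ (and its cyclic analogues); since $\dist(x,\ell_{yz})\le\min\{|x-y|,|x-z|\}$ this gives $\frac{1}{r(x,y,z)}\le\frac{2}{|x-y|}$ and, using the analogue at the vertex $z$, $\frac{1}{r(x,y,z)}\le\frac{2}{\max\{|x-z|,|y-z|\}}$. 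I will use repeatedly that a point $(t,0)\in H$ has distance at least $|t|$ from every point of $V$, and symmetrically.

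\textbf{The energies $\mathcal{U}_{p}^{1}$ and $\mathcal{I}_{p}^{1}$.} For $\mathcal{U}_{p}^{1}$: if $x=(t,0)\in H$, $t\neq0$, then in any nondegenerate triple $\{x,y,z\}$ at least one of $y,z$ — say $z$ — lies in $V$, hence $|x-z|\ge|t|$ and $\tfrac{1}{r(x,y,z)}\le\tfrac{2}{|x-z|}\le\tfrac{2}{|t|}$; the analogous bound holds for $x\in V$. Thus $\inf_{y,z}r(x,y,z)\ge|t|/2$ on $H$ and $\ge|u|/2$ on $V$, so $\mathcal{U}_{p}^{1}(E)\le 2^{p}\big(\int_{-1}^{1}|t|^{-p}\,\dd t+\int_{0}^{1}u^{-p}\,\dd u\big)$, finite exactly for $p<1$ (the origin is the sole point where the integrand is infinite, and it is $\HM^{1}$-null). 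For $\mathcal{I}_{p}^{1}$: split $E\times E$ by which arm $x,y$ lie on. If $x=(t,0),y=(s,0)\in H$ a nondegenerate triple forces $z\in V$, so $|x-z|\ge|t|$, $|y-z|\ge|s|$ and $\inf_{z}r(x,y,z)\ge\tfrac12\max\{|t|,|s|\}$; the case $x,y\in V$ is symmetric, and if $x,y$ lie on different arms I use only $\inf_{z}r(x,y,z)\ge\tfrac12|x-y|$. Hence $\mathcal{I}_{p}^{1}(E)$ is bounded by a constant times
\[
\int_{[-1,1]^{2}}\max\{|t|,|s|\}^{-p}\,\dd t\,\dd s+\int_{[0,1]^{2}}\max\{|u|,|v|\}^{-p}\,\dd u\,\dd v+\int_{-1}^{1}\!\int_{0}^{1}(t^{2}+u^{2})^{-p/2}\,\dd u\,\dd t,
\]
and the first two integrals are comparable to $\int_{0}^{1}\tau^{1-p}\,\dd\tau$ while the third, by polar coordinates near the origin, is comparable to $\int_{0}^{1}\rho^{1-p}\,\dd\rho$; all three converge exactly for $p<2$.

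\textbf{The energy $\mathcal{M}_{p}^{1}$.} The same bookkeeping writes $\mathcal{M}_{p}^{1}(E)$ as a finite sum of blocks, each — after possibly interchanging the two arms — of the form $\int_{H}\int_{H}\int_{V}[r(x,y,z)]^{-p}\,\dd\HM^{1}(x)\,\dd\HM^{1}(y)\,\dd\HM^{1}(z)$. For $x=(a,0),y=(b,0)\in H$ and $z=(0,c)\in V$, computing the area of $\{x,y,z\}$ from base $|a-b|$ and height $c$ gives $\frac{1}{r(x,y,z)}=\frac{2c}{\sqrt{a^{2}+c^{2}}\,\sqrt{b^{2}+c^{2}}}$, so such a block equals
\[
2^{p}\int_{0}^{1}c^{p}\Big(\int_{-1}^{1}(a^{2}+c^{2})^{-p/2}\,\dd a\Big)^{2}\,\dd c.
\]
The inner integral, as $c\downarrow0$, is $\Oh(1)$ for $p<1$, $\Oh(\log(1/c))$ for $p=1$, and $\Oh(c^{1-p})$ for $1<p<3$ (and bounded for $c$ away from $0$); substituting, the outer integral behaves like $\int_{0}^{1}c^{p}\,\dd c$, $\int_{0}^{1}c\log^{2}(1/c)\,\dd c$, or $\int_{0}^{1}c^{2-p}\,\dd c$, each finite for every $p\in(0,3)$. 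Hence $\mathcal{M}_{p}^{1}(E)<\infty$ for $p\in(0,3)$.

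\textbf{Main obstacle.} The only step needing a little care is this last estimate: one must follow the $c$-dependence of $\int_{-1}^{1}(a^{2}+c^{2})^{-p/2}\,\dd a$ across the threshold $p=1$, and verify that the finitely many arm-configurations — including the two arms of unequal length — are genuinely of the stated form. Everything else reduces to routine one- and two-variable integration. (The three displayed bounds are sharp in $p$, which matches the claimed optimality of the ranges $(0,1)$, $(0,2)$, $(0,3)$.)
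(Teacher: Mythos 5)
Your proposal is correct, and its skeleton coincides with the paper's: bound the relevant integrand by reducing to the explicit circumradius formula $1/r=2\dist(x,\ell_{yz})/(|x-y||x-z|)$, observe that triples confined to one arm are degenerate, and reduce everything to one-, two- and three-fold integrals over the arms (your $\mathcal{U}_p^1$ argument is essentially verbatim the paper's, and your pointwise bound $\kappa_i\le 2/\max\{|t|,|s|\}$ is equivalent up to a factor of $2$ to the paper's $2\sqrt2/(|\xi|+|\eta|)$). The genuine difference is how the full parameter range is covered. The paper proves the integral estimates only in the upper part of each range --- $p\in(1,2)$ for $\mathcal{I}_p^1$ and $p\in[2,3)$ for $\mathcal{M}_p^1$, the latter via an explicit closed-form inequality $\int_0^1 x^p(x^2+y^2)^{-p/2}(x^2+z^2)^{-p/2}\dx\le \pi 2^{-p}(zy)^{-(p-1)/2}$ that is only valid for $p\ge2$ --- and then extends downward to all of $(0,2)$, resp.\ $(0,3)$, by the H\"older-type comparison $\M_{p}^{1}(E)\leq \HM^{1}(E)^{3(1-p/q)}\M_{q}^{1}(E)^{p/q}$. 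You instead track the asymptotics of the inner integral $\int_{-1}^1(a^2+c^2)^{-p/2}\dd a$ as $c\downarrow0$ in the three regimes $p<1$, $p=1$, $1<p<3$ and integrate directly, which handles the whole range in one pass without the interpolation lemma. Your route is more self-contained and arguably cleaner for establishing finiteness alone; the paper's yields explicit constants such as $72\pi/(3-p)^2$. Your one flagged point of care --- checking that the $(V,V,H)$-type blocks, where the arms have unequal length, obey the same estimate --- is indeed harmless, since the asymptotic analysis only uses integrability near the crossing point.
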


Furthermore we demonstrate that there is a set $F$ that has a point without an approximate $1$-tangent and finite $\mathcal{I}_{p}^{1}$ and $\mathcal{M}_{p}^{1}$ for all $p\in (0,\infty)$. Hence there is no hope
of obtaining the main result for approximate $1$-tangents instead of weak approximate $1$-tangents for these two energies.

\begin{proposition}[(Weak approximate $1$-tangents are optimal for $\he=1$)]\label{introductionweakapprtangentsoptimal}
	There is a set $F$, $x\in \R^{n}$, such that $F$ has no approximate $1$-tangent at $x$ and
	\begin{itemize}
		\item
			$\mathcal{I}_{p}^{1}(F)<\infty$ for $p\in (0,\infty)$,
		\item
			$\mathcal{M}_{p}^{1}(F)<\infty$ for $p\in (0,\infty)$.
	\end{itemize}
\end{proposition}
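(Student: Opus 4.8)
The plan is to build $F$ as a \emph{lacunary corner}. Work in $\R^{2}$ (for $n\geq2$, view $F$ in $\R^{n}$ via the first two coordinates), take $x:=0$, fix a rapidly decreasing sequence, say $\rho_{k}:=2^{-k!}$ for $k\geq1$, and put
\begin{align*}
	C:=\bigcup_{k\geq1}[\rho_{2k},2\rho_{2k}],\qquad D:=\bigcup_{k\geq1}[\rho_{2k-1},2\rho_{2k-1}],\qquad F:=(C\times\{0\})\cup(\{0\}\times D).
\end{align*}
Then $\HM^{1}(F)=\sum_{n\geq1}\rho_{n}<\infty$, and the construction rests on the single lacunarity bound $\sum_{m>k}\rho_{m}\leq2\rho_{k+1}\ll\rho_{k}$: inside $\overline B_{2\rho_{k}}(0)$ the portion of $F$ produced by scales below $\rho_{k}$ has $\HM^{1}$-measure at most $2\rho_{k+1}$, hence is negligible next to the single segment of measure $\rho_{k}$ sitting on $\R e_{1}$ (if $k$ is even) or on $\R e_{2}$ (if $k$ is odd).

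\emph{No approximate $1$-tangent at $0$.} Let $s\in\Sphere^{n-1}$. If $s\notin\{\pm e_{1}\}$, pick $\epsilon>0$ so small that $e_{1}\notin C_{s,\epsilon}(0)$; then along $r=2\rho_{2k}\downarrow0$ the whole horizontal segment $[\rho_{2k},2\rho_{2k}]\times\{0\}\subseteq F\cap\overline B_{r}(0)$ lies outside $C_{s,\epsilon}(0)$, so $\HM^{1}([F\setminus C_{s,\epsilon}(0)]\cap\overline B_{r}(0))/(2r)\geq\rho_{2k}/(4\rho_{2k})=\tfrac14$, which does not tend to $0$. For $s\in\{\pm e_{1}\}$ the same argument applies along $r=2\rho_{2k-1}$ using the vertical pieces. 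Hence no direction is an approximate $1$-tangent of $F$ at $0$.

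\emph{Finiteness of the energies.} This is the substance, and it reduces to one elementary identity plus bookkeeping for lacunary series. For $P=(a,0)$, $Q=(b,0)$ with $a\neq b$ and $R=(0,c)$ with $c\neq0$, the circumradius formula $r=\frac{|PQ|\,|QR|\,|RP|}{4\,(\text{area of }PQR)}$ together with $\text{area}(PQR)=\tfrac12|a-b|\,|c|$ gives
\begin{align*}
	r(P,Q,R)=\frac{\sqrt{a^{2}+c^{2}}\;\sqrt{b^{2}+c^{2}}}{2|c|}\;\geq\;\frac{\max(|a|,|c|)\,\max(|b|,|c|)}{2|c|}\;\geq\;\tfrac12\max(|a|,|b|).
\end{align*}
Since any three points of $F$ on a common axis are collinear and contribute nothing, this controls every nondegenerate triple up to the reflection exchanging the two axes. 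For $\mathcal{M}_{p}^{1}(F)$ one splits the triple integral by the axis each point lies on; the only non-trivial case, ``two points on the first axis, one on the second'', is by Fubini at most
\begin{align*}
	2^{p}\int_{D}c^{p}\Bigl(\,\int_{C}\bigl(\max(t,c)\bigr)^{-p}\,\dd t\,\Bigr)^{2}\dd c,
\end{align*}
and writing $\int_{C}(\max(t,c))^{-p}\,\dd t$ out piece by piece turns this into a finite sum of lacunary series — schematically $\sum_{j}\rho_{2j-1}^{1-p}\rho_{2j}^{2}$ and $\sum_{j}\rho_{2j-1}^{p+1}\rho_{2j-2}^{2-2p}$ — each convergent for every $p\in(0,\infty)$ because of the factorial growth of the exponents. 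For $\mathcal{I}_{p}^{1}(F)$ one needs a lower bound on $\inf_{R}r(P,Q,R)$. If $P,Q$ lie on different axes this is simply $\tfrac12|P-Q|\geq\tfrac12\max(|a|,|c|)$ (a side of a triangle never exceeds its circumdiameter), and $\int_{C}\int_{D}(\max(|a|,|c|))^{-p}\,\dd a\,\dd c$ is again a convergent lacunary double series. If $P,Q$ lie on the same axis, every non-collinear $R\in F$ lies on the other axis, so $\inf_{R}r(P,Q,R)$ is governed by the displayed formula with $c$ ranging over $D$; the inequality above already gives $\inf_{R}r\geq\tfrac12\max(|a|,|b|)$, which handles the case that $P,Q$ lie on different $C$-pieces, and when they lie on the \emph{same} $C$-piece, of scale $\rho_{2k}$, the lacunarity — there being no scale in $D$ comparable to $\rho_{2k}$ — improves this to $\inf_{R}r(P,Q,R)\gtrsim\rho_{2k-1}\gg\rho_{2k}$. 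Inserting these bounds into $\int_{F}\int_{F}[\inf_{R}r]^{-p}$ once more produces only convergent lacunary series, so $\mathcal{I}_{p}^{1}(F)<\infty$ for every $p$.

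\emph{Main obstacle.} It is concentrated in the last step and is of two kinds. The one genuinely geometric point is the lower bound for $\inf_{R}r(P,Q,R)$ when $P,Q$ lie on a common axis: one must rule out that some far-away point of $F$ forms a thin triangle with the (possibly very close) pair $P,Q$. This is exactly why $F$ is supported on two straight lines — a non-collinear $R$ is then forced onto the perpendicular axis, where $\dist(R,\text{line through }P,Q)=|R|$, and the super-lacunary spacing of $(\rho_{k})$ keeps the infimum well above $|P-Q|$. The remaining obstacle is a matter of care: verifying that every doubly-indexed series coming out of the Fubini reductions converges for \emph{all} $p\in(0,\infty)$ simultaneously. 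This is what forces $(\rho_{k})$ to decay faster than any geometric sequence; with merely geometric lacunarity each such series would converge only below some finite exponent, reproducing exactly the threshold behaviour of the set $E$ of the previous proposition rather than the finiteness for every $p$ claimed here.
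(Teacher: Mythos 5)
Your proposal is correct and follows essentially the same route as the paper: a two-axis set built from super-geometrically lacunary blocks placed alternately on the two coordinate axes, the explicit circumradius formula $r=\sqrt{a^{2}+c^{2}}\sqrt{b^{2}+c^{2}}/(2|c|)$ for one vertex off-axis, and the key observation that for two points on the same block the minimizing third point (at the geometric mean) is excluded from $F$, so the infimum is governed by the much larger neighbouring scale. The only (cosmetic) difference is that the paper establishes $\mathcal{I}_{p}^{1}(F)<\infty$ for $p\geq 3$ and then deduces all $p\in(0,\infty)$ and the bound for $\mathcal{M}_{p}^{1}$ from the H\"older comparison and the chain inequality $\mathcal{M}_{p}^{1}\leq\HM^{1}(F)\,\mathcal{I}_{p}^{1}(F)$, whereas you estimate both energies directly for every $p$.
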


To put these results into perspective, we give some simple examples of what they infer, in case of the integral Menger curvature $\M_{p}^{1}$. 
For a curve and $p\geq 3$ we only obtain weak approximate $1$-tangents, which is hardly a new result, except for the case $p=3$, 
as we already knew for $p>3$ that these curves are actually in $W^{2-2/p,p}$ by \cite{Strzelecki2010a,Blatt2011a}, but now we also know that
it is possible for more general connected compact objects to have finite integral Menger curvature for $p\in (0,3)$, objects that cannot be parametrised by a simple curve, like the set $E$. 
On the other hand there are objects with infinite energy, which for instance are constructed by the following principle:
let $X$ be a set, $x\in X$ with positive density and an approximate $1$-tangent $s$ at $x$, further let $\phi_{A}(X)$ be the set $X$
rotated by a rotation matrix $A$ about the point $x$ in such a way that the axis of rotation does not coincide with $s$. Then the set $X\cup\phi_{A}(X)$ -- for example a polygon with two edges -- 
has no weak approximate $1$-tangent at $x$, see Lemma \ref{constructionnoweakapprtangent}, and hence infinite $\M_{p}^{1}$ for $p\geq 3$. Here previously no conclusive statement was possible.
In addition, for $p\in(0,3)$ all polygons have finite $\M^{1}_{p}$ energy, which can be seen using techniques of this paper. Similar statements hold for $\mathcal{U}_{p}^{1}$ and $\mathcal{I}_{p}^{1}$
for $p$ below the scale invariant threshold value; see \cite{Scholtes2011e}.\\

The paper is organised as follows: Section \ref{sectioncurvenergies} introduces integral curvature energies for 
arbitrary metric spaces, as this is no more complicated than doing so for arbitrary sets in $\R^{n}$ and even provides a simpler notation.
Then, in Section \ref{sectiondensityestimates}, we give lower bounds for the Hausdorff measure of annuli under
certain conditions on the Hausdorff density. We also introduce a new and slightly wider notion of Hausdorff density for set valued mappings.
In Section \ref{sectionapproximatetangents} we give some examples and simple properties of the different notions of tangents. 
Finally we are ready to prove the main theorem and compute the energies $1/\Delta$ \& $\mathcal{U}_{p}^{\he}$, $\mathcal{I}_{p}^{\he}$ and $\mathcal{M}_{p}^{\he}$ of the set $E$ in the Sections \ref{sectionUp}, \ref{sectionIp} 
and \ref{sectionMp}, respectively.
The topic of Section \ref{sectionresultsharp} is the proof of Proposition \ref{introductionweakapprtangentsoptimal}. To improve readability we have deferred several technical issues to the appendix.\\

\textbf{Acknowledgements}\\
The author wishes to thank his advisor Heiko von der Mosel for constant support and encouragement, reading and discussing the present paper, as well as giving many helpful remarks, 
like the idea to allow for $\he\not=1$ as in \cite{Lin2001a}. He is also thankful to Martin Meurer for the joint efforts that lead to Lemma \ref{finiteenergyfinitemeasureball}.
Furthermore the author is indebted to Thomas El Khatib, who gave some helpful remarks and a better proof for Lemma \ref{distanceintermsofangle}.

\section{Curvature energies and notation}\label{sectioncurvenergies}

For a set $X$ with outer measure $\V$ we write $\measurable(\V)$ for the \emph{$\V$ measurable sets} of $X$,
i.e. those sets $E$, which are measurable in the sense of Carath{\'e}ory:
\begin{align*}
	\V(M)=\V(M\cap E)+\V(M\backslash E)\quad\text{for all }M\subset X.
\end{align*}
Let $(X,\tau)$ be a topological space -- in this paper the topology is always induced by a metric -- then $\B(X)$ denotes the
\emph{Borel sets} of $(X,\tau)$. For two measurable spaces $(X,\mathcal{A})$ and $(Y,\mathcal{B})$ we say that a function
$f:(X,\mathcal{A})\to(Y,\mathcal{B})$ is \emph{$\mathcal{A}$--$\mathcal{B}$ measurable}, if $f^{-1}(B)\in \mathcal{A}$ for all $B\in\mathcal{B}$.
By $\HM^{\he}$ we denote the \emph{$\he$-dimensional Hausdorff measure} on a metric space $(X,d)$ and by
$\LM^{n}$ the \emph{$n$-dimensional Lebesgue measure} on $\R^{n}$. The \emph{extended real numbers} are indicated by the symbol $\overline\R$.\\

The thickness of a set was introduced by O. Gonzales and J. Maddocks in \cite{Gonzalez1999a}, where they also suggested to
investigate the integral curvature energies $\mathcal{U}_{p}^{1},\mathcal{I}_{p}^{1}$ and $\mathcal{M}_{p}^{1}$, which will be defined 
subsequently.

\begin{definition}[(Circumradius, interm. and global radius of curv., thickness)]
	Let $(X,d)$ be a metric space. We define the \emph{circumradius} of three distinct points $x,y,z\in X$ as the circumradius of the
	triangle defined by the, up to Euclidean motions unique, isometric embedding of these three points in the Euclidean plane, i.e.
	\begin{align}\label{formulacircumradius}
		\begin{split}
			\MoveEqLeft r:\{(x,y,z)\in X^{3}\mid d(x,y),d(y,z),d(z,x)>0\}=\vcentcolon D\to\overline\R,\\
			&(x,y,z)\mapsto\frac{abc}{\sqrt{(a+b+c)(a+b-c)(a-b+c)(-a+b+c)}},
		\end{split}
	\end{align}
	where $a\vcentcolon=d(x,y)$, $b\vcentcolon=d(y,z)$, $c\vcentcolon=d(z,x)$ and $\alpha/0=\infty$ for any $\alpha>0$.
	We also write $X_{0}\vcentcolon=X^{3}\backslash D$.
	Now we define the mappings $\rho:X^{2}\backslash \mathrm{diag}(X)\to \overline\R$ and $\rho_{G}:X\to\overline \R$ by
	\begin{align*}
		\rho(x,y)\vcentcolon=\inf_{\substack{w\in X\\x\not= y\not= w\not= x}}r(x,y,w)\quad\text{and}\quad
		\rho_{G}(x)\vcentcolon=\inf_{\substack{v,w\in X\\x\not= v\not= w\not= x}}r(x,v,w),
	\end{align*}
	which are often called \emph{intermediate} and \emph{global radius of curvature}, respectively. 
	Here $\mathrm{diag}(X)\vcentcolon=\{(x,x)\mid x\in X\}$ denotes the \emph{diagonal} of $X$.
	The \emph{thickness} is then defined to be
	\begin{align*}
		\Delta[X]\vcentcolon=\inf_{\substack{u,v,w\in X\\u\not= v\not= w\not= u}}r(u,v,w).
	\end{align*}
\end{definition}

\begin{remark}[(Different formulas for the circumradius)]\label{differentformulaskappa}
	We note that in $\R^{n}$ there are various formulas for the circumradius, for example one has the following 
	representations for $x,y,z\in\R^{n}$ mutually distinct
	\cite[(14) and (15), p.29]{Pajot2002a}
	\begin{align*}
		r(x,y,z)=\frac{\abs{x-y}}{2\abs{\sin(\measuredangle(x,z,y))}}
		=\frac{\abs{x-z}\abs{y-z}}{2\dist(z,L_{x,y})},
	\end{align*}
	where $L_{x,y}\vcentcolon=x+\R(x-y)$ is the straight line connecting $x$ and $y$. 
\end{remark}

\begin{lemma}[(Various curvature radii are upper semi-continuous)]\label{curvatureradiiareupersemi-continuous}
	Let $(X,d)$ be a metric space. Then
	\begin{align*}\begin{array}{rcll}
		r:			&X^{3}\backslash X_{0}			&\to\overline\R\qquad		&\text{is continuous},\\
		\rho:		&X^{2}\backslash\mathrm{diag}(X)		&\to\overline\R\qquad		&\text{is upper semi-continuous},\\
		\rho_{G}:	&X								&\to \overline \R\qquad	&\text{is upper semi-continuous}.
		\end{array}
	\end{align*}
\end{lemma}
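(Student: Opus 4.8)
The plan is to prove the three assertions in the order listed, deriving the upper semi-continuity of $\rho$ and $\rho_G$ from the continuity of $r$.

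First I would establish continuity of $r$. With $a\vcentcolon=d(x,y)$, $b\vcentcolon=d(y,z)$, $c\vcentcolon=d(z,x)$, these are continuous functions of $(x,y,z)\in X^{3}$ and are strictly positive exactly on $D$. The triangle inequality in $(X,d)$ gives $a+b-c\geq 0$, $a-b+c\geq 0$, $-a+b+c\geq 0$, while $a+b+c>0$ on $D$, so the map $g\colon D\to[0,\infty)$, $g(x,y,z)\vcentcolon=\sqrt{(a+b+c)(a+b-c)(a-b+c)(-a+b+c)}\,/\,(abc)$, is well defined and continuous. Since $t\mapsto 1/t$ with the convention $1/0\vcentcolon=\infty$ is an (order-reversing) homeomorphism of $[0,\infty]$, it restricts to a homeomorphism $[0,\infty)\to(0,\infty]$; and $r=1/g$ on $D$ by \eqref{formulacircumradius}, so $r$ is continuous as a map into $\overline{\R}$. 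Concretely, at a triple where $g=0$, i.e.\ $r=\infty$, one uses that $abc$ stays bounded below by a positive constant on a small neighbourhood while $g\to 0$ there, which forces $r\to\infty$; at a triple where $g>0$ the quotient is finite and continuous.

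Next I would reduce the upper semi-continuity of $\rho$ and $\rho_G$ to one soft principle. Extend $r$ to $\bar r\colon X^{3}\to\overline{\R}$ by setting $\bar r\vcentcolon=\infty$ on $X_{0}$. Then $\bar r$ is upper semi-continuous on $X^{3}$: near a triple of the open set $D$ it coincides with the continuous function $r$, and at a triple of $X_{0}$ it equals $\infty$, so the defining inequality is automatic. I would then record the elementary fact that if $f\colon A\times B\to\overline{\R}$ is upper semi-continuous then $a\mapsto\inf_{b\in B}f(a,b)$ is upper semi-continuous on $A$: given $a_{0}$ with finite infimum and $\epsilon>0$, pick $b_{0}$ with $f(a_{0},b_{0})<\inf_{b}f(a_{0},b)+\epsilon/2$ and use upper semi-continuity of $f$ at $(a_{0},b_{0})$ to obtain a neighbourhood $U$ of $a_{0}$ with $f(a,b_{0})<\inf_{b}f(a_{0},b)+\epsilon$ for $a\in U$, whence $\inf_{b}f(a,b)\leq f(a,b_{0})$ settles it; the case of infinite infimum is trivial. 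Since $\bar r(x,y,x)=\bar r(x,y,y)=\infty$ (those triples lie in $X_{0}$), one has $\rho(x,y)=\inf_{w\in X}\bar r(x,y,w)$ on $X^{2}\setminus\mathrm{diag}(X)$ and $\rho_{G}(x)=\inf_{(v,w)\in X^{2}}\bar r(x,v,w)$ on $X$, so applying the principle with $(A,B)=(X^{2}\setminus\mathrm{diag}(X),X)$, respectively $(A,B)=(X,X^{2})$, yields the two claims. (If $X$ has fewer than two, resp.\ three, points the relevant infimum is over the empty set, equals $\infty$, and the assertion is vacuous.)

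The only genuinely delicate step is the continuity of $r$ across the degenerate triples: one must check it is continuous, not merely upper semi-continuous, into $\overline{\R}$, which is why it is cleanest to factor $r$ through the homeomorphism $t\mapsto 1/t$ and to isolate the strict positivity of $abc$ on $D$. Everything else is point-set topology; note in particular that one cannot directly invoke ``an infimum of upper semi-continuous functions is upper semi-continuous'', since the index set of the infimum defining $\rho$ or $\rho_{G}$ varies with the base point, and the extension-by-$\infty$ device is precisely what removes this friction.
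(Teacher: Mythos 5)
Your proposal is correct and follows the same two-stage strategy as the paper: first continuity of $r$ on $D=X^{3}\backslash X_{0}$, then upper semi-continuity of $\rho$ and $\rho_{G}$ by passing to infima. Two points differ. For the continuity of $r$ you factor it as $(t\mapsto 1/t)\circ g$ through the homeomorphism of $[0,\infty]$, whereas the paper runs a two-case sequential argument (the case $f(x,y,z)\neq 0$ versus $f(x,y,z)=0$, with the numerator bounded below); these are interchangeable, and your concrete remark about $abc$ staying bounded below near a degenerate limit is exactly the paper's Case 2. More substantively, for $\rho$ and $\rho_{G}$ the paper writes $\rho(x,y)=\inf_{z\in X\backslash\{x,y\}}f_{z}(x,y)$ and invokes the standard fact that an infimum of upper semi-continuous functions is upper semi-continuous, even though the index set of that infimum (and the domain of each $f_{z}$) varies with the base point; your device of extending $r$ by $\infty$ on $X_{0}$ (legitimate, since $X_{0}$ is closed and the extension is automatically upper semi-continuous there, while the added competitors do not lower the infimum) turns this into an infimum over a fixed index set and removes that friction. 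So your route is essentially the paper's, executed a little more carefully at the one step where the paper's citation does not literally apply.
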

\begin{proof}
	\textbf{Step 1}
		Let $((x_{n},y_{n},z_{n}))_{n\in\N}\subset D$ and $(x,y,z)\in D$ such that 
		$(x_{n},y_{n},z_{n})\to (x,y,z)$ in $X^{3}$ and set $f(x,y,z)\vcentcolon=(-a+b+c)(a-b+c)(a+b-c)$.\\
		\textbf{Case 1}
			Let us first	assume that $f(x,y,z)\not=0$. Then $f(x,y,z)>0$ and as $(x,y,z)\in D$ we have $r(x,y,z)<\infty$.
			Since $f$ is continuous, see Lemma \ref{metriccontinuous} we have $f(x_{n},y_{n},z_{n})\geq f(x,y,z)/2$ for $n$ large enough.
			Therefore $r(x_{n},y_{n},z_{n})\to r(x,y,z)$, because the numerator of (\ref{formulacircumradius}) is also continuous.\\
		\textbf{Case 2}
			If on the other hand $f(x,y,z)=0$, we have $f(x_{n},y_{n},z_{n})\to 0$ and 
			$g(x_{n},y_{n},z_{n})\vcentcolon= d(x_{n},y_{n})d(y_{n},z_{n})d(z_{n},x_{n})>g(x,y,z)/2$ for $n$ large enough, which gives us 
			$r(x_{n},y_{n},z_{n})\to r(x,y,z)=\infty$.\\\
	\textbf{Step 2}
		If we set $f_{z}:(x,y)\mapsto r(x,y,z)$ then according to the previous item the functions $f_{z}$ are upper semi-continuous and therefore, 
		see \cite[Remark 1.4 (ii), p.21]{Braides2002a}, also is
		\begin{align*}
			\rho(x,y)=\inf_{z\in X\backslash\{x,y\}}f_{z}(x,y).
		\end{align*}
	\textbf{Step 3}
		By arguing analogous to the proof of the preceding item we have that
		\begin{align*}
			\rho_{G}(x)=\inf_{y\in X\backslash \{x\}}\rho(x,y)
		\end{align*}
		is upper semi-continuous.
\end{proof}

\begin{lemma}[(Reciprocal radii of curvature are l.s.c. and measurable)]\label{reciprocalradiiofcurvaturearemeasurable}
	Let $(X,d)$ be a metric space. Then the functions
	\begin{align*}
		\begin{array}{cclrl}
		\kappa_{G}:&	 X&		\to\overline\R,&		
			\,x&\mapsto\frac{1}{\rho_{G}(x)},\\
		\kappa_{i}:&		X^{2}&	\to\overline\R,&		
			\,(x,y)&\mapsto\begin{cases}\frac{1}{\rho(x,y)},&(x,y)\in X^{2}\backslash \mathrm{diag}(X),\\0,&\text{else},\end{cases}\\
		\kappa:&		X^{3}&	\to\overline\R,&		
			\,(x,y,z)&\mapsto\begin{cases}\frac{1}{r(x,y,z)},&(x,y,z)\in X^{3}\backslash X_{0},\\0,&\text{else},\end{cases}
		\end{array}
	\end{align*}	
	with the convention $1/0=\infty$ and $1/\infty=0$ are lower semi-continuous and $\B(X)\mhyphen\B(\overline \R)$, $\B(X^{2})\mhyphen\B(\overline \R)$ and 
	$\B(X^{3})\mhyphen\B(\overline \R)$ measurable, respectively.
\end{lemma}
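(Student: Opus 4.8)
The statement to prove is Lemma \ref{reciprocalradiiofcurvaturearemeasurable}: the reciprocal radii $\kappa_G$, $\kappa_i$, $\kappa$ are lower semi-continuous and Borel measurable. The plan is to deduce everything from the previous lemma, Lemma \ref{curvatureradiiareupersemi-continuous}, which already establishes that $r$ is continuous, $\rho$ is upper semi-continuous, and $\rho_G$ is upper semi-continuous, each as a map into $\overline\R$ on its natural domain of definition (where the relevant points are distinct). The one subtlety is that each $\kappa$ is defined on the \emph{whole} power of $X$, not just on the subset where the points are mutually distinct, and it is extended by $0$ on the degenerate set $X_0$ (resp.\ $\mathrm{diag}(X)$). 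So the argument splits into two tasks: (a) check lower semi-continuity at points of the degenerate set, and (b) check it at points where the arguments are distinct, where it follows from the reciprocal of an u.s.c.\ function being l.s.c.

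\textbf{Step 1: reciprocation reverses semi-continuity.} First I would record the elementary fact that the map $t\mapsto 1/t$ from $(0,\infty]$ to $[0,\infty)$, extended by $1/0=\infty$, is continuous and monotone decreasing as a map $\overline{[0,\infty]}\to\overline{[0,\infty]}$; hence if $g\colon Y\to\overline\R$ is upper semi-continuous and takes values in $(0,\infty]$ (which all circumradii do, being radii of genuine or degenerate circles), then $1/g$ is lower semi-continuous. Applying this to $r$ on $X^3\backslash X_0$, to $\rho$ on $X^2\backslash\mathrm{diag}(X)$, and to $\rho_G$ on $X$ (where $\rho_G$ is everywhere defined and u.s.c.\ by the previous lemma) immediately gives lower semi-continuity of $\kappa_G$ on all of $X$, and of $\kappa$ and $\kappa_i$ on the open sets $X^3\backslash X_0$ and $X^2\backslash\mathrm{diag}(X)$.

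\textbf{Step 2: behaviour on the degenerate set.} The remaining point is l.s.c.\ at a point $(x,y,z)\in X_0$, i.e.\ where at least two of the three points coincide (and analogously for $\kappa_i$ on $\mathrm{diag}(X)$). Here $\kappa$ is defined to be $0$, which is the minimum possible value; since lower semi-continuity at a point $p$ only requires $\liminf_{q\to p}\kappa(q)\geq\kappa(p)$, and $\kappa\geq 0$ everywhere, the inequality $\liminf\geq 0$ is automatic. So no work is needed there — l.s.c.\ at degenerate points is free precisely because we extended by the bottom value $0$. Combining Steps 1 and 2, $\kappa$, $\kappa_i$, $\kappa_G$ are l.s.c.\ on their full domains.

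\textbf{Step 3: measurability.} Finally, a lower semi-continuous function $f\colon Z\to\overline\R$ on a topological space $Z$ has $\{f>c\}$ open for every $c\in\R$, and since $\overline\R$ is generated as a Borel space by the rays $(c,\infty]$, such $f$ is $\B(Z)\mhyphen\B(\overline\R)$ measurable; I would cite this standard fact (or the analogous remark in a reference such as Braides already used in the preceding proof). Applying it with $Z=X$, $X^2$, $X^3$ respectively — noting $\B(X^2)$ and $\B(X^3)$ are understood as the Borel $\sigma$-algebras of the product topologies — yields the claimed measurability. I do not expect any genuine obstacle here; the only thing to be careful about is the bookkeeping at the degenerate set, and as noted that case is trivial because of the choice of extension value, so the proof is essentially a three-line reduction to Lemma \ref{curvatureradiiareupersemi-continuous}.
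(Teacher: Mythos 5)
Your proposal is correct and follows essentially the same route as the paper: reciprocals of the upper semi-continuous radii are lower semi-continuous (the paper's Lemma \ref{reciprocalofsemi-continuousfunctions}), the extension by $0$ across the degenerate sets preserves lower semi-continuity (the paper's Lemma \ref{extensionoflowersemicontinuousfunctions}, which you inline by observing that l.s.c.\ at points where the value is $0$ is automatic for a non-negative function), and semi-continuity gives Borel measurability (the paper's Lemma \ref{semicontiniuousfunctionsaremeasurable}). The only point you assert without justification is that $X^{2}\backslash\mathrm{diag}(X)$ and $X^{3}\backslash X_{0}$ are open, which is needed for l.s.c.\ at the non-degenerate points and is exactly the content of the paper's Lemma \ref{DeltaXandX0areclosed}.
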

\begin{proof}
	Considering Lemmata \ref{curvatureradiiareupersemi-continuous} and \ref{reciprocalofsemi-continuousfunctions}
	the functions $\kappa_{G}$, $\kappa_{i}$ and $\kappa$ are lower semi-continuous on $X$, 
	$X^{2}\backslash \textrm{diag}(X)$ and $X^{3}\backslash X_{0}$ respectively. 
	This proves the proposition for $\kappa_{G}$. Now considering that the excluded sets $\mathrm{diag}(X)$ and $X_{0}$ are closed, 
	Lemma \ref{DeltaXandX0areclosed},
	and that the functions are non-negative on the whole space and $0$ on these sets, we know that they are lower semi-continuous on the entire space
	by Lemma \ref{extensionoflowersemicontinuousfunctions}.
	Now  Lemma \ref{semicontiniuousfunctionsaremeasurable} gives us Borel measurability.
\end{proof}

\begin{definition}[(A menagerie of integral curvature energies)]
	Let $(X,d)$ be a metric space and $\he,p\in (0,\infty)$. We are now able to define the following two-parameter families of 
	integral curvature energies 
	\begin{align*}
		\mathcal{U}_{p}^{\he}(X)&\vcentcolon=\int_{X}\kappa_{G}^{p}(x)\dHM(x),\\
		\mathcal{I}_{p}^{\he}(X)&\vcentcolon=\int_{X}\int_{X}\kappa_{i}^{p}(x,y)\dHM(x)\dHM(y),\\
		\mathcal{M}_{p}^{\he}(X)&\vcentcolon=\int_{X}\int_{X}\int_{X}\kappa^{p}(x,y,z)\dHM(x)\dHM(y)\dHM(z).
	\end{align*}
	The last of these energies, $\M_{p}^{\he}$, is often called \emph{$\he$-dimensional (integral) $p$-Menger curvature}.
\end{definition}

\begin{remark}[(Subtle differences in possible definitions of energies)]\label{remarksubtledifferences}
	We want to remark that in the Euclidean case the measure in the integrals is the Hausdorff measure on the set $X$ (respective to
	the subspace metric, i.e. the restriction of the metric of $\R^{n}$ to the set $X$), in contrast to the Hausdorff measure on $\R^{n}$.
	As we shall see shortly this enables us to include non-measurable sets, contrary to the other approach, where the energy might
	not exist on non-measurable sets, which can easily be seen by the example of a Vitali type set on the unit circle. We suspect that the gain of permitted sets
	when comparing \cite{Hahlomaa2008a} for $\R^{n}$ to \cite{Leger1999a}, where only Borel sets were permitted, might be related to this
	matter.
\end{remark}

We shall now be concerned with the existence of these integral curvature energies, which is why we first take a closer look at the integrands.

\begin{lemma}[(Various integrand functions are l.s.c. and measurable)]\label{variousintegrandfuntionsarelsc}
	Let $(X,d)$ be a metric space. Then for all $p\in (0,\infty)$ the following functions
	\begin{align*}
		y&\mapsto \int_{X}\kappa_{i}^{p}(x,y)\dd\HM^{\he}(x)\\
		y&\mapsto \int_{X}\kappa^{p}(x,y,z)\dd\HM^{\he}(x)\quad\text{for all }z\in X\\
		z&\mapsto \int_{X}\int_{X}\kappa^{p}(x,y,z)\dd\HM^{\he}(x)\dd\HM^{\he}(y)
	\end{align*}
	are lower semi-continuous and $\B(X)\mhyphen\B(\overline\R)$ measurable.
\end{lemma}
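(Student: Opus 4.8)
The plan is to deduce all three assertions from Fatou's lemma. The starting point is that the integrands $\kappa_{i}^{p}$ on $X^{2}$ and $\kappa^{p}$ on $X^{3}$ are non-negative, lower semi-continuous and Borel measurable: this follows from Lemma \ref{reciprocalradiiofcurvaturearemeasurable} together with the elementary fact that $t\mapsto t^{p}$ is continuous and non-decreasing on $[0,\infty]$, so that post-composition preserves lower semi-continuity. Since $X$ is metrizable, lower semi-continuity is equivalent to sequential lower semi-continuity, so it suffices to test along convergent sequences; and once lower semi-continuity of each of the three functions is established, $\B(X)\mhyphen\B(\overline\R)$ measurability is immediate from Lemma \ref{semicontiniuousfunctionsaremeasurable}.

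For the first function I would fix $y\in X$ and a sequence $y_{n}\to y$. For each $n$ the slice map $x\mapsto(x,y_{n})$ is continuous, hence $\B(X)\mhyphen\B(X^{2})$ measurable, so $x\mapsto\kappa_{i}^{p}(x,y_{n})$ is a non-negative Borel function on $X$ and $\int_{X}\kappa_{i}^{p}(x,y_{n})\dd\HM^{\he}(x)$ is well defined in $[0,\infty]$. Lower semi-continuity of $\kappa_{i}^{p}$ at $(x,y)$ gives $\liminf_{n\to\infty}\kappa_{i}^{p}(x,y_{n})\geq\kappa_{i}^{p}(x,y)$ for every $x\in X$, so Fatou's lemma yields the key estimate
\begin{align*}
	\liminf_{n\to\infty}\int_{X}\kappa_{i}^{p}(x,y_{n})\dd\HM^{\he}(x)\geq\int_{X}\liminf_{n\to\infty}\kappa_{i}^{p}(x,y_{n})\dd\HM^{\he}(x)\geq\int_{X}\kappa_{i}^{p}(x,y)\dd\HM^{\he}(x),
\end{align*}
which is exactly the asserted lower semi-continuity. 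The second function is handled word for word the same way: fix $z\in X$, let $y_{n}\to y$, and use joint lower semi-continuity of $\kappa^{p}$ on $X^{3}$ to pass to the $\liminf$ inside the $x$-integral before applying Fatou.

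For the third function I would first observe that, by the second assertion just proved, $y\mapsto\int_{X}\kappa^{p}(x,y,z)\dd\HM^{\he}(x)$ is a non-negative Borel function for each $z$, so the iterated integral is meaningful. Now fix $z\in X$ and $z_{n}\to z$. Applying the argument above (Fatou in the $x$-variable) at each fixed $y$ with $z_{n}\to z$ gives $\liminf_{n\to\infty}\int_{X}\kappa^{p}(x,y,z_{n})\dd\HM^{\he}(x)\geq\int_{X}\kappa^{p}(x,y,z)\dd\HM^{\he}(x)$ for every $y\in X$, and a second application of Fatou's lemma, this time in the $y$-variable, gives
\begin{align*}
	\liminf_{n\to\infty}\int_{X}\int_{X}\kappa^{p}(x,y,z_{n})\dd\HM^{\he}(x)\dd\HM^{\he}(y)\geq\int_{X}\int_{X}\kappa^{p}(x,y,z)\dd\HM^{\he}(x)\dd\HM^{\he}(y),
\end{align*}
so this function is lower semi-continuous and hence Borel measurable. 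The argument is essentially routine; the only points requiring (minor) care are the measurability of the restrictions of the jointly Borel integrands to the slices $X\times\{y_{n}\}$ and $X\times X\times\{z_{n}\}$, so that Fatou's lemma applies, and the remark that in a metric space sequential lower semi-continuity already implies lower semi-continuity — so I do not anticipate any genuine obstacle.
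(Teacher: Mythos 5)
Your proposal is correct and follows essentially the same route as the paper: establish joint lower semi-continuity of $\kappa_{i}^{p}$ and $\kappa^{p}$ (the paper cites Lemma \ref{powersofsemicontiniuous} where you invoke monotone continuous post-composition, which is the same content), note that the slice restrictions are then lower semi-continuous and Borel, and apply Fatou's lemma once for the first two functions and twice for the third, concluding measurability from Lemma \ref{semicontiniuousfunctionsaremeasurable}. No gaps.
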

\begin{proof}
	\textbf{Step 1}
		By Lemma \ref{reciprocalradiiofcurvaturearemeasurable}, $\kappa\geq 0$ and  Lemma \ref{powersofsemicontiniuous}
		we know that $\kappa^{p}$ is lower semi-continuous. 
		Let $a_{n}\to a$ in $X$. As for fixed $x,y,z\in X$ we have $(a_{n},y,z)\to (a,y,z)$ and therefore
		\begin{align*}
			\kappa^{p}(a,y,z)\leq\liminf_{n\to\infty} \kappa^{p}(a_{n},y,z),
		\end{align*}
		so that $\kappa^{p}(\cdot,y,z)$, $\kappa^{p}(x,\cdot,z)$ and $\kappa^{p}(x,y,\cdot)$ are lower semi-continuous 
		and hence $\B(X)$--$\B(\overline\R)$ measurable,
		see Lemma \ref{semicontiniuousfunctionsaremeasurable}. Using Fatou's Lemma \cite[Theorem 1, p.19]{Evans1992a} we obtain
		\begin{align*}
			\int_{X}\kappa^{p}(x,y,a)\dHM(x)\leq \int_{X} \liminf_{n\to\infty}\kappa^{p}(x,y,a_{n})\dHM(x)
			\leq \liminf_{n\to\infty} \int_{X} \kappa^{p}(x,y,a_{n})\dHM(x).
		\end{align*}
		This tells us that for fixed $x,y,z$ the mappings $\int_{X}\kappa^{p}(x,\cdot,z)\dHM(x)$ and $\int_{X}\kappa^{p}(x,y,\cdot)\dHM(x)$ are 
		lower semi-continuous and hence measurable.\\
	\textbf{Step 2}
		Let $z_{n}\to z$ in $X$. If we use Fatou's Lemma and integrate again, we obtain
		\begin{align*}
			\MoveEqLeft\int_{X}\int_{X}\kappa^{p}(x,y,z)\dHM(x)\dHM(y)
			\leq \int_{X}\liminf_{n\to\infty} \int_{X} \kappa^{p}(x,y,z_{n})\dHM(x)\dHM(y)\\
			&\leq \liminf_{n\to\infty}\int_{X} \int_{X} \kappa^{p}(x,y,z_{n})\dHM(x)\dHM(y),
		\end{align*}
		so that $z\mapsto \int_{X}\int_{X}\kappa^{p}(x,y,z)\dHM(x)\dHM(y)$ is lower semi-continuous and hence measurable. 
		For the function involving $\kappa_{i}$ we argue analogously.
\end{proof}

\begin{lemma}[(Integral curvature energies are well-defined)]\label{integralcurvatureenergiesarewelldefinedinms}
	Let $(X,d)$ be a metric space. Then for all $\he,p\in (0,\infty)$ the curvature energies $\mathcal{U}_{p}^{\he}(X)$, 
	$\mathcal{I}_{p}^{\he}(X)$ and $\M_{p}^{\he}(X)$ are well defined.
\end{lemma}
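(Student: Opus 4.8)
The plan is to show that each of the three energies takes values in $[0,\infty]$ in a well-defined manner, i.e.\ that the iterated integrals defining $\mathcal{U}_{p}^{\he}$, $\mathcal{I}_{p}^{\he}$ and $\mathcal{M}_{p}^{\he}$ make sense (the integrands are measurable and non-negative, so the integrals exist as elements of $\overline\R_{\geq 0}$) and that the order of integration is irrelevant. First I would invoke Lemma \ref{reciprocalradiiofcurvaturearemeasurable} to get that $\kappa_G$, $\kappa_i$ and $\kappa$ are $\B$-measurable and non-negative; hence $\kappa_G^p$, $\kappa_i^p$, $\kappa^p$ are $\B$-measurable and non-negative for every $p\in(0,\infty)$ (using that $t\mapsto t^p$ is continuous and monotone on $[0,\infty]$, as already used in Lemma \ref{variousintegrandfuntionsarelsc}). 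Since $\HM^{\he}$ is a Borel measure on the metric space $X$, the integral $\int_X \kappa_G^p\dHM$ is a well-defined element of $[0,\infty]$, which settles $\mathcal{U}_{p}^{\he}$.

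For $\mathcal{I}_{p}^{\he}$ and $\mathcal{M}_{p}^{\he}$ the content is that the iterated integrals are well-defined. Here I would appeal to Lemma \ref{variousintegrandfuntionsarelsc}: the function $y\mapsto\int_X\kappa_i^p(x,y)\dHM(x)$ is $\B(X)$-measurable, hence $\int_X\int_X\kappa_i^p(x,y)\dHM(x)\dHM(y)$ is a well-defined element of $[0,\infty]$; similarly $z\mapsto\int_X\int_X\kappa^p(x,y,z)\dHM(x)\dHM(y)$ is $\B(X)$-measurable by that same lemma, so the triple iterated integral defining $\mathcal{M}_{p}^{\he}$ is well-defined in $[0,\infty]$. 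Thus each energy is an unambiguous element of $\overline\R$; one should also note that, by Tonelli's theorem for non-negative measurable integrands on the $\sigma$-finite (or at least the non-negative) setting, the value does not depend on the order in which the variables are integrated, so the notation is consistent.

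The one genuinely delicate point — the main obstacle — is the applicability of Tonelli/Fubini: $\HM^{\he}$ on a general metric space need not be $\sigma$-finite, so the classical product-measure machinery does not apply verbatim, and the joint measurability of $(x,y)\mapsto\kappa_i^p(x,y)$ and $(x,y,z)\mapsto\kappa^p(x,y,z)$ with respect to the product Borel $\sigma$-algebra (which is what Lemma \ref{reciprocalradiiofcurvaturearemeasurable} in fact provides, via lower semi-continuity on $X^2$, $X^3$) together with the slice-measurability from Lemma \ref{variousintegrandfuntionsarelsc} is precisely what is needed to make the iterated integrals meaningful regardless of $\sigma$-finiteness. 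I would therefore phrase the proof so that it relies only on: (i) non-negativity and $\B$-measurability of the integrands on the relevant product spaces, and (ii) $\B(X)$-measurability of the partially-integrated functions — both already established — so that each iterated integral is literally an integral of a non-negative Borel function against $\HM^{\he}$, hence well-defined in $[0,\infty]$, without invoking a product measure at all. That observation sidesteps the $\sigma$-finiteness issue entirely and makes the lemma a one-line consequence of the two preceding lemmata.

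\begin{proof}
	By Lemma \ref{reciprocalradiiofcurvaturearemeasurable} the functions $\kappa_G$, $\kappa_i$ and $\kappa$ are non-negative and Borel measurable, and since $t\mapsto t^p$ is continuous and non-decreasing on $[0,\infty]$, so are $\kappa_G^p$, $\kappa_i^p$ and $\kappa^p$ for every $p\in(0,\infty)$. As $\HM^{\he}$ is a Borel measure on $X$, the integral $\mathcal{U}_{p}^{\he}(X)=\int_X\kappa_G^p(x)\dHM(x)$ is a well-defined element of $[0,\infty]\subset\overline\R$. By Lemma \ref{variousintegrandfuntionsarelsc} the function $y\mapsto\int_X\kappa_i^p(x,y)\dHM(x)$ is non-negative and $\B(X)\mhyphen\B(\overline\R)$ measurable, so that $\mathcal{I}_{p}^{\he}(X)=\int_X\int_X\kappa_i^p(x,y)\dHM(x)\dHM(y)$ is a well-defined element of $[0,\infty]$. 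Again by Lemma \ref{variousintegrandfuntionsarelsc}, for each $z\in X$ the function $y\mapsto\int_X\kappa^p(x,y,z)\dHM(x)$ is non-negative and $\B(X)$-measurable, hence $z\mapsto\int_X\int_X\kappa^p(x,y,z)\dHM(x)\dHM(y)$ is non-negative and, by the same lemma, $\B(X)$-measurable; therefore $\M_{p}^{\he}(X)=\int_X\int_X\int_X\kappa^p(x,y,z)\dHM(x)\dHM(y)\dHM(z)$ is a well-defined element of $[0,\infty]$.
\end{proof}
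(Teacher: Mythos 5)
Your proof is correct and takes essentially the same route as the paper: the paper's own proof is a one-line appeal to Lemma \ref{reciprocalradiiofcurvaturearemeasurable} and Lemma \ref{variousintegrandfuntionsarelsc} together with non-negativity of the integrands, which is exactly what you spell out in more detail (including the correct observation that the energies are defined as iterated integrals of non-negative Borel functions, so no product-measure or $\sigma$-finiteness considerations are needed).
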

\begin{proof}
	This is a simple consequence of Lemma \ref{reciprocalradiiofcurvaturearemeasurable} and Lemma \ref{variousintegrandfuntionsarelsc}
	together with the fact that the integrands are non-negative, see \cite[Remark, p.18]{Evans1992a}.
\end{proof}

\begin{lemma}[(Inequality between integral curvature energies)]\label{inequalityforcurvatureenergies}
	Let $(X,d)$ be a metric space with $\HM^{\he}(X)<\infty$ and $\he,p\in(0,\infty)$, then
	\begin{align*}
		\M_{p}^{\he}(X)\leq\HM^{\he}(X)\mathcal{I}_{p}^{\he}(X)\leq\HM^{\he}(X)^{2}\mathcal{U}_{p}^{\he}(X)\leq\frac{\HM^{\he}(X)^{3}}{\Delta[X]^{p}}.
	\end{align*}
\end{lemma}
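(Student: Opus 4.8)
The plan is to prove the chain of inequalities
\[
\M_{p}^{\he}(X)\leq\HM^{\he}(X)\,\mathcal{I}_{p}^{\he}(X)\leq\HM^{\he}(X)^{2}\,\mathcal{U}_{p}^{\he}(X)\leq\frac{\HM^{\he}(X)^{3}}{\Delta[X]^{p}}
\]
by bounding, at each stage, one innermost integral by a pointwise supremum of the integrand times the measure of $X$, exactly mirroring the way $\rho$, $\rho_{G}$ and $\Delta[X]$ are obtained from $r$ by successively replacing an integration by an infimum. Since $\HM^{\he}(X)<\infty$, all the measures appearing as prefactors are finite, so no $0\cdot\infty$ issue arises; and since all integrands are non-negative and measurable (Lemma \ref{reciprocalradiiofcurvaturearemeasurable} and Lemma \ref{variousintegrandfuntionsarelsc}), each estimate is a legitimate application of monotonicity of the integral.

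For the first inequality, fix $y,z\in X$ and observe that for every $x$ with $(x,y,z)\in X^{3}\backslash X_{0}$ we have $r(x,y,z)\geq\inf_{w:\,x\neq y\neq w\neq x}r(x,y,w)=\rho(x,y)$, hence $\kappa(x,y,z)\leq\kappa_{i}(x,y)$; on the exceptional set $X_{0}$ we have $\kappa=0\leq\kappa_{i}$ by definition. Raising to the $p$-th power and integrating in $x$ gives $\int_{X}\kappa^{p}(x,y,z)\dHM(x)\leq\int_{X}\kappa_{i}^{p}(x,y)\dHM(x)$ for all $y,z$; this last quantity does not depend on $z$, so integrating in $z$ contributes a factor $\HM^{\he}(X)$, and integrating in $y$ yields $\M_{p}^{\he}(X)\leq\HM^{\he}(X)\mathcal{I}_{p}^{\he}(X)$. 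The second inequality is entirely analogous: for fixed $y$ and every admissible $x$ one has $\rho(x,y)\geq\rho_{G}(x)$, hence $\kappa_{i}(x,y)\leq\kappa_{G}(x)$ (again the convention makes this hold on $\mathrm{diag}(X)$ as well), so integrating in $x$ gives $\int_{X}\kappa_{i}^{p}(x,y)\dHM(x)\leq\mathcal{U}_{p}^{\he}(X)$, a bound independent of $y$, and a further integration in $y$ produces the factor $\HM^{\he}(X)$. For the third inequality, for every $x\in X$ and every admissible triple $v,w$ we have $r(x,v,w)\geq\Delta[X]$, so $\rho_{G}(x)\geq\Delta[X]$, i.e. $\kappa_{G}(x)\leq 1/\Delta[X]$ (with the convention $1/0=\infty$ this is also valid when $\Delta[X]=0$, and trivial when $\Delta[X]=\infty$); integrating $\kappa_{G}^{p}$ over $X$ then bounds $\mathcal{U}_{p}^{\he}(X)$ by $\HM^{\he}(X)/\Delta[X]^{p}$, and multiplying through by $\HM^{\he}(X)^{2}$ finishes the chain.

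I do not expect any genuine obstacle here: the argument is purely the monotone-integral translation of the nested infima defining $\rho$, $\rho_{G}$, $\Delta[X]$. The only points requiring a line of care are the bookkeeping of the degenerate sets $X_{0}$ and $\mathrm{diag}(X)$ — where one simply notes that $\kappa$, $\kappa_{i}$ are set to $0$ there, so the inequalities hold trivially — and the handling of the conventions $1/0=\infty$, $1/\infty=0$ together with the extended-real arithmetic, which is where the hypothesis $\HM^{\he}(X)<\infty$ is used to keep the prefactors finite and avoid meaningless $\infty-\infty$ or $0\cdot\infty$ expressions. If $\HM^{\he}(X)=0$ all terms vanish and there is nothing to prove.
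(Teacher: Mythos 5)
Your proof is correct and follows essentially the same route as the paper: the pointwise chain $\Delta[X]\leq\rho_{G}(x)\leq\rho(x,y)\leq r(x,y,z)$, hence $\kappa\leq\kappa_{i}\leq\kappa_{G}\leq 1/\Delta[X]$, followed by monotone integration with each superfluous integral contributing a factor $\HM^{\he}(X)$. Your explicit check of the degenerate sets $X_{0}$ and $\mathrm{diag}(X)$, where the kernels are set to $0$, is a small extra care the paper glosses over but changes nothing in substance.
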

\begin{proof}
	Clearly for all distinct $x,y,z\in X$ we have
	\begin{align*}
		\Delta[X]\leq\rho_{G}(x)\leq\rho(x,y)\leq r(x,y,z),
	\end{align*}
	which gives us 
	\begin{align}\label{inequalitycRDelta}
		\kappa(x,y,z)\leq \kappa_{i}(x,y)\leq \kappa_{G}(x)\leq\frac{1}{\Delta[X]}\quad\text{for all }x,y,z\in X
	\end{align}
	and thus the proposition.
\end{proof}

By successively using the H{\"o}lder inequality from the inner to the outer integral one can easily prove

\begin{lemma}[(Comparison of curvature energies for different $p$)]\label{comparisonofcurvatureenergiesfordifferentp}
	Let $(X,d)$ be a metric space with $\HM^{\he}(X)<\infty$, $\he\in (0,\infty)$ and $0<p<q <\infty$. Then
	\begin{align*}
		\mathcal{U}_{p}^{\he}(X)&\leq \HM^{\he}(X)^{(1-p/q)}\mathcal{U}_{q}^{\he}(X)^{p/q},\\
		\mathcal{I}_{p}(X)^{\he}&\leq \HM^{\he}(X)^{2(1-p/q)}\mathcal{I}_{q}^{\he}(X)^{p/q},\\
		\M_{p}^{\he}(X)&\leq \HM^{\he}(X)^{3(1-p/q)}\M_{q}^{\he}(X)^{p/q}.
	\end{align*}
\end{lemma}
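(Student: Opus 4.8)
The plan is to apply Hölder's inequality iteratively, starting from the innermost integral and working outward, exactly as the sentence preceding the statement announces. Let me describe the argument for $\M_{p}^{\he}$; the cases of $\U_{p}^{\he}$ and $\I_{p}^{\he}$ are the same argument with one or two integrations instead of three. Write $\mu\vcentcolon=\HM^{\he}\restriction X$, a finite measure by hypothesis, and abbreviate $m\vcentcolon=\HM^{\he}(X)=\mu(X)<\infty$. Fix the exponent pair $(r,r')$ with $r\vcentcolon=q/p>1$ and $\tfrac1r+\tfrac1{r'}=1$, so that $r'=\tfrac{q}{q-p}$ and $1-1/r=1-p/q$.

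First I would treat the integral in $x$: for fixed $y,z$, by Hölder with exponents $r$ and $r'$ applied to $\kappa^{p}(\cdot,y,z)\cdot\1$,
\begin{align*}
	\int_{X}\kappa^{p}(x,y,z)\dd\mu(x)
	\leq\Bigl(\int_{X}\kappa^{q}(x,y,z)\dd\mu(x)\Bigr)^{p/q}\,m^{1-p/q}.
\end{align*}
Next I would integrate this inequality in $y$ and apply Hölder once more, now to the function $y\mapsto\bigl(\int_{X}\kappa^{q}(x,y,z)\dd\mu(x)\bigr)^{p/q}$, whose $r$-th power is integrable; this produces another factor $m^{1-p/q}$ and replaces the outer $p/q$-power by an integral of $\kappa^{q}$ against $\dd\mu(x)\dd\mu(y)$ raised to the power $p/q$. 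Finally I would integrate in $z$ and apply Hölder a third time, yielding a third factor $m^{1-p/q}$. Collecting the three factors gives $m^{3(1-p/q)}$ times $\M_{q}^{\he}(X)^{p/q}$, which is the claimed bound. For $\I_{p}^{\he}$ one performs the first two steps only, obtaining $m^{2(1-p/q)}$; for $\U_{p}^{\he}$ only the first step, obtaining $m^{1(1-p/q)}$.

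Two small points need care but no real work. One must check measurability of the intermediate functions so that each Hölder application and each Fubini-type iteration of the integral is legitimate; this is exactly what Lemma \ref{variousintegrandfuntionsarelsc} provides (together with Lemma \ref{reciprocalradiiofcurvaturearemeasurable}), and since all integrands are nonnegative and $\overline\R$-valued the iterated integrals are unambiguously defined by Lemma \ref{integralcurvatureenergiesarewelldefinedinms}. One should also note that the inequalities remain valid, trivially, when $\M_{q}^{\he}(X)=\infty$ (the right-hand side is then $+\infty$) or when $m=0$ (both sides vanish), so no positivity or finiteness assumption on the right-hand side is needed. The only mild subtlety — and the closest thing to an obstacle — is bookkeeping the exponents through the three nested applications so that the powers of $m$ add up correctly to $3(1-p/q)$ rather than, say, being multiplied; keeping $r=q/p$ fixed throughout and observing that each layer contributes precisely one clean factor $m^{1-p/q}$ makes this transparent.
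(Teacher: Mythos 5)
Your proposal is correct and follows essentially the same route as the paper: the paper also applies H\"older's inequality successively from the innermost to the outermost integral with the conjugate pair $a=q/p$, $b=q/(q-p)$, each layer contributing one factor $\HM^{\he}(X)^{1/b}=\HM^{\he}(X)^{1-p/q}$. Your additional remarks on measurability and on the degenerate cases $\M_{q}^{\he}(X)=\infty$ or $\HM^{\he}(X)=0$ are sound but not needed beyond what the cited lemmas already provide.
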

\begin{proof}
	For $a=q/p>1$ and $b=q/(q-p)$ we obtain
	\begin{align*}
		\MoveEqLeft\M_{p}^{\he}(X)=\int_{X}\int_{X}\int_{X}\kappa^{p}(x,y,z)\dHM(x)\dHM(y)\dHM(z)\\
		&\leq \int_{X}\int_{X}\HM^{\he}(X)^{1/b}\Big(\int_{X}\kappa^{pa}(x,y,z)\dHM(x)\Big)^{1/a}\dHM(y)\dHM(z)\\
		&\leq \HM^{\he}(X)^{1/b}\int_{X} \HM^{\he}(X)^{1/b} \Big[\int_{X}\Big(\int_{X}\kappa^{q}(x,y,z)\dHM(x)\Big)^{a\cdot 1/a}\dHM(y)\Big]^{1/a}\dHM(z)\\
		&\leq \HM^{\he}(X)^{3/b}\Big(\int_{X}\Big[\int_{X}\int_{X}\kappa^{q}(x,y,z)\dHM(x)\dHM(y)\Big]^{a\cdot 1/a}\dHM(z)\Big)^{1/a}\\
		&\leq \HM^{\he}(X)^{3(1-p/q)}\Big(\int_{X}\int_{X}\int_{X}\kappa^{q}(x,y,z)\dHM(x)\dHM(y)\dHM(z)\Big)^{p/q}.
	\end{align*}
	The inequalities for the other two energies are proven analogously.
\end{proof}

Later on we often use the contrapositive of the following lemma to show that a set has infinite curvature energy.

\begin{lemma}[($\F(B_{r})\to 0$ if $\F(X)<\infty$)]\label{nicebehaviourlimitofsmallballs}
	Let $(X,d)$ be a metric space with $\HM^{\he}(X)<\infty$, $\he,p\in (0,\infty)$, $\F\in \{\mathcal{U}_{p}^{\he},\mathcal{I}_{p}^{\he},\mathcal{M}_{p}^{\he}\}$. 
	If we have finite energy $\F(X)<\infty$ then for all $x\in X$
	\begin{align*}
		\lim_{r\downarrow 0}\F(B_{r}(x))=0.
	\end{align*}
\end{lemma}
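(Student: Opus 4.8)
The governing principle is absolute continuity of the integral. Each $\F(B_{r}(x))$ is the integral of one of the non-negative integrands $\kappa_{G}^{p}$, $\kappa_{i}^{p}$, $\kappa^{p}$ over a suitable power of $B_{r}(x)$; the corresponding integral over the same power of $X$ is finite by hypothesis; and the balls $B_{r}(x)$ shrink, as $r\downarrow 0$, to the single point $x$, which satisfies $\HM^{\he}(\{x\})=0$ because $\he>0$. A dominated convergence argument should therefore close the proof.

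I would begin with $\F=\mathcal{U}_{p}^{\he}$. The circumradius depends only on the three mutual distances, and the $\he$-dimensional Hausdorff measure of a subset of $B_{r}(x)$ is unchanged whether computed inside $B_{r}(x)$ or inside $X$; on the other hand, restricting the ambient space to $B_{r}(x)$ can only enlarge the infimum defining $\rho_{G}$, so that
\begin{align*}
	\mathcal{U}_{p}^{\he}(B_{r}(x))\leq\int_{X}\kappa_{G}^{p}(u)\,\1_{B_{r}(x)}(u)\dHM(u).
\end{align*}
By Lemma \ref{reciprocalradiiofcurvaturearemeasurable} the function $\kappa_{G}^{p}$ is $\B(X)\mhyphen\B(\overline\R)$ measurable, and it lies in $L^{1}(\HM^{\he})$ since $\mathcal{U}_{p}^{\he}(X)<\infty$. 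As $r\downarrow 0$ the integrand decreases pointwise to $\kappa_{G}^{p}\1_{\{x\}}$, which is $0$ at every $u$ different from $x$ and hence $\HM^{\he}$-almost everywhere; dominated convergence with majorant $\kappa_{G}^{p}$ gives $\mathcal{U}_{p}^{\he}(B_{r}(x))\to 0$.

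For $\F=\mathcal{M}_{p}^{\he}$ the infimum issue disappears, since $\kappa$ depends only on the mutual distances, and one has the exact identity
\begin{align*}
	\mathcal{M}_{p}^{\he}(B_{r}(x))=\int_{X^{3}}\kappa^{p}(u,v,w)\,\1_{B_{r}(x)}(u)\,\1_{B_{r}(x)}(v)\,\1_{B_{r}(x)}(w)\,\dd\mu_{3},
\end{align*}
where $\mu_{3}$ is the threefold product of $\HM^{\he}$ and where the passage from the iterated integral to the product integral is Tonelli's theorem, legitimate because $\kappa^{p}\geq 0$ is measurable by Lemma \ref{reciprocalradiiofcurvaturearemeasurable} and $\HM^{\he}$ is finite, hence $\sigma$-finite; in particular $\int_{X^{3}}\kappa^{p}\dd\mu_{3}=\mathcal{M}_{p}^{\he}(X)<\infty$. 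As $r\downarrow 0$ the integrands decrease pointwise to $\kappa^{p}\1_{\{(x,x,x)\}}$, which vanishes off the single point $(x,x,x)$ and thus $\mu_{3}$-almost everywhere, while staying dominated by $\kappa^{p}\in L^{1}(\mu_{3})$; dominated convergence yields $\mathcal{M}_{p}^{\he}(B_{r}(x))\to 0$. The case $\F=\mathcal{I}_{p}^{\he}$ is analogous on $X^{2}$, with the inequality (rather than the identity) as in the treatment of $\mathcal{U}_{p}^{\he}$ because $\kappa_{i}$ still involves an infimum. (If one prefers to avoid product measures on a possibly non-separable $X$, one can alternatively note that $r\mapsto\F(B_{r}(x))$ is nondecreasing and bounded by $\F(X)<\infty$, and push dominated convergence through the nested integrals along a sequence $r_{n}\downarrow 0$, using the monotonicity in $n$ of the inner integrals and the integrable majorant furnished by $\F(B_{r_{1}}(x))<\infty$.)

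There is no real obstacle here. The only steps that require a moment's care are the identification, respectively estimation, of $\F(B_{r}(x))$ by an integral against a \emph{fixed} dominating function — which is where Tonelli's theorem, or the monotonicity in $r$, is used — together with the elementary observation that a single point is $\HM^{\he}$-null when $\he>0$, which is precisely what makes the pointwise limits of the integrands vanish $\HM^{\he}$-almost everywhere.
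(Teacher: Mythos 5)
Your proof is correct and rests on the same principle as the paper's: absolute continuity of the integral of a fixed $L^{1}$ function over sets whose measure shrinks to zero. The paper packages this slightly differently — it argues by contradiction, bounds $\F(B_{r}(x))$ by $\int_{B_{r}(x)}f\dd\HM^{\he}$ where $f$ is the integrand with the \emph{inner} integrals still taken over all of $X$ (so only the outermost domain ever shrinks), which sidesteps the product-measurability point you handle via Tonelli or your nested-integral alternative.
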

\begin{proof}
	Let $x_{0}\in X$ and assume that there is a monotonically decreasing sequence $(r_{n})_{n\in\N}$, $r_{n}>0$ with $\lim_{n\to\infty}r_{n}=0$, 
	such that
	$\F(B_{r_{n}}(x_{0}))\geq c>0$ for all $n\in\N$. We first note that as $B_{r}(x_{0})\in\measurable(\HM^{\he})$ and measures 
	are continuous on monotonically decreasing sets $E_{j}$, 
	if $E_{1}$ has finite measure, \cite[Theorem 1.1, (b), p.2]{Falconer1985a} we have
	\begin{align*}
		\lim_{n\to\infty}\HM^{\he}(B_{r_{n}}(x_{0}))=\HM^{\he}(\lim_{n\to\infty}B_{r_{n}}(x_{0}))=\HM^{\he}(\{x_{0}\})=0.
	\end{align*}
	Let 
	\begin{align*}
		f\in\Big\{x\mapsto \kappa_{G}^{p}(x),
		y\mapsto \int_{X}\kappa_{i}^{p}(x,y)\dd\HM^{\he}(x), 
		z\mapsto \int_{X}\int_{X}\kappa^{p}(x,y,z)\dd\HM^{\he}(x)\dd\HM^{\he}(y)\Big\}
	\end{align*}
	be the corresponding integrand to $\F$.
	Then $f$ is measurable, as we have seen in
	Lemma \ref{reciprocalradiiofcurvaturearemeasurable} and Lemma \ref{variousintegrandfuntionsarelsc}, and
	\begin{align*}
		\int_{B_{r_{n}}(x_{0})}f\dd\HM^{\he}\geq \F(B_{r_{n}}(x_{0}))\geq c>0.
	\end{align*}
	To conclude the proof we employ Lemma \ref{conditionforinfiniteintegral} for the different integrands $f$ and obtain 
	the desired contradiction, namely
	$\F(X)=\int_{X}f\dd\HM^{\he}=\infty$.
\end{proof}

\begin{lemma}[(Condition for infinite integral)]\label{conditionforinfiniteintegral}
	Let $\V$ be a regular outer measure on $X$, $f:(X,\measurable(V))\to (\overline\R,\B(\overline\R))$, 
	$f\geq 0$ measurable and $X_{n+1}\subset X_{n}\subset X$, $X_{n}\in \measurable(\V)$ 
	for $n\in\N$, such that $\V(X_{n})\to 0$. If
	\begin{align*}
		\int_{X_{n}}f\dd\V\geq c>0\quad\text{ for all  }n\in\N
	\end{align*}
	then
	\begin{align*}
		\int_{X}f\dd\V=\infty.
	\end{align*}
\end{lemma}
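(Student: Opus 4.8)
The plan is to prove the contrapositive: assuming $\int_X f\,\mathrm{d}\V < \infty$, I will show that $\int_{X_n} f\,\mathrm{d}\V \to 0$, which contradicts the lower bound $\int_{X_n} f\,\mathrm{d}\V \geq c > 0$. The key observation is that the sets $X_n$ are nested and decreasing with $\V(X_n) \to 0$, so morally the "mass" of $f$ concentrated on $X_n$ should vanish — this is an absolute continuity statement for the finite measure $\mu(E) := \int_E f\,\mathrm{d}\V$ with respect to $\V$, or more directly a consequence of dominated/monotone convergence.

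Here are the steps. First, I would set $\mu(E) := \int_E f\,\mathrm{d}\V$ for $E \in \measurable(\V)$; since $f \geq 0$ is measurable this is a well-defined measure on $\measurable(\V)$, and by hypothesis $\mu(X) < \infty$. Next, consider $g_n := f\cdot \1_{X_n}$. Since $X_{n+1}\subset X_n$, the sequence $g_n$ is monotonically decreasing, and since $\V(X_n)\to 0$ one has $\V(\bigcap_n X_n) = 0$, hence $g_n \to 0$ $\V$-a.e. (off the $\V$-null set $\bigcap_n X_n$). Now $0 \leq g_n \leq f$ and $f \in L^1(\V)$, so the Dominated Convergence Theorem gives $\int_X g_n\,\mathrm{d}\V \to 0$, i.e. $\int_{X_n} f\,\mathrm{d}\V \to 0$. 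This contradicts $\int_{X_n} f\,\mathrm{d}\V \geq c > 0$, so the assumption $\int_X f\,\mathrm{d}\V < \infty$ is untenable and $\int_X f\,\mathrm{d}\V = \infty$.

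One should be slightly careful about two measure-theoretic points, though neither is a real obstacle. First, $\measurable(\V)$ is a $\sigma$-algebra on which $\V$ restricts to a genuine (complete) measure — this is standard for Carathéodory-measurable sets of an outer measure, and regularity of $\V$ is used to ensure everything is consistent, though for this particular argument completeness of $\measurable(\V)$ alone suffices to handle the a.e. convergence. Second, one needs $\V(\bigcap_n X_n) = 0$: since $X_1 \supset X_2 \supset \cdots$ and $\V(X_n)\to 0$, monotonicity of measures gives $\V(\bigcap_n X_n) \leq \inf_n \V(X_n) = 0$. With these in hand the DCT application is immediate.

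The main (minor) obstacle is simply making sure the convergence theorem is applied in the correct measure-theoretic setting: $f$ is only assumed $\measurable(\V)$-measurable (not Borel), so one works with $\V$ restricted to $\measurable(\V)$ throughout, and uses that $f \in L^1$ there. Alternatively, one could avoid DCT entirely and argue by continuity of the finite measure $\mu$ from above on the decreasing sequence $X_n$: $\mu(X_n) \to \mu(\bigcap_n X_n)$, and since $\V(\bigcap_n X_n) = 0$ and $\mu \ll \V$ (because $f$ is integrable), $\mu(\bigcap_n X_n) = 0$, giving the same contradiction. Either route is routine; I would present the continuity-from-above version as it uses only $\mu(X) < \infty$ and the elementary fact that a finite measure is continuous on decreasing sequences, matching the style of the surrounding lemmas which already invoke exactly this property of Hausdorff measure.
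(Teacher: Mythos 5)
Your proof is correct and follows essentially the same route as the paper: both argue by contrapositive, assume $\int_X f\,\mathrm{d}\V<\infty$, and use a standard convergence theorem to conclude $\int_{X_n}f\,\mathrm{d}\V\to 0$ (the paper applies monotone convergence to $f(1-\chi_{X_n})$, you apply dominated convergence to $f\chi_{X_n}$ — a trivial complementation). Your explicit treatment of the null set $\bigcap_n X_n$ is in fact slightly more careful than the paper's claim of pointwise convergence everywhere.
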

\begin{proof}
	We prove the contrapositive. Let $\int_{X}f\dd\V<\infty$. Then $f(1-\chi_{X_{n}})$ are measurable \cite[2.6 Proposition, p.45]{Folland1999a} and converge pointwise and monotonically increasing to $f$, 
	so that by the monotone convergence theorem, see for example
	\cite[1.3, Theorem 2, p.20]{Evans1992a} we have
	\begin{align*}
		\int_{X}f(1-\chi_{X_{n}})\dd\V=\int_{X}f\dd\V-\int_{X_{n}}f\dd\V\rightarrow \int_{X}f\dd\V
	\end{align*}
	and hence the proposition.
\end{proof}

We also need the following

\begin{lemma}[(Decomposition of triple integral)]\label{decompositionoftripleintegral}
	Let $\V$ be an outer measure on $X$ and $X_{i}\in\measurable(\V)$, $i\in\N$ with $\V(X_{i}\cap X_{j})=0$ for $i\not=j$ and 
	$X=\bigcup_{i\in\N}X_{i}$. 
	Let $f:X^{3}\to\overline\R$, $f\geq 0$ be such that for all $x,y,z\in X$ the mappings
	\begin{align*}
		x\mapsto f(x,y,z), \quad y\mapsto\int_{X}f(x,y,z)\dd\V(x)\quad \text{and}\quad z\mapsto\int_{X}\int_{X}f(x,y,z)\dd\V(x)\dd\V(y)
	\end{align*}
	are $\measurable(\V)$--$\B(\overline\R)$ measurable.
	Then
	\begin{align*}
		\int_{X}\int_{X}\int_{X}f(x,y,z)\dd\V(x)\dd\V(y)\dd\V(z)=\sum_{i,j,k\in\N}\int_{X_{k}}\int_{X_{j}}\int_{X_{i}}f(x,y,z)\dd\V(x)\dd\V(y)\dd\V(z).
	\end{align*}
\end{lemma}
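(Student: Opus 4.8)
The plan is to reduce the triple-integral identity to iterated applications of the countable additivity of the (inner, middle, outer) integrals over the decomposition $X=\bigcup_{i}X_i$, using the measurability hypotheses to ensure every intermediate function is integrable in the relevant variable. First I would record the one-variable fact underlying everything: if $g:X\to\overline\R$ is $\measurable(\V)$--$\B(\overline\R)$ measurable with $g\geq 0$, and $X=\bigcup_{i\in\N}X_i$ with $X_i\in\measurable(\V)$ and $\V(X_i\cap X_j)=0$ for $i\neq j$, then $\int_X g\,\dd\V=\sum_{i\in\N}\int_{X_i}g\,\dd\V$. This is standard: writing $Y_1\vcentcolon=X_1$ and $Y_i\vcentcolon=X_i\setminus\bigcup_{j<i}X_j$ gives a genuinely disjoint measurable decomposition with $\V(Y_i)=\V(X_i)$ and, more to the point, $\int_{X_i}g\,\dd\V=\int_{Y_i}g\,\dd\V$ since the two sets differ by a $\V$-null set and $g\geq 0$; then $\int_X g\,\dd\V=\sum_i\int_{Y_i}g\,\dd\V$ by countable additivity of the integral of a nonnegative measurable function over a countable disjoint union of measurable sets (monotone convergence applied to partial sums $\sum_{i\leq N}g\chi_{Y_i}\nearrow g$).

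Next I would apply this three times, peeling the integrals from the inside out. Fix $y,z\in X$. The map $x\mapsto f(x,y,z)$ is nonnegative and $\measurable(\V)$--$\B(\overline\R)$ measurable by hypothesis, so
\begin{align*}
	\int_X f(x,y,z)\,\dd\V(x)=\sum_{i\in\N}\int_{X_i}f(x,y,z)\,\dd\V(x).
\end{align*}
Now fix $z\in X$ and integrate in $y$. For each fixed $i$, the map $y\mapsto\int_{X_i}f(x,y,z)\,\dd\V(x)$ equals $y\mapsto\int_X f(x,y,z)\chi_{X_i}(x)\,\dd\V(x)$, which is $\measurable(\V)$--$\B(\overline\R)$ measurable: indeed $y\mapsto\int_X f(x,y,z)\,\dd\V(x)$ is measurable by assumption, and the same argument applied to the set $X_i$ in place of $X$ (noting $X_i=\bigcup_k(X_i\cap X_k)$ is itself such a decomposition, or simply invoking the truncated integrand $f\chi_{X_i}$ which inherits the measurability hypotheses since multiplication by $\chi_{X_i}$ preserves $\measurable(\V)$-measurability) shows $y\mapsto\int_{X_i}f(x,y,z)\,\dd\V(x)$ is measurable. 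Being also nonnegative, the one-variable fact gives
\begin{align*}
	\int_X\Big(\sum_{i\in\N}\int_{X_i}f(x,y,z)\,\dd\V(x)\Big)\dd\V(y)
	=\sum_{i\in\N}\sum_{j\in\N}\int_{X_j}\int_{X_i}f(x,y,z)\,\dd\V(x)\,\dd\V(y),
\end{align*}
where the sum $\sum_i$ was pulled out of the $y$-integral by the monotone convergence theorem (partial sums increase) and then each summand was expanded in $y$ by the one-variable fact. Finally the same step in the $z$-variable, using that $z\mapsto\int_{X_j}\int_{X_i}f(x,y,z)\,\dd\V(x)\,\dd\V(y)$ is nonnegative and measurable (again by truncating the hypothesised measurable map $z\mapsto\int_X\int_X f\,\dd\V(x)\,\dd\V(y)$ to $X_i,X_j$), yields
\begin{align*}
	\int_X\int_X\int_X f(x,y,z)\,\dd\V(x)\,\dd\V(y)\,\dd\V(z)
	=\sum_{i,j,k\in\N}\int_{X_k}\int_{X_j}\int_{X_i}f(x,y,z)\,\dd\V(x)\,\dd\V(y)\,\dd\V(z),
\end{align*}
since a double series of nonnegative terms may be summed in any arrangement (Tonelli for counting measure), which is the claim.

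The only genuinely delicate point — and the one I would be careful to spell out rather than wave through — is the propagation of measurability at each stage: one must know that after restricting the inner integration to $X_i$ (equivalently, replacing $f$ by $f\chi_{X_i\times X\times X}$), the resulting partially-integrated function is still measurable in the next variable. This is why the hypothesis is phrased for all the iterated integrands and not just for $f$ itself; the restriction $\chi_{X_i}$ is $\measurable(\V)$-measurable because $X_i\in\measurable(\V)$, a product of measurable functions is measurable, and $X_i$-truncated versions of the three hypothesised maps are again of the required form, so no new hypothesis is needed. Everything else is monotone convergence plus the unsigned Tonelli principle for rearranging sums, so once the measurability bookkeeping is in place the proof is essentially three iterations of the one-variable decomposition lemma.
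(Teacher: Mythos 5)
Your proof is correct and follows essentially the same route as the paper's: establish the one-variable identity $\int_X g\,\dd\V=\sum_{i\in\N}\int_{X_i}g\,\dd\V$ for nonnegative measurable $g$ via monotone convergence (the paper works directly with the partial sums $\sum_{i\le n}g\cdot\chi_{X_i}\nearrow g$ rather than disjointifying first, but this is the same idea) and then iterate it through the three variables. The paper compresses the iteration into the phrase ``repeated application of the monotone convergence theorem'', so your explicit bookkeeping of the measurability of the truncated intermediate integrands is the only substantive addition, and it makes explicit a point the paper leaves implicit.
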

\begin{proof}
	This is a repeated application of the monotone convergence theorem. If $g:X\to\overline \R$, $g\geq 0$ is $\measurable(\V)$--$\B(\overline \R)$ 
	measurable, then
	so are $g_{n}\vcentcolon=\sum_{i=1}^{n}g\cdot\chi_{X_{i}}$ and $g_{n}\to g$ monotonically. Hence the monotone convergence theorem gives us
	\begin{align*}
		\sum_{i\in \N}\int_{X_{i}}g\dd\V=\lim_{n\to\infty}\int_{X}g_{n}\dd\V=\int_{X}g\dd\V.
	\end{align*}
\end{proof}

Just after the first version of this paper had been written up Martin Meurer, who also did a higher dimensional version of this,
and the author could show the following lemma.
It offers us the opportunity to include sets with infinite measure in our
subsequent theorems.

\begin{lemma}[(Finite energy implies finite measure on all balls)]\label{finiteenergyfinitemeasureball}
	Let $\he\in[1,\infty)$, $p\in (0,\infty)$, $\F\in\{\mathcal{U}_{p}^{\he},\mathcal{I}_{p}^{\he},\mathcal{M}_{p}^{\he}\}$ and $X\subset \R^{n}$ be a set with $\mathcal{F}(X)<\infty$.
	Then for all $x\in \R^{n}$ and all $R>0$ we have $\HM^{\he}(X\cap \overline B_{R}(x))<\infty$.
\end{lemma}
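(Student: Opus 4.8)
The plan is to argue by contradiction, exploiting Lemma~\ref{nicebehaviourlimitofsmallballs} and a covering/scaling argument. Suppose there were some $x_{0}\in\R^{n}$ and $R>0$ with $\HM^{\he}(X\cap\overline B_{R}(x_{0}))=\infty$. The strategy is to show that the energy $\F$ restricted to a small ball cannot be made arbitrarily small, contradicting Lemma~\ref{nicebehaviourlimitofsmallballs} (or, directly, contradicting $\F(X)<\infty$). The point where infinite measure becomes useful is that on a bounded region with infinite $\HM^{\he}$ measure one can find, for \emph{every} scale $r>0$, a ball $B_{r}(y)$ with $y\in X$ whose relative measure $\HM^{\he}(X\cap\overline B_{r}(y))$ is large compared with $(2r)^{\he}$ — indeed it must be infinite or at least unbounded in the ratio, since a set of finite $\HM^{\he}$ density everywhere on a bounded set has finite total measure there. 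Concentrating mass at a fixed scale forces many triples (resp.\ pairs, resp.\ points) of comparable mutual distances and comparably small circumradius, hence a definite lower bound on the energy carried by that ball.

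Concretely, first I would reduce to the energy $\M_{p}^{\he}$: by Lemma~\ref{inequalityforcurvatureenergies} (used when $\HM^{\he}$ is finite on the relevant ball) and the successive structure of the three energies, it suffices to treat the weakest one, and for $\mathcal{U}_{p}^{\he}$ and $\mathcal{I}_{p}^{\he}$ one argues with the corresponding single- or double-integral integrand in the same manner. Second, I would fix the ball $\overline B_{R}(x_{0})$ and use that $X\cap\overline B_{R}(x_{0})$ has infinite $\HM^{\he}$ measure to locate, via the density characterization of Hausdorff measure (the "$\Theta^{*\he}=\infty$ somewhere" phenomenon, or simply a Vitali-type covering count), a point $y\in X$ and a radius $r$ with $r$ as small as we like and $\HM^{\he}(X\cap\overline B_{r}(y))\geq c\,r^{\he}$ for a constant $c$ we can take arbitrarily large — here the hypothesis $\he\geq 1$ enters to control how mass at scale $r$ distributes, and to ensure the triangles we build are genuinely non-degenerate with circumradius $\lesssim r$. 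Third, inside such a ball I would estimate
\begin{align*}
	\M_{p}^{\he}(B_{r}(y))\geq \int\!\!\int\!\!\int_{A}\kappa^{p}(x',y',z')\dHM(x')\dHM(y')\dHM(z'),
\end{align*}
where $A$ is the set of triples in $X\cap\overline B_{r}(y)$ with pairwise distances in, say, $[r/4,r/2]$; on $A$ one has $\kappa\geq c'/r$, and a packing argument shows $A$ has $(\HM^{\he})^{3}$-measure at least a fixed fraction of $(\HM^{\he}(X\cap\overline B_{r}(y)))^{3}\geq c^{3}r^{3\he}$, so $\M_{p}^{\he}(B_{r}(y))\geq c''\,c^{3}\,r^{3\he-p}$; choosing the density constant $c$ large (possible since the measure is infinite, not merely positive) forces this lower bound to exceed any prescribed threshold, contradicting $\lim_{r\downarrow0}\M_{p}^{\he}(B_{r}(y))=0$ from Lemma~\ref{nicebehaviourlimitofsmallballs} once we also note $\M_{p}^{\he}(B_{\rho}(y))$ is finite for small $\rho$ only if the measure near $y$ is finite.

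The main obstacle is the second step: converting "infinite $\HM^{\he}$ measure on a bounded ball" into the quantitative statement that there are balls at arbitrarily small scales carrying arbitrarily high normalized mass. One must be careful, because a set can have infinite $\HM^{\he}$ measure while having zero $\he$-dimensional density at every point (e.g.\ if its true dimension exceeds $\he$); the resolution is that in that regime the circumradii are even smaller, so the energy blows up all the more — but making this dichotomy clean, uniformly in the scale, and packaging it so that the final contradiction with Lemma~\ref{nicebehaviourlimitofsmallballs} is airtight, is the delicate part. A convenient way to organize it is to first dispose of the case $\Theta^{*\he}(\HM^{\he},X,x)=\infty$ for some $x\in\overline B_{R}(x_{0})$ directly by the packing estimate above, and then observe that if the upper density were finite at every point of the compact set $\overline B_{R}(x_{0})$, a standard covering lemma (as in the density estimates of Section~\ref{sectiondensityestimates}) would already force $\HM^{\he}(X\cap\overline B_{R}(x_{0}))<\infty$, contradicting our assumption.
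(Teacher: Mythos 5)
Your overall strategy (contradiction, plus a lower bound on the energy extracted from the infinite mass) is the right one, and you correctly identify degeneracy as the central difficulty, but the concrete mechanism you propose does not work. The claim that on the set $A$ of triples in $X\cap\overline B_{r}(y)$ with pairwise distances in $[r/4,r/2]$ one has $\kappa\geq c'/r$ is false: three such points can be collinear or nearly collinear (e.g.\ $0$, $r/4$, $r/2$ on a line), in which case $\kappa$ is zero or arbitrarily small. Worse, the ``packing argument'' that $A$ carries a fixed fraction of $(\HM^{\he}(X\cap\overline B_{r}(y)))^{3}$ \emph{with the integrand bounded below} is exactly the non-degeneracy statement that needs proof: a set squeezed into a thin tube of width $\delta$ around a line can have enormous (even infinite) $\HM^{\he}$ measure while every triple at scale $r$ has $\kappa\leq C\delta/r^{2}$, so no bound of the form $\kappa\geq c'/r$ on a definite fraction of triples is available. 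Your closing dichotomy (infinite upper density somewhere, versus finite upper density everywhere forcing finite measure) is where all the work lies and is not a standard covering lemma in the generality you need; moreover your appeal to Lemma \ref{nicebehaviourlimitofsmallballs} is circular, since that lemma assumes $\HM^{\he}(X)<\infty$, which is precisely what is in question. Also, the proposed reduction ``it suffices to treat $\M_{p}^{\he}$'' via Lemma \ref{inequalityforcurvatureenergies} goes the wrong way: that chain of inequalities bounds $\M_{p}^{\he}$ from \emph{above} by the other energies and itself presupposes finite measure.

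The paper's proof resolves the degeneracy issue differently and more cheaply. By a covering and compactness argument it first produces a single point $x_{0}\in\overline B_{R}(x)$ with $\HM^{\he}(X\cap B_{r}(x_{0}))=\infty$ for \emph{every} $r>0$; no quantitative statement about arbitrarily large normalized mass at arbitrarily small scales is needed. It then fixes an annulus $A=B_{\rho}(x_{0})\backslash B_{r}(x_{0})$ carrying mass at least $3\rho$ and uses the hypothesis $\he\geq 1$ in a precise way: a segment $L=x_{0}+[-\rho,\rho]s$ satisfies $\HM^{\he}(X\cap A\cap L)\leq 2\rho<3\rho$, so the mass in the annulus cannot all concentrate along one direction, and one obtains two subsets $C$ and $C'$ of positive measure separated by a definite angle at $x_{0}$. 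For $x\in C$ and $y\in C'$ the line $L_{x,y}$ then stays a fixed distance from $x_{0}$, so the integrand is bounded below by a positive \emph{constant} on $C\times C'\times B_{\sin(\epsilon)r/4}(x_{0})$; since the last factor has infinite measure, $\M_{p}^{\he}(X)=\infty$ in one stroke, with no limit as $r\downarrow 0$ and no use of Lemma \ref{nicebehaviourlimitofsmallballs}. If you want to salvage your approach, you must replace the ``comparable pairwise distances'' set by an angularly separated configuration of this kind; that is the missing idea.
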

\begin{proof}
	We argue by contradiction and therefore assume that this is not the case.\\
	\textbf{Step 1}
		We show that there is an $x_{0}\in \overline B_{R}(x)$ with
		\begin{align}\label{infinitemeasureonsmallballs}
			\HM^{\he}(X\cap B_{r}(x_{0}))=\infty\quad\text{for all }r>0.
		\end{align}
		According to our assumption there exists $x\in\R^{n}$ and $R>0$, such that $\HM^{\he}(X\cap B_{R}(x))=\infty$. By a
		covering argument we know that for any $n\in\N$ there is an $x_{n}\in B_{R}(x)$, such that $\HM^{\he}(X\cap B_{1/n}(x_{n}))=\infty$. 
		As $\overline B_{R}(x)$ is compact, there is a subsequence, such that $x_{n_{k}}\to x_{0}\in \overline B_{R}(x)$. 
		Then $\HM^{\he}(X\cap B_{r}(x_{0}))=\infty$ for all $r>0$, because 
		\begin{align*}
			\sup_{y\in B_{1/n_{k}}(x_{n_{k}})}d(x_{0},y)\leq d(x_{0},x_{n_{k}})+\frac{1}{n_{k}}\to 0.
		\end{align*}
	\textbf{Step 2}
		For $\rho>0$ we can find $r=r(\rho)$, $A\vcentcolon=[B_{\rho}(x_{0})\backslash B_{r}(x_{0})]$ such that
		$\HM^{\he}(X\cap A )\geq 3\rho$, because $B_{\rho}(x_{0})\backslash B_{r}(x_{0})\in\measurable(\HM^{\he}_{X})$ and by Lemma
		\ref{subspacehausdorffmeasure} and the continuity of measures on increasing sets \cite[Theorem 1.1, (a), p.2]{Falconer1985a} we have
		\begin{align*}
			\MoveEqLeft
			\HM^{\he}(X\cap B_{\rho}(x_{0}))=\HM^{\he}_{X}(B_{\rho}(x_{0}))=\HM^{\he}_{X}(B_{\rho}(x_{0})\backslash\{x_{0}\})\\
			&=\HM^{\he}_{X}(\bigcup_{n\in\N}B_{\rho}(x_{0})\backslash B_{1/n}(x_{0}))
			=\lim_{n\to\infty}\HM^{\he}_{X}(B_{\rho}(x_{0})\backslash B_{1/n}(x_{0}))=\infty. 
		\end{align*}
		Then there exists a direction $s\in\mathbb{S}^{n-1}$ and an $\epsilon>0$, such that
		\begin{align}\label{bothmeasurespositive}
			\HM^{\he}(X\cap A\cap C_{s,\epsilon}(x_{0}))>0\quad\text{and}\quad\HM^{\he}([X\cap A]\backslash C_{s,2\epsilon}(x_{0}))>0,
		\end{align}
		because, by a covering and compactness argument similar to that of Step 1, there is a direction $s$,
		such that for all $\epsilon>0$ we have $\HM^{\he}(X\cap A\cap C_{s,\epsilon}(x_{0}))>0$.
		If we assume that $\HM^{\he}([X\cap A]\backslash C_{s,2\epsilon}(x_{0}))=0$ for all $\epsilon>0$, we obtain a contradiction for
		$N_{n}\vcentcolon=[X\cap A]\backslash C_{s,1/n}(x_{0})$ as
		\begin{align*} 
			\HM^{\he}([X\cap A]\backslash L)=\HM^{\he}(\bigcup_{n\in\N}N_{n})\leq\sum_{n\in\N}\HM^{\he}(N_{n})=0
		\end{align*}	
		by
		\begin{align*}
			3\rho\leq \HM^{\he}(X\cap A)=\HM^{\he}([X\cap A]\backslash L)+\HM^{\he}(X\cap A\cap L)=\HM^{\he}(X\cap A\cap L)\leq 2\rho,
		\end{align*}
		where $L=x_{0}+[-\rho,\rho]s$. For the last inequality we needed $\he\in [1,\infty)$.\\
	\textbf{Step 3}
		Denote $C\vcentcolon=X\cap A\cap C_{s,\epsilon}(x_{0})$ and $C'\vcentcolon=[X\cap A]\backslash C_{s,2\epsilon}(x_{0})$
		the sets from (\ref{bothmeasurespositive}). By Lemma \ref{distanceintermsofangle} we have
		$\dist(L_{x,y},x_{0})\geq \sin(\epsilon)r/2$ for all $x\in C$ and all $y\in C'$, so that for all $z\in B_{\sin(\epsilon)r/4}(x_{0})$ we have
		\begin{align*}
			\dist(L_{x,y},z)\geq \dist(L_{x,y},x_{0})-d(z,x_{0})\geq \sin(\epsilon)r/4
		\end{align*}
		and hence by (\ref{infinitemeasureonsmallballs})
		\begin{align*}
			\MoveEqLeft
			\M_{p}^{\he}(X)
			\geq \int_{C}\int_{C'}\int_{B_{\sin(\epsilon)r/4}(x_{0})}\frac{[\sin(\epsilon)r/4]^{p}}{r^{2p}}\dHM(z)\dHM(y)\dHM(x)\\
			&\geq \HM^{\he}(C)\HM^{\he}(C')\HM^{\he}(B_{\sin(\epsilon)r/4}(x_{0}))\frac{[\sin(\epsilon)r/4]^{p}}{r^{2p}}
			\stackrel{\text{(\ref{infinitemeasureonsmallballs})}}{=}\infty.
		\end{align*}
		With a similar argument for the other energies we have proven the proposition.
\end{proof}

\begin{corollary}[(Finite energy implies that $\HM^{\he}$ is a Radon measure)]
	Let $\he\in [1,\infty)$, $p\in (0,\infty)$, $\F\in\{\mathcal{U}_{p}^{\he},\mathcal{I}_{p}^{\he},\mathcal{M}_{p}^{\he}\}$ and $X\subset \R^{n}$ be a set with $\mathcal{F}(X)<\infty$.
	Then $\HM^{\he}_{X}$ is a Radon measure.
\end{corollary}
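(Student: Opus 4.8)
The plan is to read the corollary off Lemma \ref{finiteenergyfinitemeasureball} together with the elementary structure theory of Hausdorff measures. Recall that a Radon measure on a metric space is a Borel regular outer measure that is locally finite, in the sense that bounded sets (in particular compact sets) have finite measure; cf.\ \cite{Mattila1995a}. The measure in question, $\HM^{\he}_{X}$, is the $\he$-dimensional Hausdorff measure of the metric space $(X,d)$ equipped with the metric induced from $\R^{n}$ --- this is the measure with respect to which all of $\mathcal{U}_{p}^{\he}$, $\mathcal{I}_{p}^{\he}$, $\mathcal{M}_{p}^{\he}$ were set up in Section \ref{sectioncurvenergies}. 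As a Carath{\'e}odory construction it is a metric outer measure, hence every Borel subset of $X$ is $\HM^{\he}_{X}$ measurable and $\HM^{\he}_{X}$ is Borel regular; note that this needs no measurability hypothesis on $X$. So the only thing left to verify is local finiteness.

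For that I would simply fix $x\in\R^{n}$ and $R>0$ and note, using Lemma \ref{subspacehausdorffmeasure}, that $\HM^{\he}_{X}(\overline B_{R}(x)\cap X)=\HM^{\he}(\overline B_{R}(x)\cap X)$, which is finite by Lemma \ref{finiteenergyfinitemeasureball} because $\he\in[1,\infty)$ and $\F(X)<\infty$. Since every bounded subset of $X$ --- and every compact subset, and a neighbourhood of every point of $X$ --- is contained in some $\overline B_{R}(x)\cap X$, it follows that $\HM^{\he}_{X}$ is finite on bounded subsets of $X$, and in particular on compact subsets. Together with Borel regularity this says precisely that $\HM^{\he}_{X}$ is a Radon measure, which completes the argument.

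In all honesty there is no serious obstacle here; this is a short corollary rather than a new proof. The one point that deserves attention is the bookkeeping between the subspace measure $\HM^{\he}_{X}$ on $(X,d|_{X})$ and the ambient $\HM^{\he}$ on $\R^{n}$: working on the metric space $X$ is what makes Borel regularity automatic even for non-$\HM^{\he}$-measurable $X$, and Lemma \ref{subspacehausdorffmeasure} is exactly the tool that lets the finiteness bound of Lemma \ref{finiteenergyfinitemeasureball} be transported to $\HM^{\he}_{X}$.
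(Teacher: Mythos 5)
Your proposal is correct and follows exactly the paper's route: the paper proves this corollary as a one-line "direct consequence of Lemma \ref{finiteenergyfinitemeasureball}", and your write-up simply fills in the standard details (Borel regularity of the Hausdorff measure as a metric outer measure, plus local finiteness from the finiteness on balls, transported via Lemma \ref{subspacehausdorffmeasure}). Nothing to add.
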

\begin{proof}
	This is a direct consequence of Lemma \ref{finiteenergyfinitemeasureball}.
\end{proof}

\begin{lemma}[(Consequences of finite energy for $\he\in (0,1)$)]\label{consequencesfiniteenergyhe<1}
	Let $\he\in(0,1)$, $p\in (0,\infty)$, $\F\in\{\mathcal{U}_{p}^{\he},\mathcal{I}_{p}^{\he},\mathcal{M}_{p}^{\he}\}$ and $X\subset \R^{n}$ be a set with $\mathcal{F}(X)<\infty$.
	For all $x_{0}\in X$ we have one of the following propositions
	\begin{itemize}
		\item
			there is an $\rho>0$, such that $\HM^{\he}(X\cap \overline B_{r}(x_{0}))<\infty$ for all $r\in (0,\rho)$, or
		\item
			there is a direction $s\in\mathbb{S}^{n-1}$, such that $\Theta^{\he}(\HM^{\he},X\backslash C_{s,\epsilon}(x_{0}),x_{0})=0$ for all $\epsilon>0$.
	\end{itemize}
\end{lemma}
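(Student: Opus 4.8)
The plan is to argue by contradiction: suppose that for some $x_0\in X$ neither alternative holds. Failure of the first alternative means that $\HM^{\he}(X\cap\overline B_r(x_0))=\infty$ for all $r>0$ (if it were finite for some $r$, it would stay finite for all smaller $r$ by monotonicity, so the negation really does give infinite measure on every ball). Failure of the second means that for \emph{every} direction $s\in\Sphere^{n-1}$ there is some $\epsilon=\epsilon(s)>0$ with $\Theta^{\he}(\HM^{\he},X\backslash C_{s,\epsilon}(x_0),x_0)\neq 0$, i.e.\ the upper density of the ``complement of the cone'' part is positive. My goal is to extract from these two facts a pair of subsets $C,C'\subset X$ accumulating at $x_0$, separated by a definite angle, each of positive $\HM^{\he}$ measure in arbitrarily small annuli, together with a third set of positive measure very close to $x_0$ — and then run the circumradius estimate exactly as in Step 3 of Lemma \ref{finiteenergyfinitemeasureball} to force $\F(X)=\infty$.

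\textbf{Producing the separated sets.} This is essentially a rerun of Steps 2 and 3 of Lemma \ref{finiteenergyfinitemeasureball}, but one has to be careful because $\he<1$: in that lemma Step 2 used $\he\in[1,\infty)$ precisely to bound $\HM^{\he}(X\cap A\cap L)\leq 2\rho$ for a segment $L$, and that bound fails for $\he<1$. The point of the present lemma is exactly that this is the step that can go wrong, and when it goes wrong it gives the second alternative. So, fixing a small annulus $A=\overline B_\rho(x_0)\setminus B_r(x_0)$ on which $X$ has infinite (hence in particular large) $\HM^{\he}$ measure, a covering/compactness argument as in Step 1 of that lemma produces a direction $s$ with $\HM^{\he}(X\cap A\cap C_{s,\epsilon}(x_0))>0$ for all $\epsilon>0$. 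By the failure of the second alternative applied to this $s$, there is $\epsilon_0>0$ with $\Theta^{\he}(\HM^{\he},X\backslash C_{s,\epsilon_0}(x_0),x_0)>0$, which forces $\HM^{\he}([X\cap A']\setminus C_{s,\epsilon_0}(x_0))>0$ for annuli $A'$ of arbitrarily small outer radius. After shrinking $\epsilon_0$ and relabelling, I get $C:=X\cap A\cap C_{s,\epsilon}(x_0)$ and $C':=[X\cap A]\setminus C_{s,2\epsilon}(x_0)$ both of positive measure inside the same small annulus $A$, cf.\ \eqref{bothmeasurespositive}.

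\textbf{Closing with the circumradius bound.} With $C,C'$ in hand and $\HM^{\he}(X\cap B_{\sin(\epsilon)r/4}(x_0))=\infty$ from the infinite-measure assumption, Lemma \ref{distanceintermsofangle} gives $\dist(L_{x,y},x_0)\geq\sin(\epsilon)r/2$ for $x\in C$, $y\in C'$, hence $\dist(L_{x,y},z)\geq\sin(\epsilon)r/4$ for $z\in B_{\sin(\epsilon)r/4}(x_0)$, and the formula $r(x,y,z)=\abs{x-z}\abs{y-z}/(2\dist(z,L_{x,y}))$ from Remark \ref{differentformulaskappa} together with $\abs{x-z},\abs{y-z}\leq 2\rho$ bounds $\kappa(x,y,z)$ from below by a positive constant on $C\times C'\times B_{\sin(\epsilon)r/4}(x_0)$. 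Integrating gives $\M_p^{\he}(X)\geq \HM^{\he}(C)\,\HM^{\he}(C')\,\HM^{\he}(X\cap B_{\sin(\epsilon)r/4}(x_0))\cdot\text{const}=\infty$, and the analogous lower bounds (bounding $\kappa_i$, resp.\ $\kappa_G$, below by the same constant on the relevant product) handle $\I_p^{\he}$ and $\U_p^{\he}$; in each case we contradict $\F(X)<\infty$.

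\textbf{Main obstacle.} The delicate point is the bookkeeping between the annulus $A$ used to house $C$ and $C'$ and the smaller ball $B_{\sin(\epsilon)r/4}(x_0)$ used to house the third point, making sure one can carry this out simultaneously for a single direction $s$ and a single $\epsilon$; concretely, the angle $\epsilon$ and radius $r$ are coupled, and one must choose the annulus $A=\overline B_\rho(x_0)\setminus B_r(x_0)$ \emph{after} the direction $s$ and angle $\epsilon_0$ have been fixed, using the infinite-measure property to guarantee positive measure of $C$ and $C'$ in that specific annulus. Compared with Lemma \ref{finiteenergyfinitemeasureball} the new subtlety is that we cannot rule out the ``segment-like'' alternative, so the proof must branch on it rather than suppress it — but once the branch is taken, the estimate is verbatim the earlier one and requires no $\he\geq 1$ hypothesis.
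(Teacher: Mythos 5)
Your proposal is correct and takes essentially the same route as the paper: negate both alternatives, use the covering/compactness argument to fix a direction $s$ whose cones carry positive measure in an annulus, use the negated second alternative to produce a positive-measure set off the cone $C_{s,\epsilon_{0}}(x_{0})$ (pushed into an annulus by continuity from below), and feed these together with the infinite-measure small ball into the circumradius estimate of Step 3 of Lemma \ref{finiteenergyfinitemeasureball}. The only structural difference is that the paper additionally splits into cases according to whether $\HM^{\he}(X\cap\overline B_{r_{n}}(x_{0})\cap L_{s_{0}})$ is finite, invoking the density hypothesis only in the infinite case, whereas you apply it uniformly --- a harmless streamlining; just make sure that for $\mathcal{I}_{p}^{\he}$ and $\mathcal{U}_{p}^{\he}$ the infinite-measure ball occurs as one of the integration variables (e.g.\ via $\kappa_{i}(x,z)\geq 1/r(x,z,y_{0})$ for a fixed witness $y_{0}\in C'$), since integrating $\kappa_{i}^{p}\geq c^{p}$ over $C\times C'$ alone only yields a finite lower bound.
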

\begin{proof}
	Assume that both alternatives are false, i.e. that 
	\begin{align}\label{measureinballssequenceisinfinite}
		\HM^{\he}(X\cap \overline B_{r_{n}}(x_{0}))=\infty\text{ for a sequence }r_{n}\downarrow 0
	\end{align}
	and for all $s\in \mathbb{S}^{n-1}$ there is $\epsilon_{s}>0$, such that 
	\begin{align}\label{densityonarbitrarylinepositive}
		\Theta^{*\he}(\HM^{\he},X\backslash C_{s,\epsilon_{s}}(x_{0}),x_{0})>0.
	\end{align}
	We show that then $\M_{p}^{\he}(X)=\infty$. For the other energies a similar argument can be applied.
	Denote $A_{r}\vcentcolon=\overline B_{\rho}(x_{0})\backslash B_{r}(x_{0})$ and $L_{s}=x_{0}+\R s$ for a direction $s\in\mathbb{S}^{n-1}$.
	Due to (\ref{measureinballssequenceisinfinite}) and an argument similar to that indicated in Step 2 of Lemma \ref{finiteenergyfinitemeasureball}, we can find a direction 
	$s_{0}\in \mathbb{S}^{n-1}$, such that $\HM^{\he}([X\cap A_{r}]\cap C_{s_{0},\epsilon}(x_{0}))>0$ for all $\epsilon>0$ and all $\rho>0$, if $r\in (0,\rho)$ is small enough.\\
	\textbf{Case 1}
		We first investigate the case that $\HM^{\he}(X\cap \overline B_{r_{n}}(x_{0})\cap L_{s_{0}})<\infty$ for $n\geq N$.
		Now we can argue analogously to Step 2 from Lemma \ref{finiteenergyfinitemeasureball} to obtain for all $\rho\in(0,r_{N})$
		a contradiction to $\HM^{\he}([X\cap A_{r}]\backslash C_{s_{0},2\epsilon}(x_{0}))=0$ for all $\epsilon>0$ and all $r\in (0,\rho)$ by
		\begin{align*}
			\infty\stackrel{\text{(\ref{measureinballssequenceisinfinite})}}{=}\lim_{r\downarrow 0}\HM^{\he}(X\cap A_{r})=\HM^{\he}(X\cap \overline B_{\rho}(x_{0})\cap L_{s_{0}})
			\leq \HM^{\he}(X\cap \overline B_{r_{N}}(x_{0})\cap L_{s_{0}})<\infty.
		\end{align*}
		Therefore we have shown the analogous result to Step 2 from Lemma \ref{finiteenergyfinitemeasureball} and can
		use Step 3 of this lemma to obtain $\M_{p}^{\he}(X)=\infty$.\\
	\textbf{Case 2}
		It is left to deal with the case that there is a subsequence, such that $\HM^{\he}(X\cap \overline B_{r_{n_{k}}}(x_{0})\cap L_{s_{0}})=\infty$ for $k\in\N$.
		Now we can use (\ref{densityonarbitrarylinepositive}) to obtain $\HM^{\he}(X\cap B_{r_{n_{k}}}(x_{0})\backslash C_{s_{0},\epsilon_{s_{0}}}(x_{0}))>0$. Then we argue again as in
		Step 3 of Lemma \ref{finiteenergyfinitemeasureball}, using (\ref{measureinballssequenceisinfinite}), to obtain $\M_{p}^{\he}(X)=\infty$.
\end{proof}

\section{Hausdorff density and lower estimates of annuli}\label{sectiondensityestimates}

In this section we remind the reader of the definition of Hausdorff density, introduce a slightly wider notion for set valued mappings and prove some properties of these densities.
More importantly we estimate the Hausdorff measure of annuli from below under the assumption that the densities fulfill certain conditions.

\begin{definition}[(Hausdorff density for set-valued mappings)]
	Let $(X,d)$ be a metric space, $x\in X$, $\he\in (0,\infty)$ and $A:(0,\rho)\to \textrm{Pot}(X)$. Then
	\begin{align*}
		\Theta_{*}^{\he}(\HM^{\he},A(r),x)&\vcentcolon=\liminf_{r\downarrow 0}\frac{\HM^{\he}(A(r)\cap \overline B_{r}(x))}{(2r)^{\he}},\\
		\Theta^{*\he}(\HM^{\he},A(r),x)&\vcentcolon=\limsup_{r\downarrow 0}\frac{\HM^{\he}(A(r)\cap \overline B_{r}(x))}{(2r)^{\he}}
	\end{align*}
	are called the \emph{lower and upper $\he$-dimensional Hausdorff density of $A$ in $x$}.
	If upper and lower density coincide we call their common value \emph{Hausdorff density}
	and denote it by $\Theta^{\he}(\HM^{\he},A(r),x)$. Here
	\begin{align*}
		\overline B_{r}(x)\vcentcolon=\{y\in X\mid d(x,y)\leq r\}
	\end{align*}
	is the closed ball of radius $r$ about $x$. If $A(r)\equiv A$ is constant we will usually identify the mapping with the constant and neglect the argument.
\end{definition}

\begin{remark}[(Warning: closure of ball $\mathrm{cl}(B_{r}(x))$ may not equal closed ball $\overline B_{r}(x)$)]
	In normed vector spaces the notion of closed balls and the closure of balls coincides. However, in metric spaces this may not be the case, 
	as can be quickly seen
	by looking at $B_{1}(0)=\{0\}$, $\mathrm{cl}(B_{1}(0))=\{0\}$ and $\overline B_{1}(0)=\R$ in $(\R,d)$, where $d$ is the discrete metric.
\end{remark}

\begin{lemma}[(Implications of positive lower density)]\label{implicationspositivelowerdensity}
	Let $(X,d)$ be a metric space, $x\in X$, $A:(0,\varrho)\to\textrm{Pot}(X)$, $\he\in (0,\infty)$ and $\vartheta_{*}\vcentcolon=\Theta_{*}^{\he}(\HM^{\he},A(r),x)>0$. 
	Then for all $\theta\in (0,2^{\he}\vartheta_{*})$ there is $\rho>0$, such that for all $r\in (0,\rho)$ we have
	\begin{align*}
		\theta r^{\he}\leq \HM^{\he}(A(r)\cap \overline B_{r}(x)).
	\end{align*}
\end{lemma}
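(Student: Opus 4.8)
The plan is to read off the claim directly from the definition of the lower density as a $\liminf$; the only real work is bookkeeping with the factor $2^{\he}$.

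First I would recall that
\begin{align*}
	\vartheta_{*}=\liminf_{r\downarrow 0}\frac{\HM^{\he}(A(r)\cap \overline B_{r}(x))}{(2r)^{\he}}=\sup_{\sigma\in(0,\varrho)}\ \inf_{r\in(0,\sigma)}\frac{\HM^{\he}(A(r)\cap \overline B_{r}(x))}{(2r)^{\he}},
\end{align*}
and given $\theta\in(0,2^{\he}\vartheta_{*})$ I would put $\epsilon\vcentcolon=\vartheta_{*}-\theta/2^{\he}$, which is positive precisely because $\theta<2^{\he}\vartheta_{*}$. By the defining property of the supremum there is some $\rho\in(0,\varrho)$ with
\begin{align*}
	\inf_{r\in(0,\rho)}\frac{\HM^{\he}(A(r)\cap \overline B_{r}(x))}{(2r)^{\he}}>\vartheta_{*}-\epsilon=\frac{\theta}{2^{\he}},
\end{align*}
for otherwise the inner infimum would be at most $\vartheta_{*}-\epsilon$ for every $\sigma\in(0,\varrho)$, contradicting that the supremum equals $\vartheta_{*}$. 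Multiplying through by $(2r)^{\he}=2^{\he}r^{\he}$ then yields $\HM^{\he}(A(r)\cap \overline B_{r}(x))>\theta r^{\he}$ for all $r\in(0,\rho)$, which is the assertion (in fact with strict inequality).

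I do not expect a genuine obstacle here. The two points to keep track of are that $\epsilon$ must be chosen so that $2^{\he}(\vartheta_{*}-\epsilon)\geq\theta$ — the choice $\epsilon=\vartheta_{*}-\theta/2^{\he}$ achieves this with equality — and that $\rho$ is taken below $\varrho$, so that $A(r)$ is defined for all $r\in(0,\rho)$. A perhaps cleaner alternative is to argue by contraposition: if no such $\rho$ existed, one could extract a sequence $r_{n}\downarrow 0$ with $\HM^{\he}(A(r_{n})\cap \overline B_{r_{n}}(x))<\theta r_{n}^{\he}$, hence $\HM^{\he}(A(r_{n})\cap \overline B_{r_{n}}(x))/(2r_{n})^{\he}<\theta/2^{\he}<\vartheta_{*}$ for every $n$, which is incompatible with $\vartheta_{*}$ being the $\liminf$ of this quantity as $r\downarrow 0$.
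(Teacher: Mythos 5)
Your proof is correct and is essentially the paper's argument: the paper proves the lemma by the very contraposition you sketch at the end (extracting a sequence $r_n\downarrow 0$ violating the bound and contradicting the definition of the $\liminf$), and your direct version via the $\sup$-$\inf$ characterization is just the same observation unwound. The only cosmetic caveat is that if $\vartheta_*=\infty$ your choice $\epsilon=\vartheta_*-\theta/2^{\he}$ is not a real number, but the argument goes through verbatim by picking $\rho$ with $\inf_{r\in(0,\rho)}$ exceeding $\theta/2^{\he}$ directly.
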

\begin{proof}
	Fix $\theta\in (0,2^{\he}\vartheta_{*})$ and assume that the proposition if false. Then for all $\rho>0$ there is $r_{\rho}\in (0,\rho)$, such that
	\begin{align}\label{conditionzerolowerdensity}
		\HM^{\he}(A(r_{\rho})\cap \overline B_{r_{\rho}}(x))<\theta r_{\rho}^{\he}.
	\end{align}
	Choose $\rho_{n}=n^{-1}$ and obtain a sequence $r_{n^{-1}}$, such that $r_{n^{-1}}\to 0$ and (\ref{conditionzerolowerdensity}), but this means that
	$\vartheta_{*}=\Theta_{*}^{\he}(\HM^{\he},A(r),x)\leq \theta/2^{\he}$, which contradicts $\theta\in (0,2^{\he}\vartheta_{*})$.
\end{proof}

\begin{lemma}[(Implications of finite upper density)]\label{implicationsfiniteupperdensity}
	Let $(X,d)$ be a metric space, $x\in X$, $A:(0,\varrho)\to\textrm{Pot}(X)$, $\he\in (0,\infty)$ and $\vartheta^{*}\vcentcolon=\Theta^{*\he}(\HM^{\he},A(r),x)<\infty$. 
	Then for all $\theta\in (2^{\he}\vartheta^{*},\infty)$, there is $\rho>0$, such that for all $r\in (0,\rho)$ we have
	\begin{align*}
		 \HM^{\he}(A(r)\cap \overline B_{r}(x))\leq \theta r^{\he}.
	\end{align*}
\end{lemma}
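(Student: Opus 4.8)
The plan is to mirror the proof of Lemma \ref{implicationspositivelowerdensity} almost verbatim, flipping all inequalities, since the statement is the exact dual: positive lower density forced a lower bound $\theta r^{\he}\le\HM^{\he}(A(r)\cap\overline B_r(x))$ for $\theta$ strictly below the critical constant $2^{\he}\vartheta_*$, while finite upper density should force an upper bound $\HM^{\he}(A(r)\cap\overline B_r(x))\le\theta r^{\he}$ for $\theta$ strictly above the critical constant $2^{\he}\vartheta^*$. So I would argue by contradiction: fix $\theta\in(2^{\he}\vartheta^*,\infty)$ and suppose the claim fails, so that for every $\rho>0$ there exists $r_\rho\in(0,\rho)$ with $\HM^{\he}(A(r_\rho)\cap\overline B_{r_\rho}(x))>\theta r_\rho^{\he}$.

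Then I would take $\rho_n=n^{-1}$ to extract a sequence $r_n:=r_{n^{-1}}$ with $r_n\to 0$ and $\HM^{\he}(A(r_n)\cap\overline B_{r_n}(x))>\theta r_n^{\he}$ for every $n$. Dividing by $(2r_n)^{\he}$ gives $\HM^{\he}(A(r_n)\cap\overline B_{r_n}(x))/(2r_n)^{\he}>\theta/2^{\he}$ for all $n$, and passing to the $\limsup$ over this subsequence (which is bounded below by $\limsup$ along the full net, hence trivially shows the full $\limsup$ is at least $\theta/2^{\he}$) yields $\vartheta^*=\Theta^{*\he}(\HM^{\he},A(r),x)\ge\theta/2^{\he}$, i.e. $\theta\le 2^{\he}\vartheta^*$, contradicting $\theta\in(2^{\he}\vartheta^*,\infty)$.

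There is essentially no obstacle here — the only point requiring a moment's care is the direction of the inequality when passing from a subsequence estimate to the $\limsup$: for the lower-density lemma one used that a subsequence realizing small values pins the $\liminf$ down from above, whereas here a subsequence realizing large values pins the $\limsup$ down from below, which is exactly what we want. I would write the proof as follows.

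\begin{proof}
	Fix $\theta\in (2^{\he}\vartheta^{*},\infty)$ and assume that the proposition is false. Then for all $\rho>0$ there is $r_{\rho}\in (0,\rho)$, such that
	\begin{align}\label{conditioninfiniteupperdensity}
		\HM^{\he}(A(r_{\rho})\cap \overline B_{r_{\rho}}(x))>\theta r_{\rho}^{\he}.
	\end{align}
	Choose $\rho_{n}=n^{-1}$ and obtain a sequence $r_{n^{-1}}$, such that $r_{n^{-1}}\to 0$ and (\ref{conditioninfiniteupperdensity}) holds, but this means that
	$\vartheta^{*}=\Theta^{*\he}(\HM^{\he},A(r),x)\geq \theta/2^{\he}$, which contradicts $\theta\in (2^{\he}\vartheta^{*},\infty)$.
\end{proof}
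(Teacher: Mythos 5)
Your proof is correct and is essentially identical to the paper's own argument: the same contradiction setup, the same choice $\rho_n=n^{-1}$ to extract a sequence $r_n\to 0$ violating the bound, and the same conclusion that $\vartheta^*\geq\theta/2^{\he}$ contradicts $\theta>2^{\he}\vartheta^*$. Nothing to add.
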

\begin{proof}
	Fix $\theta\in (2^{\he}\vartheta^{*},\infty)$ and assume that the proposition if false. Then for all $\rho>0$ there is $r_{\rho}\in (0,\rho)$, such that
	\begin{align}\label{conditionlargeupperdensity}
		\theta r_{\rho}^{\he}<\HM^{\he}(A(r_{\rho})\cap \overline B_{r_{\rho}}(x)).
	\end{align}
	Choose $\rho_{n}=n^{-1}$ and obtain a sequence $r_{n^{-1}}$, such that $r_{n^{-1}}\to 0$ and (\ref{conditionlargeupperdensity}), but this means that
	$\theta/2^{\he}\leq \Theta^{*\he}(\HM^{\he},A(r),x)=\vartheta^{*}$, which contradicts $\theta\in (2^{\he}\vartheta^{*},\infty)$.
\end{proof}

\begin{lemma}[(Simultaneous estimate of annuli)]\label{simultaneousestimateannuli}
	Let $(X,d)$ be a metric space, $\he\in (0,\infty)$, $A,B:(0,\rho)\to\textrm{Pot}(X)$, $x\in X$ with
	\begin{align*}
		0<\Theta_{*}^{\he}(\HM^{\he},A(r),x),\quad 0<\Theta^{*\he}(\HM^{\he},B(r),x)\quad\text{and}\quad\Theta^{*\he}(\HM^{\he},X,x)<\infty.
	\end{align*}
	Then there exists a $q_{0}\in (0,1)$, a sequence $(r_{n})_{n\in\N}$, $r_{n}>0$, $\lim_{n\to\infty}r_{n}=0$ and a constant $c>0$ such that 
	\begin{align*}
		c r_{n}^{\he}\leq \min\{\HM^{\he}(A(r_{n})\cap[\overline B_{r_{n}}(x)\backslash B_{q_{0}r_{n}}(x)]),
		\HM^{\he}(B(r_{n})\cap [\overline B_{r_{n}}(x)\backslash B_{q_{0}r_{n}}(x)])\}.
	\end{align*}	
\end{lemma}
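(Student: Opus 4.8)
The plan is to fuse the three density hypotheses into one annular estimate by choosing the inner-to-outer radius ratio $q_0$ so small that the amount of measure sitting inside the small ball $B_{q_0r}(x)$ — which is controlled \emph{from above} by the finiteness of the upper density of the ambient set $X$ — is strictly less than the amount of $A(r)$ (resp.\ $B(r)$) in the whole ball $\overline B_r(x)$. Then the difference must live in the annulus, and comparing everything to $r^{\he}$ gives the claim.

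Concretely, write $\vartheta_*:=\Theta_{*}^{\he}(\HM^{\he},A(r),x)>0$, $\vartheta^{*}:=\Theta^{*\he}(\HM^{\he},B(r),x)>0$ and $\Theta:=\Theta^{*\he}(\HM^{\he},X,x)<\infty$. First I would fix three comparison constants. Applying Lemma \ref{implicationspositivelowerdensity} to $A$ with $\theta_A:=2^{\he-1}\vartheta_*\in(0,2^{\he}\vartheta_*)$ yields $\rho_1>0$ with $\HM^{\he}(A(r)\cap\overline B_r(x))\ge\theta_A r^{\he}$ for all $r\in(0,\rho_1)$. Applying Lemma \ref{implicationsfiniteupperdensity} to the constant mapping $X$ with $\theta_X:=2^{\he}\Theta+1>2^{\he}\Theta$ yields $\rho_2>0$ with $\HM^{\he}(X\cap\overline B_r(x))\le\theta_X r^{\he}$ for all $r\in(0,\rho_2)$. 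Finally, directly from the definition of $\limsup$ there is a sequence $r_n\downarrow 0$ and an index $N_1$ with $\HM^{\he}(B(r_n)\cap\overline B_{r_n}(x))\ge 2^{\he-1}\vartheta^{*}r_n^{\he}=:\theta_B r_n^{\he}$ for all $n\ge N_1$. Note the asymmetry: the $A$-bound holds for every small radius, while the $B$-bound is only available along $(r_n)$, which is exactly why we use that sequence.

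Next I would set $q_0:=\min\{\tfrac12,\ \tfrac12(\min\{\theta_A,\theta_B\}/\theta_X)^{1/\he}\}\in(0,1)$, so that $\theta_X q_0^{\he}\le 2^{-\he}\min\{\theta_A,\theta_B\}<\min\{\theta_A,\theta_B\}$, and put $c:=\min\{\theta_A,\theta_B\}-\theta_X q_0^{\he}>0$. For $n$ large enough that $r_n<\min\{\rho,\rho_1,\rho_2\}$ and $n\ge N_1$ (since $q_0<1$ this also gives $q_0r_n<\rho_2$), I would estimate, using $A(r_n)\subseteq X$, $B_{q_0r_n}(x)\subseteq\overline B_{q_0r_n}(x)$ and the upper bound on $X$, that $\HM^{\he}(A(r_n)\cap B_{q_0r_n}(x))\le\theta_X q_0^{\he}r_n^{\he}$, and likewise $\HM^{\he}(B(r_n)\cap B_{q_0r_n}(x))\le\theta_X q_0^{\he}r_n^{\he}$. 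Since $\HM^{\he}$ is an outer measure, subadditivity applied to the decomposition $A(r_n)\cap\overline B_{r_n}(x)=\bigl(A(r_n)\cap[\overline B_{r_n}(x)\setminus B_{q_0r_n}(x)]\bigr)\cup\bigl(A(r_n)\cap B_{q_0r_n}(x)\bigr)$ gives
\[
	\HM^{\he}\bigl(A(r_n)\cap[\overline B_{r_n}(x)\setminus B_{q_0r_n}(x)]\bigr)\ge\theta_A r_n^{\he}-\theta_X q_0^{\he}r_n^{\he}\ge c\,r_n^{\he},
\]
and the identical computation with $\theta_B$ in place of $\theta_A$ bounds the $B$-term below by $c\,r_n^{\he}$ as well. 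Taking the minimum and re-indexing the tail of $(r_n)$ completes the proof.

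There is no deep obstacle here; the proof is essentially bookkeeping. The two points that need a little care are (i) that $B$ only carries a subsequence estimate (positive \emph{upper} density), so the final sequence must be the one extracted from $\vartheta^{*}$, and (ii) that $q_0$ has to be chosen small simultaneously against all three constants $\theta_A,\theta_B,\theta_X$ so that the inner-ball contribution is strictly dominated by both outer-ball lower bounds.
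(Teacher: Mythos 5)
Your proof is correct and takes essentially the same route as the paper's: extract the sequence $(r_n)$ from the positive upper density of $B$, invoke the lower-density bound for $A$ and the finite-upper-density bound for $X$ at the scaled radius $q_0r_n$, and subtract via subadditivity to land the mass in the annulus. The only differences are cosmetic choices of the comparison constants (the paper takes $c=\min\{\delta_1,\delta_0\}/2$ by forcing $\theta q_0^{\he}\le\min\{\delta_1,\delta_0\}/2$, whereas you keep $c=\min\{\theta_A,\theta_B\}-\theta_Xq_0^{\he}$ explicitly).
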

\begin{proof}
	\textbf{Step 1}
		By our hypothesis $\Theta^{*\he}(\HM^{\he},B(r),x)=\delta_{0}>0$ and $\Theta^{*\he}(\HM^{\he},X,x)=\vcentcolon\theta/4^{\he}<\infty$ there are $r_{n}>0$, $r_{n}\to 0$, such that
		\begin{align*}
			\delta_{0}r_{n}^{\he}\leq \HM^{\he}(B(r_{n})\cap \overline B_{r_{n}}(x))
		\end{align*}
		and
		\begin{align*}
			\HM^{\he}(B(r_{n})\cap \overline B_{qr_{n}}(x))\leq \HM^{\he}(\overline B_{qr_{n}}(x))\leq \theta q^{\he}r_{n}^{\he}\quad\text{for all }q\in (0,1),
		\end{align*}
		see Lemma \ref{implicationsfiniteupperdensity}. Together this means that
		\begin{align*}
			\MoveEqLeft
			\HM^{\he}(B(r_{n})\cap [\overline B_{r_{n}}(x)\backslash B_{qr_{n}}(x)])\\
			&\geq \HM^{\he}(B(r_{n})\cap \overline B_{r_{n}}(x))-\HM^{\he}(B(r_{n})\cap B_{qr_{n}}(x))\\
			&\geq \HM^{\he}(B(r_{n})\cap \overline B_{r_{n}}(x))-\HM^{\he}(B(r_{n})\cap \overline B_{qr_{n}}(x))\\
			&\geq (\delta_{0}-\theta q^{\he})r_{n}^{\he}\geq \delta_{0} r_{n}^{\he}/2,
		\end{align*}
		if we choose $q^{\alpha}\leq \delta_{0}/(2\theta)<1$.\\
	\textbf{Step 2}
		As $0<\delta_{1}\vcentcolon= \Theta_{*}^{\he}(\HM^{\he},A(r),x)$ we know that
		\begin{align*}
			\delta_{1}r_{n}^{\he}\leq \HM^{\he}(A(r_{n})\cap \overline B_{r_{n}}(x))
		\end{align*}
		and can use the argument from Step 1 to obtain
		\begin{align*}
			\HM^{\he}(A(r_{n})\cap [\overline B_{r_{n}}(x)\backslash B_{qr_{n}}(x)])\geq (\delta_{1}-\theta q^{\he})r_{n}^{\he}\geq \delta_{1} r_{n}^{\he}/2
		\end{align*}
		if we choose $q^{\he}\leq \delta_{1}/(2\theta)<1$.\\
	\textbf{Step 3}
		Combining the results from the previous steps we obtain the proposition for $q_{0}=[\min\{\delta_{1},\delta_{2}\}/(2\theta)]^{1/\he}\in (0,1)$ and $c=\min\{\delta_{1},\delta_{2}\}/2$.
\end{proof}

\begin{lemma}[(Existence of positive upper density in finite decomposition)]\label{positiveupperdensityinfinitedecomposition}
	Let $(X,d)$ be a metric space $x\in X$, $\he\in (0,\infty)$, $\Theta^{*\he}(\HM^{\he},X,x)>0$ and $X_{i}\subset X$, $i\in\{1,\ldots,N\}$ 
	such that $X=\bigcup_{i=1}^{N}X_{i}$.
	Then there exists an $n\in\{1,\ldots,N\}$, such that $\Theta^{*\he}(\HM^{\he},X_{n},x)>0$.
\end{lemma}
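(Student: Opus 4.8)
The plan is to prove the contrapositive: assuming $\Theta^{*\he}(\HM^{\he},X_{i},x)=0$ for every $i\in\{1,\ldots,N\}$, I will deduce $\Theta^{*\he}(\HM^{\he},X,x)=0$, contradicting the hypothesis $\Theta^{*\he}(\HM^{\he},X,x)>0$. The whole argument rests on two elementary facts: the finite subadditivity of the outer measure $\HM^{\he}$, and the inequality $\limsup(f_{1}+\dots+f_{N})\leq \limsup f_{1}+\dots+\limsup f_{N}$ for nonnegative functions.

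First I would note that $\HM^{\he}$ is an outer measure, hence countably (in particular finitely) subadditive on arbitrary subsets of $X$, so no measurability of the $X_{i}$ or of the balls is needed. Since $X=\bigcup_{i=1}^{N}X_{i}$, for every $r>0$ we have $X\cap \overline B_{r}(x)=\bigcup_{i=1}^{N}\bigl(X_{i}\cap \overline B_{r}(x)\bigr)$, and therefore
\begin{align*}
	\HM^{\he}(X\cap \overline B_{r}(x))\leq \sum_{i=1}^{N}\HM^{\he}(X_{i}\cap \overline B_{r}(x)).
\end{align*}

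Next I would divide by $(2r)^{\he}$ and pass to $\limsup_{r\downarrow 0}$ on both sides. Using that the $\limsup$ of a finite sum of nonnegative terms is at most the sum of the $\limsup$s, this yields
\begin{align*}
	\Theta^{*\he}(\HM^{\he},X,x)\leq \sum_{i=1}^{N}\Theta^{*\he}(\HM^{\he},X_{i},x)=0.
\end{align*}
Since the upper density is always nonnegative, this forces $\Theta^{*\he}(\HM^{\he},X,x)=0$, the desired contradiction. Hence there must exist some $n\in\{1,\ldots,N\}$ with $\Theta^{*\he}(\HM^{\he},X_{n},x)>0$.

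There is no real obstacle here; the only point demanding a little care is that one must work with the \emph{upper} density, for which subadditivity across a finite union goes the right way — the analogous statement for the lower density $\Theta_{*}^{\he}$ is false in general, which is presumably why the lemma is stated only for $\Theta^{*\he}$.
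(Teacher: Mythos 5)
Your proof is correct and is essentially the same as the paper's: both reduce to finite subadditivity of the outer measure $\HM^{\he}$ on the intersections with $\overline B_{r}(x)$, followed by the observation that the (sequential) limsup of a finite sum of nonnegative quotients is controlled by the sum of the individual upper densities, all of which vanish by assumption. Your closing remark about the upper versus lower density is a sensible aside but not needed for the lemma.
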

\begin{proof}
	Assume that this is not the case. Then we obtain a contradiction 
	to $\Theta^{*\he}(\HM^{\he},X,x)>0$ by
	\begin{align*}
		\lim_{n\to\infty}\frac{\HM^{\he}(X\cap \overline B_{r_{n}}(x))}{(2r_{n})^{\he}}\leq \lim_{n\to\infty}\sum_{i=1}^{N} 
		\frac{\HM^{\he}(X_{i}\cap \overline B_{r_{n}}(x))}{(2r_{n})^{\he}}=0,
	\end{align*}
	for any sequence of radii $(r_{n})_{n\in\N}$, $r_{n}>0$, $\lim_{n\to\infty}r_{n}=0$.
\end{proof}

\begin{remark}[(Lemma \ref{positiveupperdensityinfinitedecomposition} is not true for countable decomposition)]
	If we choose $X=[0,1]$, $X_{0}=\{0\}$ and $X_{n}=(2^{-n},2^{-n+1}]$, we see that $\Theta^{*1}(\HM^{1},X_{n},0)=0$ for all $n\in\N_{0}$,
	but $\Theta^{*1}(\HM^{1},X,0)=1/2>0$.
\end{remark}

\begin{remark}[(In $\R^{n}$ we do not need $x\in X$)]\label{positiveupperdensityinfinitedecompositionremark}
	Note that for example in case $X\subset \R^{n}$ we do not require $x\in X$ in Lemma \ref{implicationspositivelowerdensity}, Lemma \ref{implicationsfiniteupperdensity}
	Lemma \ref{simultaneousestimateannuli} and Lemma \ref{positiveupperdensityinfinitedecomposition}.
\end{remark}

We would like to remind the reader that the angle $\measuredangle(s,0,s')$ is a metric, denoted by $d_{\mathbb{S}^{n-1}}(s,s')$, on the sphere $\mathbb{S}^{n-1}$, so that
$(\mathbb{S}^{n-1},d_{\mathbb{S}^{n-1}})$ is a complete metric space.

\begin{lemma}[(Uniform estimate of cones if $\Theta_{*}^{\he}(\HM^{\he},X,x)>0$)]\label{uniformestimatesoncones}
	Let $X\subset \R^{n}$, $x\in \R^{n}$ and $\Theta_{*}^{\he}(\HM^{\he},X,x)>0$. Then there is a $\rho>0$ and a mapping $s:(0,\rho)\to\mathbb{S}^{n-1}$, such that for all $\epsilon>0$ there is $c(\epsilon)>0$ with
	\begin{align*}
		c(\epsilon)r^{\he}\leq \HM^{\he}(X\cap \overline B_{r}(x)\cap C_{s(r),\epsilon}(x))\quad\text{for all }r\in (0,\rho).
	\end{align*}
\end{lemma}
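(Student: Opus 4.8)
**Proof plan for Lemma (Uniform estimate of cones if $\Theta_{*}^{\he}(\HM^{\he},X,x)>0$).**

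The plan is to extract, for each dyadic scale, a single "good" direction in which the measure of the set inside the ball is quantitatively large, and then to pass from a sequence of scale-indexed directions to a genuine mapping $s:(0,\rho)\to\mathbb{S}^{n-1}$ by a monotonicity/interpolation argument. First I would fix $\vartheta_*\vcentcolon=\Theta_{*}^{\he}(\HM^{\he},X,x)>0$ and choose, via Lemma \ref{implicationspositivelowerdensity}, some $\theta>0$ and $\rho_0>0$ so that $\theta r^{\he}\leq \HM^{\he}(X\cap\overline B_r(x))$ for all $r\in(0,\rho_0)$. Now the key step: for a fixed $r$ and a fixed $\epsilon>0$, cover the sphere $(\mathbb{S}^{n-1},d_{\mathbb{S}^{n-1}})$ by finitely many cones $C_{s_1,\epsilon/2}(x),\dots,C_{s_N,\epsilon/2}(x)$ with $N=N(\epsilon,n)$ independent of $r$ (such a finite cover exists because the sphere is compact; the number of cones needed depends only on $\epsilon$ and the dimension). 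Then $X\cap\overline B_r(x)=\bigcup_{i=1}^N\big(X\cap\overline B_r(x)\cap C_{s_i,\epsilon/2}(x)\big)$, so by subadditivity at least one index $i=i(r)$ satisfies $\HM^{\he}(X\cap\overline B_r(x)\cap C_{s_{i(r)},\epsilon/2}(x))\geq \frac{\theta}{N}r^{\he}$.

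The difficulty is that the good direction $s_{i(r)}$ produced this way depends on $\epsilon$ and on $r$, whereas the lemma asks for one mapping $s(r)$ that works simultaneously for all $\epsilon>0$. To handle this I would argue as follows. Work along the dyadic radii $r_k=2^{-k}$ (for $k$ large enough that $r_k<\rho_0$) and use a diagonal argument: for $\epsilon=1/m$ choose, for each $k$, a direction $\sigma_{k}^{(m)}$ with the above property; by compactness of $\mathbb{S}^{n-1}$ and a Cantor diagonal extraction over $m$, pass to a subsequence of scales along which, for every $m$, the directions $\sigma_k^{(m)}$ converge to some limit $\sigma^{(m)}$, and these limits themselves converge to a single $\sigma_\infty\in\mathbb{S}^{n-1}$. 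A cleaner alternative, which I would actually prefer to write up, avoids diagonalization entirely: observe that for a \emph{fixed} scale $r$, the function $\epsilon\mapsto \HM^{\he}(X\cap\overline B_r(x)\cap C_{s,\epsilon}(x))$ is monotone in $\epsilon$ and, since the cones $C_{s,\epsilon}(x)$ increase to the whole space as $\epsilon\uparrow\pi$, one can find for each $r$ a direction $s(r)$ which is "optimal at scale $r$" in the sense of maximizing (or nearly maximizing) the measure captured by a thin cone; the $N$-fold covering bound then gives, for \emph{every} $\epsilon>0$, the estimate $\HM^{\he}(X\cap\overline B_r(x)\cap C_{s(r),\epsilon}(x))\geq \frac{\theta}{N(\epsilon)}r^{\he}=\vcentcolon c(\epsilon)r^{\he}$, because the optimal direction does at least as well as the best of the finitely many $s_i$ from an $\epsilon$-net. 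This is exactly the statement with $c(\epsilon)\vcentcolon=\theta/N(\epsilon)$.

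Finally I would extend the estimate from the (countable) set of scales where $s(r)$ was defined to all $r\in(0,\rho)$. The honest move is to set $\rho\vcentcolon=\rho_0$ and define $s$ on all of $(0,\rho)$ directly by the optimality prescription above (there is no need to restrict to dyadic scales: for each $r$ independently, pick $s(r)$ to nearly maximize the measure captured), so the bound $c(\epsilon)r^{\he}\leq\HM^{\he}(X\cap\overline B_r(x)\cap C_{s(r),\epsilon}(x))$ holds verbatim for every $r\in(0,\rho)$ and every $\epsilon>0$. The only genuine obstacle is the purely geometric fact that the number $N(\epsilon)$ of $(\epsilon/2)$-cones needed to cover $\mathbb{S}^{n-1}$ can be taken finite and independent of $r$; this is immediate from compactness of the sphere in the metric $d_{\mathbb{S}^{n-1}}$ together with the observation that a double cone $C_{s,\epsilon}(x)$ contains all directions within angular distance $\epsilon$ of $\pm s$, so an $(\epsilon/2)$-net of $\mathbb{S}^{n-1}$ yields the desired finite cover. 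Everything else is subadditivity of $\HM^{\he}$ and the pigeonhole principle.
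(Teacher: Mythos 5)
Your opening move (finitely many cones of half-angle $\epsilon/2$ cover the sphere, so by subadditivity one of them captures at least $\theta r^{\he}/N(\epsilon)$ of the mass of $X\cap\overline B_{r}(x)$) is exactly the paper's Step 1 and is fine. The gap is in the passage from ``for each $\epsilon$ and each $r$ there is a good direction'' to ``for each $r$ there is one direction good for all $\epsilon$ simultaneously,'' and neither of your two routes closes it. The ``cleaner alternative'' is the real problem: a direction $s(r)$ that maximizes (or nearly maximizes) the measure captured by a cone of some fixed thin opening $\delta$ does \emph{not} ``do at least as well as the best of the finitely many $s_i$ from an $\epsilon$-net'' when $\epsilon<\delta$. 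Optimality at one angular scale says nothing at finer scales: at a given $r$ all the mass near the $\delta$-optimal direction $u$ may sit at angular distance between $\epsilon$ and $\delta$ from $u$, so $C_{u,\epsilon}(x)$ captures nothing, while the direction that works for every $\epsilon$ lies elsewhere. There is no single ``optimality prescription'' at one angular scale whose output is automatically near-optimal at all angular scales --- producing such a direction \emph{is} the content of the lemma. The diagonalization variant has the same defect plus two more: a compactness limit $\sigma_\infty$ of good directions $\sigma^{(m)}$ at angular scales $1/m$ need not lie close to $\sigma^{(m)}$ for any particular $m$, so $C_{\sigma_\infty,1/m}(x)$ need not contain any cone for which you actually have an estimate; and passing to a subsequence of radii only yields the bound along that subsequence, not for all $r\in(0,\rho)$.

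The missing idea --- and this is how the paper argues --- is to make the successive pigeonhole choices \emph{nested}. Having found a good direction $s_k(r)$ at angular scale $\epsilon_k=2\pi 2^{-k}$, one covers the cone $C_{s_k(r),\epsilon_k}(x)$ itself (not the whole sphere) by $M(\epsilon_{k+1},\epsilon_k)$ cones of opening $\epsilon_{k+1}$ whose axes lie within $\epsilon_k$ of $s_k(r)$, and pigeonholes inside that cone. This forces $d_{\mathbb{S}^{n-1}}(s_k(r),s_{k+1}(r))<\epsilon_k$, so $(s_k(r))_{k}$ is Cauchy (Lemma \ref{lemmacauchysequencescompletespace}) with limit $s(r)$ satisfying $d_{\mathbb{S}^{n-1}}(s_k(r),s(r))\leq\epsilon_{k-1}$; consequently $C_{s_k(r),\epsilon_k}(x)\subset C_{s(r),\epsilon}(x)$ as soon as $\epsilon>\epsilon_{k-1}+\epsilon_k$, and the accumulated constant $c/\prod_{i=0}^{k-1}M(\epsilon_{i+1},\epsilon_i)$ serves as $c(\epsilon)$, uniformly in $r\in(0,\rho)$. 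Without the nesting, the good directions at different angular scales may wander over the sphere and no single limit direction inherits all the estimates; with it, your covering-plus-pigeonhole skeleton becomes a complete proof.
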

\begin{proof}
	\textbf{Step 1}
		Fix $x\in \R^{n}$. Let $0<\phi<\psi$, $s\in \mathbb{S}^{n-1}$ and define
		\begin{align*}
			M(s,\alpha,\psi)\vcentcolon=\min\{\abs{I}\mid C_{s,\psi}(x)\subset \bigcup_{i\in I}C_{s_{i},\phi}(x), s_{i}\in\mathbb{S}^{n-1}, d_{\mathbb{S}^{n-1}}(s,s_{i})<\psi\}.
		\end{align*}
		As $x+\mathbb{S}^{n-1}$ is compact in $\R^{n}$ we can always find a finite subcover of $\overline C_{s,\psi}(x)$ in $\{C_{s',\phi}(x)\mid s'\in\mathbb{S}^{n-1}, d_{\mathbb{S}^{n-1}}(s,s')<\psi\}$
		and consequently $M(s,\phi,\psi)$ is finite. We can transform the situation for $s$ to that of $\tilde s$ by a rotation and hence it is clear that
		$M(s,\phi,\psi)=M(\tilde s,\phi,\psi)$ for all $s,\tilde s\in \mathbb{S}^{n-1}$. Therefore we write $M(\phi,\psi)\vcentcolon=M(s,\phi,\psi)$.\\
	\textbf{Step 2}
		We define $s_{0}(r)\vcentcolon=e_{1}=(1,0,\ldots,0)$ and $\epsilon_{0}\vcentcolon= 2\pi 2^{-0}=2\pi$.
		From Lemma \ref{implicationspositivelowerdensity} we know that there are $\rho>0$ and $c>0$, such that
		\begin{align*}
			\HM^{\he}(X\cap \overline B_{r}(X))=\HM^{\he}(X\cap \overline B_{r}(X)\cap C_{s_{0}(r),\epsilon_{0}}(x))\geq cr^{\he}\quad\text{for all }r\in (0,\rho).
		\end{align*}
		Now we set $\epsilon_{k+1}=2\pi 2^{-(k+1)}$ and find, with the help of Step 1, a direction $s_{k+1}(r)\in \mathbb{S}^{n-1}$ with $d_{\mathbb{S}^{n-1}}(s_{k}(r),s_{k+1}(r))<\epsilon_{k}$, such that
		\begin{align*}
			\MoveEqLeft
			\HM^{\he}(X\cap \overline B_{r}(X)\cap C_{s_{k+1}(r),\epsilon_{k+1}}(x))
			\geq \frac{\HM^{\he}(X\cap \overline B_{r}(X)\cap C_{s_{k}(r),\epsilon_{k}}(x))}{M(\epsilon_{k+1},\epsilon_{k})}\\
			&\geq \ldots
			\geq \frac{c}{\prod_{i=0}^{k} M(\epsilon_{i+1},\epsilon_{i})}r^{\he}\quad\text{for all }r\in (0,\rho).
		\end{align*}
		Now Lemma \ref{lemmacauchysequencescompletespace} tells us that for all $r\in (0,\rho)$ there are $s(r)\in \mathbb{S}^{n-1}$, such that $s_{k}(r)\to s(r)$, with
		\begin{align*}
			d_{\mathbb{S}^{n-1}}(s_{k}(r),s(r))\leq \sum_{i=k}^{\infty}\epsilon_{i}=\sum_{i=k}^{\infty}2\pi 2^{-i}
			=2\pi\Big[\frac{1}{1-1/2}-\frac{1-1/2^{-k}}{1-1/2}\Big]
			=2\pi 2^{-(k-1)}=\epsilon_{k-1}.
		\end{align*}
	\textbf{Step 3}
		Let $\epsilon>0$, then, as $\epsilon_{k}\to 0$, there is a $k$, such that $\epsilon>\epsilon_{k-1}+\epsilon_{k}$. Because $d_{\mathbb{S}^{n-1}}(s,s')+\phi\leq \psi$
		implies $C_{s',\phi}(x)\subset C_{s,\psi}(x)$ and we already know $d_{\mathbb{S}^{n-1}}(s_{k}(r),s(r))\leq \epsilon_{k-1}$ by Step 2, we have $C_{s_{k}(r),\epsilon_{k}}(x)\subset C_{s(r),\epsilon}(x)$
		and hence
		\begin{align*}
			\MoveEqLeft
			\HM^{\he}(X\cap \overline B_{r}(x)\cap C_{s(r),\epsilon}(x))
			\geq \HM^{\he}(X\cap \overline B_{r}(x)\cap C_{s_{k}(r),\epsilon_{k}}(x))\\
			&\geq \frac{c}{\prod_{i=0}^{k-1} M(\epsilon_{i+1},\epsilon_{i})}r^{\he}
			= c(\epsilon)r^{\he}\quad\text{for all }r\in (0,\rho).
		\end{align*}
\end{proof}

\section{Approximate tangents, counterexamples}\label{sectionapproximatetangents}

We now fix our notation regarding the tangency properties
we wish to investigate. Also we give some remarks and examples in this context. In this section we finally leave the setting of metric
spaces and are from now on only concerned with subsets of $\R^{n}$.

\begin{definition}[(Double cone in direction $s$ with opening angle $\epsilon$)]
	Let $x\in\R^{n}$, $s\in\Sphere^{n-1}$ and $\epsilon> 0$. By $C_{s,\epsilon}(x)$ we denote the open double cone centred at $x$ 
	in direction $s$, i.e.
	\begin{align*}
		C_{s,\epsilon}(x)\vcentcolon=\{y\in\R^{n}\backslash \{x\}\mid \min\{\measuredangle(y,x,x- s),\measuredangle(y,x,x+s)\}<\epsilon\}.
	\end{align*}
\end{definition}

\begin{definition}[(Weakly $\he$-linearly approximable)]
	We say that a set $X\subset \R^{n}$ is \emph{weakly $\he$-linearly approximable}, $\he\in (0,\infty)$ at a point $x\in\R^{n}$, if there is a $\rho>0$ and a mapping $s:(0,\rho)\to\mathbb{S}^{n-1}$, such that
	for every $\epsilon>0$ and every $\delta>0$, there is an  $\rho(\epsilon,\delta)\in (0,\rho)$ with
	\begin{align*}
		\HM^{\he}([X\cap \overline B_{r}(x)]\backslash C_{s(r),\epsilon}(x))\leq \delta r^{\he}\quad \text{for all }r\in (0,\rho(\epsilon,\delta)).
	\end{align*}
\end{definition}

\begin{definition}[(Weak and strong approximate $\he$-tangents)]
	Let $X\subset \R^{n}$ be a set and $x\in \R^{n}$, $\he\in(0,\infty)$. We say that $X$ has a \emph{(strong) approximate $\he$-tangent} at $x$, if there is a direction $s\in\mathbb{S}^{n-1}$, such that
	\begin{align*}
		\Theta^{\he}(\HM^{\he},X\backslash C_{s,\epsilon}(x),x)=0\quad\text{for all }\epsilon>0,
	\end{align*}
	and we say that $X$ has a \emph{weak approximate $\he$-tangent} at $x$, if there is a $\rho>0$ and a mapping $s:(0,\rho)\to\mathbb{S}^{n-1}$, such that
	\begin{align*}
		\Theta^{\he}(\HM^{\he},X\backslash C_{s(r),\epsilon}(x),x)=0\quad\text{for all }\epsilon>0.
	\end{align*}
	We will also sometimes call the direction $s$ and the mapping $s:(0,\rho)\to\mathbb{S}^{n-1}$ (strong) approximate $\he$-tangent and weak approximate $\he$-tangent, respectively.
\end{definition}

\begin{lemma}[(Weakly $\he$-linearly appr. iff weak approximate $\he$-tangents)]\label{weaklylinapproxiffweakapprtangent}
	Let $X\subset \R^{n}$ be a set and $x\in \R^{n}$, $\he\in (0,\infty)$. Then the following are equivalent
	\begin{itemize}
		\item
			$X$ is weakly $\he$-linearly approximable at $x$,
		\item
			$X$ has weak approximate $\he$-tangents at $x$.
	\end{itemize}
\end{lemma}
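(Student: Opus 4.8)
The plan is to show that both conditions, for the \emph{same} witnessing mapping $s\colon(0,\rho)\to\Sphere^{n-1}$, are reformulations of a single statement: that a certain nonnegative quantity tends to zero as $r\downarrow 0$. First I would introduce, for $\epsilon>0$, the abbreviation $g_{\epsilon}(r)\vcentcolon=\HM^{\he}([X\cap\overline B_{r}(x)]\backslash C_{s(r),\epsilon}(x))$, and note the elementary set identity $(A\cap B)\backslash C=(A\backslash C)\cap B$, which lets me rewrite $g_{\epsilon}(r)=\HM^{\he}([X\backslash C_{s(r),\epsilon}(x)]\cap\overline B_{r}(x))$. Applying the definition of the set-valued Hausdorff density of Section~\ref{sectiondensityestimates} to $A(r)\vcentcolon=X\backslash C_{s(r),\epsilon}(x)$ then gives $\Theta^{*\he}(\HM^{\he},X\backslash C_{s(r),\epsilon}(x),x)=\limsup_{r\downarrow 0}g_{\epsilon}(r)/(2r)^{\he}$.

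Next I would record the crucial but trivial observation that, since $g_{\epsilon}\geq 0$, the limit inferior of $g_{\epsilon}(r)/(2r)^{\he}$ is automatically nonnegative and $\leq$ the limit superior; hence $\Theta^{*\he}(\HM^{\he},X\backslash C_{s(r),\epsilon}(x),x)=0$ is equivalent to $\lim_{r\downarrow 0}g_{\epsilon}(r)/(2r)^{\he}=0$, and in that case the lower density vanishes too, so $\Theta^{\he}(\HM^{\he},X\backslash C_{s(r),\epsilon}(x),x)=0$ in the sense required in the definition of a weak approximate $\he$-tangent.

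With this in hand the equivalence is pure bookkeeping. For the forward implication I would assume $X$ is weakly $\he$-linearly approximable at $x$ via $s$, fix $\epsilon>0$ and an arbitrary $\eta>0$, and apply the defining inequality with $\delta\vcentcolon=2^{\he}\eta$ to get $g_{\epsilon}(r)\leq 2^{\he}\eta\,r^{\he}=\eta(2r)^{\he}$ for $r$ below the threshold $\rho(\epsilon,2^{\he}\eta)$; since $\eta>0$ was arbitrary this gives $\lim_{r\downarrow 0}g_{\epsilon}(r)/(2r)^{\he}=0$, hence a weak approximate $\he$-tangent with the same $\rho$ and $s$. Conversely, assuming $X$ has a weak approximate $\he$-tangent $s$ at $x$, for fixed $\epsilon>0$ the limit $\lim_{r\downarrow 0}g_{\epsilon}(r)/(2r)^{\he}=0$ yields, for each $\delta>0$, a threshold $\rho(\epsilon,\delta)\in(0,\rho)$ below which $g_{\epsilon}(r)<\delta\,r^{\he}$, which is exactly the inequality demanded by the definition of weak $\he$-linear approximability.

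The main (and essentially only) obstacle here is conceptual rather than technical: one has to notice that the density in the definition of a weak approximate tangent is phrased via a limit superior whereas the approximability condition is phrased as a genuine limit, and that the gap is bridged precisely because $g_{\epsilon}$ is nonnegative; once that is seen, the remaining manipulations amount only to absorbing the harmless constant $2^{\he}$ into $\delta$.
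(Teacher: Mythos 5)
Your proof is correct and takes essentially the same approach as the paper, which disposes of the lemma by noting that one direction is immediate from the definitions and the other is exactly Lemma \ref{implicationsfiniteupperdensity} applied with $\vartheta^{*}=0$ to $A(r)=X\backslash C_{s(r),\epsilon}(x)$. You have merely written out explicitly the set identity, the nonnegativity argument identifying the vanishing of $\Theta^{*\he}$ with that of the full density $\Theta^{\he}$, and the absorption of the harmless factor $2^{\he}$ into $\delta$, all of which the paper leaves implicit.
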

\begin{proof}
	One direction is directly clear from the definitions and the other direction is proven in Lemma \ref{implicationsfiniteupperdensity}.
\end{proof}

\begin{remark}[(Differences to standard use of terminology)]
	We should warn the reader, that our definition of $1$-linear approximability and approximate $1$-tangents differ from the standard use in literature 
	\cite[15.7 \& 15.10 Definition, p.206 and 15.17 Definition, p.212]{Mattila1995a} in that we refrain from imposing additional density requirements, 
	like $\Theta^{*1}(\HM^{1},X,x)>0$ in the case of approximate $1$-tangents. This is simply due to the fact that in the following sections we obtain simpler formulations of our results, because some
	distinction of cases can be omitted; as we cannot expect a set with finite curvature energy to have positive upper density at any point.
\end{remark}

\begin{remark}[(Difference between approximate $1$-tangents and tangents)]\label{differenceapptangentsandtangents}
	What it means for a set to have an approximate $1$-tangent at a point is, in some respects, quite different to having an actual tangent at this point. 
	To illustrate this, consider
	\begin{align*}
		S\vcentcolon=\{(x,0)\mid x\in [0,1]\}\cup\{(x,x^{2})\mid x\in [0,1] \}.
	\end{align*}
	As $x\mapsto x^{2}$ is convex there is $r(\epsilon)$, such that $S\cap B_{r(\epsilon)}(0)\subset C_{\epsilon}(0)$ and hence $S$ has an approximate
	$1$-tangent at $(0,0)$, but an arc length parametrisation $\gamma$ of $S$ does not posses a derivative, and hence a tangent, at $\gamma^{-1}((0,0))$.
\end{remark}

\begin{example}[(A set with weak appr. but no appr. $1$-tangents)]\label{weakbutnoapptangents}
	Set $a_{n}\vcentcolon=2^{-n^{n}n^{3}}$, $A_{n}\vcentcolon=[a_{n}/2,a_{n}]$ and
	\begin{align*}
		F\vcentcolon=\Big[\bigcup_{n\in\N}\underbrace{A_{2n}\times\{0\}}_{=\vcentcolon B_{2n}}\Big]
		\cup\Big[\bigcup_{n\in\N}\underbrace{\{0\}\times A_{2n-1}}_{=\vcentcolon B_{2n-1}}\Big].
	\end{align*}
	For $\epsilon>0$ we have
	\begin{align}\label{Xhasnoapptangentsex}
		\begin{split}
			\HM^{1}(F\cap C_{e_{1},\epsilon}(0)\cap \overline B_{a_{2n}}(0))&\geq \HM^{1}([a_{2n}/2,a_{2n}])=a_{2n}/2\\
			\HM^{1}(F\cap C_{e_{2},\epsilon}(0)\cap \overline B_{a_{2n+1}}(0))&\geq \HM^{1}([a_{2n+1}/2,a_{2n+1}])=a_{2n+1}/2.
		\end{split}
	\end{align} 
	Now (\ref{Xhasnoapptangentsex})
	tells us that no approximate $1$-tangent exists, because for every $s\in\Sphere^{n-1}$ there is $\epsilon_{s}>0$ and $i_{s}\in \{1,2\}$, such that
	$C_{e_{i_{s},\epsilon_{s}},\epsilon_{s}}(0)\cap C_{s,\epsilon_{s}}(0)=\emptyset$ and hence by (\ref{Xhasnoapptangentsex})
	there are $r_{n}=r_{n}(s)>0$, $r_{n}\to 0$ with
	\begin{align*}
		\Theta^{*1}(\HM^{1},F\backslash C_{s,\epsilon_{s}}(0),0)
		\geq \lim_{n\to\infty}\frac{\HM^{1}([F\cap C_{e_{i_{s}},\epsilon_{s}}(0)]\cap\overline B_{r_{n}}(0))}{2r_{n}}\geq \frac{1}{4}.
	\end{align*}
	On the other hand we have
	\begin{align*}
		\MoveEqLeft
		\HM^{1}([F \cap \overline B_{r}(0)]\backslash C_{e_{1},\epsilon}(0))\leq \HM^{1}([0,a_{2n+1}])=2^{-(2n+1)^{2n+1}(2n+1)^{3}}\\
		&\leq 2^{-2n}2^{-(2n)^{2n}(2n)^{3}-1}=2^{-2n}\frac{a_{2n}}{2}\leq 2^{-2n}r
	\end{align*}
	for all $r\in [a_{2n}/2,a_{2n-1}/2]$ and
	\begin{align*}
		\MoveEqLeft
		\HM^{1}([F \cap \overline B_{r}(0)]\backslash C_{e_{2},\epsilon}(0))\leq \HM^{1}([0,a_{2(n+1)}])=2^{-(2[n+1])^{2[n+1]}(2[n+1])^{3}}\\
		&\leq 2^{-(2n+1)}2^{-(2n+1)^{2n+1}(2n+1)^{3}-1}=2^{-(2n+1)}\frac{a_{2n+1}}{2}\leq 2^{-(2n+1)}r
	\end{align*}
	for all $r\in (a_{2n+1}/2,a_{2n}/2)$. We therefore have verified the definition of $F$ having a weak approximate $1$-tangent for
	\begin{align*}
		s:(0,1/2)\to\mathbb{S}^{1},\,r\mapsto
		\begin{cases}
			e_{1},&r\in \bigcup_{n\in\N}[a_{2n}/2,a_{2n-1}/2],\\
			e_{2},&r\in \bigcup_{n\in\N}(a_{2n+1}/2,a_{2n}/2).
		\end{cases}
	\end{align*}
\end{example}

One might be tempted to think that a continuum with approximate $1$-tangents is a topological $1$-manifold, i.e. a closed curve or an arc.
That these two concepts are not related can be seen by the following remark. If the reader is not familiar with the notion
of ramification order we refer him to \cite[Definition 13.5, p.442 f.]{Blumenthal1970b}.

\begin{remark}[(Relationship between appr. $1$-tangents  and ramification points)]
	If a set $M$ has an approximate $1$-tangent at $x\in M$ then $x$ can still be a ramification point.
	Let $S$ be the set from Remark \ref{differenceapptangentsandtangents}. Then $S\cup( [-1,0]\times \{0\})$ has 
	an approximate $1$-tangent at $0$ and $0$ is a point of order $3$.
	On the other hand a point of order less than $2$ does not imply that the set has an approximate $1$-tangent at this point.
	This can be sen as follows: let $M\vcentcolon=([0,1]\times\{0\})\cup (\{0\}\times [0,1])$. Then $0$ is a point of order 2 in $M$, but $M$ does not even possess 
	a weak approximate $1$-tangent at $0$.
\end{remark}

\begin{lemma}[(Density estimates for set with no approximate tangent)]\label{implicationsoflackofapptangents}
	Let $X\subset \R^{n}$, $x\in \R^{n}$, $\he\in (0,\infty)$ and $\Theta^{*\he}(\HM^{\he},X,x)>0$.
	If $X$ has no approximate $\he$-tangent at $x$,
	then there is $s\in \Sphere^{n-1}$ and $\epsilon_{0}>0$, such that
	\begin{align*}
		0<\Theta^{*\he}(\HM^{\he},X\cap C_{s,\epsilon_{0}/2}(x),x)\quad\text{and}\quad
		0<\Theta^{*\he}(\HM^{\he},X\backslash C_{s,\epsilon_{0}}(x),x).
	\end{align*}
\end{lemma}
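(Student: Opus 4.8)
The plan is to argue by contradiction from the negation of the conclusion and combine it with the hypothesis that $X$ has no approximate $\he$-tangent at $x$. Since $X$ has no approximate $\he$-tangent, for every direction $s\in\Sphere^{n-1}$ there is an $\epsilon>0$ and a sequence $r_k\downarrow 0$ witnessing $\Theta^{*\he}(\HM^{\he},X\backslash C_{s,\epsilon}(x),x)>0$. So the existence of \emph{some} direction $s$ and \emph{some} $\epsilon_0$ with $\Theta^{*\he}(\HM^{\he},X\backslash C_{s,\epsilon_0}(x),x)>0$ is immediate; the real content is to choose $s$ and $\epsilon_0$ so that \emph{simultaneously} $\Theta^{*\he}(\HM^{\he},X\cap C_{s,\epsilon_0/2}(x),x)>0$.

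First I would fix any direction $s'$ and any $\epsilon'>0$ with $\Theta^{*\he}(\HM^{\he},X\backslash C_{s',\epsilon'}(x),x)>0$, obtained from the failure of the approximate tangent. Now I would cover the compact complement $\Sphere^{n-1}\setminus C_{s',\epsilon'}(x)$ (intersected with the sphere of directions) by finitely many double cones $C_{s_1,\eta}(x),\dots,C_{s_N,\eta}(x)$ of small opening angle $\eta$, chosen so that each $C_{s_i,\eta}(x)$ is disjoint from $C_{s',\epsilon'/2}(x)$ (this is possible if $\eta$ is small enough relative to $\epsilon'$, since the $s_i$ lie outside $C_{s',\epsilon'}(x)$ as directions, hence are at angular distance $\geq\epsilon'$ from $\pm s'$). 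Then
\begin{align*}
	X\backslash C_{s',\epsilon'}(x)\subset\bigcup_{i=1}^{N}\bigl(X\cap C_{s_i,\eta}(x)\bigr),
\end{align*}
so by Lemma \ref{positiveupperdensityinfinitedecomposition} (applicable via Remark \ref{positiveupperdensityinfinitedecompositionremark}, which drops the requirement $x\in X$) there is an index $n$ with $\Theta^{*\he}(\HM^{\he},X\cap C_{s_n,\eta}(x),x)>0$. Set $s\vcentcolon=s_n$ and $\epsilon_0\vcentcolon=2\eta$. Then $\Theta^{*\he}(\HM^{\he},X\cap C_{s,\epsilon_0/2}(x),x)=\Theta^{*\he}(\HM^{\he},X\cap C_{s,\eta}(x),x)>0$, which is the first required inequality.

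It remains to verify the second inequality $\Theta^{*\he}(\HM^{\he},X\backslash C_{s,\epsilon_0}(x),x)>0$ for this same $s$. This is exactly where I again invoke that $X$ has no approximate $\he$-tangent: applied to the direction $s$, it furnishes some $\tilde\epsilon>0$ with $\Theta^{*\he}(\HM^{\he},X\backslash C_{s,\tilde\epsilon}(x),x)>0$. The only subtlety is matching the opening angles: if $\tilde\epsilon\leq\epsilon_0$ we are done since $X\backslash C_{s,\epsilon_0}(x)\supset X\backslash C_{s,\tilde\epsilon}(x)$ is impossible in the wrong direction — so instead I would go back and choose $\eta$ (hence $\epsilon_0=2\eta$) from the start to be smaller than a suitable universal fraction, and only \emph{afterwards} extract $\tilde\epsilon$ for the chosen $s$; then shrinking further I may assume $\tilde\epsilon\leq\epsilon_0$, whence $X\backslash C_{s,\epsilon_0}(x)\subset X\backslash C_{s,\tilde\epsilon}(x)$ gives the monotonicity the wrong way. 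The clean fix is to instead note that if $\tilde\epsilon>\epsilon_0$ we simply replace $\epsilon_0$ by $\tilde\epsilon$ at the end and re-run the covering argument with $\eta\vcentcolon=\tilde\epsilon/2$, or — more simply — observe that the first inequality only gets stronger when $\epsilon_0$ is decreased, so after obtaining $\tilde\epsilon$ we set the final $\epsilon_0\vcentcolon=\min\{2\eta,\tilde\epsilon\}$ and both inequalities hold: the cone-covering one because $C_{s,\epsilon_0/2}(x)\subset C_{s,\eta}(x)$ is false if $\epsilon_0/2<\eta$...

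I expect the main obstacle to be precisely this bookkeeping of opening angles so that one fixed pair $(s,\epsilon_0)$ serves both inequalities at once; the resolution is to perform the construction in two rounds — first use the no-tangent hypothesis for an arbitrary direction to locate a good direction $s$ via the finite-decomposition lemma, \emph{then} use the no-tangent hypothesis a second time for that specific $s$ to get a complementary-cone density bound, and finally take $\epsilon_0$ to be half the minimum of the two angular scales produced, using that $\Theta^{*\he}(\HM^{\he},X\cap C_{s,\epsilon_0/2}(x),x)$ is monotone nondecreasing in $\epsilon_0$ while $\Theta^{*\he}(\HM^{\he},X\backslash C_{s,\epsilon_0}(x),x)$ is monotone nondecreasing as $\epsilon_0$ decreases. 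Everything else is a routine application of Lemma \ref{positiveupperdensityinfinitedecomposition}, Remark \ref{positiveupperdensityinfinitedecompositionremark}, compactness of the sphere of directions, and monotonicity of Hausdorff measure.
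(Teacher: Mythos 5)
Your overall strategy (compactness of the sphere of directions plus Lemma \ref{positiveupperdensityinfinitedecomposition}) is the right one, but the proof as written has a genuine gap that your own closing paragraphs circle around without resolving. The problem is the circular dependence you yourself identify: the opening angle $\eta$ of the covering cones must be fixed \emph{before} the finite-decomposition lemma selects the direction $s=s_n$, whereas the angle $\tilde\epsilon$ furnished by the no-tangent hypothesis at $s$ is only available \emph{after} $s$ is known, and nothing prevents $\tilde\epsilon<2\eta$. Your proposed repair, taking $\epsilon_0=\min\{2\eta,\tilde\epsilon\}$, does not work because the two required bounds are monotone in \emph{opposite} directions in $\epsilon_0$: decreasing $\epsilon_0$ shrinks $C_{s,\epsilon_0/2}(x)$, so positivity of $\Theta^{*\he}(\HM^{\he},X\cap C_{s,\eta}(x),x)$ does \emph{not} pass to $\Theta^{*\he}(\HM^{\he},X\cap C_{s,\epsilon_0/2}(x),x)$ when $\epsilon_0/2<\eta$, while increasing $\epsilon_0$ breaks the complementary-cone bound in the same way. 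Re-running the covering argument with $\eta\vcentcolon=\tilde\epsilon/2$ only produces a possibly different direction with a possibly smaller witness angle, so that iteration need not terminate.

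The paper closes exactly this gap by tying the opening angles of the covering cones to the witnesses of the no-tangent hypothesis: for \emph{each} $s\in\Sphere^{n-1}$ one first extracts $\epsilon_s>0$ with $\Theta^{*\he}(\HM^{\he},X\backslash C_{s,\epsilon_s}(x),x)>0$, and then covers $x+\Sphere^{n-1}$ by the cones $C_{s,\epsilon_s/2}(x)$ themselves; there is no need to excise a preliminary cone $C_{s',\epsilon'}(x)$, since one may cover the whole sphere and decompose all of $X\backslash\{x\}$, whose upper density is positive by hypothesis. The finite subcover $\{C_{s_i,\epsilon_{s_i}/2}(x)\}_{i=1}^N$ together with Lemma \ref{positiveupperdensityinfinitedecomposition} yields a $j$ with $\Theta^{*\he}(\HM^{\he},X\cap C_{s_j,\epsilon_{s_j}/2}(x),x)>0$, and for this same $j$ the second inequality holds with $\epsilon_0\vcentcolon=\epsilon_{s_j}$ by the very definition of $\epsilon_{s_j}$: the half-angle of the selected covering cone and the complementary-cone angle are matched automatically because both come from one and the same $\epsilon_{s_j}$. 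If you build your cover this way, the rest of your argument goes through.
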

\begin{proof}
		Assuming that there exists no approximate $\he$-tangent at $x\in X$ we know that for all directions $s\in\Sphere^{n-1}$ there is an $\epsilon_{s}>0$, 
		such that $\Theta^{*\he}(\HM^{\he},X\backslash C_{s,\epsilon_{s}}(x),x)>0$. As $\Sphere^{n-1}$ is compact 
		and $\{C_{s,\epsilon_{s}/2}(x)\}_{s\in \Sphere^{n-1}}$ is
		an open cover of $x+\Sphere^{n-1}$ there exists a finite subcover $\{C_{s_{i},\epsilon_{s_{i}}/2}(x)\}_{i=1}^{N}$. 
		Clearly this subcover also covers the 
		whole $\R^{n}\backslash\{x\}$. As $0<\Theta^{*\he}(\HM^{\he},X,x)=\Theta^{*\he}(\HM^{\he},X\backslash\{x\},x)$ we know, 
		by Lemma \ref{positiveupperdensityinfinitedecomposition}, note Remark \ref{positiveupperdensityinfinitedecompositionremark}, 
		that for some $j\in \{1,\ldots, N\}$ we have
		$\Theta^{*\he}(\HM^{\he},X\cap C_{s_{j},\epsilon_{j}/2}(x),x)>0$.
\end{proof}

\begin{lemma}[(Density estimates for set with no weak approximate tangent)]\label{densityestimaenonweaklylinapprset}
	Let $X\subset \R^{n}$, $x\in\R^{n}$, $\he\in (0,\infty)$ and $\Theta_{*}^{\he}(\HM^{\he},X,x)>0$. If $X$ has no weak approximate $\he$-tangent at $x$, 
	then there is a mapping $s:(0,\rho)\to \mathbb{S}^{n-1}$, $\rho>0$ and $\epsilon_{0}>0$, such that
	\begin{align*}
		0<\Theta_{*}^{\he}(\HM^{\he},X\cap C_{s(r),\epsilon_{0}/2}(x),x)\quad\text{and}\quad
		0<\Theta^{*\he}(\HM^{\he},X\backslash C_{s(r),\epsilon_{0}}(x),x).
	\end{align*}
\end{lemma}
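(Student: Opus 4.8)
The statement is the weak‑tangent analogue of Lemma \ref{implicationsoflackofapptangents}, so I would mimic that proof but carry the radius‑dependence of the directions along and replace the purely ``upper density'' bookkeeping in the cone‑splitting step by the uniform cone estimate available under the hypothesis $\Theta_{*}^{\he}(\HM^{\he},X,x)>0$. The rough idea: since $X$ has \emph{no} weak approximate $\he$-tangent at $x$, the direction‑selecting procedure from Lemma \ref{uniformestimatesoncones} must fail to produce a genuine tangent, which forces, for the constructed mapping $r\mapsto s(r)$, a uniformly positive amount of mass to sit outside some fixed cone $C_{s(r),\epsilon_{0}}(x)$ along a sequence of radii; on the other hand the part of $X$ inside the slightly narrower cone $C_{s(r),\epsilon_{0}/2}(x)$ retains positive lower density by the very construction of that mapping.

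\textbf{Key steps.} First I would invoke Lemma \ref{uniformestimatesoncones}: because $\Theta_{*}^{\he}(\HM^{\he},X,x)>0$, there is $\rho>0$ and a mapping $s:(0,\rho)\to\mathbb{S}^{n-1}$ such that for every $\epsilon>0$ there is $c(\epsilon)>0$ with $c(\epsilon)r^{\he}\le \HM^{\he}(X\cap \overline B_{r}(x)\cap C_{s(r),\epsilon}(x))$ for all $r\in(0,\rho)$. This $s$ is our candidate weak approximate $\he$-tangent, so by Lemma \ref{weaklylinapproxiffweakapprtangent} (equivalently directly from the definition) the assumption that $X$ has \emph{no} weak approximate $\he$-tangent says in particular that $s$ fails: there is $\epsilon_{0}>0$ with $\Theta^{*\he}(\HM^{\he},X\backslash C_{s(r),\epsilon_{0}}(x),x)>0$, which is exactly the second claimed inequality. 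Second, for the first inequality I take the $\epsilon_{0}/2$ obtained from this $\epsilon_{0}$ and feed $\epsilon:=\epsilon_{0}/2$ into the uniform estimate from Lemma \ref{uniformestimatesoncones}; this gives $c(\epsilon_{0}/2)r^{\he}\le \HM^{\he}(X\cap \overline B_{r}(x)\cap C_{s(r),\epsilon_{0}/2}(x))$ for all $r\in(0,\rho)$, whence $\Theta_{*}^{\he}(\HM^{\he},X\cap C_{s(r),\epsilon_{0}/2}(x),x)\ge c(\epsilon_{0}/2)/2^{\he}>0$. Combining the two displays and shrinking $\rho$ if necessary (so that both hold on the common domain) yields the lemma.

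\textbf{Main obstacle.} The only genuinely delicate point is making sure the mapping $s$ whose existence is asserted in the conclusion is the \emph{same} one produced by Lemma \ref{uniformestimatesoncones}, and that ``no weak approximate $\he$-tangent'' is applied to \emph{that} mapping rather than to some abstract one; a priori ``$X$ has no weak approximate $\he$-tangent at $x$'' quantifies over all mappings $s:(0,\rho')\to\mathbb{S}^{n-1}$ and all $\rho'>0$, so one must check that the particular $s$ and $\rho$ from Lemma \ref{uniformestimatesoncones} fall under that universal statement — which they do, after possibly restricting to a smaller $\rho$. One must also be slightly careful that ``fails to be a weak approximate tangent'' is negated correctly: it means there exists $\epsilon_{0}>0$ such that $\Theta^{*\he}(\HM^{\he},X\backslash C_{s(r),\epsilon_{0}}(x),x)\neq 0$, i.e. $>0$ (the density is nonnegative), and this $\epsilon_{0}$ is precisely the one used in the statement. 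Everything else is the routine density bookkeeping already appearing in Lemmata \ref{implicationspositivelowerdensity} and \ref{implicationsoflackofapptangents}, so I would keep it terse.
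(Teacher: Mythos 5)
Your proposal is correct and follows essentially the same route as the paper: fix the mapping $s$ and radius $\rho$ produced by Lemma \ref{uniformestimatesoncones}, apply the negation of the weak-tangent property to that particular $s$ to obtain $\epsilon_{0}$ with positive upper density outside $C_{s(r),\epsilon_{0}}(x)$, and use the uniform cone estimate with $\epsilon=\epsilon_{0}/2$ for the positive lower density inside the narrower cone. The paper merely routes the negation through the equivalence with weak $\he$-linear approximability (Lemma \ref{weaklylinapproxiffweakapprtangent}) to extract an explicit sequence of radii, which is the same content as your direct observation that the upper density must be positive.
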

\begin{proof}
	If $X$ has no weak approximate $\he$-tangent at $x\in\R^{n}$ it is not weakly $\he$-linearly approximable in $x$, by Lemma \ref{weaklylinapproxiffweakapprtangent}, 
	so that for all $\rho>0$ and all mappings $s:(0,\rho)\to \mathbb{S}^{n-1}$ there is an $\epsilon_{0}>0$ and a $\delta_{0}>0$, 
	such that for all $\rho'\in (0,\rho)$ there is $r\in (0,\rho')$ with
	\begin{align*}
		\delta_{0} r^{\he}<\HM^{\he}([X\cap \overline B_{r}(x)]\backslash C_{s(r),\epsilon_{0}}(x)).
	\end{align*}
	By choosing $\rho'=\rho (2k)^{-1}$ we obtain a sequence $(r_{k})_{k\in\N}$, $r_{k}>0$, $r_{k}\to 0$, with
	\begin{align}\label{conditionwhichimpliesposupperdensity}
		\delta_{0} r_{k}^{\he}<\HM^{\he}([X\cap \overline B_{r_{k}}(x)]\backslash C_{s(r_{k}),\epsilon_{0}}(x)) \quad\text{for all }k\in\N.
	\end{align}
	Now fix $\rho$ and $s:(0,\rho)\to\mathbb{S}^{n-1}$ to be those we obtain from Lemma \ref{uniformestimatesoncones}. Then
	\begin{align*}
		0<\Theta^{*\he}(\HM^{\he},X\backslash C_{s(r),\epsilon_{0}}(x),x).
	\end{align*}
	by (\ref{conditionwhichimpliesposupperdensity}) and Lemma \ref{uniformestimatesoncones} gives us
	\begin{align*}
		0<c(\epsilon_{0}/2)/2\leq \Theta_{*}^{\he}(\HM^{\he},X\cap C_{s(r),\epsilon_{0}/2}(x),x).
	\end{align*}
\end{proof}

We shall now give a construction that guarantees that a set has no weak approximate $\he$-tangent.

\begin{lemma}[(Construction of sets with no weak appr. tangent)]\label{constructionnoweakapprtangent}
	Let $X\subset \R^{n}$, $\he\in (0,\infty)$ such that $X$ has an approximate $\he$-tangent in direction $s\in \mathbb{S}^{n-1}$ at $x\in \R^{n}$ and $\Theta^{*\he}(\HM^{\he},X,x)>0$.
	Let $A\in \mathrm{SO}(n)\backslash\{\mathrm{id}\}$, $\phi_{A}(y)=A(y-x)+x$ such that the axis of rotation does not coincide with $\R s$.
	Then $X\cup \phi_{A}(X)$ has no weak approximate $\he$-tangent at $x$.
\end{lemma}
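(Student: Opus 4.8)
The plan is to argue by contradiction: suppose $X \cup \phi_A(X)$ \emph{does} have a weak approximate $\he$-tangent at $x$, given by some mapping $\tau : (0,\rho) \to \mathbb{S}^{n-1}$. The first step is to record the one fact that makes rotation relevant: since $A$ is a nontrivial rotation whose axis does not contain $\R s$, the image direction $A s$ (more precisely $\pm A s$, viewed as a point of $\mathbb{S}^{n-1}$) is distinct from $\pm s$. Hence there is an $\epsilon_0 > 0$ so small that the double cones $C_{s, 2\epsilon_0}(x)$ and $C_{A s, 2\epsilon_0}(x)$ are disjoint away from $x$; this is the geometric heart of the construction and it is elementary, just continuity of rotation acting on the sphere.

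Next I would exploit that $\phi_A$ is an isometry fixing $x$. Therefore $\phi_A$ maps $\overline B_r(x)$ to $\overline B_r(x)$, preserves $\HM^{\he}$, and maps the cone $C_{s,\epsilon}(x)$ to $C_{As,\epsilon}(x)$. Since $X$ has an approximate $\he$-tangent in direction $s$ at $x$, applying $\phi_A$ shows that $\phi_A(X)$ has an approximate $\he$-tangent in direction $As$ at $x$; i.e.
\begin{align*}
	\Theta^{\he}(\HM^{\he}, \phi_A(X) \backslash C_{As,\epsilon}(x), x) = 0 \quad \text{for all } \epsilon > 0,
\end{align*}
and likewise $\Theta^{\he}(\HM^{\he}, X \backslash C_{s,\epsilon}(x), x) = 0$. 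Also, $X \subset C_{s,\epsilon_0}(x) \cup \{x\} \cup (X \backslash C_{s,\epsilon_0}(x))$, and the last piece has zero $\he$-density, so $\Theta^{*\he}(\HM^{\he}, X \cap C_{s,\epsilon_0}(x), x) = \Theta^{*\he}(\HM^{\he}, X, x) > 0$ by subadditivity of $\limsup$; similarly $\Theta^{*\he}(\HM^{\he}, \phi_A(X) \cap C_{As,\epsilon_0}(x), x) > 0$. So there are sequences $r_n \downarrow 0$ witnessing positive upper density of $X$ concentrated in $C_{s,\epsilon_0}(x)$.

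Now I derive the contradiction. Fix any such sequence $r_n \downarrow 0$. For each $n$, the candidate tangent direction $\tau(r_n) \in \mathbb{S}^{n-1}$ cannot lie within angular distance $\epsilon_0$ of \emph{both} $\pm s$ and $\pm As$, since these are separated by more than $2\epsilon_0$ on the sphere; so either $C_{s,\epsilon_0}(x) \cap C_{\tau(r_n),\epsilon_0}(x) = \emptyset$ (up to the apex) or $C_{As,\epsilon_0}(x) \cap C_{\tau(r_n),\epsilon_0}(x) = \emptyset$. In the first case, the portion of $X$ inside $C_{s,\epsilon_0}(x) \cap \overline B_{r_n}(x)$ lies in $[X \cup \phi_A(X)] \backslash C_{\tau(r_n),\epsilon_0}(x)$, forcing
\begin{align*}
	\frac{\HM^{\he}([(X \cup \phi_A(X)) \cap \overline B_{r_n}(x)] \backslash C_{\tau(r_n),\epsilon_0}(x))}{(2r_n)^{\he}} \geq \frac{\HM^{\he}(X \cap C_{s,\epsilon_0}(x) \cap \overline B_{r_n}(x))}{(2r_n)^{\he}},
\end{align*}
and passing to a further subsequence of $r_n$ along which the right-hand side stays bounded below (possible by positive upper density) contradicts that $\tau$ is a weak approximate $\he$-tangent. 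The second case is symmetric, using $\phi_A(X) \cap C_{As,\epsilon_0}(x)$. A pigeonhole over $n$ (at least one of the two cases occurs infinitely often) then yields the contradiction in full.

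The main obstacle is a bookkeeping one rather than a conceptual one: one must be careful that the \emph{bad radii} $r_n$ witnessing positive upper density of the $s$-part (and of the $As$-part) can be threaded through the case distinction so that the same subsequence simultaneously sees a definite amount of mass outside $C_{\tau(r_n),\epsilon_0}(x)$; this is handled by choosing the sequence up front for $X$ in direction $s$, running the dichotomy on $\tau(r_n)$, and if the ``first case'' fails infinitely often, restarting with a sequence adapted to $\phi_A(X)$ in direction $As$ instead. Everything else is routine from the isometry invariance of $\HM^{\he}$ and the density lemmas already in Section~\ref{sectiondensityestimates}.
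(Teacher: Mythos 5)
Your proposal is correct and follows the same route as the paper: the paper's own proof is a single sentence (``clearly $\phi_{A}(X)$ has an approximate $\he$-tangent in direction $As$, so that $X\cup \phi_{A}(X)$ cannot have a weak approximate $\he$-tangent''), so what you have written is essentially the detailed expansion of that ``clearly''. Two points deserve attention. First, your fallback bookkeeping (``restarting with a sequence adapted to $\phi_{A}(X)$'') does not quite close as described: if case (a) fails for all large $n$ along the first sequence and case (b) fails for all large $m$ along the second, you are left with mismatched sequences and no lower bound. The clean fix is the isometry identity $\HM^{\he}(\phi_{A}(X)\cap C_{As,\epsilon_{0}}(x)\cap\overline B_{r}(x))=\HM^{\he}(X\cap C_{s,\epsilon_{0}}(x)\cap\overline B_{r}(x))$ for \emph{every} $r$, which lets you use one and the same sequence $(r_{n})_{n\in\N}$ for both cones; then for every $n$ at least one of the two cases places mass at least $cr_{n}^{\he}$ in $[X\cup\phi_{A}(X)]\cap\overline B_{r_{n}}(x)$ outside $C_{\tau(r_{n}),\epsilon_{0}}(x)$, and no pigeonhole or restart is needed. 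Second, your opening claim that $\pm As\neq \pm s$ requires reading the hypothesis as ``$A$ does not map the line $\R s$ to itself'': a rotation by $\pi$ about an axis orthogonal to $s$ has axis different from $\R s$ yet sends $s$ to $-s$, in which case the double cones $C_{s,\epsilon}(x)$ and $C_{As,\epsilon}(x)$ coincide and the separation argument (and indeed the conclusion of the lemma, e.g.\ for a segment $X$ in direction $s$) fails. That imprecision sits in the statement of the lemma rather than in your argument, and the paper's one-line proof is affected in exactly the same way.
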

\begin{proof}
	Clearly $\phi_{A}(X)$ has an approximate $\he$-tangent in direction $\phi_{A}(s)$, so that $X\cup \phi_{A}(X)$ cannot have a weak
	approximate $\he$-tangent.
\end{proof}

\section{Finite \texorpdfstring{$1/\Delta$}{inverse thickness}, \texorpdfstring{$\U_{p}^{\he}$}{Up alpha} implies app. \texorpdfstring{$\he$}{alpha}-tangents for \texorpdfstring{$p\in [\he,\infty)$}{p in [alpha,infinity)}}\label{sectionUp}

We now show that for $p\in[\he,\infty)$ a set with finite $\mathcal{U}_{p}^{\he}$ is guaranteed to have approximate $\he$-tangents at all points.
This directly implies similar results for the inverse thickness $1/\Delta$. Later on we give a 
counterexample to the analogous result for $\he=1$ and $p\in (0,1)$.

\begin{lemma}[(Finite $\U_{p}^{\he}$ guarantees approximate $\he$-tangents)]\label{finiteUpguaranteesapproxiamtetangents}
	Let $X\subset \R^{n}$, $x\in \R^{n}$, $\he\in (0,\infty)$, $p\in [\he,\infty)$ and $\U_{p}^{\he}(X)<\infty$.
	Then $X$ has an approximate $\he$-tangent at $x$.
\end{lemma}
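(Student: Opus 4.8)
The plan is to argue by contradiction: suppose $X$ has no approximate $\he$-tangent at $x$. Since the energy is finite we may, by Lemma~\ref{nicebehaviourlimitofsmallballs} (and Lemma~\ref{finiteenergyfinitemeasureball} to guarantee finite measure on balls when $\he\ge 1$; for $\he<1$ we invoke Lemma~\ref{consequencesfiniteenergyhe<1}, noting the second alternative there is exactly the existence of an approximate $\he$-tangent), work locally and assume $\Theta^{*\he}(\HM^{\he},X,x)<\infty$ is not needed but that $\HM^{\he}(X\cap\overline B_r(x))<\infty$ for small $r$. If the upper density $\Theta^{*\he}(\HM^{\he},X,x)$ were zero then $X$ would trivially have an approximate $\he$-tangent (every direction works), so we may assume $\Theta^{*\he}(\HM^{\he},X,x)>0$. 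Then Lemma~\ref{implicationsoflackofapptangents} provides a direction $s\in\Sphere^{n-1}$ and $\epsilon_0>0$ with
\[
	0<\Theta^{*\he}(\HM^{\he},X\cap C_{s,\epsilon_0/2}(x),x)\quad\text{and}\quad 0<\Theta^{*\he}(\HM^{\he},X\setminus C_{s,\epsilon_0}(x),x).
\]

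**Extracting two separated pieces near $x$.** Set $C\vcentcolon= X\cap C_{s,\epsilon_0/2}(x)$ and $C'\vcentcolon= X\setminus C_{s,\epsilon_0}(x)$. The key geometric point is that for any $y\in C$ and $y'\in C'$ the direction $y-x$ makes angle $<\epsilon_0/2$ with $\pm s$ while $y'-x$ makes angle $\ge\epsilon_0$ with $\pm s$, hence the angle $\measuredangle(y,x,y')$ is bounded below by $\epsilon_0/2$; by Lemma~\ref{distanceintermsofangle} (or the formula in Remark~\ref{differentformulaskappa}, $r(x,y,y')=|x-y'|/(2|\sin\measuredangle(x,y,y')|)$ after relabeling, together with $|x-y|,|x-y'|\le r$) one gets $r(x,y,y')\le C_{\epsilon_0}\, r$ whenever $y,y'\in\overline B_r(x)$, i.e.\ $\rho_G(x)\le\kappa^{-1}$-type control: more precisely $\rho_G(x)\le r(x,y,y')\le C_{\epsilon_0} r$ for every $x$ whose associated set meets both $C$ and $C'$ inside $\overline B_r(x)$. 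Now choose a sequence $r_n\downarrow 0$ realising the positive upper density of $C$: $\HM^{\he}(C\cap\overline B_{r_n}(x))\ge c\, r_n^{\he}$. Along a further subsequence we also keep $\HM^{\he}(C'\cap\overline B_{r_n}(x))\ge c'\,r_n^{\he}$ (here one has to be a little careful, since upper densities need not be realised along the same sequence — this is where one argues separately or uses that for \emph{every} small $r$, $\HM^{\he}(C'\cap \overline B_r(x))$ is eventually positive; in fact once $\HM^{\he}(C\cap\overline B_r(x))>0$ the point $x$ already has two "spread" directions and it suffices that $C'$ has positive measure in $\overline B_{r_n}(x)$, which follows from $\Theta^{*\he}(\HM^{\he},C',x)>0$ after passing to a common subsequence via a diagonal argument, or by replacing $C'$ with $X\setminus C_{s,\epsilon_0}(x)$ and using that its measure in $\overline B_{r_n}$ cannot vanish if the upper density along $r_n$ is to stay positive).

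**The energy lower bound.** For every $w\in C\cap\overline B_{r_n}(x)$ we have, by the above, $\rho_G(w)\le r(w, y, y')$ for a suitable $y\in C$, $y'\in C'$ in $\overline B_{r_n}(x)$ — wait, more simply: for each $w\in C\cap \overline B_{r_n}(x)$, picking any $y'\in C'\cap\overline B_{r_n}(x)$ and any third point we get $\rho_G(w)\le C_{\epsilon_0}r_n$, hence $\kappa_G(w)\ge (C_{\epsilon_0}r_n)^{-1}$. Therefore
\[
	\U_p^{\he}(X\cap \overline B_{r_n}(x))\ge \int_{C\cap\overline B_{r_n}(x)}\kappa_G^p(w)\dHM(w)\ge \frac{\HM^{\he}(C\cap\overline B_{r_n}(x))}{(C_{\epsilon_0}r_n)^p}\ge \frac{c\, r_n^{\he}}{(C_{\epsilon_0})^p\, r_n^{p}} = c'' r_n^{\he-p}.
\]
Since $p\ge\he$ we have $r_n^{\he-p}\ge r_n^{0}=1$ (for $r_n<1$; if $p=\he$ we get a constant lower bound, if $p>\he$ it blows up), so $\U_p^{\he}(\overline B_{r_n}(x)\cap X)\not\to 0$, contradicting Lemma~\ref{nicebehaviourlimitofsmallballs}. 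The result for $1/\Delta$ then follows from Lemma~\ref{inequalityforcurvatureenergies} (finite inverse thickness bounds $\U_p^{\he}$ once the measure is finite), together with Lemma~\ref{finiteenergyfinitemeasureball}.

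**Main obstacle.** The delicate point is the third-point issue in defining $\rho_G(w)=\inf_{v,w'} r(w,v,w')$: we need a \emph{genuine} third point distinct from $w$ and from the chosen $y'$, and we need the lower bound on the circumradius to be uniform — this is precisely handled by the estimate $r(w,y',v)\le C_{\epsilon_0}\,\mathrm{diam}$-type bound from Lemma~\ref{distanceintermsofangle}, but one must check the degenerate/collinear cases and that there genuinely are at least two other points of $X$ near $x$ (which follows from $C$ and $C'$ both having positive $\HM^{\he}$-measure in $\overline B_{r_n}(x)$, in particular being infinite sets). The second subtlety — matching the two upper-density sequences — is resolved by observing we only need $C'$ to have \emph{positive} measure in $\overline B_{r_n}(x)$, not density-order measure, and this can be arranged along the sequence realising the upper density of $C$ by a standard subsequence/diagonal extraction, or circumvented entirely by symmetrising the roles of $C$ and $C'$.
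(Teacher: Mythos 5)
Your argument is in essence the paper's own proof: apply Lemma~\ref{implicationsoflackofapptangents} to get two cone pieces $C=X\cap C_{s,\epsilon_{0}/2}(x)$ and $C'=X\backslash C_{s,\epsilon_{0}}(x)$ of positive upper density, bound $\kappa_{G}$ from below by $\sin(\epsilon_{0}/2)/r_{n}$ on $C\cap\overline B_{r_{n}}(x)$ via a triangle with one vertex at $x$, integrate to get $\U_{p}^{\he}(X\cap B_{2r_{n}}(x))\geq c\,r_{n}^{\he-p}\geq c'>0$ for $p\geq\he$, and contradict Lemma~\ref{nicebehaviourlimitofsmallballs}. Two points need tightening. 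First, ``picking any $y'$ and any third point'' is not correct: the third vertex must be $x$ itself, because the estimate rests on the angle $\measuredangle(w,x,y')\in[\epsilon_{0}/2,\pi-\epsilon_{0}/2]$ sitting at $x$; with an arbitrary third vertex $v$ the angle $\measuredangle(w,v,y')$ can degenerate and $r(w,y',v)$ can be arbitrarily large. Your displayed inequality $r(x,y,y')\leq C_{\epsilon_{0}}r$ is the right one, so this is a slip of phrasing, but it feeds into a genuine gap: the infimum defining $\rho_{G}(w)$ runs over points of $X$, while the hypothesis only gives $x\in\R^{n}$, so the inequality $\rho_{G}(w)\leq r(w,y',x)$ is not available as written when $x\notin X$. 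The paper repairs this by observing that positive upper density forces $x$ to be an accumulation point of $X$ and invoking Lemma~\ref{curvatureenergiesifaccpointisremovedlemma} to assume $x\in X$ without changing the energy (one could instead replace $x$ by a point of $X$ at distance much smaller than $r_{n}$ and control the angle perturbation); you should add this reduction. Finally, your worry about matching the two upper-density sequences dissolves for the reason you yourself indicate: one only needs, for each $n$, a single point $b_{n}\in C'\cap\overline B_{r_{n}}(x)\backslash\{x\}$, which follows because positive upper density of $C'$ at $x$ makes $x$ an accumulation point of $C'$ --- this is exactly how the paper argues, with no diagonal extraction required.
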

\begin{proof}
		Assume that $\Theta^{*\he}(\HM^{\he},X,x)>0$ -- which we might without loss of generality, because else the proposition is clear -- and that $X$ has no approximate $\he$-tangent at $x$.
		As $x$ has to be an accumulation point of $X$ we can, by means of Lemma \ref{curvatureenergiesifaccpointisremovedlemma}, assume that 
		without loss of generality $x\in X$. By Lemma \ref{finiteenergyfinitemeasureball} and Lemma \ref{consequencesfiniteenergyhe<1} we can also assume that $\HM^{\he}(X\cap B_{r}(x))<\infty$ for all small radii.
		Now we use Lemma \ref{implicationsoflackofapptangents} and set
		$A\vcentcolon=X\cap C_{s,\epsilon/2}(x) $, $B\vcentcolon= X\backslash C_{s,\epsilon}(x)$ 
		and choose a sequence of radii $r_{n}\downarrow 0$, such that $\HM^{\he}(A\cap\overline B_{r_{n}}(x))/r_{n}^{\he}\geq c>0$.
		Then $\measuredangle(a,x,b)\in [\epsilon/2,\pi-\epsilon/2]$ for all $a\in A$ and all $b\in B$. 
		Clearly $x$ is an accumulation point of of $B$, so that
		for each $n\in\N$ there exists $b_{n}\in B \cap \overline B_{r_{n}}(x)$. Using Lemma \ref{distanceintermsofangle} we obtain for all 
		$a\in A \cap \overline B_{r_{n}}(x)\backslash\{x\}$
		\begin{align*}
			\MoveEqLeft \kappa_{G}(a)\geq\frac{1}{r(a,b_{n},x)}=\frac{2\dist(L_{a,b_{n}},x)}{\norm{a-x}\norm{b_{n}-x}}
			\geq\frac{\sin(\epsilon/2)\min\{\norm{a-x},\norm{b_{n}-x}\}}{\norm{a-x}\norm{b_{n}-x}}\\
			&=\frac{\sin(\epsilon/2)}{\max\{\norm{a-x},\norm{b_{n}-x}\}}\geq \frac{\sin(\epsilon/2)}{r_{n}}.
		\end{align*}
		We have
		\begin{align*}
			\MoveEqLeft \U_{p}^{\he}(B_{2r_{n}}(x)\cap X)\geq
			\int_{A\cap\overline B_{r_{n}}(x)\backslash\{x\}} \kappa_{G}^{p}(t)\dHM(t)
			\geq \HM^{\he}(A\cap\overline B_{r_{n}}(x)) \Big(\frac{\sin(\epsilon/2)}{r_{n}}\Big)^{p}\\
			&\geq c r_{n}^{\he}\Big(\frac{\sin(\epsilon/2)}{r_{n}}\Big)^{p}\geq c'>0
		\end{align*}
		for all $n\in\N$. Hence Lemma \ref{nicebehaviourlimitofsmallballs} tells us that $\mathcal{U}_{p}^{\he}(X)=\infty$,
		note that for this we needed $\HM^{\he}(B_{2r_{n}}(x)\cap X)<\infty$. This is absurd as $\mathcal{U}_{p}^{\he}(X)<\infty$.
\end{proof}

\begin{corollary}[(Sets with finite $\U_{p}^{1}$ are rectifiable)]\label{finiteUp=rectifiable}
	Let $X\subset \R^{n}$ be an $\HM^{1}$-measurable set and $p\in [1,\infty)$. 
	If $\U_{p}^{1}(X)<\infty$ then $X$ is $1$-rectifiable.
\end{corollary}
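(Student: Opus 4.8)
The plan is to derive $1$-rectifiability from the fact, just proven in Lemma \ref{finiteUpguaranteesapproxiamtetangents}, that a set with finite $\U_p^1$ has an approximate $1$-tangent at \emph{every} point. The key observation is that we are now in the classical setting of \cite{Mattila1995a}: $X$ is $\HM^1$-measurable, and by Lemma \ref{finiteenergyfinitemeasureball} (with $\he=1$) we have $\HM^1(X\cap\overline B_R(y))<\infty$ for all $y$ and $R$, so $\HM^1\restriction X$ is a Radon measure and in particular $X$ is $\sigma$-finite with respect to $\HM^1$. This is exactly the hypothesis under which the standard characterisation of rectifiability via approximate tangents applies.

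First I would recall the relevant classical theorem: for an $\HM^1$-measurable set $X\subset\R^n$ with $\HM^1\restriction X$ locally finite, $X$ is $1$-rectifiable if and only if $X$ has an approximate tangent line at $\HM^1$-almost every point of $X$ (cf.\ \cite[Theorem 15.19, p.~214]{Mattila1995a}). One subtlety: Mattila's notion of approximate tangent includes the requirement $\Theta^{*1}(\HM^1,X,x)>0$, whereas the definition used in this paper deliberately drops that density condition (see the Remark on differences to standard terminology). This is harmless here, because the set of points of $X$ where $\Theta^{*1}(\HM^1,X,x)=0$ has $\HM^1$-measure zero — this is a standard density lemma for Radon measures (e.g.\ \cite[Theorem 6.2, p.~89]{Mattila1995a}), so at $\HM^1$-a.e.\ point of $X$ the paper's approximate $1$-tangent, which exists at \emph{all} points by Lemma \ref{finiteUpguaranteesapproxiamtetangents}, is automatically an approximate tangent in Mattila's sense as well.

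So the proof would run: by Lemma \ref{finiteenergyfinitemeasureball}, $\HM^1\restriction X$ is locally finite; by the density lemma, $\Theta^{*1}(\HM^1,X,x)>0$ for $\HM^1$-a.e.\ $x\in X$; by Lemma \ref{finiteUpguaranteesapproxiamtetangents}, $X$ has an approximate $1$-tangent (in the paper's sense) at every $x\in\R^n$, hence in Mattila's sense at $\HM^1$-a.e.\ $x\in X$; therefore by the classical characterisation $X$ is $1$-rectifiable. I do not expect a genuine obstacle here — the corollary is essentially a bookkeeping exercise combining the main lemma of the section with off-the-shelf geometric measure theory. The only point requiring a little care is the terminological mismatch about the positive-density requirement, and reconciling that is exactly the density lemma cited above; one should also make sure the local finiteness of $\HM^1\restriction X$ (needed both for the density lemma and for the rectifiability criterion) is invoked, which is precisely what Lemma \ref{finiteenergyfinitemeasureball} supplies.
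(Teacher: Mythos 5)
Your proof is correct and follows essentially the same route as the paper: invoke Lemma \ref{finiteenergyfinitemeasureball} to reduce to pieces of finite $\HM^{1}$-measure, apply Lemma \ref{finiteUpguaranteesapproxiamtetangents} together with Mattila's characterisation of $1$-rectifiability via approximate tangents, and conclude by countable exhaustion. Your explicit reconciliation of the paper's tangent notion with Mattila's (via the a.e.\ positivity of $\Theta^{*1}$ on sets of finite measure) is a point the paper's own proof passes over silently, and it is handled correctly.
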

\begin{proof}
	For all $n\in\N$ Lemma \ref{finiteenergyfinitemeasureball} tells us that $X\cap B_{n}(0)$ has finite measure, so that by 
	Lemma \ref{finiteUpguaranteesapproxiamtetangents} together with the equivalent characterisation of rectifiablity in terms of 
	approximate $1$-tangents, see for example \cite[15.19 Theorem, p. 212]{Mattila1995a}, we know that all $X\cap B_{n}(0)$ are rectifiable.
	By taking all the rectifiable curves that cover the $X\cap B_{n}(0)$, which are still countably many, we have covered $X$ with
	countably many curves, so that $X$ is rectifiable.
\end{proof}

\begin{corollary}[(Sets with positive thickness are rectifiable)]
	Let $X\subset \R^{n}$ be an $\HM^{1}$-measurable set with $\HM^{1}(X)<\infty$ and $1/\Delta[X]<\infty$. Then $X$ is $1$-rectifiable and has 
	an approximate $1$-tangent at each point $x\in \R^{n}$.
\end{corollary}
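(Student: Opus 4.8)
The plan is to deduce this corollary directly from the two preceding results by verifying their hypotheses. First I would recall the chain of inequalities from Lemma \ref{inequalityforcurvatureenergies}: since $\HM^{1}(X)<\infty$ and $1/\Delta[X]<\infty$, we have
\begin{align*}
	\U_{p}^{1}(X)\leq\frac{\HM^{1}(X)^{2}}{\Delta[X]^{p}}<\infty
\end{align*}
for every $p\in[1,\infty)$; in particular $\U_{1}^{1}(X)<\infty$. This places us squarely in the setting of Corollary \ref{finiteUp=rectifiable} with $p=1$ (the $\HM^{1}$-measurability of $X$ being assumed), which immediately yields that $X$ is $1$-rectifiable.

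For the tangency claim I would instead invoke Lemma \ref{finiteUpguaranteesapproxiamtetangents}: taking $\he=1$ and $p=1\in[\he,\infty)$, the finiteness $\U_{1}^{1}(X)<\infty$ established above gives, for an arbitrary point $x\in\R^{n}$, that $X$ has an approximate $1$-tangent at $x$. Combining the two conclusions gives the statement. (Alternatively, one could skip Lemma \ref{finiteUpguaranteesapproxiamtetangents} and extract the approximate-tangent assertion from the equivalent characterisation of $1$-rectifiability used in the proof of Corollary \ref{finiteUp=rectifiable}, but since that characterisation only guarantees approximate tangents $\HM^{1}$-almost everywhere, whereas we want them at \emph{every} point, it is cleaner to cite Lemma \ref{finiteUpguaranteesapproxiamtetangents} directly.)

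There is essentially no obstacle here: the corollary is a bookkeeping consequence of Lemma \ref{inequalityforcurvatureenergies} feeding into Lemma \ref{finiteUpguaranteesapproxiamtetangents} and Corollary \ref{finiteUp=rectifiable}. The only mild subtlety worth a sentence is that one should make sure $\HM^{1}(X)<\infty$ is genuinely needed to apply Lemma \ref{inequalityforcurvatureenergies} (it is, since that lemma is stated for metric spaces with finite $\HM^{\he}$ measure), but it is part of the hypotheses, so nothing further is required.
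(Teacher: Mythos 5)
Your proposal is correct and follows essentially the same route as the paper: bound $\U_{p}^{1}(X)$ via Lemma \ref{inequalityforcurvatureenergies}, then feed the finiteness into Corollary \ref{finiteUp=rectifiable} for rectifiability and into Lemma \ref{finiteUpguaranteesapproxiamtetangents} for the pointwise tangents (your parenthetical remark that the a.e.\ characterisation alone would not suffice for \emph{every} point is exactly the right reason to cite the lemma directly). The only cosmetic quibble is the power of $\HM^{1}(X)$ in your bound -- the lemma actually yields $\U_{p}^{1}(X)\leq \HM^{1}(X)/\Delta[X]^{p}$ -- but finiteness is all that is used, so nothing is affected.
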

\begin{proof}
	Because $\U_{p}^{1}(X)\leq [\HM^{1}(X)]^{p}/\Delta[X]$, see Lemma \ref{inequalityforcurvatureenergies}, this a an immediate consequence of 
	Lemma \ref{finiteUp=rectifiable}. The result for the approximate $1$-tangents remains true when $X$ is not measurable, but meets the other
	hypotheses.
\end{proof}

\subsection{Finite \texorpdfstring{$\mathcal{U}_{p}^{1}$}{Up1} does not imply (weak) approx. tangents for \texorpdfstring{$p\in (0,1)$}{p in (0,1)}}

For further reference we define

\begin{definition}[(The set $E$)]\label{definitionTset}
	We set $E\vcentcolon=([-1,1]\times\{0\})\cup(\{0\}\times [0,1])\subset \R^{2}$ as well as $E_{1}\vcentcolon=[-1,0]\times\{0\}$, 
	$E_{2}\vcentcolon=\{0\}\times[0,1]$ and $E_{3}\vcentcolon=[0,1]\times\{0\}$.
\end{definition}

Clearly $E$ does not have a weak approximate $1$-tangent at $(0,0)$.
To show that our results are sharp, we need to compute the appropriate energy of $E$ in each section. We therefore start with

\begin{proposition}[(The set $E$ has finite $\mathcal{U}_{p}^{1}$ for $p\in (0,1)$)]\label{TsetfiniteUp}
	For $p\in (0,1)$ we have
	\begin{align*}
		\U_{p}^{1}(E)\leq \frac{6}{1-p}.
	\end{align*}
\end{proposition}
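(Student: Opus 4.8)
The plan is to bound $\kappa_G(x) = 1/\rho_G(x)$ pointwise on each of the three pieces $E_1, E_2, E_3$ and then integrate. For a point $x = (t,0) \in E_3$ with $t \in (0,1]$, the key observation is that the global radius of curvature $\rho_G(x)$ is controlled by choosing two auxiliary points that make a "bad" (small-circumradius) triangle with $x$: take one point on $E_2$ near the origin and one point on $E_1$ near the origin, or more efficiently use the fact that $x$, the origin, and a nearby point of $E_2$ form a right angle at the origin. Using the formula from Remark \ref{differentformulaskappa}, namely $r(x,y,z) = \tfrac{|x-z||y-z|}{2\dist(z,L_{x,y})}$, with $z$ the corner and $L_{x,y}$ the line through the two other points, one gets that $\rho_G((t,0))$ is of order $t$ (the distance to the corner), so $\kappa_G((t,0)) \lesssim 1/t$. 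Then $\int_0^1 \kappa_G^p \, \dd t \lesssim \int_0^1 t^{-p}\,\dd t = \tfrac{1}{1-p}$, which converges precisely because $p < 1$. By symmetry the same estimate holds on $E_1$ and on $E_2$, contributing three such terms.

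More concretely, for $x=(t,0)\in E_3$, choose $v=(0,s)\in E_2$ and $w=(0,0)$; then $\measuredangle(x,w,v) = \pi/2$ and by the first formula in Remark \ref{differentformulaskappa}, $r(x,v,w) = \tfrac{|x-v|}{2} \le \tfrac{|x-w| + |w-v|}{2}$. To make this small we want $|w-v|=s$ small, but we cannot take $s\to 0$ since $v$ must be distinct from $w$; however $\rho_G(x)$ is an infimum, so $\rho_G(x) \le \inf_{s>0} \tfrac{1}{2}\sqrt{t^2+s^2} = t/2$. Hence $\kappa_G((t,0)) \le 2/t$ for all $t \in (0,1]$, and similarly $\kappa_G((0,t)) \le 2/t$ for $x=(0,t)\in E_2$ with $t>0$ (using $v=(s,0)$ and $w=(0,0)$), and $\kappa_G((t,0))\le 2/t$ for $x=(-t,0)\in E_1$. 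Therefore
\begin{align*}
	\U_p^1(E) = \int_E \kappa_G^p \dHM^1 &\le \int_{E_1}\kappa_G^p \dHM^1 + \int_{E_2}\kappa_G^p \dHM^1 + \int_{E_3}\kappa_G^p \dHM^1 \\
	&\le 3\int_0^1 \Bigl(\frac{2}{t}\Bigr)^p \dd t = 3\cdot 2^p \cdot \frac{1}{1-p} \le \frac{6}{1-p},
\end{align*}
where the last inequality uses $2^p \le 2$ for $p \in (0,1)$.

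The main obstacle, and the only point requiring care, is justifying the pointwise bound $\kappa_G(x) \le 2/\dist(x,0)$ uniformly and checking that the circumradius formula is being applied to a genuinely non-degenerate triangle (so that $r$ is finite and the formula applies); for $x\in E_3\setminus\{0\}$, $v=(0,s)$ with $s>0$, $w=(0,0)$ the three points are indeed affinely independent, so this is fine, and the infimum over $s$ legitimately drives the circumradius down to $\tfrac12\dist(x,0)$. One should also note that the value at the corner $(0,0)$ and the endpoints is irrelevant since they form an $\HM^1$-null set. Everything else is the elementary integral $\int_0^1 t^{-p}\,\dd t = (1-p)^{-1}$, finite exactly in the claimed range $p\in(0,1)$.
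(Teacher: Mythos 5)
There is a genuine gap, and it sits exactly at the step you yourself flag as ``the only point requiring care'': your derivation of the pointwise bound runs in the wrong direction. You exhibit the specific triple $x=(t,0)$, $v=(0,s)$, $w=(0,0)$ and let $s\downarrow 0$ to conclude $\rho_{G}(x)\le \inf_{s>0}\tfrac12\sqrt{t^{2}+s^{2}}=t/2$. Since $\kappa_{G}=1/\rho_{G}$, an upper bound on the infimum $\rho_{G}(x)$ yields a \emph{lower} bound $\kappa_{G}(x)\ge 2/t$, not the upper bound $\kappa_{G}(x)\le 2/t$ that you need in order to control $\int_{E}\kappa_{G}^{p}\dd\HM^{1}$. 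Exhibiting one small-circumradius triangle can never bound $\kappa_{G}(x)=\sup_{y,z}\kappa(x,y,z)$ from above; you must show that \emph{every} admissible non-degenerate triangle through $x$ has circumradius at least $\norm{x}/2$.

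That missing half is exactly what the paper's proof supplies, and it is short: for $x\in E\backslash\{0\}$ the intersection $E\cap B_{\norm{x}}(x)$ is contained in the line through $x$ (for $x=(t,0)\in E_{3}$ the ball $B_{t}(x)$ misses the vertical segment entirely, and symmetrically for $E_{1}$, $E_{2}$), so any triple with $y,z\in B_{\norm{x}}(x)\cap E$ is degenerate and gives $\kappa=0$. Hence any triple with $\kappa(x,y,z)>0$ has a vertex at distance at least $\norm{x}$ from $x$, and since the circumradius is at least half of any side of the triangle, $r(x,y,z)\ge\norm{x}/2$, i.e. $\kappa_{G}(x)\le 2/\norm{x}$. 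With that bound in hand, your integration, the symmetry among $E_{1},E_{2},E_{3}$, and the estimate $3\cdot 2^{p}/(1-p)\le 6/(1-p)$ all go through exactly as in the paper. (Incidentally, the two directions together show $\rho_{G}((t,0))=t/2$ exactly: you proved the ``$\le$'' half but then invoked the ``$\ge$'' half.)
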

\begin{proof}
	For all $x\in E\backslash\{0\}$ and $y,z\in B_{\norm{x}}(x)\cap E$, $y\not=z$ 
	we have $\kappa(x,y,z)=0$, so that for $\kappa(x,y,z)>0$ we need $\norm{x-y}\geq\norm{x}$ or
	$\norm{x-z}\geq \norm{x}$, which both result in $r(x,y,z)\geq\norm{x}/2$ and consequently 
	\begin{align*}
		\sup_{\substack{y,z\in E\backslash\{x\}\\ y\not= z }} \kappa(x,y,z) \leq \frac{2}{\norm{x}},
	\end{align*}
	so that for $p\in (0,1)$
	\begin{align*}
		\MoveEqLeft
		\mathcal{U}_{p}^{1}(E)
		=\int_{E\backslash\{0\}} \big(\underbrace{\sup_{\substack{y,z\in E\backslash\{x\}\\ y\not= z }} 
		\kappa(x,y,z)}_{\leq 2/\norm{x}}\big)^{p}\dd\HM^{1}(x)\\
		&\leq 3\int_{E_{2}}\frac{2}{\norm{x}^{p}}\dd\HM^{1}(x)
		= 6\int_{0}^{1}\frac{1}{s^{p}}\dd\mathcal{L}^{1}(s)
		=\frac{6}{1-p}<\infty.
	\end{align*}
\end{proof}

\section{Finite \texorpdfstring{$\mathcal{I}_{p}^{\he}$}{Ip alpha} implies weak app. tangents for \texorpdfstring{$p\in [2\he,\infty)$}{p in [2 alpha,infinity)}}\label{sectionIp}

The purpose of this section is to show that for $p\in [2\he,\infty)$ a set with finite $\mathcal{I}_{p}^{\he}$ has a weak approximate $\he$-tangent
at all points where the lower density is positive. We also show that this is not true if $\he=1$ and $p\in (0,2)$.

\begin{lemma}[(Necessary conditions for finite $\mathcal{I}_{p}^{\he}$)]\label{setseparatedbyanglehasinfiniteIpenergy}
	Let $X\subset \R^{n}$, $z_{0}\in \R^{n}$, $\he\in (0,\infty)$, $\HM^{\he}(X)<\infty$. Let $\epsilon>0$, $c>0$ and two sequences of sets $A_{n},B_{n}\subset X$ as well as a sequence 
	$(r_{n})_{n\in\N}$, $r_{n}>0$, $r_{n}\to 0$ be given, with the following properties:
	\begin{itemize}
		\item
			for all $n\in\N$ and all $x\in A_{n}\backslash \{z_{0}\}$ and $y\in B_{n}\backslash\{z_{0}\}$ we have $\pi-\epsilon\geq\measuredangle(x,z_{0},y)\geq \epsilon$,
		\item
			for all $n\in\N$ we have
			\begin{align*}
				cr_{n}^{\he}\leq \min\{\HM^{\he}(A_{n}\cap \overline B_{r_{n}}(z_{0})),\HM^{\he}(B_{n}\cap \overline B_{r_{n}}(z_{0}))\}.
		\end{align*}	
	\end{itemize}
	Then $\mathcal{I}_{p}^{\he}(X)=\infty$ for all $p\geq 2\he$.
\end{lemma}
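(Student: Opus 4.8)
The plan is to prove that $\mathcal{I}_{p}^{\he}$ stays bounded below by a fixed positive constant on a sequence of balls centred at $z_{0}$ with radii shrinking to $0$, and then to invoke the contrapositive of Lemma~\ref{nicebehaviourlimitofsmallballs}; it is precisely this localisation that makes the argument cover the scale-invariant exponent $p=2\he$ rather than only $p>2\he$.

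First I would record that the measure hypothesis forces $A_{n}\cap\overline B_{r_{n}}(z_{0})\neq\emptyset$ for all $n$, whence $z_{0}\in\overline X$ because $r_{n}\to 0$; in particular $X$ has points arbitrarily close to $z_{0}$. The geometric core is the pointwise bound
\begin{align*}
	\kappa_{i}(x,y)\geq\frac{\sin\epsilon}{2r_{n}}\qquad\text{for all }x\in (A_{n}\cap\overline B_{r_{n}}(z_{0}))\setminus\{z_{0}\},\ y\in (B_{n}\cap\overline B_{r_{n}}(z_{0}))\setminus\{z_{0}\},\ x\neq y.
\end{align*}
To establish it I would, using $z_{0}\in\overline X$ and the continuity of the circumradius (Lemma~\ref{curvatureradiiareupersemi-continuous}), pick $w\in X$ so close to $z_{0}$ that $w\notin\{x,y\}$, $w\in B_{2r_{n}}(z_{0})$, and $\abs{\sin\measuredangle(x,w,y)}\geq\tfrac12\sin\epsilon$; this is possible since $\measuredangle(x,z_{0},y)\in[\epsilon,\pi-\epsilon]$ by hypothesis and $x,y\neq z_{0}$. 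Then $\{x,y,w\}$ is non-degenerate and, by $r(x,y,w)=\abs{x-y}/(2\abs{\sin\measuredangle(x,w,y)})$ (Remark~\ref{differentformulaskappa}) together with $\abs{x-y}\leq\abs{x-z_{0}}+\abs{z_{0}-y}\leq 2r_{n}$, we get $\kappa_{i}(x,y)=1/\rho(x,y)\geq 1/r(x,y,w)\geq\sin\epsilon/(2r_{n})$; since $w\in X\cap B_{2r_{n}}(z_{0})$ the same bound holds for the intermediate curvature of the subspace $X\cap B_{2r_{n}}(z_{0})$, which is the one relevant for $\mathcal{I}_{p}^{\he}(B_{2r_{n}}(z_{0}))$.

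Next I would integrate this bound. Since $(A_{n}\cap\overline B_{r_{n}}(z_{0}))\times(B_{n}\cap\overline B_{r_{n}}(z_{0}))\subset (X\cap B_{2r_{n}}(z_{0}))^{2}$, and the bound fails only on the diagonal and on $(\{z_{0}\}\times X)\cup(X\times\{z_{0}\})$ — an $\HM^{\he}\otimes\HM^{\he}$-null set — using the lower semicontinuity (hence Borel measurability) of $\kappa_{i}^{p}$ in each variable one obtains
\begin{align*}
	\mathcal{I}_{p}^{\he}(X\cap B_{2r_{n}}(z_{0}))\geq\Big(\frac{\sin\epsilon}{2r_{n}}\Big)^{p}\HM^{\he}(A_{n}\cap\overline B_{r_{n}}(z_{0}))\,\HM^{\he}(B_{n}\cap\overline B_{r_{n}}(z_{0}))\geq c^{2}\Big(\frac{\sin\epsilon}{2}\Big)^{p}r_{n}^{2\he-p}.
\end{align*}
For $p\geq 2\he$ and all large $n$ we have $r_{n}^{2\he-p}\geq 1$, so $\mathcal{I}_{p}^{\he}(X\cap B_{2r_{n}}(z_{0}))\geq c^{2}(\sin\epsilon/2)^{p}>0$ along a sequence with $2r_{n}\downarrow 0$ (pass to a monotone subsequence). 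By the contrapositive of Lemma~\ref{nicebehaviourlimitofsmallballs} with centre $z_{0}$ this yields $\mathcal{I}_{p}^{\he}(X)=\infty$.

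I expect the real obstacle to be conceptual: a global lower bound for $\mathcal{I}_{p}^{\he}(X)$ would only produce a positive constant when $p=2\he$, which is worthless, so one must carry out the estimate on the shrinking balls $B_{2r_{n}}(z_{0})$ and exploit the energy-concentration statement of Lemma~\ref{nicebehaviourlimitofsmallballs}. A lesser nuisance is the selection of the auxiliary point $w$ when $z_{0}\notin X$ (settled by $z_{0}\in\overline X$ and continuity of $r$), where one must take care that $w$ still lies in $B_{2r_{n}}(z_{0})$ so the estimate survives the restriction to that ball.
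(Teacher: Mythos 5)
Your proof is correct and takes essentially the same route as the paper's: a pointwise lower bound $\kappa_{i}(x,y)\geq \mathrm{const}\cdot\sin(\epsilon)/r_{n}$ on $(A_{n}\cap \overline B_{r_{n}}(z_{0}))\times(B_{n}\cap \overline B_{r_{n}}(z_{0}))$, integration to get a uniform positive lower bound on $\mathcal{I}_{p}^{\he}(X\cap B_{2r_{n}}(z_{0}))$ when $p\geq 2\he$, and then the contrapositive of Lemma \ref{nicebehaviourlimitofsmallballs}. The only (harmless) deviation is in producing the third point: the paper adjoins $z_{0}$ to $X$ via Lemma \ref{curvatureenergiesifaccpointisremovedlemma} and bounds $\kappa_{i}(x,y)\geq\kappa(x,y,z_{0})$ using Lemma \ref{distanceintermsofangle}, whereas you approximate $z_{0}$ by a nearby $w\in X$ and invoke continuity of $r$.
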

\begin{proof}
	Let $p\geq 2\he$ and suppose for contradiction that $\mathcal{I}_{p}^{\he}(X)<\infty$.
	As $z_{0}$ has to be an accumulation point of $X$ we can, by means of Lemma \ref{curvatureenergiesifaccpointisremovedlemma}, assume that 
	without loss of generality $z_{0}\in X$.
	If we set
	\begin{align*}
		\tilde A_{n}\vcentcolon=A_{n}\cap \overline B_{r_{n}}(z_{0})\quad\text{and}\quad
		\tilde B_{n}\vcentcolon=B_{n}\cap \overline B_{r_{n}}(z_{0})
	\end{align*} 
	Lemma \ref{distanceintermsofangle} gives us
	\begin{align*}
		\MoveEqLeft \kappa_{i}(x,y)\geq\kappa(x,y,z_{0})=\frac{2\dist(L_{x,y},z_{0})}{\norm{x-z_{0}}\norm{y-z_{0}}}
		\geq\frac{\sin(\epsilon/2)\min\{\norm{x-z_{0}},\norm{y-z_{0}}\}}{\norm{x-z_{0}}\norm{y-z_{0}}}\\
		&=\frac{\sin(\epsilon/2)}{\max\{\norm{x-z_{0}},\norm{y-z_{0}}\}}\geq \frac{\sin(\epsilon/2)}{r_{n}},
	\end{align*}
	for all $x\in \tilde A_{n}\backslash\{z_{0}\}$ and $y\in \tilde B_{n}\backslash\{z_{0}\}$. Now we have
	\begin{align*}
		\MoveEqLeft \mathcal{I}_{p}^{\he}(X\cap B_{2r_{n}}(z_{0}))\geq \mathcal{I}_{p}^{\he}(X\cap \overline B_{r_{n}}(z_{0}))
		=\int_{X\cap \overline B_{r_{n}}(z_{0})}\int_{X\cap \overline B_{r_{n}}(z_{0})}\kappa_{i}^{p}(x,y)\dd\HM^{\he}(x)\dd\HM^{\he}(y)\\
		&\geq \int_{\tilde B_{n}}\int_{\tilde A_{n}}\kappa_{i}^{p}(x,y)\dd\HM^{\he}(x)\dd\HM^{\he}(y)
		\geq \HM^{\he}(\tilde B_{n})\HM^{\he}(\tilde A_{n})\Big(\frac{\sin(\epsilon/2)}{r_{n}}\Big)^{p}\\
		&\geq c^{2}\sin^{p}(\epsilon/2) r_{n}^{2\he-p}\geq c'>0
	\end{align*}
	for $p\geq 2\he$ and all $n\in\N$. Hence Lemma \ref{nicebehaviourlimitofsmallballs} tells us that $\mathcal{I}_{p}^{\he}(X)=\infty$,
	note that for this we needed $\HM^{\he}(B_{2r_{n}}(x)\cap X)<\infty$. This is absurd as we assumed $\mathcal{I}_{p}^{\he}(X)<\infty$.
\end{proof}

\begin{proposition}[(Finite $\mathcal{I}_{p}^{\he}$, $p\geq 2\he$ implies weak app. $\he$-tangents)]
	Let $X\subset \R^{n}$ be a set, $\he\in (0,\infty)$ and $x\in \R^{n}$ with $0<\Theta_{*}^{\he}(\HM^{\he},X,x)$. If $p\in [2\he,\infty)$ and $\mathcal{I}_{p}^{\he}(X)<\infty$ 
	then $X$ has a weak approximate $\he$-tangent at $x$.
\end{proposition}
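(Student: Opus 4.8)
The plan is to argue by contradiction: suppose $X$ has no weak approximate $\he$-tangent at $x$, while $0<\Theta_{*}^{\he}(\HM^{\he},X,x)$ and $\mathcal{I}_{p}^{\he}(X)<\infty$ for some $p\in[2\he,\infty)$. The strategy is to produce the data required by Lemma \ref{setseparatedbyanglehasinfiniteIpenergy} — two sequences of subsets $A_n,B_n\subset X$ separated by an angle bounded away from $0$ and $\pi$ near $x$, each carrying a definite share of $\HM^{\he}$-mass in $\overline B_{r_n}(x)$ — and thereby conclude $\mathcal{I}_{p}^{\he}(X)=\infty$, a contradiction. Note that by Lemma \ref{finiteenergyfinitemeasureball} (for $\he\geq 1$) and Lemma \ref{consequencesfiniteenergyhe<1} together with $\Theta_{*}^{\he}(\HM^{\he},X,x)>0$ (which rules out the second alternative there, since positive lower density forces mass near $x$ in every cone complement) we may assume $\HM^{\he}(X\cap \overline B_r(x))<\infty$ for all small $r$, so Lemma \ref{setseparatedbyanglehasinfiniteIpenergy} is applicable.

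The key step is to extract the separated sets. Since $X$ has no weak approximate $\he$-tangent at $x$ and $\Theta_{*}^{\he}(\HM^{\he},X,x)>0$, Lemma \ref{densityestimaenonweaklylinapprset} provides a mapping $s:(0,\rho)\to\mathbb{S}^{n-1}$, an $\epsilon_0>0$, such that
\begin{align*}
	0<\Theta_{*}^{\he}(\HM^{\he},X\cap C_{s(r),\epsilon_0/2}(x),x)\quad\text{and}\quad 0<\Theta^{*\he}(\HM^{\he},X\backslash C_{s(r),\epsilon_0}(x),x).
\end{align*}
Set $A(r)\vcentcolon=X\cap C_{s(r),\epsilon_0/2}(x)$ and $B(r)\vcentcolon=X\backslash C_{s(r),\epsilon_0}(x)$. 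The geometry of the double cones gives that for any $a\in A(r)\backslash\{x\}$ and $b\in B(r)\backslash\{x\}$ the angle $\measuredangle(a,x,b)$ lies in $[\epsilon_0/2,\pi-\epsilon_0/2]$, since $a$ is within $\epsilon_0/2$ of the line $\R s(r)$ while $b$ is at angular distance at least $\epsilon_0$ from it. Thus the angular separation hypothesis of Lemma \ref{setseparatedbyanglehasinfiniteIpenergy} holds with $z_0=x$, $\epsilon=\epsilon_0/2$, for each fixed $r$.

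The remaining ingredient is the simultaneous mass lower bound $cr_n^{\he}\le\min\{\HM^{\he}(A(r_n)\cap\overline B_{r_n}(x)),\HM^{\he}(B(r_n)\cap\overline B_{r_n}(x))\}$ along some sequence $r_n\downarrow 0$. This is exactly what Lemma \ref{simultaneousestimateannuli} delivers, applied with the roles $A\rightsquigarrow A(r)$ (which has positive lower density), $B\rightsquigarrow B(r)$ (which has positive upper density), provided $\Theta^{*\he}(\HM^{\he},X,x)<\infty$; the lemma in fact yields mass in the annulus $\overline B_{r_n}(x)\backslash B_{q_0 r_n}(x)$, which in particular lies in $\overline B_{r_n}(x)$, so the weaker statement needed here follows. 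The main obstacle is precisely the finite-upper-density assumption: if $\Theta^{*\he}(\HM^{\he},X,x)=\infty$, Lemma \ref{simultaneousestimateannuli} is unavailable. The fix is that under $\mathcal{I}_{p}^{\he}(X)<\infty$ with $\he\geq 1$ one has $\HM^{\he}(X\cap\overline B_R(x))<\infty$ (Lemma \ref{finiteenergyfinitemeasureball}), hence $\Theta^{*\he}(\HM^{\he},X,x)<\infty$ automatically; for $\he\in(0,1)$ one invokes the dichotomy of Lemma \ref{consequencesfiniteenergyhe<1}, where the local finiteness alternative again gives finite upper density, and the other alternative is incompatible with $X$ having no weak approximate $\he$-tangent. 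Feeding $A_n=A(r_n)$, $B_n=B(r_n)$, the $r_n$, $\epsilon=\epsilon_0/2$ and the constant $c$ into Lemma \ref{setseparatedbyanglehasinfiniteIpenergy} then yields $\mathcal{I}_{p}^{\he}(X)=\infty$, contradicting finiteness and completing the proof.
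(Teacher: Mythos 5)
Your overall strategy is the same as the paper's: argue by contradiction, use Lemma \ref{densityestimaenonweaklylinapprset} to produce the cone/cone-complement pair with the stated density properties, and feed the resulting sets into Lemma \ref{setseparatedbyanglehasinfiniteIpenergy}. However, there is a genuine flaw in how you obtain the simultaneous mass lower bound. You route it through Lemma \ref{simultaneousestimateannuli}, which requires $\Theta^{*\he}(\HM^{\he},X,x)<\infty$, and you justify that hypothesis by claiming it follows from $\HM^{\he}(X\cap\overline B_{R}(x))<\infty$ (Lemma \ref{finiteenergyfinitemeasureball}, resp.\ the first alternative of Lemma \ref{consequencesfiniteenergyhe<1}). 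That implication is false: local finiteness of the measure does not bound the quotient $\HM^{\he}(X\cap\overline B_{r}(x))/(2r)^{\he}$ as $r\downarrow 0$ (a set with $\HM^{\he}(X\cap\overline B_{r}(x))\sim r^{\he/2}$ has finite measure on every ball but infinite upper density). Note that the author deliberately does \emph{not} assume finite upper density in this proposition; it is imposed only in the $\mathcal{M}_{p}^{\he}$ result, precisely because there the annulus estimate of Lemma \ref{simultaneousestimateannuli} is genuinely needed and the author could not remove that hypothesis.

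The repair is short, and is what the paper does: for the $\mathcal{I}_{p}^{\he}$ case you only need mass in the full ball $\overline B_{r_{n}}(x)$, not in an annulus, and this follows directly from the two conclusions of Lemma \ref{densityestimaenonweaklylinapprset}. Positive lower density of $A(r)\vcentcolon=X\cap C_{s(r),\epsilon_{0}/2}(x)$ gives, via Lemma \ref{implicationspositivelowerdensity}, a $\theta>0$ with $\HM^{\he}(A(r)\cap\overline B_{r}(x))\geq\theta r^{\he}$ for \emph{all} small $r$; positive upper density of $B(r)\vcentcolon=X\backslash C_{s(r),\epsilon_{0}}(x)$ gives, by definition of the $\limsup$, a sequence $r_{n}\downarrow 0$ and $\delta>0$ with $\HM^{\he}(B(r_{n})\cap\overline B_{r_{n}}(x))\geq\delta r_{n}^{\he}$. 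Taking $c=\min\{\theta,\delta\}$ yields the simultaneous bound along $(r_{n})$ with no assumption on $\Theta^{*\he}(\HM^{\he},X,x)$, after which your application of Lemma \ref{setseparatedbyanglehasinfiniteIpenergy} (with $\epsilon=\epsilon_{0}/2$ and $X$ replaced by $X\cap\overline B_{r}(x)$ for $r$ small, so that the finite-measure hypothesis holds) goes through as you describe.
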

\begin{proof}
	Assume that this is not the case. 
	By Lemma \ref{finiteenergyfinitemeasureball} and Lemma \ref{consequencesfiniteenergyhe<1} we can without loss of generality assume that $\HM^{\he}(X\cap \overline B_{r}(x))<\infty$ for all small radii.
	Then by Lemma \ref{densityestimaenonweaklylinapprset} there is a mapping $s:(0,\rho)\to \mathbb{S}^{n-1}$, $\rho>0$ and $\epsilon_{0}>0$, such that
	\begin{align*}
		0<\Theta_{*}^{\he}(\HM^{\he},X\cap C_{s(r),\epsilon_{0}/2}(x),x)
	\end{align*}
	and
	\begin{align*}
		0<\Theta^{*\he}(\HM^{1},X\backslash C_{s(r),\epsilon_{0}}(x),x).
	\end{align*}
	This means that there is a constant $c>0$ and a sequence $(r_{n})_{n\in\N}$, $r_{n}>0$, $r_{n}\to 0$, such that
	\begin{align*}
		cr_{n}^{\he}\leq \min\{\HM^{\he}([X\cap C_{s(r_{n}),\epsilon_{0}/2}(x)]\cap \overline B_{r_{n}}(x)),\HM^{\he}([X\backslash C_{s(r_{n}),\epsilon_{0}}(x)]\cap \overline B_{r_{n}}(x))\}
	\end{align*}
	and hence the hypotheses of Lemma \ref{setseparatedbyanglehasinfiniteIpenergy} hold for
	\begin{align*}
		A_{n}\vcentcolon=[X\cap \overline B_{r}(x)]\cap C_{s(r_{n}),\epsilon_{0}/2}(x)\quad\text{and}\quad
		B_{n}\vcentcolon=[X\cap \overline B_{r}(x)]\backslash C_{s(r_{n}),\epsilon_{0}}(x)
	\end{align*}
	for $r$ small enough, i.e. the role $X$ in Lemma \ref{setseparatedbyanglehasinfiniteIpenergy} is played by $X\cap \overline B_{r}(x)$, and $\epsilon\vcentcolon=\epsilon_{0}/2$, so that we have proven the proposition.
\end{proof}

\subsection{Finite \texorpdfstring{$\mathcal{I}_{p}^{1}$}{Ip1} does not imply (weak) approx. tangents for \texorpdfstring{$p\in (0,2)$}{p in (0,2)}}

\begin{proposition}[(The set $E$ has finite $\mathcal{I}_{p}^{1}$ for $p\in (1,2)$)]\label{TsetfiniteIp}
	Let $E$ be the set from Definition \ref{definitionTset}. For $p\in (1,2)$ we have
	\begin{align*}
		\mathcal{I}_{p}^{1}(E)\leq \frac{9\cdot 2^{3p/2+1}(2^{1-p}-1)}{(1-p)(2-p)}.
	\end{align*}
\end{proposition}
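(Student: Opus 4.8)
The plan is to bound $\mathcal{I}_p^1(E)$ by splitting the double integral over $E\times E$ according to which of the three pieces $E_1,E_2,E_3$ the two points $x,y$ lie in. By symmetry (reflection $x_1\mapsto -x_1$ swaps $E_1$ and $E_3$, while $E_2$ is fixed) there are only a handful of genuinely different cases: both points in the same segment, or the points in two different segments. When $x$ and $y$ lie on the same line through the origin (e.g. both in $E_1\cup E_3$ along the $x_1$-axis, or one in $E_1$ and one in $E_3$, or both in $E_2$), the key observation — exactly as in the proof of Proposition \ref{TsetfiniteUp} — is that $\kappa_i(x,y)=0$ unless the infimising third point $z$ is far enough away that $r(x,y,z)$ is comparable to the distances involved; in fact for collinear $x,y$ we get $\kappa_i(x,y)\le 2/\max\{\|x\|,\|y\|\}$ by the same circumradius estimate (any admissible $z$ must be at distance $\ge$ roughly $\max\{\|x\|,\|y\|\}$ from the line through $x,y$ when it leaves that line, giving $r\ge \tfrac12\max\{\|x\|,\|y\|\}$). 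So those terms contribute something like $\int\!\int \max\{\|x\|,\|y\|\}^{-p}\,d\HM^1 d\HM^1$, which is finite for $p<2$.

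The main work is the cross term, say $x\in E_2$ and $y\in E_3$ (the case $x\in E_1,y\in E_3$ being handled by the collinearity remark, and $x\in E_1,y\in E_2$ by symmetry). Here $x=(0,t)$, $y=(s,0)$ with $t,s\in(0,1]$, and the two points subtend a right angle at the origin, so by Remark \ref{differentformulaskappa} (or Lemma \ref{distanceintermsofangle}, with $\epsilon=\pi/2$) we have the lower bound $r(x,y,0)\ge \tfrac12\min\{s,t\}$, but we need an \emph{upper} bound on $\kappa_i(x,y)=\inf_z r(x,y,z)^{-1}$. The natural choice is $z=0\in E_2\cap E_3\subset E$, which is admissible since $0\ne x\ne y\ne 0$; then $\kappa_i(x,y)\le 1/r(x,y,0)$. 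Using $r(x,y,0)=\dfrac{\|x\|\,\|y\|\,\|x-y\|}{\text{(area terms)}}$ — concretely, with the right angle at $0$, $r(x,y,0)=\tfrac12\|x-y\| = \tfrac12\sqrt{s^2+t^2}$. Hence $\kappa_i(x,y)\le 2/\sqrt{s^2+t^2}$, and the cross contribution is controlled by
\begin{align*}
	\int_0^1\int_0^1 \frac{2^p}{(s^2+t^2)^{p/2}}\,d\mathcal{L}^1(s)\,d\mathcal{L}^1(t).
\end{align*}
Passing to polar-type coordinates or just estimating $(s^2+t^2)^{p/2}\ge$ a constant times $\max\{s,t\}^p$ and splitting $\{s\le t\}\cup\{t\le s\}$, one reduces to $\int_0^1\int_0^t t^{-p}\,ds\,dt = \int_0^1 t^{1-p}\,dt = \tfrac{1}{2-p}$, which is finite precisely for $p<2$; tracking the constants ($2^p$ from the bound on $\kappa_i$, a factor $2^{p/2}$ from replacing $s^2+t^2$ by $2\max\{s,t\}^2$, the factor $9$ for the nine pairs of pieces, and the $(1-p)$ in the denominator coming from the collinear terms $\int_0^1 s^{-p}ds$) should reproduce the stated bound $\dfrac{9\cdot 2^{3p/2+1}(2^{1-p}-1)}{(1-p)(2-p)}$.

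The step I expect to be the main obstacle is bookkeeping the constants so that they aggregate to exactly the claimed expression — in particular getting the sign right in $(2^{1-p}-1)$, which for $p\in(1,2)$ is negative, matching the negative $(1-p)$ in the denominator so the whole fraction is positive; this suggests the relevant antiderivative is evaluated as $\int_{1/2}^1 u^{-p}\,du$ type expressions (lower limit $1/2$ coming from the ratio of successive scales, not $0$), which is where the $2^{1-p}-1$ enters. Conceptually nothing is hard once the right admissible third point $z=0$ is chosen for the cross terms and the collinearity trick from Proposition \ref{TsetfiniteUp} is invoked for the same-line terms; the only care needed is that on $E_3$ a point $y$ near $0$ still has $0$ available as third point and that the $\kappa_i$ integrand is measurable, which is Lemma \ref{reciprocalradiiofcurvaturearemeasurable}.
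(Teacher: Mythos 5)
Your decomposition into the pairs $E_i\times E_j$ and the reduction by symmetry to ``same line'' versus ``perpendicular'' configurations is exactly the paper's route, and your same-line bound $\kappa_i(x,y)\le 2/\max\{\norm{x},\norm{y}\}$ is correct (though not for the reason you give in parentheses: a point $z\in E_2$ near the origin is \emph{close} to the line $L_{x,y}$, not far from it; the valid reason is that the circumradius is at least half of any side of the triangle and $\abs{x-z},\abs{y-z}\ge\max\{\norm{x},\norm{y}\}$ for such $z$, which is the argument of Proposition \ref{TsetfiniteUp}). The genuine gap is in the cross term. You need an \emph{upper} bound on $\kappa_i(x,y)=[\inf_{z}r(x,y,z)]^{-1}=\sup_{z}\kappa(x,y,z)$, but testing the infimum with the single admissible point $z=0$ only gives $\inf_z r(x,y,z)\le r(x,y,0)$, hence $\kappa_i(x,y)\ge 1/r(x,y,0)$ --- the inequality points the wrong way. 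An upper bound for a supremum cannot be obtained by evaluating at one point; you must estimate $\kappa(x,y,z)$ uniformly over \emph{all} $z\in E$.

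That uniform estimate is precisely the content of the paper's proof: in each configuration it writes $\kappa(x,y,z)$ explicitly in the free coordinate $\zeta$ of the third point and optimizes over $\zeta$ (e.g.\ via $\zeta^{2}+\xi^{2}\eta^{2}/\zeta^{2}\ge 2\xi\eta$ in the collinear case, and by monotonicity in $\zeta$ in the perpendicular case), arriving at $\kappa(x,y,z)\le 2\sqrt{2}/(\abs{\xi}+\abs{\eta})$ for every $z$. Your value $2/\sqrt{s^{2}+t^{2}}$ for the perpendicular configuration happens to be the correct supremum, because $z\to 0$ is in fact the extremal position --- but that is exactly the assertion that needs proof, and your argument does not supply it. Once the uniform bound is in place, the remaining integral $\int_0^1\int_0^1(s+t)^{-p}\,\mathrm{d}s\,\mathrm{d}t$ is elementary; the factor $2^{1-p}-1$ arises from evaluating $(1+t)^{2-p}-t^{2-p}$ at the endpoints $t=0,1$, not from an integral with lower limit $1/2$ as you speculate.
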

\begin{proof}
	Let $x,y\in E\backslash \{0\}$, $x\not= y$. We are interested in the maximal value of $\kappa(x,y,z)$ for $z\in E\backslash\{x,y\}$. 
	As $\kappa$ is invariant under isometries we can restrict ourselves to the cases 
	$x,y\in E_{1}$ and $x\in E_{1}$, $y\in E_{3}$ and $x\in E_{1}$, $y\in E_{2}$.	 
	In each of these cases we want to estimate $\kappa(x,y,z)$ independently of $z$.
	We denote the non-zero components of $x,y,z$ by $\xi,\eta, \zeta$ respectively.\\
	\textbf{Case 1}
		If $x,y\in E_{1}$, $xy\not=0$ we clearly can assume $z\in E_{2}\backslash\{0\}$ and hence
		\begin{align*}
			\kappa(x,y,z)=\frac{2\zeta}{\sqrt{\xi^{2}+\zeta^{2}}\sqrt{\eta^{2}+\zeta^{2}}}
			=\frac{2}{\sqrt{\zeta^{2}+\xi^{2}+\eta^{2}+\xi^{2}\eta^{2}/\zeta^{2} }}.
		\end{align*}
		By taking first and second derivatives of $f(u)=\alpha u+\beta/u$, $\alpha,\beta>0$, 
		we easily see that $\min_{u>0}f(u)=f(\sqrt{\beta/\alpha})$, so that for all $\zeta >0$ we have
		\begin{align*}
			\zeta^{2}+\frac{\xi^{2}\eta^{2}}{\zeta^{2}}\geq \xi \eta+\frac{\xi^{2}\eta^{2}}{\xi \eta}=2\xi \eta
		\end{align*}
		and therefore
		\begin{align*}
			\kappa(x,y,z)\leq \frac{2}{\sqrt{\xi^{2}+\eta^{2}+2\xi \eta}}=\frac{2}{\abs{\xi}+\abs{\eta}}.
		\end{align*}
	\textbf{Case 2}
		If $x\in E_{1}$, $y\in E_{3}$, $xy\not=0$ we do need $z\in E_{2}$ in order to have $\kappa(x,y,z)>0$, but then
		$\kappa(x,y,z)=\kappa(x,-y,z)$, so that we can without loss of generality assume that $y\in E_{1}$.
		This was already done in Case 1.\\
	\textbf{Case 3}
		If $x\in E_{1}$, $y\in E_{2}$, $xy\not=0$ we note that we have $\kappa(x,y,z)=\kappa(x,y,-z)$ for $z\in E_{3}$,
		so that we may assume $z\in E_{1}$ without loss of generality. Then
		\begin{align*}
			\kappa(x,y,z)=\frac{2\eta}{\sqrt{\xi^{2}+\eta^{2}}\sqrt{\zeta^{2}+\eta^{2}}}\leq 
			\frac{2\eta}{\sqrt{\xi^{2}+\eta^{2}}\sqrt{\eta^{2}}}=\frac{2}{\sqrt{\xi^{2}+\eta^{2}}}
			\leq \frac{2\sqrt{2}}{\abs{\xi}+\eta}.
		\end{align*}

	In all cases we have
	\begin{align*}
		\kappa(x,y,z)\leq \frac{2\sqrt{2}}{\abs{\xi}+\abs{\eta}}\quad\text{for all }z\in E\backslash\{x,y\},
	\end{align*}
	which for $p\in (1,2)$ gives us
	\begin{align*}
		\MoveEqLeft\mathcal{I}_{1}^{2}(E)
		\leq
		9\cdot 2^{3p/2}\int_{0}^{1}\int_{0}^{1}\Big(\frac{1}{s+t}\Big)^{p}\dd\mathcal{L}^{1}(s)\dd\mathcal{L}^{1}(t)\\
		&=\frac{9\cdot 2^{3p/2}}{1-p}\int_{0}^{1} [(1+t)^{1-p}-t^{1-p}]\dd\mathcal{L}^{1}(t)
		=\frac{9\cdot 2^{3p/2}}{(1-p)(2-p)} \Big[[(1+t)^{2-p}-t^{2-p}]\Big]_{0}^{1}\\
		&=\frac{9\cdot 2^{3p/2}}{(1-p)(2-p)} \Big[[2^{2-p}-1]-[1-0]\Big]
		=\frac{9\cdot 2^{(3p/2)+1}(2^{1-p}-1)}{(1-p)(2-p)}.
	\end{align*}
\end{proof}

\begin{corollary}[(The set $E$ has finite $\mathcal{I}_{p}^{1}$ for $p\in (0,2)$)]\label{TsetfiniteIp<2}
	For $p\in (0,2)$ we have $\mathcal{I}_{p}^{1}(E)<\infty$.
\end{corollary}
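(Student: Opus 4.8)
The plan is to split the range $p\in(0,2)$ into the sub-range $(1,2)$, which is already settled by Proposition \ref{TsetfiniteIp}, and the remaining sub-range $(0,1]$, which I would reduce to the first via the interpolation estimate of Lemma \ref{comparisonofcurvatureenergiesfordifferentp}. The only preliminary observation needed is that $E$ is the union of three segments of length one, so $\HM^{1}(E)\leq 3<\infty$; in particular the finite-measure hypothesis of that lemma is satisfied.

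Concretely, for $p\in(1,2)$ the assertion is exactly Proposition \ref{TsetfiniteIp}, which even gives the explicit bound $\mathcal{I}_{p}^{1}(E)\leq \tfrac{9\cdot 2^{3p/2+1}(2^{1-p}-1)}{(1-p)(2-p)}$. For $p\in(0,1]$ I would fix an exponent $q\in(1,2)$, say $q=3/2$, so that $0<p<q<\infty$, and apply Lemma \ref{comparisonofcurvatureenergiesfordifferentp} to obtain
\begin{align*}
	\mathcal{I}_{p}^{1}(E)\leq \HM^{1}(E)^{2(1-p/q)}\,\mathcal{I}_{q}^{1}(E)^{p/q}\leq 3^{2(1-p/q)}\,\mathcal{I}_{q}^{1}(E)^{p/q},
\end{align*}
where the right-hand side is finite because $\mathcal{I}_{q}^{1}(E)<\infty$ by Proposition \ref{TsetfiniteIp}. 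Since $(0,1]\cup(1,2)=(0,2)$, this proves $\mathcal{I}_{p}^{1}(E)<\infty$ for all $p\in(0,2)$.

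I do not expect any real obstacle here: the argument is a one-line application of the Hölder-type interpolation inequality already established in Lemma \ref{comparisonofcurvatureenergiesfordifferentp}, together with the trivial finiteness of $\HM^{1}(E)$. The only mild point of care is to pick $q$ strictly between $1$ and $2$ (so that Proposition \ref{TsetfiniteIp} applies to $q$) and with $q>p$ for every $p\leq 1$, which the choice $q=3/2$ arranges.
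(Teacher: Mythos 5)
Your proposal is correct and is essentially the paper's own argument: the paper likewise deduces the case $p\in(0,2)$ from Proposition \ref{TsetfiniteIp}, the fact that $\HM^{1}(E)=3$, and the interpolation inequality of Lemma \ref{comparisonofcurvatureenergiesfordifferentp}. Your explicit choice $q=3/2$ is just a concrete instance of that reduction.
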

\begin{proof}
	This is a consequence of Lemma \ref{TsetfiniteIp} together with $\HM^{1}(E)=3$ and Lemma \ref{comparisonofcurvatureenergiesfordifferentp}.
\end{proof}

\section{Finite \texorpdfstring{$\mathcal{M}_{p}^{\he}$}{Mp alpha} implies weak app. tangents for \texorpdfstring{$p\in [3\he,\infty)$}{p in [3 alpha,infinity)}}\label{sectionMp}

In this section we show that for $p\in [3\he,\infty)$ a set with finite upper density and finite $\mathcal{M}_{p}^{\he}$ has a weak approximate $\he$-tangent
at all points where the lower density is positive. After this we demonstrate that this is not true for $\he=1$ and $p\in (0,3)$.

\begin{lemma}[(Necessary conditions for finite Menger curvature)]\label{setseparatedbyanglehasinfinitemengercurvature}
	Let $X\subset \R^{n}$, $z_{0}\in \R^{n}$, $\he\in (0,\infty)$, $\HM^{\he}(X)<\infty$, $\Theta_{*}^{\he}(\HM^{\he},X,z_{0})>0$. Let $\epsilon>0$, $c>0$, $q_{0}\in (0,1)$ and 
	two sequences of sets $A_{n},B_{n}\subset X$ as well as a sequence 
	$(r_{n})_{n\in\N}$, $r_{n}>0$, $r_{n}\to 0$ be given, with the following properties:
	\begin{itemize}
		\item
			for all $n\in\N$ and all $x\in A_{n}\backslash \{z_{0}\}$ and $y\in B_{n}\backslash\{z_{0}\}$ we have $\pi-\epsilon\geq\measuredangle(x,z_{0},y)\geq \epsilon$,
		\item
			for all $n\in\N$ we have
			\begin{align*}
				c r_{n}^{\he}\leq \min\{\HM^{\he}(A_{n}\cap [\overline B_{r_{n}}(z_{0})\backslash B_{q_{0}r_{n}}(z_{0})]),
				\HM^{\he}(B_{n}\cap [\overline B_{r_{n}}(z_{0})\backslash B_{q_{0}r_{n}}(z_{0})])\}.
		\end{align*}	
	\end{itemize}
	Then $\M_{p}^{\he}(X)=\infty$ for all $p\geq 3\he$.
\end{lemma}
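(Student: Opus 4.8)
The plan is to adapt the proof of Lemma~\ref{setseparatedbyanglehasinfiniteIpenergy} to the triple integral $\M_{p}^{\he}$. The new point is that, besides the two angularly separated clusters $A_{n},B_{n}$, we must now exhibit a \emph{third} cluster of points sitting very close to $z_{0}$ on which the inverse circumradius is still comparably large; the extra hypothesis $\Theta_{*}^{\he}(\HM^{\he},X,z_{0})>0$ is exactly what makes such a cluster carry enough Hausdorff measure. This is the same device as in Step~3 of the proof of Lemma~\ref{finiteenergyfinitemeasureball}, and it also explains the role of the inner radius $q_{0}$: we need $\min\{\norm{x-z_{0}},\norm{y-z_{0}}\}$ bounded below by a fixed multiple of $r_{n}$.

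First I would argue by contradiction: fix $p\geq 3\he$ and assume $\M_{p}^{\he}(X)<\infty$. Since positive lower density makes $z_{0}$ an accumulation point of $X$, we may assume $z_{0}\in X$, exactly as in the proof of Lemma~\ref{setseparatedbyanglehasinfiniteIpenergy} (via Lemma~\ref{curvatureenergiesifaccpointisremovedlemma}). Put $\delta_{n}\vcentcolon=\tfrac{1}{4}\sin(\epsilon/2)\,q_{0}r_{n}$ and
\begin{align*}
	\tilde A_{n}&\vcentcolon=A_{n}\cap[\overline B_{r_{n}}(z_{0})\backslash B_{q_{0}r_{n}}(z_{0})],\\
	\tilde B_{n}&\vcentcolon=B_{n}\cap[\overline B_{r_{n}}(z_{0})\backslash B_{q_{0}r_{n}}(z_{0})],\\
	D_{n}&\vcentcolon=X\cap\overline B_{\delta_{n}}(z_{0}).
\end{align*}
For $x\in\tilde A_{n}\backslash\{z_{0}\}$ and $y\in\tilde B_{n}\backslash\{z_{0}\}$ the angle hypothesis together with Lemma~\ref{distanceintermsofangle} gives (as in the proof of Lemma~\ref{setseparatedbyanglehasinfiniteIpenergy}) the bound $2\dist(L_{x,y},z_{0})\geq\sin(\epsilon/2)\min\{\norm{x-z_{0}},\norm{y-z_{0}}\}\geq\sin(\epsilon/2)\,q_{0}r_{n}=4\delta_{n}$, hence $\dist(L_{x,y},z)\geq 2\delta_{n}-\norm{z-z_{0}}\geq\delta_{n}$ for every $z\in D_{n}$. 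In particular $z\notin L_{x,y}$; moreover $\norm{x-z_{0}},\norm{y-z_{0}}\geq q_{0}r_{n}>\delta_{n}\geq\norm{z-z_{0}}$ gives $z\neq x$ and $z\neq y$, while the angle at $z_{0}$ gives $x\neq y$, so $\{x,y,z\}$ is a non-degenerate triangle. Since $\delta_{n}\leq r_{n}$, all three points lie in $\overline B_{r_{n}}(z_{0})$, so $\norm{x-z},\norm{y-z}\leq 2r_{n}$, and by the circumradius formula of Remark~\ref{differentformulaskappa}
\begin{align*}
	\kappa(x,y,z)=\frac{2\dist(z,L_{x,y})}{\norm{x-z}\,\norm{y-z}}\geq\frac{2\delta_{n}}{(2r_{n})^{2}}=\frac{\sin(\epsilon/2)\,q_{0}}{8\,r_{n}}.
\end{align*}

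Next I would bound the three masses: by hypothesis $\HM^{\he}(\tilde A_{n}),\HM^{\he}(\tilde B_{n})\geq c\,r_{n}^{\he}$, while Lemma~\ref{implicationspositivelowerdensity}, applied to the constant set-valued map $X$ and using $\Theta_{*}^{\he}(\HM^{\he},X,z_{0})>0$, yields a $\theta>0$ with $\HM^{\he}(D_{n})\geq\theta\,\delta_{n}^{\he}$ for all large $n$. As $\{z_{0}\}$ is $\HM^{\he}$-null, integrating the lower bound for $\kappa^{p}$ over $\tilde A_{n}\times\tilde B_{n}\times D_{n}\subset(X\cap B_{2r_{n}}(z_{0}))^{3}$ gives, with a constant $c'>0$ independent of $n$,
\begin{align*}
	\M_{p}^{\he}(X\cap B_{2r_{n}}(z_{0}))\geq\HM^{\he}(\tilde A_{n})\,\HM^{\he}(\tilde B_{n})\,\HM^{\he}(D_{n})\Big(\frac{\sin(\epsilon/2)\,q_{0}}{8\,r_{n}}\Big)^{p}\geq c'\,r_{n}^{3\he-p}\geq c'
\end{align*}
for $n$ large, because $p\geq 3\he$ and $r_{n}\leq 1$ eventually. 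Since $\HM^{\he}(X)<\infty$, this contradicts Lemma~\ref{nicebehaviourlimitofsmallballs}, which forces $\M_{p}^{\he}(B_{r}(z_{0}))\to 0$ as $r\downarrow 0$; hence $\M_{p}^{\he}(X)=\infty$.

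I do not expect a genuine obstacle: structurally this is the two-point estimate of Lemma~\ref{setseparatedbyanglehasinfiniteIpenergy} with the third-cluster construction of Lemma~\ref{finiteenergyfinitemeasureball} grafted on. The only points needing care are the degeneracy bookkeeping (pairwise distinctness and non-collinearity of $x,y,z$, checked above) and making sure that every constant depends only on $c,q_{0},\epsilon,\he,p$ and $\theta$, never on $n$; an analogous remark for the other energies is unnecessary since the statement only concerns $\M_{p}^{\he}$.
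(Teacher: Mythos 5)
Your proof is correct and follows essentially the same route as the paper: restrict $x,y$ to the annular pieces $\tilde A_n,\tilde B_n$, use Lemma \ref{distanceintermsofangle} to bound $\dist(L_{x,y},z_0)$ from below by a fixed multiple of $q_0r_n$, let $z$ range over a small ball around $z_0$ whose mass is controlled by the positive lower density, and conclude via Lemma \ref{nicebehaviourlimitofsmallballs}. The only differences are cosmetic (the constant $\sin(\epsilon/2)$ versus $\sin(\epsilon)$ in the distance bound and the explicit non-degeneracy bookkeeping), so no further changes are needed.
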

\begin{proof}
	Let $p\geq 3\he$ and suppose for contradiction that $\mathcal{M}_{p}^{\he}(X)<\infty$. 
	We set 
	\begin{align*}
		\tilde A_{n}\vcentcolon=A_{n}\cap [\overline B_{r_{n}}(z_{0})\backslash B_{q_{0}r_{n}}(z_{0})]\quad\text{and}\quad
		\tilde B_{n}\vcentcolon=B_{n}\cap [\overline B_{r_{n}}(z_{0})\backslash B_{q_{0}r_{n}}(z_{0})].
	\end{align*}
	Considering Lemma \ref{distanceintermsofangle} we know that for all $x\in \tilde A_{n}\backslash\{z_{0}\}$ 
	and $y\in \tilde B_{n}\backslash\{z_{0}\}$ we have
	$\dist(L_{x,y},z_{0})\geq \sin(\epsilon) q_{0}r_{n}/2$ and therefore for all $z\in B_{\sin(\epsilon)q_{0}r_{n}/4}(z_{0})$
	\begin{align*}
		\dist(L_{x,y},z)\geq \dist(L_{x,y},z_{0})-d(z_{0},z)\geq \frac{\sin(\epsilon)}{4}q_{0}r_{n}.
	\end{align*}
	There exists a constant $c_{1}>0$, such that
	\begin{align*}
		c_{1}(\sin(\epsilon)q_{0}r_{n}/4)^{\he}\leq\HM^{\he}(X\cap \overline B_{\sin(\epsilon)q_{0}r_{n}/4}(z_{0}))
	\end{align*}
	for all $n\in\N$. Then
	\begin{align*}
		\MoveEqLeft[1] 
		\M_{p}^{\he}(X\cap B_{2r_{n}}(z_{0}))\\
		&\geq \int_{X\cap \overline B_{\sin(\epsilon)q_{0}r_{n}/4}(z_{0})}\int_{\tilde A_{n}}\int_{\tilde B_{n}}
		\Big(\frac{2\dist(L_{x,y},z)}{\norm{x-z}\norm{y-z}}\Big)^{p}\dHM(x)\dHM(y)\dHM(z)\\
		&\geq \int_{X\cap \overline B_{\sin(\epsilon)q_{0}r_{n}/4}(z_{0})}\int_{\tilde A_{n}}\int_{\tilde B_{n}}
		\Big(\frac{2\frac{\sin(\epsilon)}{4}q_{0}r_{n} }{4r_{n}^{2}}\Big)^{p}\dHM(x)\dHM(y)\dHM(z)\\
		&\geq  \Big(\frac{\sin(\epsilon)q_{0}}{8}\Big)^{p} \HM^{\he}(X\cap \overline B_{\sin(\epsilon)q_{0}r_{n}/4}(z_{0}))
		\HM^{\he}(\tilde A_{n}) \HM^{\he}(\tilde B_{n})
		\Big(\frac{1}{r_{n}}\Big)^{p}\\
		&\geq  \Big(\frac{\sin(\epsilon)q_{0}}{8}\Big)^{p} c_{1}\Big(\frac{\sin(\epsilon)q_{0}r_{n}}{4}\Big)^{\he}
		c^{2}r_{n}^{2\he}\Big(\frac{1}{r_{n}}\Big)^{p}
		\geq  \Big(\frac{\sin(\epsilon)q_{0}}{8}\Big)^{p+\he}2^{\he}c_{1}c^{2}r_{n}^{3\he-p}\geq c'>0
	\end{align*}
	for all $n\in\N$. Hence Lemma \ref{nicebehaviourlimitofsmallballs} tells us that $\mathcal{M}_{p}^{\he}(X)=\infty$,
	note that for this we needed $\HM^{\he}(B_{2r_{n}}(x)\cap X)<\infty$. This is absurd as we assumed $\mathcal{M}_{p}^{\he}(X)<\infty$.
\end{proof}

\begin{proposition}[(Finite $\M_{p}^{\he}$, $p\geq 3\he$ implies weak appr. tangents if $\Theta^{*\he}$ is finite)]
	Let $X\subset \R^{n}$ be a set, $\he\in (0,\infty)$ and $x\in \R^{n}$ with $0<\Theta_{*}^{\he}(\HM^{\he},X,x)\leq \Theta^{*\he}(\HM^{\he},X,x)<\infty$. 
	If $p\in [3\he,\infty)$ and $\M_{p}^{\he}(X)<\infty$ then $X$ has a weak approximate $\he$-tangent at $x$.
\end{proposition}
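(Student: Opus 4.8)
The plan is to argue by contradiction, following the same scheme as the preceding proposition for $\mathcal{I}_{p}^{\he}$, but replacing the plain ball estimate by the annulus estimate of Lemma \ref{simultaneousestimateannuli}: for $\M_{p}^{\he}$ we must keep the \emph{third} integration variable bounded away both from $x$ and from the ``far'' points, which is exactly what the inner radius $q_{0}r_{n}$ in Lemma \ref{setseparatedbyanglehasinfinitemengercurvature} provides. So suppose $X$ has no weak approximate $\he$-tangent at $x$. First I would use Lemma \ref{finiteenergyfinitemeasureball} together with Lemma \ref{consequencesfiniteenergyhe<1} to reduce, without loss of generality, to the case $\HM^{\he}(X\cap\overline B_{r}(x))<\infty$ for all sufficiently small $r$; indeed, the other alternative in Lemma \ref{consequencesfiniteenergyhe<1} (a direction $s$ along which $\Theta^{\he}(\HM^{\he},X\backslash C_{s,\epsilon}(x),x)=0$ for every $\epsilon$) would already yield a strong, hence weak, approximate $\he$-tangent, contradicting the assumption.

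Next, since $X$ is then not weakly $\he$-linearly approximable at $x$ (Lemma \ref{weaklylinapproxiffweakapprtangent}) and $\Theta_{*}^{\he}(\HM^{\he},X,x)>0$, Lemma \ref{densityestimaenonweaklylinapprset} supplies $\rho>0$, $\epsilon_{0}>0$ and a map $s:(0,\rho)\to\Sphere^{n-1}$ with $0<\Theta_{*}^{\he}(\HM^{\he},X\cap C_{s(r),\epsilon_{0}/2}(x),x)$ and $0<\Theta^{*\he}(\HM^{\he},X\backslash C_{s(r),\epsilon_{0}}(x),x)$. I would then apply Lemma \ref{simultaneousestimateannuli} to $A(r):=X\cap C_{s(r),\epsilon_{0}/2}(x)$ and $B(r):=X\backslash C_{s(r),\epsilon_{0}}(x)$; here the extra hypothesis $\Theta^{*\he}(\HM^{\he},X,x)<\infty$ is precisely what allows the small inner ball $B_{q_{0}r_{n}}(x)$ to be discarded without destroying the lower bounds. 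This produces $q_{0}\in(0,1)$, a constant $c>0$, and radii $r_{n}\downarrow 0$ with $c r_{n}^{\he}\le\min\{\HM^{\he}(A(r_{n})\cap[\overline B_{r_{n}}(x)\backslash B_{q_{0}r_{n}}(x)]),\HM^{\he}(B(r_{n})\cap[\overline B_{r_{n}}(x)\backslash B_{q_{0}r_{n}}(x)])\}$.

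It remains to check the angle condition: if $a\in C_{s(r_{n}),\epsilon_{0}/2}(x)\backslash\{x\}$ and $b\in\R^{n}\backslash(C_{s(r_{n}),\epsilon_{0}}(x)\cup\{x\})$, then by the triangle inequality for the spherical metric $d_{\Sphere^{n-1}}$, after possibly reflecting $b$ through $x$, one gets $\epsilon_{0}/2\le\measuredangle(a,x,b)$, and symmetrically $\measuredangle(a,x,b)\le\pi-\epsilon_{0}/2$; thus $\pi-\epsilon_{0}/2\ge\measuredangle(a,x,b)\ge\epsilon_{0}/2$. Hence, taking the set in Lemma \ref{setseparatedbyanglehasinfinitemengercurvature} to be $X\cap\overline B_{r}(x)$ for a fixed small $r$ (which still has $\Theta_{*}^{\he}(\HM^{\he},\cdot,x)>0$ and finite $\HM^{\he}$ mass), $z_{0}:=x$, $\epsilon:=\epsilon_{0}/2$, and $A_{n},B_{n}$ the intersections of $A(r_{n}),B(r_{n})$ with $\overline B_{r}(x)$, all hypotheses of that lemma are satisfied, so $\M_{p}^{\he}(X\cap\overline B_{r}(x))=\infty$ for $p\ge 3\he$, and a fortiori $\M_{p}^{\he}(X)=\infty$, contradicting the hypothesis.

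The only genuinely delicate point is the bookkeeping when invoking Lemma \ref{simultaneousestimateannuli}: one must ensure that a single sequence of radii simultaneously realises the positive lower density of $A(r)$, the positive upper density of $B(r)$, and the control of the inner ball by the ambient upper density — but this is exactly the content of that lemma, so here it is just a matter of matching notation. Verifying the angle bounds, which is where the gap between opening angles $\epsilon_{0}$ and $\epsilon_{0}/2$ is used (ultimately via Lemma \ref{distanceintermsofangle}, just as in the $\mathcal{I}_{p}^{\he}$ proof), is routine.
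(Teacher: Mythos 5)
Your proposal is correct and follows essentially the same route as the paper: reduce to $\HM^{\he}(X\cap\overline B_{r}(x))<\infty$ via Lemmata \ref{finiteenergyfinitemeasureball} and \ref{consequencesfiniteenergyhe<1}, invoke Lemma \ref{densityestimaenonweaklylinapprset} to produce the cones, feed them into the annulus estimate of Lemma \ref{simultaneousestimateannuli}, and conclude with Lemma \ref{setseparatedbyanglehasinfinitemengercurvature} applied to $X\cap\overline B_{r}(x)$ with $\epsilon=\epsilon_{0}/2$. The additional details you spell out (why the second alternative of Lemma \ref{consequencesfiniteenergyhe<1} is harmless, and the explicit angle bookkeeping) are consistent with what the paper leaves implicit.
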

\begin{proof}
	Assume that this is not the case. 
	By Lemma \ref{finiteenergyfinitemeasureball} and Lemma \ref{consequencesfiniteenergyhe<1} we can without loss of generality assume that $\HM^{\he}(X\cap \overline B_{r}(x))<\infty$ for all small radii.
	Then by Lemma \ref{densityestimaenonweaklylinapprset} there is a mapping $s:(0,\rho)\to \mathbb{S}^{n-1}$, $\rho>0$ and $\epsilon_{0}>0$, such that
	\begin{align*}
		0<\Theta_{*}^{\he}(\HM^{\he},X\cap C_{s(r),\epsilon_{0}/2}(x),x)
	\end{align*}
	and
	\begin{align*}
		0<\Theta^{*\he}(\HM^{\he},X\backslash C_{s(r),\epsilon_{0}}(x),x).
	\end{align*}
	This means that the hypotheses of Lemma \ref{simultaneousestimateannuli} hold for
	\begin{align*}
		A(r)\vcentcolon=X\cap C_{s(r),\epsilon_{0}/2}(x)\quad\text{and}\quad 
		B(r)\vcentcolon=X\backslash C_{s(r),\epsilon_{0}}(x),
	\end{align*}
	so that there exists a $q_{0}\in (0,1)$, a sequence $(r_{n})_{n\in\N}$, $r_{n}>0$, $\lim_{n\to\infty}r_{n}=0$ and a constant $c>0$ such that 
	\begin{align*}
		c r_{n}^{\he}\leq \min\{\HM^{\he}(A(r_{n})\cap[\overline B_{r_{n}}(x)\backslash B_{q_{0}r_{n}}(x)]),
		\HM^{\he}(B(r_{n})\cap [\overline B_{r_{n}}(x)\backslash B_{q_{0}r_{n}}(x)])\}.
	\end{align*}
	Hence the hypotheses of Lemma \ref{setseparatedbyanglehasinfinitemengercurvature} are fulfilled for $\epsilon\vcentcolon=\epsilon_{0}/2$, note that $\HM^{\he}(X\cap \overline B_{r}(x))<\infty$ for small radii,
	and we have proven the proposition.
\end{proof}

\subsection{Finite \texorpdfstring{$\mathcal{M}_{p}^{1}$}{Mp1} does not imply (weak) app. tangents for \texorpdfstring{$p\in (0,3)$}{p in (0,3)}}

\begin{definition}[(The functional $\F_{p}$)]
	For $A,B,C\subset\R^{n}$ measurable, $p>0$ we set
	\begin{align*}
		\F_{p}(A,B,C)\vcentcolon= \int_{C}\int_{B}\int_{A}\kappa^{p}(x,y,z)\dd\HM^{1}(x)\dd\HM^{1}(y)\dd\HM^{1}(z).
	\end{align*}
\end{definition}

\begin{remark}[($\F_{p}$ is invariant under permutations)]\label{permutation}
	By Fubini's Theorem and the symmetry of the integrand under permutations, as well as its measurability it is clear that for all measurable subsets 
	$A,B,C\subset X$ of $X\subset \R^{n}$ we have
	\begin{align*}
		\F_{p}(A,B,C)=\F_{p}(B,C,A)=\F_{p}(C,A,B)=\F_{p}(B,A,C)=\F_{p}(A,C,B)=\F_{p}(C,B,A).
	\end{align*}
\end{remark}

\begin{proposition}[(The set $E$ has finite $\M_{p}^{1}$ for $p\in [2,3)$)]\label{finiteMpbutramificationpoints}
	Let $E$ be the set from Definition \ref{definitionTset}. For $p\in [2,3)$ we have
	\begin{align*}
		\M_{p}^{1}(E)\leq \frac{72\pi}{(3-p)^{2}}.
	\end{align*}
\end{proposition}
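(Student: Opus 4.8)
The plan is to decompose $E$ into the three segments $E_1,E_2,E_3$ from Definition \ref{definitionTset} and use Remark \ref{permutation} and the decomposition of triple integrals (Lemma \ref{decompositionoftripleintegral}) to write $\M_p^1(E)$ as a finite sum of terms $\F_p(E_i,E_j,E_k)$. Since $\kappa(x,y,z)=0$ whenever all three points are collinear, any term in which all indices lie in $\{1,3\}$ (both on the horizontal axis), or all three equal, contributes nothing. By the permutation symmetry of Remark \ref{permutation}, it therefore suffices to bound a bounded number of representative configurations: the "two on one leg, one on the other" type, e.g. $\F_p(E_1,E_1,E_2)$ (and by the reflection $E_3\to E_1$ across the $y$-axis these also cover the mixed horizontal cases), and the "one point on each of three distinct pieces" type $\F_p(E_1,E_2,E_3)$. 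Using coordinates $\xi,\eta,\zeta\in(0,1]$ for the nonzero components of the three points as in the proof of Proposition \ref{TsetfiniteIp}, one has in each case an explicit formula for $r(x,y,z)^{-1}$.

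Next I would get a pointwise bound on $\kappa$ in each configuration that is integrable for $p<3$. In the configuration with two points $x,y$ on the horizontal axis and $z=(0,\zeta)$ on the vertical leg, Remark \ref{differentformulaskappa} gives $\kappa(x,y,z)=2\operatorname{dist}(z,L_{x,y})/(\|x-z\|\|y-z\|)=2\zeta/(\sqrt{\xi^2+\zeta^2}\sqrt{\eta^2+\zeta^2})$; crucially this is $\le 2/\max\{\xi,\eta\}$ and also $\le 2/\zeta$, hence $\kappa\le 2(\xi+\eta+\zeta)^{-1}$ up to a dimensional constant, or more simply $\kappa \le 6/(\xi+\eta+\zeta)$ after symmetrizing. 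For the fully mixed configuration $x=(-\xi,0)$, $y=(0,\eta)$, $z=(\zeta,0)$, the circumradius is that of a triangle with a vertex at the origin region; here $\kappa(x,y,z)=2\operatorname{dist}(y,L_{x,z})/(\|x-y\|\|z-y\|)=2\eta/(\sqrt{\xi^2+\eta^2}\sqrt{\zeta^2+\eta^2})\le 2/\max\{\xi,\zeta\}$, and again one obtains $\kappa\le C/(\xi+\eta+\zeta)$. So in every nonvanishing configuration $\kappa(x,y,z)\le C(\xi+\eta+\zeta)^{-1}$ with an absolute constant $C$ (the factor $2\pi$ and the constant $72$ in the statement suggest keeping track of the constants carefully, but the shape is what matters).

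The remaining task is the elementary integral $\int_0^1\int_0^1\int_0^1 (\xi+\eta+\zeta)^{-p}\,d\xi\,d\eta\,d\zeta$, which I would evaluate by integrating once in $\xi$ (producing $\frac{1}{1-p}[(1+\eta+\zeta)^{1-p}-(\eta+\zeta)^{1-p}]$ when $p\neq1$, or a $\log$ when $p=1$—but since the statement only claims $p\in[2,3)$ I can assume $p>1$ throughout), then in $\eta$ (producing a factor $\frac{1}{(1-p)(2-p)}$), then in $\zeta$ (a factor $\frac{1}{(1-p)(2-p)(3-p)}$), and bounding the resulting difference of powers crudely so that only the $(3-p)$ factor blows up; pairing the $(1-p)$ and $(2-p)$ factors against each other (both are $O(1)$ bounded away from their zeros on $[2,3)$ — actually $(2-p)$ vanishes at $p=2$, so one must be slightly more careful, cancelling the zero of $(2-p)$ against the vanishing of the corresponding power difference, exactly as in the proof of Proposition \ref{TsetfiniteIp} where $[(1+t)^{2-p}-t^{2-p}]$ stays bounded) leaves a bound of the form $\mathrm{const}/(3-p)$, or with a cruder estimate $\mathrm{const}/(3-p)^2$ as claimed. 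Multiplying by the number of configuration terms (at most $3^3=27$, reduced by symmetry) and by $C^p\le C^3$ gives the stated $\frac{72\pi}{(3-p)^2}$.

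The main obstacle is bookkeeping rather than conceptual: verifying the pointwise bound $\kappa\le C(\xi+\eta+\zeta)^{-1}$ uniformly over \emph{all} configurations (including the degenerate sub-cases where one or two of the integration variables are small, where one must check that the denominator $\sqrt{\xi^2+\zeta^2}\sqrt{\eta^2+\zeta^2}$ etc.\ does not collapse faster than the numerator), and then handling the apparent singularities of the constant $\frac{1}{(1-p)(2-p)}$ at $p=1,2$ by cancelling them against the vanishing of power-differences like $(1+u)^{2-p}-u^{2-p}$, so that the final bound is genuinely controlled only by $(3-p)$. The restriction to $p\ge2$ in the statement conveniently sidesteps the $p=1$ issue entirely and makes the $p=2$ case the only delicate boundary, which is dealt with by continuity of the integral in $p$.
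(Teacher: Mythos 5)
Your argument is correct in substance but follows a genuinely different route from the paper. The paper also reduces, via the symmetry bookkeeping of Step~1, to $18\,\F_{p}(E_{1},E_{1},E_{2})$, but then integrates the ``height'' variable (the one on $E_{2}$, which appears in the numerator $2\zeta$) \emph{first}, using the sharp one-dimensional estimate of Lemma \ref{integral2} -- this is precisely where the hypothesis $p\geq 2$ enters -- to produce $(yz)^{-(p-1)/2}$, after which each of the two remaining integrations contributes a factor $\tfrac{2}{3-p}$; that is where the $(3-p)^{2}$ and the $\pi$ in the stated constant come from. You instead use the pointwise bound $\kappa\leq 2/\max\{\xi,\eta,\zeta\}$ followed by a single volume integral; this is simpler, avoids Lemma \ref{integral2} entirely, and in fact works for all $p\in(0,3)$, which would render the H\"older interpolation in Corollary \ref{finiteMpbutramificationpointsp<3} unnecessary. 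Two caveats on your execution. First, if you literally integrate $(\xi+\eta+\zeta)^{-p}$ iteratively you run into the $(2-p)$ singularity you mention; it is much cleaner to integrate the bound your own estimates actually deliver, namely
\begin{align*}
	\int_{[0,1]^{3}}\max\{\xi,\eta,\zeta\}^{-p}\dd\xi\dd\eta\dd\zeta
	=3\int_{0}^{1}\xi^{2-p}\dd\xi=\frac{3}{3-p},
\end{align*}
which has no issue at $p=2$. Second, the constant does matter for the statement as written: with your ``symmetrized'' bound $\kappa\leq 6/(\xi+\eta+\zeta)$ you get $18\cdot 6^{p}\cdot\tfrac{3}{3-p}$, which already at $p=2$ exceeds $72\pi$ by an order of magnitude. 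With the sharper form $\kappa\leq 2/\max\{\xi,\eta,\zeta\}$ you obtain $\M_{p}^{1}(E)\leq \tfrac{54\cdot 2^{p}}{3-p}=\tfrac{54\cdot 2^{p}(3-p)}{(3-p)^{2}}\leq\tfrac{216}{(3-p)^{2}}\leq\tfrac{72\pi}{(3-p)^{2}}$ on $[2,3)$, since $2^{p}(3-p)$ is decreasing there and equals $4$ at $p=2$. So the approach does close, but only if you commit to the $\max$ bound rather than the symmetrized one and actually do the bookkeeping you deferred.
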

\begin{proof}
	\textbf{Step 1} 
		By Lemma \ref{decompositionoftripleintegral} it is clear that
		\begin{align*}
			\M_{p}^{1}(E)=\sum_{\mathclap{i,j,k\in\{1,2,3\}}}\F_{p}(E_{i},E_{j},E_{k}).
		\end{align*}
		Since the integrand $\kappa^{p}$ vanishes on certain sets, we have
		\begin{align*}
			\sum_{\mathclap{\substack{i,j,k\in\{1,2,3\}\\\#\{i,j,k\}=1}}}\, \F_{p}(E_{i}, E_{j}, E_{k}) 
			+\sum_{\mathclap{i,j,k\in\{1,3\}}}\, \F_{p}(E_{i}, E_{j}, E_{k})=0,
		\end{align*}
		furthermore
		\begin{align*}
			\M_{p}^{1}(E_{1}\cup E_{2})=\sum_{\mathclap{\substack{i,j,k\in\{1,2\}\\\#\{i,j,k\}=2}}}\, \F_{p}(E_{i}, E_{j}, E_{k})=
			\sum_{\mathclap{\substack{i,j,k\in\{2,3\}\\\#\{i,j,k\}=2}}}\, \F_{p}(E_{i}, E_{j}, E_{k})=\M_{p}^{1}(E_{2}\cup E_{3}),
		\end{align*}
		as the energy is invariant under isometries. Considering Remark \ref{permutation} we obtain
		\begin{align*}
			\M_{p}^{1}(E_{1}\cup E_{2})=\M_{p}^{1}(E_{2}\cup E_{3})=3(\F_{p}(E_{1},E_{1},E_{2})+\F_{p}(E_{1},E_{2},E_{2}))=6\F_{p}(E_{1},E_{1},E_{2}),
		\end{align*}
		where the last equality is, again, due to the invariance of the integrand under isometries. By considering the integrand $\kappa^{p}$
		in the form
		\begin{align*}
			\kappa^{p}(x,y,z)=\bigg( \frac{2\dist(x,L_{zy})}{d(x,y)d(x,z)} \bigg)^{p}
		\end{align*}
		for $x\in E_{2}, y\in E_{1}$ and $z\in E_{3}$ we note, that $\kappa^{p}(x,y,z)=\kappa^{p}(x,y,-z)$, 
		by mapping $E_{3}$ onto $E_{1}$ via $z\mapsto -z$ we find
		\begin{align*}
			\F_{p}(E_{3},E_{1},E_{2})=\F_{p}(E_{1},E_{1},E_{2}),
		\end{align*}
		so that
		\begin{align*}
			\sum_{\mathclap{\substack{i,j,k\in\{1,2,3\}\\\#\{i,j,k\}=3}}}\, \F_{p}(E_{i}, E_{j}, E_{k})=6\F_{p}(E_{1},E_{1},E_{2}).
		\end{align*}
		All in all we obtain
		\begin{align*}
			\MoveEqLeft
			\M_{p}^{1}(E)\\
			&=\Big(\sum_{\substack{i,j,k\in\{1,2,3\}\\\#\{i,j,k\}=1}}
			+\sum_{\substack{i,j,k\in\{1,3\}\\\#\{i,j,k\}=2}}
			+\sum_{\substack{i,j,k\in\{1,2\}\\\#\{i,j,k\}=2}}
			+\sum_{\substack{i,j,k\in\{2,3\}\\\#\{i,j,k\}=2}}
			+\sum_{\substack{i,j,k\in\{1,2,3\}\\\#\{i,j,k\}=3}}\Big)\F_{p}(E_{i}, E_{j}, E_{k})\\
			&=18\F_{p}(E_{1},E_{1},E_{2})=18\F_{p}(E_{2},E_{1},E_{1}).
		\end{align*}
	\textbf{Step 2} 
		Let us first choose parametrisations 
		\begin{align*}
			\gamma_{1}:[0,1]\to \R^{2},\,t\mapsto(-t,0)\quad\text{and}\quad\gamma_{2}:[0,1]\to \R^{2},\,t\mapsto(0,t)
		\end{align*}
		of $E_{1}$ and $E_{2}$, respectively. This gives us
		\begin{align*}
			\MoveEqLeft\F_{p}(E_{2},E_{1},E_{1})=
			\int_{0}^{1}\int_{0}^{1}\int_{0}^{1}\bigg(\frac{2x}{\sqrt{x^{2}+y^{2}}\sqrt{x^{2}+z^{2}}}\bigg)^{p}\dL^{1}(x)\dL^{1}(y)\dL^{1}(z)\\
			&\stackrel{\mathclap{\text{Lemma \ref{integral2}}}}{\leq}\quad
			\int_{0}^{1}\int_{0}^{1} 2^{p}\frac{\pi}{2^{p}}(zy)^{-(p-1)/2} \dL^{1}(y)\dL^{1}(z)\\
			&=\pi\int_{0}^{1} z^{(1-p)/2}\bigg[\frac{2}{3-p}y^{(3-p)/2}\bigg]_{0}^{1} \dL^{1}(z)
			=\pi\bigg[\frac{2}{3-p}z^{(3-p)/2}\bigg]_{0}^{1}\frac{2}{3-p}\\
			&=\frac{4\pi}{(3-p)^{2}}.
		\end{align*}
		Notice that the range $p\geq 2$ was neccessary to apply Lemma \ref{integral2}.
\end{proof}

\begin{corollary}[(The set $E$ has finite $\mathcal{M}_{p}^{1}$ for $p\in (0,3)$)]\label{finiteMpbutramificationpointsp<3}
	For $p\in (0,3)$ we have $\mathcal{M}_{p}^{1}(E)<\infty$.
\end{corollary}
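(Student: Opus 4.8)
The plan is to deduce $\mathcal{M}_{p}^{1}(E)<\infty$ for all $p\in(0,3)$ from the already established bound for the sub-range $p\in[2,3)$, Proposition~\ref{finiteMpbutramificationpoints}, together with the interpolation inequality of Lemma~\ref{comparisonofcurvatureenergiesfordifferentp} and the elementary fact that $\HM^{1}(E)=3<\infty$. This is exactly the strategy already used in Corollary~\ref{TsetfiniteIp<2}, so the present statement is its direct analogue one section later.

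Concretely, first I would dispose of the range $p\in[2,3)$ by simply invoking Proposition~\ref{finiteMpbutramificationpoints}, which gives the explicit bound $\mathcal{M}_{p}^{1}(E)\leq 72\pi/(3-p)^{2}<\infty$. It then remains to handle $p\in(0,2)$. For such $p$, pick any fixed exponent $q$ with $p<q<3$, for instance $q=2$ (or $q=5/2$); since $q\in[2,3)$ we already know $\mathcal{M}_{q}^{1}(E)<\infty$. Now apply the third inequality of Lemma~\ref{comparisonofcurvatureenergiesfordifferentp} with $\he=1$: it yields
\begin{align*}
	\mathcal{M}_{p}^{1}(E)\leq \HM^{1}(E)^{3(1-p/q)}\,\mathcal{M}_{q}^{1}(E)^{p/q}=3^{3(1-p/q)}\,\mathcal{M}_{q}^{1}(E)^{p/q}<\infty,
\end{align*}
using $\HM^{1}(E)=3$ and the finiteness of $\mathcal{M}_{q}^{1}(E)$. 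Together the two ranges cover all of $(0,3)$, which is the claim.

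There is essentially no obstacle here: the only thing to check is that Lemma~\ref{comparisonofcurvatureenergiesfordifferentp} applies, which requires $\HM^{1}(E)<\infty$ — true since $E$ is a union of three unit segments — and $0<p<q<\infty$, which holds by construction. One could alternatively phrase the whole thing in a single line by observing that $\mathcal{M}_{p}^{1}$ is, up to powers of the finite quantity $\HM^{1}(E)$, monotone in $p$ in the sense made precise by that lemma, so that finiteness at $q=2$ propagates downward to every $p\in(0,2)$ while Proposition~\ref{finiteMpbutramificationpoints} covers $[2,3)$ directly. I would present it in the two-case form above for clarity, mirroring the proof of Corollary~\ref{TsetfiniteIp<2}.
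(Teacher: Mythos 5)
Your proposal is correct and follows exactly the paper's own route: Proposition~\ref{finiteMpbutramificationpoints} handles $p\in[2,3)$, and Lemma~\ref{comparisonofcurvatureenergiesfordifferentp} together with $\HM^{1}(E)=3$ propagates finiteness down to $p\in(0,2)$. Nothing is missing.
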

\begin{proof}
	This is a consequence of Lemma \ref{finiteMpbutramificationpoints} 
	together with $\HM^{1}(E)=3$ and Lemma \ref{comparisonofcurvatureenergiesfordifferentp}.
\end{proof}

\section{Exponents are sharp and weak approximate tangents are optimal for \texorpdfstring{$\he=1$}{alpha=1}}\label{sectionresultsharp}

The exponents in the previous results are sharp, i.e.

\begin{lemma}[(A set with no appr. $1$-tangent and finite $\mathcal{U}_{(0,1)}^{1}$, $\mathcal{I}_{(0,2)}^{1}$ and $\mathcal{M}_{(0,3)}^{1}$)]\label{summaryforsetE}
	Let $E$ be the set from Definition \ref{definitionTset}. Then
	\begin{itemize}
		\item
			$E$ does not have a weak approximate $1$-tangent at $0$,
		\item
			$\U_{p}^{1}(E)<\infty$ for all $p\in (0,1)$,
		\item
			$\mathcal{I}_{p}^{1}(E)<\infty$ for all $p\in (0,2)$,
		\item
			$\M_{p}^{1}(E)<\infty$ for all $p\in (0,3)$.
	\end{itemize}
\end{lemma}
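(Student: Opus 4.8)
The statement is a compilation: all four assertions have effectively been established in the preceding subsections, so the plan is simply to assemble them, supplying only a short independent argument for the first bullet (which was merely asserted after Definition \ref{definitionTset}).

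For the first bullet I would show directly that no narrow double cone about the origin can absorb enough of the three arms $E_1,E_2,E_3$ of the ``T''. Fix $\epsilon\in(0,\pi/4)$ and any $s\in\Sphere^{1}$. Every point of $E_2\setminus\{0\}$ is seen from $0$ in direction $e_2$, and every point of $(E_1\cup E_3)\setminus\{0\}$ in direction $\pm e_1$; hence $C_{s,\epsilon}(0)$ meets $E_2\setminus\{0\}$ only if $s$ lies within angle $\epsilon$ of $\pm e_2$, and meets $(E_1\cup E_3)\setminus\{0\}$ only if $s$ lies within angle $\epsilon$ of $\pm e_1$. Since $e_1\perp e_2$ and $\epsilon<\pi/4$, these two conditions are mutually exclusive. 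Therefore, for every $s$, at least one of the sets $E_2\cap\overline B_r(0)$ or $(E_1\cup E_3)\cap\overline B_r(0)$ lies entirely outside $C_{s,\epsilon}(0)$, so for all $r\in(0,1)$
\begin{align*}
	\HM^{1}\big([E\cap\overline B_r(0)]\setminus C_{s,\epsilon}(0)\big)\ge\min\{\HM^{1}(E_2\cap\overline B_r(0)),\,\HM^{1}((E_1\cup E_3)\cap\overline B_r(0))\}=r.
\end{align*}
Consequently, for any $\rho>0$ and any mapping $s\colon(0,\rho)\to\Sphere^{1}$ one has $\HM^{1}([E\cap\overline B_r(0)]\setminus C_{s(r),\epsilon}(0))/(2r)\ge \tfrac12$ for all small $r$, whence $\Theta^{*1}(\HM^{1},E\setminus C_{s(r),\epsilon}(0),0)\ge\tfrac12>0$ and $E$ has no weak approximate $1$-tangent at $0$.

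The remaining three bullets are now immediate: $\U_p^1(E)\le 6/(1-p)<\infty$ for $p\in(0,1)$ by Proposition \ref{TsetfiniteUp}; $\mathcal{I}_p^1(E)<\infty$ for $p\in(0,2)$ by Corollary \ref{TsetfiniteIp<2}; and $\M_p^1(E)<\infty$ for $p\in(0,3)$ by Corollary \ref{finiteMpbutramificationpointsp<3}. There is no real obstacle at this stage — the substance of the statement lies entirely in those earlier estimates, most notably the Menger-curvature bound of Proposition \ref{finiteMpbutramificationpoints}, which in turn rested on the integral lemmas deferred to the appendix.
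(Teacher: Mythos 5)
Your proposal is correct and follows the paper's own route: the paper's proof of this lemma is literally the citation of Proposition \ref{TsetfiniteUp}, Corollary \ref{TsetfiniteIp<2} and Corollary \ref{finiteMpbutramificationpointsp<3}, with the first bullet only asserted as ``clear'' after Definition \ref{definitionTset}. Your explicit cone argument for that first bullet is a sound (and welcome) addition: for $\epsilon<\pi/4$ no double cone can simultaneously come within $\epsilon$ of both $\pm e_1$ and $\pm e_2$, so the complement of $C_{s(r),\epsilon}(0)$ always retains at least one full arm of measure $r$ in $\overline B_r(0)$, giving upper density at least $1/2$.
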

\begin{proof}
	This is Lemma \ref{TsetfiniteUp}, Corollary \ref{TsetfiniteIp<2} and Corollary \ref{finiteMpbutramificationpointsp<3}.
\end{proof}

The weak approximate $1$-tangents in the results for $\mathcal{I}_{p}^{1}$ and $\mathcal{M}_{p}^{1}$ are optimal in the following sense

\begin{lemma}[(A set with no appr. tangent and finite $\mathcal{I}_{p}^{1}$ for all $p\in (0,\infty)$)]\label{noapprtangentbutfiniteenergy}
	Set $a_{n}\vcentcolon=2^{-n^{n}n^{3}}$, $A_{n}\vcentcolon=[a_{n}/2,a_{n}]$ and
	\begin{align*}
		F\vcentcolon=\Big[\bigcup_{n\in\N}\underbrace{A_{2n}\times\{0\}}_{=\vcentcolon B_{2n}}\Big]
		\cup\Big[\bigcup_{n\in\N}\underbrace{\{0\}\times A_{2n-1}}_{=\vcentcolon B_{2n-1}}\Big].
	\end{align*}
	Then 
	\begin{itemize}
		\item
			$F$ does not have an approximate $1$-tangent at $0$,
		\item
			$1/\Delta[F]=\infty$,
		\item
			$\U_{p}^{1}(F)=\infty$ for all $p\in [1,\infty)$,
		\item
			$\mathcal{I}_{p}^{1}(F)<\infty$ for all $p\in (0,\infty)$,
		\item
			$\M_{p}^{1}(F)<\infty$ for all $p\in (0,\infty)$.
	\end{itemize}
\end{lemma}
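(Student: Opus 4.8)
The first item is exactly the content of Example~\ref{weakbutnoapptangents}, where this very set $F$ is shown to have no approximate $1$-tangent at $0$; I would just invoke it. For $1/\Delta[F]=\infty$ I would argue directly: take two distinct points $x=(\xi,0),y=(\eta,0)$ on $A_{2n}\times\{0\}$ and a point $z=(0,\zeta)$ on $\{0\}\times A_{2n-1}$. These are distinct and non-collinear, $L_{x,y}$ is the first axis, $\dist(z,L_{x,y})=\zeta$, so $r(x,y,z)=\sqrt{\xi^2+\zeta^2}\sqrt{\eta^2+\zeta^2}/(2\zeta)$; since $\zeta\ge a_{2n-1}/2$ dwarfs $\xi,\eta\le a_{2n}$, both radicals are $\le\sqrt2\,\zeta$ and hence $r(x,y,z)\le\zeta\le a_{2n-1}\to 0$, so $\Delta[F]=0$. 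The third item then follows from the contrapositive of Lemma~\ref{finiteUpguaranteesapproxiamtetangents} (with $\he=1\le p$) together with the first item: finite $\mathcal{U}_p^1(F)$ would force an approximate $1$-tangent at $0$. (One can also see it directly: for $x=(\xi,0)\in A_{2n}\times\{0\}$, picking $v,w$ distinct on $\{0\}\times A_{2n+1}$ gives $\rho_G(x)\le r(x,v,w)\le\xi$, so $\int_{A_{2n}\times\{0\}}\kappa_G^p\,\dd\HM^1\ge\int_{a_{2n}/2}^{a_{2n}}\xi^{-p}\,\dd\xi$, which is $\ge\ln2$ for $p=1$ and unbounded in $n$ for $p>1$; summing over $n$ gives $\infty$.)

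The heart of the matter is $\mathcal{I}_p^1(F)<\infty$ for all $p>0$. Here $F$ is the \emph{disjoint} union of the segments $B_k$ (with $B_{2n}=A_{2n}\times\{0\}$, $B_{2n-1}=\{0\}\times A_{2n-1}$), $\HM^1(B_k)=a_k/2$, and $\HM^1(F)=\tfrac12\sum_k a_k<\infty$; by the same monotone-convergence argument as in Lemma~\ref{decompositionoftripleintegral} one gets $\mathcal{I}_p^1(F)=\sum_{k,l}\int_{B_l}\int_{B_k}\kappa_i^p\,\dd\HM^1\,\dd\HM^1$. I would bound $\rho(x,y)=\inf_w r(x,y,w)$ on each pair $B_k\times B_l$ and sum; always $\rho(x,y)\ge|x-y|/2$. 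If $k\ne l$, the nonzero coordinates of $x,y$ lie in the far-separated intervals $[a_k/2,a_k]$ and $[a_l/2,a_l]$, so $|x-y|\ge\tfrac14 a_{\min(k,l)}$ (whether the two points sit on the same axis or not), whence $\int_{B_l}\int_{B_k}\kappa_i^p\le(8/a_{\min(k,l)})^p\,\tfrac{a_k}{2}\tfrac{a_l}{2}$; summing over $k<l$ and using $\sum_{l>k}a_l\le 2a_{k+1}$ reduces the off-diagonal part to $\sum_k a_k^{1-p}a_{k+1}=\sum_k 2^{(p-1)k^{k+3}-(k+1)^{k+4}}$, which converges since the exponent tends to $-\infty$.

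The delicate case, and the main obstacle, is the diagonal $k=l$. Then $x,y$ lie on one axis, so every admissible third vertex $w$ lies on the other axis, on some $B_j$ with $j$ of opposite parity to $k$; with $\zeta$ its nonzero coordinate and $\xi,\eta\in[a_k/2,a_k]$ those of $x,y$,
\[
r(x,y,w)=\frac{\sqrt{\xi^2+\zeta^2}\,\sqrt{\eta^2+\zeta^2}}{2\zeta}\ \ge\ \max\!\Big\{\frac{\zeta}{2},\ \frac{a_k^2}{8\zeta}\Big\}.
\]
Because an opposite-parity index $j$ is either $\le k-1$ (forcing $\zeta\ge a_{k-1}/2$) or $\ge k+1$ (forcing $\zeta\le a_{k+1}$), and because $a_k^2/a_{k+1}=2^{(k+1)^{k+4}-2k^{k+3}}\ge1$, in every case $r(x,y,w)\ge a_{k-1}/4$ when $k\ge2$ (and $r(x,y,w)\ge a_1^2/(8a_2)$, a positive constant, when $k=1$). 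Hence $\kappa_i\le 4/a_{k-1}$ on $B_k\times B_k$ and $\int_{B_k}\int_{B_k}\kappa_i^p\le(4/a_{k-1})^p(a_k/2)^2$; the series $\sum_{k\ge2}a_k^2 a_{k-1}^{-p}=\sum 2^{\,p(k-1)^{k+2}-2k^{k+3}}$ converges. The mechanism making everything work is that each segment $B_k$ is doubly-exponentially shorter than the distance $\approx a_{k-1}$ to the nearest usable third vertex, so the would-be blow-up of $\kappa_i$ on a segment is smothered by the tininess of that segment, and the growth of the exponents $k^{k+3}$ forces all the resulting series to converge, for every $p$.

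Finally, $\mathcal{M}_p^1(F)<\infty$ for all $p>0$ is immediate from Lemma~\ref{inequalityforcurvatureenergies}, since $\mathcal{M}_p^1(F)\le\HM^1(F)\,\mathcal{I}_p^1(F)<\infty$ once we know $\HM^1(F)<\infty$ and $\mathcal{I}_p^1(F)<\infty$.
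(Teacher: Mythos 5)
Your proof is correct, and it follows the same overall strategy as the paper: decompose $\mathcal{I}_p^1(F)$ into the blocks $B_k\times B_l$, control the off-diagonal blocks by the separation $\dist(B_k,B_l)\gtrsim a_{\min(k,l)}$ via $\rho(x,y)\geq \lvert x-y\rvert/2$, and control the diagonal blocks by showing that any admissible third vertex for $x,y\in B_k$ is essentially $a_{k-1}$ away, so that the doubly exponential decay of $a_k$ smothers every power $p$. Within that framework you deviate from the paper in three worthwhile ways. First, on the diagonal you avoid the paper's calculus computation (which locates the exact maximizer $\xi=\sqrt{\eta\zeta}$ of $\xi\mapsto\kappa(x,y,(\xi,0))$ and evaluates $f$ at the two admissible endpoints $a_{n+1}$ and $a_{n-1}/2$); your two crude lower bounds $r\geq\zeta/2$ and $r\geq a_k^2/(8\zeta)$, combined with the parity dichotomy $j\leq k-1$ or $j\geq k+1$, give the same conclusion $\kappa_i\lesssim 1/a_{k-1}$ with less work. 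Second, your estimates are uniform enough to handle all $p\in(0,\infty)$ in one pass, whereas the paper proves finiteness only for $p\geq 3$ and then interpolates down via the H\"older comparison of Lemma \ref{comparisonofcurvatureenergiesfordifferentp}. Third, you prove $\Delta[F]=0$ and $\mathcal{U}_p^1(F)=\infty$ directly by exhibiting degenerate triangles and cheap circumradius bounds, while the paper deduces both from the absence of an approximate tangent together with Lemma \ref{finiteUpguaranteesapproxiamtetangents}; both routes are valid, and yours has the small advantage of not routing the thickness statement through the tangent machinery. The remaining items (quoting Example \ref{weakbutnoapptangents} for the tangent claim and Lemma \ref{inequalityforcurvatureenergies} for $\M_p^1$) coincide with the paper.
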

\begin{proof}
	\textbf{Step 1}
		For $l\not=k$ we denote $\mu\vcentcolon=\min\{k,l\}$ and $M\vcentcolon=\max\{k,l\}$. Then
		\begin{align*}
			\MoveEqLeft\dist(B_{k},B_{l})\geq \dist(A_{k},A_{l})=2^{-(\mu^{\mu}\mu^{3}+1)}-2^{-M^{M}M^{3}}\\
			&= 2^{-(\mu^{\mu}\mu^{3}+1)}(1-2^{(\mu^{\mu}\mu^{3}+1)-M^{M}M^{3}})
			\geq 2^{-(\mu^{\mu}\mu^{3}+2)}=a_{\mu}/4.
		\end{align*}
		Let $y\in B_{k}$, $z\in B_{l}$ with $k\not= l$. Then
		\begin{align*}
			\kappa_{i}(y,z)\leq \frac{2}{\dist(B_{k},B_{l})}\leq \frac{8}{a_{\mu}}=\frac{8}{a_{\min\{k,l\}}}=\frac{8}{\max\{a_{k},a_{l}\}}.
		\end{align*}
	\textbf{Step 2}
		Let $q>1$.
		We now compute some inequalities for the indices. Let $k,m\in\N$, $k<m$, i.e. $m=k+i$ for some $i\in\N$. Then
		\begin{align*}
			m^{3}=(k+i)^{3}=k^{3}+3k^{2}i+3ki^{2}+i^{3},
		\end{align*}
		so that
		\begin{align}\label{indexinequality}
			-m^{3}+k^{3}=-(3k^{2}i+3ki^{2}+i^{3})\leq -3(k+i)=-3m.
		\end{align}
		As $qk^{k}\leq m^{m}$ for $1<q\leq k< m$ we have
		\begin{align*}
			-m^{m}m^{3}+qk^{k}k^{3}\leq -qk^{k}m^{3}+qk^{k}k^{3}=qk^{k}(-m^{3}+k^{3})
			\stackrel{(\text{\ref{indexinequality})}}{\leq} qk^{k}(-3m)\leq -3m.
		\end{align*}
		Consequently for all $1<q\leq k< m$
		\begin{align}\label{quotientofans}
			\frac{a_{m}}{a_{k}^{q}}=\frac{2^{-m^{m}m^{3}}}{2^{-qk^{k}k^{3}}}=2^{-m^{m}m^{3}+qk^{k}k^{3}}
			\leq 2^{-3m}.
		\end{align}
	\textbf{Step 3}
		As $\HM^{1}(B_{n})=a_{n}/2$ we have for $p\geq 3$, and $q=p-1>1$
		\begin{align*}
			\MoveEqLeft \sum_{\substack{k,m\in\N\\ k\not= m}}\int_{B_{k}}\int_{B_{m}}\kappa_{i}^{p}(y,z)\dd\HM^{1}(y)\dd\HM^{1}(z)\\
			&\leq \sum_{\substack{k,m\in\N\\ k\not= m}}\Big[ \frac{8}{\max\{a_{k},a_{m}\}}\Big]^{p}\frac{a_{k}a_{m}}{4}\\
			&\leq \frac{2\cdot 8^{p}}{4}\sum_{\substack{k,m\in\N\\1\leq k< m}} \frac{a_{k}a_{m}}{\max\{a_{k},a_{m}\}^{p}}\\
			&\leq 4\cdot 8^{p-1}\sum_{\substack{1\leq k\leq q\\k< m}} \frac{a_{k}a_{m}}{\max\{a_{k},a_{m}\}^{p}}
			+4\cdot 8^{p-1}\sum_{\substack{k,m\in\N\\q\leq k< m}} \frac{a_{m}}{a_{k}^{p-1}}\\
			&\leq 4\cdot 8^{p-1}\sum_{\substack{1\leq k\leq q\\k< m}} \frac{a_{k}a_{m}}{a_{\lceil q\rceil}^{p}}
			+4\cdot 8^{p-1}\sum_{\substack{k,m\in\N\\q\leq k< m}} \frac{a_{m}}{a_{k}^{q}}\\
			&\stackrel{\mathclap{\text{(\ref{quotientofans})}}}{\leq} \frac{4\cdot 8^{p-1}}{a_{\lceil q\rceil}^{p}}\sum_{k,m\in\N} 2^{-k}2^{-m}
			+4\cdot 8^{p-1}\sum_{\substack{k,m\in\N\\q\leq k< m}} 2^{-3m}\\
			&\leq \frac{4\cdot 8^{p-1}}{a_{\lceil q\rceil}^{p}}
			+4\cdot 8^{p-1}\sum_{\substack{k,m\in\N\\q\leq k< m}} 2^{-k}2^{-m}\\
			&\leq \frac{4\cdot 8^{p-1}}{a_{\lceil q\rceil}^{p}}
			+4\cdot 8^{p-1}\sum_{k,m\in\N} 2^{-k}2^{-m}\\
			&=4\cdot 8^{p-1}\Big(\frac{1}{a_{\lceil q\rceil}^{p}}+1\Big).
		\end{align*}
	\textbf{Step 4}
		Let $y,z\in B_{n}$. Then $\kappa(x,y,z)>0$ if and only if $x\in B_{k}$ for $(k-n)\,\mathrm{mod}\,2=1$. 
		To simplify matters we may without loss of generality assume that
		$k$ is even and $n$ is odd. We now have, compare Remark \ref{differentformulaskappa},
		\begin{align*}
			\kappa(x,y,z)=\frac{2\xi}{\sqrt{\xi^{2}+\eta^{2}}\sqrt{\xi^{2}+\zeta^{2}}},
		\end{align*}
		where we denote the non-zero entries of $x,y$ and $z$ by $\xi,\eta$ and $\zeta$, respectively.
		If we set $f(\xi)\vcentcolon= \kappa(x,y,z)/2$ for fixed $y$ and $z$ we have
		\begin{align*}
			\MoveEqLeft f'(\xi)=\frac{1}{\sqrt{\xi^{2}+\eta^{2}}\sqrt{\xi^{2}+\zeta^{2}}}
			-\frac{\xi^{2}}{\sqrt{\xi^{2}+\eta^{2}}^{3}\sqrt{\xi^{2}+\zeta^{2}}}
			-\frac{\xi^{2}}{\sqrt{\xi^{2}+\eta^{2}}\sqrt{\xi^{2}+\zeta^{2}}^{3}}\\
			&=\frac{(\xi^{2}+\eta^{2})(\xi^{2}+\zeta^{2})}{\sqrt{\xi^{2}+\eta^{2}}^{3}\sqrt{\xi^{2}+\zeta^{2}}^{3}}
			-\frac{\xi^{2}(\xi^{2}+\zeta^{2})+\xi^{2}
			(\xi^{2}+\eta^{2})}{\sqrt{\xi^{2}+\eta^{2}}^{3}\sqrt{\xi^{2}+\zeta^{2}}^{3}}\\
			&=\frac{(\xi^{2}+\eta^{2})\zeta^{2}-\xi^{2}(\xi^{2}+\zeta^{2})}
			{\sqrt{\xi^{2}+\eta^{2}}^{3}\sqrt{\xi^{2}+\zeta^{2}}^{3}}
			=\frac{\eta^{2}\zeta^{2}-\xi^{4}}{\sqrt{\xi^{2}+\eta^{2}}^{3}\sqrt{\xi^{2}+\zeta^{2}}^{3}},
		\end{align*}
		which is $0$ if and only if $\xi=\sqrt{\eta\zeta}$, because $\xi,\eta,\zeta>0$. 
		That $f$ attains its maximum at $\xi=\sqrt{\eta\zeta}$ 
		is clear by $f'\geq 0$ on $[0,\sqrt{\eta\zeta}]$ and
		$f'\leq 0$ on $[\sqrt{\eta\zeta},\infty)$. Since $\sqrt{\eta\zeta}\in A_{n}$ we have $(\sqrt{\eta\zeta},0)\not\in F$, as $n$ is odd,
		so that $\kappa_{i}(y,z)=\sup_{x\in F}\kappa(x,y,z)$ is attained for $x=(\xi,0)$, $\xi\in \{a_{n+1},a_{n-1}/2\}$. We have
		\begin{align*}
			f(a_{n+1})=\frac{a_{n+1}}{\sqrt{a_{n+1}^{2}+\eta^{2}}\sqrt{a_{n+1}^{2}+\zeta^{2}}}
			\leq \frac{a_{n+1}}{a_{n+1}^{2}+a_{n}^{2}/4}\leq 4\frac{a_{n+1}}{a_{n}^{2}}
		\end{align*}
		and
		\begin{align*}
			f(a_{n-1}/2)=\frac{a_{n-1}/2}{\sqrt{a_{n-1}^{2}/4+\eta^{2}}\sqrt{a_{n-1}^{2}/4+\zeta^{2}}}
			\leq \frac{a_{n-1}/2}{a_{n-1}^{2}/4+a_{n}^{2}/4}\leq 2\frac{a_{n-1}}{a_{n-1}^{2}}\leq \frac{4}{a_{n-1}}.
		\end{align*}
		As $2n^{n}n^{3}\leq (n+1)(n+1)^{n}(n+1)^{3}=(n+1)^{n+1}(n+1)^{3}$ and $a_{n-1}\leq 1$ we have $a_{n+1}a_{n-1}\leq a_{n}^{2}$ 
		and hence for $n\geq 2$
		\begin{align*}
			\kappa_{i}(y,z)=2\max\{f(a_{n+1}),f(a_{n-1}/2)\}
			\leq 2\max\Big\{\frac{4a_{n+1}}{a_{n}^{2}},\frac{4}{a_{n-1}}\Big\}=\frac{8}{a_{n-1}}.
		\end{align*}
		Consequently we have for $p\geq 3$
		\begin{align*}
			\MoveEqLeft \sum_{n=1}^{\infty}\int_{B_{n}}\int_{B_{n}}\kappa_{i}^{p}(y,z)\dd\HM^{1}(y)\dd\HM^{1}(z)\\
			&\leq \frac{2^{p}}{\dist(B_{1},\R\times\{0\})^{p}}\Big(\frac{1}{8}\Big)^{2}+
			\sum_{n=2}^{\infty}\frac{8^{p}}{a_{n-1}^{p}}\HM^{1}(B_{n})\HM^{1}(B_{n})\\
			&\leq\frac{2^{p}}{(1/4)^{p}}\Big(\frac{1}{8}\Big)^{2}+\sum_{n=2}^{\infty}\frac{8^{p}}{a_{n-1}^{p}}\frac{a_{n}^{2}}{4}
			\leq\frac{8^{p}}{64}+8^{p}\sum_{n=2}^{\infty}\frac{a_{n}}{a_{n-1}^{p}}\\
			&\leq \frac{8^{p}}{64}+8^{p}\sum_{n=2}^{\lceil p\rceil+1}\frac{a_{n}}{a_{n-1}^{p}}+
			8^{p}\sum_{n=\lceil p\rceil+1}^{\infty}\frac{a_{n}}{a_{n-1}^{p}}\\
			&\stackrel{\mathclap{\text{(\ref{quotientofans})}}}{\leq} C_{p}+8^{p}\sum_{n=\lceil p\rceil+1}^{\infty}2^{-3n}
			\leq C_{p}+8^{p}\sum_{n=0}^{\infty}2^{-n}\leq C_{p}+8^{p}\cdot 2.
		\end{align*}
	\textbf{Step 5}
		For $p\geq 3$ we now conclude that by Lemma \ref{decompositionoftripleintegral} we have
		\begin{align*}
			\MoveEqLeft \mathcal{I}_{p}^{1}(F)\leq \sum_{k,l\in\N}\int_{B_{k}}\int_{B_{l}}\kappa_{i}^{p}(y,z)\dd\HM^{1}(y)\dd\HM^{1}(z)\\
			&=\sum_{\substack{k,l\in\N\\ k\not= l}}\int_{B_{k}}\int_{B_{l}}\kappa_{i}^{p}(y,z)\dd\HM^{1}(y)\dd\HM^{1}(z)
			+\sum_{n\in\N}\int_{B_{n}}\int_{B_{n}}\kappa_{i}^{p}(y,z)\dd\HM^{1}(y)\dd\HM^{1}(z)<\infty
		\end{align*}
		Using $\HM^{1}(F)\leq 2$ together with Lemma \ref{comparisonofcurvatureenergiesfordifferentp} 
		we have $\mathcal{I}_{p}^{1}(F)<\infty$ for all $p\in (0,\infty)$.\\
	\textbf{Step 6}
		In Example \ref{weakbutnoapptangents} we have already seen that $F$ has no approximate tangent at $0$. 
		This observation combined with Lemma \ref{finiteUpguaranteesapproxiamtetangents} directly gives us 
		$1/\Delta[F]=\infty$ and $\U_{p}^{1}(F)=\infty$ for all $p\in [1,\infty)$. For $\M_{p}^{1}(F)<\infty$ for all $p\in (0,\infty)$ we consult Lemma
		\ref{inequalityforcurvatureenergies} together with $\HM^{1}(F)\leq 2$.
\end{proof}

\begin{appendix}

\section{Semi-continuous and measurable functions}\label{sectionsemicontinuousfunctions}

\begin{lemma}[(Metric is continuous)]\label{metriccontinuous}
	The mapping $f: X^{3}\to\R,\,(x,y,z)\mapsto d(x,y)$ is continuous.
\end{lemma}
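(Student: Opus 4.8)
The plan is to reduce the claim to the quadrilateral (reverse triangle) inequality and the elementary fact that convergence in the product space $X^{3}$ forces convergence in each coordinate. Since the topology on $X^{3}$ is induced by a product metric equivalent to, say, $D((x,y,z),(x',y',z')) = d(x,x') + d(y,y') + d(z,z')$, it suffices to work sequentially: take $(x_{n},y_{n},z_{n}) \to (x,y,z)$ in $X^{3}$, so that $d(x_{n},x) \to 0$, $d(y_{n},y) \to 0$, and show $d(x_{n},y_{n}) \to d(x,y)$.

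First I would record the quadrilateral inequality $|d(a,b) - d(a',b')| \leq d(a,a') + d(b,b')$ for all $a,a',b,b' \in X$. This follows from two applications of the triangle inequality: $d(a,b) \leq d(a,a') + d(a',b') + d(b',b)$ gives $d(a,b) - d(a',b') \leq d(a,a') + d(b,b')$, and symmetrically $d(a',b') - d(a,b) \leq d(a,a') + d(b,b')$; taking the maximum yields the bound.

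Then I would apply this with $a = x_{n}$, $a' = x$, $b = y_{n}$, $b' = y$ to obtain
\begin{align*}
	|d(x_{n},y_{n}) - d(x,y)| \leq d(x_{n},x) + d(y_{n},y) \xrightarrow{n\to\infty} 0,
\end{align*}
which is exactly the assertion that $f$ is (sequentially, hence, the spaces being metric, topologically) continuous at the arbitrary point $(x,y,z)$. The third coordinate $z$ plays no role, so no separate argument is needed for it.

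There is essentially no obstacle here; the only point that warrants a sentence is the passage from the product-metric convergence to coordinatewise convergence, which is immediate from the choice of $D$ above (or any equivalent product metric), so I would simply state it. This lemma is exactly of the routine "preliminary" type, invoked earlier in Lemma \ref{curvatureradiiareupersemi-continuous} to get continuity of the side-length functions $a,b,c$ and of $f$ and $g$.
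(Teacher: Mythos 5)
Your proposal is correct and follows essentially the same route as the paper: both reduce to sequential continuity, use coordinatewise convergence in $X^{3}$, and bound $\abs{d(x_{n},y_{n})-d(x,y)}$ by $d(x_{n},x)+d(y_{n},y)$ via the triangle inequality (the paper simply writes the quadrilateral inequality as a two-step split through the intermediate term $d(x_{n},y)$). No gap.
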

\begin{proof}
	A mapping from a metric space to a metric space is continuous iff it is sequentially continuous. Let $(x,y,z)\in X^{3}$ and
	\begin{align*}
		(x_{n},y_{n},z_{n})\xrightarrow[n\to\infty]{(X^{3},d^{3})} (x,y,z).
	\end{align*}
	Then $x_{n}\to x$ and $y_{n}\to y$ in $X$ for $n\to\infty$, which gives us
	\begin{align*}
		\abs{d(x,y)-d(x_{n},y_{n})}&\leq \abs{d(x,y)-d(x_{n},y)}+\abs{d(x_{n},y)-d(x_{n},y_{n})}\\
		&\leq d(x,x_{n})+d(y,y_{n})\xrightarrow[n\to\infty]{} 0.
	\end{align*}
\end{proof}

\begin{lemma}[(Reciprocal of semi-continuous functions)]\label{reciprocalofsemi-continuousfunctions}
	Let $f:(X,d)\to \overline\R$, $f\geq 0$ be lower [upper] semi-continuous then $1/f$ is upper [lower] semi-continuous, 
	if we set $1/0=\infty$ and $1/\infty=0$.
\end{lemma}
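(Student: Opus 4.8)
The plan is to reduce the statement to the elementary fact that post-composition with a continuous, strictly decreasing bijection interchanges lower and upper semi-continuity. Concretely, I would introduce the map $\phi\colon[0,\infty]\to[0,\infty]$, $t\mapsto 1/t$, with the conventions $\phi(0)=\infty$ and $\phi(\infty)=0$; extended to the endpoints in this way, $\phi$ is an order-reversing homeomorphism of the compact interval $[0,\infty]$ (continuity at $0$ and at $\infty$ being exactly the content of the two conventions), and $\phi^{-1}=\phi$. Since $f\geq 0$ we have $1/f=\phi\circ f$, so it suffices to show that $\phi\circ f$ is upper semi-continuous whenever $f$ is lower semi-continuous; the bracketed statement then follows by the symmetric argument.

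I would carry this out via super-/sublevel sets. Recall that $g\colon X\to\overline\R$ is lower semi-continuous iff $\{g>c\}$ is open for every $c\in\R$, and upper semi-continuous iff $\{g<c\}$ is open for every $c\in\R$. So, assuming $f$ lower semi-continuous, I fix $c\in\R$ and identify $\{1/f<c\}$: if $c\leq 0$ this set is empty because $1/f\geq 0$; if $c>0$ then checking the three cases $f(x)=0$, $0<f(x)<\infty$, $f(x)=\infty$ against the conventions shows $\{1/f<c\}=\{f>1/c\}$. In both cases the set is open, so $1/f$ is upper semi-continuous. For the reverse implication I would argue the same way: if $f$ is upper semi-continuous and $c\in\R$, then $\{1/f>c\}$ equals $X$ when $c<0$, equals $\{f<\infty\}=\bigcup_{n\in\N}\{f<n\}$ when $c=0$, and equals $\{f<1/c\}$ when $c>0$, each of which is open; hence $1/f$ is lower semi-continuous.

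As an alternative, more in keeping with the sequential arguments used elsewhere in this section, one can work directly with $\liminf$ and $\limsup$: for $x_{n}\to x$ lower semi-continuity of $f$ gives $\liminf_{n}f(x_{n})\geq f(x)$, and since $\phi$ is continuous and decreasing on the compact space $[0,\infty]$ one has $\limsup_{n}\phi(f(x_{n}))=\phi(\liminf_{n}f(x_{n}))\leq \phi(f(x))$, which is precisely upper semi-continuity of $1/f$ at $x$; the other case is again symmetric.

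I do not expect any genuine obstacle here — the proof is essentially bookkeeping. The only point requiring a little care is the behaviour at the boundary values $f=0$ and $f=\infty$: one must verify that the conventions $1/0=\infty$ and $1/\infty=0$ make the level-set identities above hold with no exceptions (equivalently, in the sequential version, that $\phi$ is genuinely continuous at both endpoints of $[0,\infty]$).
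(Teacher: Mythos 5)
Your proof is correct and is essentially the paper's own argument: the paper checks that the superlevel sets $\{1/f\geq \alpha\}$ are closed via $\{1/f\geq 1/t\}=\{f\leq t\}$, while you check that the complementary strict sublevel sets $\{1/f<c\}$ are open via $\{1/f<c\}=\{f>1/c\}$ — the same level-set bookkeeping in complementary form. Your explicit treatment of the $c=0$ case in the converse direction (writing $\{f<\infty\}=\bigcup_{n\in\N}\{f<n\}$) is a detail the paper leaves to ``analogously,'' and the sequential variant is a correct but redundant alternative.
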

\begin{proof}
	A function $f$ is lower semi-continuous if and only if for all $t\in\R$ the set $\{f\leq t\}$ is closed, see \cite[Remark 1.3, p.21]{Braides2002a}.
	Hence the sets $\{1/t\leq 1/f\}$ are closed for $t>0$,
	as is $\{\alpha\leq 1/f\}=X$ for $\alpha\leq 0$. The other case follows analogously.
\end{proof}

\begin{lemma}[(Semi-continuous functions are measurable)]\label{semicontiniuousfunctionsaremeasurable}
	Let $f:(X,d)\to\overline\R$ be upper or lower semi-continuous then $f$ is $\B(X)\mhyphen\B(\overline\R)$ measurable.
\end{lemma}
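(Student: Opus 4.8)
The plan is to reduce measurability to a statement about a generating family of the Borel $\sigma$-algebra on $\overline\R$, and then invoke the sublevel-set characterisation of semi-continuity already cited in the excerpt.

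First I would recall that a map between measurable spaces is measurable as soon as preimages of a generating family of the target $\sigma$-algebra lie in the source $\sigma$-algebra. Here the target is $(\overline\R,\B(\overline\R))$, and $\B(\overline\R)$ is generated by the family $\{[-\infty,t]\mid t\in\R\}$: indeed the complements $(t,\infty]$ lie in the generated $\sigma$-algebra, hence so do $\{+\infty\}=\bigcap_{n\in\N}(n,\infty]$, $\{-\infty\}=\bigcap_{n\in\N}[-\infty,-n)$ and every half-line of $\R$, and from the half-lines one recovers all of $\B(\R)$; conversely each $[-\infty,t]$ is closed in $\overline\R$, so the generated $\sigma$-algebra is exactly $\B(\overline\R)$. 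Thus it suffices to show $f^{-1}([-\infty,t])\in\B(X)$ for every $t\in\R$.

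Next, for the lower semi-continuous case I would use the characterisation \cite[Remark 1.3, p.21]{Braides2002a} that $f$ is lower semi-continuous if and only if the sublevel set $\{f\le t\}$ is closed for every $t\in\R$. Since $f^{-1}([-\infty,t])=\{f\le t\}$ is then closed, it belongs to $\B(X)$, and measurability follows from the previous paragraph. For the upper semi-continuous case I would apply the same argument to $-f$ (which is lower semi-continuous, with the convention $-(\pm\infty)=\mp\infty$), or equivalently observe directly that $\{f\ge t\}=f^{-1}([t,\infty])$ is closed and that $\{[t,\infty]\mid t\in\R\}$ likewise generates $\B(\overline\R)$; either way $f^{-1}$ sends a generating family into $\B(X)$.

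There is no real obstacle here; the only point requiring a little care is that the target space is the \emph{extended} reals, so one must make sure the chosen generating family correctly captures the behaviour of $\B(\overline\R)$ at $\pm\infty$ — which is handled by the countable-intersection remark above. Everything else is the standard ``sublevel/superlevel sets are closed'' fact together with ``closed sets are Borel''.
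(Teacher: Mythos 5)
Your proposal is correct and follows essentially the same route as the paper: the paper's proof simply notes that $\{f\le t\}$ (resp.\ $\{t\le f\}$) is closed, hence Borel, leaving the generating-family argument implicit. You merely spell out the standard detail that the sets $[-\infty,t]$ (resp.\ $[t,\infty]$) generate $\B(\overline\R)$, which is a welcome but not substantively different addition.
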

\begin{proof}
	If $f$ is lower semi-continuous then the set $\{f\leq t\}$ is closed for all $t\in\R$ and if $f$ is upper semi-continuous 
	then the set $\{t\leq f\}$ is closed and hence a Borel set.
\end{proof}

\begin{lemma}[(Positive powers of positive, s.c. functions are s.c.)]\label{powersofsemicontiniuous}
	Let $f:(X,d)\to\overline\R$, $f\geq 0$ be lower [upper] semi-continuous then for all $p\in (0,\infty)$ 
	the function $f^{p}$ is lower [upper] semicontinuous.
\end{lemma}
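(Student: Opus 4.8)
The plan is to reduce everything to the level-set characterisation of semi-continuity that was already used in the proof of Lemma~\ref{reciprocalofsemi-continuousfunctions}, namely that a map $g:(X,d)\to\overline\R$ is lower semi-continuous iff $\{g\leq t\}$ is closed for all $t\in\R$, and upper semi-continuous iff $\{t\leq g\}$ is closed for all $t\in\R$, see \cite[Remark 1.3, p.21]{Braides2002a}. The point is simply that raising to the power $p$ is a continuous, strictly increasing bijection of $[0,\infty]$ onto itself, so it respects (super- and sub-) level sets of a non-negative function.

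Concretely, first I would fix the conventions $0^{p}=0$ and $\infty^{p}=\infty$, consistent with the paper's convention $1/0=\infty$, so that $\phi\colon[0,\infty]\to[0,\infty]$, $\phi(t)=t^{p}$, is a homeomorphism with inverse $t\mapsto t^{1/p}$. Since $f$ takes values in $[0,\infty]$, one has for every $t>0$
\begin{align*}
	\{x\in X\mid f^{p}(x)\leq t\}=\{x\in X\mid f(x)\leq t^{1/p}\},\qquad
	\{x\in X\mid t\leq f^{p}(x)\}=\{x\in X\mid t^{1/p}\leq f(x)\},
\end{align*}
while for $t=0$ both identities still hold with $t^{1/p}=0$, and for $t<0$ the sublevel set of $f^{p}$ is empty and the superlevel set is all of $X$; in every case the relevant set is closed as soon as the corresponding level set of $f$ is. Hence if $f$ is lower semi-continuous, every sublevel set of $f^{p}$ is closed, so $f^{p}$ is lower semi-continuous; and if $f$ is upper semi-continuous, every superlevel set of $f^{p}$ is closed, so $f^{p}$ is upper semi-continuous.

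Alternatively — and perhaps more cleanly — I would argue sequentially: for $x_{n}\to x$ in $X$, lower semi-continuity of $f$ gives $f(x)\leq\liminf_{n\to\infty}f(x_{n})$, and applying the continuous, non-decreasing function $\phi$ and using that such a $\phi$ commutes with $\liminf$ of sequences in $[0,\infty]$ yields $f^{p}(x)=\phi(f(x))\leq\phi\bigl(\liminf_{n\to\infty}f(x_{n})\bigr)=\liminf_{n\to\infty}\phi(f(x_{n}))=\liminf_{n\to\infty}f^{p}(x_{n})$, i.e.\ $f^{p}$ is lower semi-continuous; the upper case is symmetric. There is no real obstacle here: the only thing requiring a line of care is the bookkeeping at the endpoints $0,\infty$ of $\overline\R$ and at non-positive thresholds $t$, and the elementary fact that a monotone continuous $\phi$ satisfies $\phi(\liminf a_{n})=\liminf\phi(a_{n})$, which follows by passing to a subsequence realising the $\liminf$ for one inequality and by monotonicity plus continuity for the other.
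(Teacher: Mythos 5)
Your proposal is correct and its first argument is essentially the paper's own proof: both reduce to the level-set characterisation of semi-continuity and use that $t\mapsto t^{p}$ is a monotone bijection of $[0,\infty]$ onto itself, so that $\{f^{p}\leq t\}=\{f\leq t^{1/p}\}$ for $t\geq 0$ and the sublevel (resp.\ superlevel) sets at negative thresholds are trivially closed. The sequential argument you add is a valid alternative but nothing more is needed.
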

\begin{proof}
	Without loss of generality let $f\geq 0$ be lower semi-continuous. We have $\{f\leq t\}=\emptyset$ for $t<0$. If $t\geq 0$ we clearly have
	$f(x)^{p}\leq t\Leftrightarrow f(x)\leq t^{1/p}$.
\end{proof}

\begin{lemma}[(Measurability of piecewise functions)]\label{partiallydefinedmeasurable}
	Let $(X_{1},\A_{1})$, $(X_{2},\A_{2})$ be measuring spaces, $A\in\A_{1}$ and $f:A\to X_{2}$ be $\A_{1}\vert_{A}\mhyphen\A_{2}$ measurable and 
	$g:X_{1}\backslash A\to X_{2}$ be $\A_{1}\vert_{X_{1}\backslash A}\mhyphen\A_{2}$ measurable. Then
	\begin{align*}
		F:X_{1}\to X_{2},\,x\mapsto\begin{cases}f(x),&x\in A,\\g(x),&x\in X_{1}\backslash A,\end{cases}
	\end{align*}
	is $\A_{1}\mhyphen\A_{2}$ measurable.
\end{lemma}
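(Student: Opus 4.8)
The plan is to unwind the definition of measurability directly on preimages of sets in $\A_{2}$. Fix an arbitrary $B\in\A_{2}$; the goal is to show $F^{-1}(B)\in\A_{1}$. Since $A$ and $X_{1}\backslash A$ partition $X_{1}$ and $F$ agrees with $f$ on $A$ and with $g$ on $X_{1}\backslash A$, we have the disjoint decomposition $F^{-1}(B)=f^{-1}(B)\cup g^{-1}(B)$, with $f^{-1}(B)\subseteq A$ and $g^{-1}(B)\subseteq X_{1}\backslash A$. So it suffices to check that each of these two pieces lies in $\A_{1}$.

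The key observation — really the only content of the proof — is that when $A\in\A_{1}$, the trace (subspace) $\sigma$-algebra $\A_{1}\vert_{A}=\{A\cap C\mid C\in\A_{1}\}$ is a subcollection of $\A_{1}$ itself, because $A\cap C\in\A_{1}$ whenever $A,C\in\A_{1}$. First I would invoke the measurability of $f$ to get $f^{-1}(B)\in\A_{1}\vert_{A}$, hence $f^{-1}(B)\in\A_{1}$; since $X_{1}\backslash A\in\A_{1}$ as well, the same reasoning applied to $g$ gives $g^{-1}(B)\in\A_{1}\vert_{X_{1}\backslash A}\subseteq\A_{1}$. Then $F^{-1}(B)=f^{-1}(B)\cup g^{-1}(B)$ is a union of two members of $\A_{1}$, hence in $\A_{1}$, and since $B\in\A_{2}$ was arbitrary, $F$ is $\A_{1}$–$\A_{2}$ measurable.

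There is no real obstacle here; this is a routine bookkeeping lemma. The only place one must be slightly careful is not to conflate the trace $\sigma$-algebra $\A_{1}\vert_{A}$ (which is a $\sigma$-algebra of subsets of $A$) with $\A_{1}$, and to note explicitly that the inclusion $\A_{1}\vert_{A}\subseteq\A_{1}$ uses the hypothesis $A\in\A_{1}$ (this is exactly why $A$ is assumed measurable rather than arbitrary). I would keep the write-up to three or four lines.
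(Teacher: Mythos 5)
Your argument is correct and is essentially identical to the paper's proof: both decompose $F^{-1}(B)=f^{-1}(B)\cup g^{-1}(B)$, identify each piece as an element of the trace $\sigma$-algebra of $A$ resp.\ $X_{1}\backslash A$, and use $A\in\A_{1}$ to conclude membership in $\A_{1}$. No issues.
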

\begin{proof}
	Let $E\in \A_{2}$, then there exist measurable sets $B,C\in\A_{1}$ such that
	\begin{align*}
		F^{-1}(E)=f^{-1}(E)\cup g^{-1}(E)=(B\cap A)\cup (C\cap [X_{1}\backslash A])\stackrel{A\in\A_{1}}{\in}\A_{1}.
	\end{align*}
\end{proof}

\begin{lemma}[(Extension of lower semi-continuous functions)]\label{extensionoflowersemicontinuousfunctions}
	Let $(X,d)$ be a metric space, $C\subset X$ closed and $f:X\backslash C\to\overline \R$, $f\geq 0$ lower semi-continuous. Then
	\begin{align*}
		\tilde f: X\to\overline\R,\,x\mapsto\begin{cases}f(x),&x\in X\backslash C,\\ 0,&x\in C, \end{cases}
	\end{align*}
	is lower semi continuous.
\end{lemma}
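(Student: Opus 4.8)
The plan is to verify lower semi-continuity of $\tilde f$ via the sequential characterisation, which is available because $(X,d)$ is a metric space and hence first countable: a function $g\colon X\to\overline\R$ is lower semi-continuous if and only if $g(x)\le\liminf_{n\to\infty}g(x_{n})$ whenever $x_{n}\to x$ in $X$. So I would fix $x\in X$ and a sequence $x_{n}\to x$, and distinguish two cases according to whether $x$ lies in the closed set $C$ or in its complement.

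If $x\in X\backslash C$, then, since $X\backslash C$ is open, there is an index $N$ with $x_{n}\in X\backslash C$ for all $n\ge N$; for those $n$ we have $\tilde f(x_{n})=f(x_{n})$, so using lower semi-continuity of $f$ on the open subspace $X\backslash C$ we obtain $\tilde f(x)=f(x)\le\liminf_{n\to\infty}f(x_{n})=\liminf_{n\to\infty}\tilde f(x_{n})$, where passing to the tail of the sequence does not change the $\liminf$. If instead $x\in C$, then $\tilde f(x)=0\le\liminf_{n\to\infty}\tilde f(x_{n})$ simply because $\tilde f\ge 0$ everywhere. This handles both cases and finishes the argument.

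There is no genuine obstacle here; the only point requiring a little care is that lower semi-continuity of $f$ is only assumed on $X\backslash C$, which is precisely why, in the first case, one must first restrict to the tail of the sequence lying in $X\backslash C$ before invoking it. As an alternative, one could argue directly through sublevel sets, in the spirit of the characterisation used elsewhere in the paper: for $t<0$ the set $\{\tilde f\le t\}$ is empty, while for $t\ge 0$ one has $\{\tilde f\le t\}=C\cup S$ with $S\vcentcolon=\{x\in X\backslash C\mid f(x)\le t\}$ closed in the subspace $X\backslash C$, and since $S$ is relatively closed a short computation gives $C\cup S=C\cup\overline S$ with closure taken in $X$, which is closed as a finite union of closed sets; hence $\{\tilde f\le t\}$ is closed for every $t\in\R$.
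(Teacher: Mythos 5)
Your proof is correct and follows essentially the same route as the paper: a sequential verification of lower semi-continuity with a case distinction, except that you split on whether the limit point $x$ lies in $C$ (using openness of $X\backslash C$ to pass to a tail), while the paper splits on whether infinitely many terms of the sequence lie in $C$; the two organisations are equivalent, and yours is if anything slightly cleaner about only evaluating $f$ where it is defined. The alternative sublevel-set argument you sketch is also valid and consistent with the characterisation of lower semi-continuity the paper uses elsewhere.
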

\begin{proof}
	Let $(x_{n})_{n\in\N}\subset X$ be a sequence converging to $x\in X$. If $x_{n}\in C$ for infinitely many $n\in\N$ 
	the sequence with these indices is contained
	in $C$ and converges to $x$, so that, since $C$ is closed, we have $x\in C$ and consequently
	\begin{align*}
		\tilde f(x)=0\leq\liminf_{n\to\infty}\tilde f(x_{n}).
	\end{align*}
	If $x_{n}\in C$ only for a finite number of $n\in\N$, we can use the lower semi-continuity of $f$ on $X\backslash C$ to get
	\begin{align*}
		\tilde f(x)=f(x)\leq \liminf_{n\to\infty}f(x_{n})= \liminf_{n\to\infty}\tilde f(x_{n}) \quad\text{if }x\in X\backslash C
	\end{align*}
	and
	\begin{align*}
		\tilde f(x)=0\leq \liminf_{n\to\infty}\underbrace{\tilde f(x_{n})}_{\geq 0} \quad\text{if }x\in C.
	\end{align*}
\end{proof}

\begin{lemma}[($\mathrm{diag}(X)$ and $X_{0}$ are closed)]\label{DeltaXandX0areclosed}
	Let $(X,d)$ be a metric space. Then the diagonal $\mathrm{diag}(X)$ and $X_{0}$ are closed sets.
\end{lemma}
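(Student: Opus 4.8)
The plan is to exhibit both $\mathrm{diag}(X)$ and $X_{0}$ as preimages of the closed singleton $\{0\}\subset\overline\R$ under continuous maps assembled from the metric, and then invoke the standard fact that the preimage of a closed set under a continuous map is closed. Continuity of the relevant building blocks is precisely what Lemma \ref{metriccontinuous} supplies (directly for maps on $X^{3}$, and for $X^{2}$ by the obvious one-variable-fewer analogue, whose proof is identical, or by precomposing with a continuous coordinate repetition $X^{2}\to X^{3}$).

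For the diagonal I would simply note that $\mathrm{diag}(X)=g^{-1}(\{0\})$ for the continuous map $g\colon X^{2}\to\R$, $(x,y)\mapsto d(x,y)$, so that $\mathrm{diag}(X)$ is closed because $\{0\}$ is closed in $\R$. An even shorter sequential argument is available and is perhaps cleanest to record: if $(x_{n},x_{n})\to(x,y)$ in $X^{2}$, then $x_{n}\to x$ and $x_{n}\to y$ in $X$, whence $x=y$ by uniqueness of limits in a metric space, so the limit lies in $\mathrm{diag}(X)$.

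For $X_{0}$ I would recall that $X_{0}=X^{3}\backslash D=\{(x,y,z)\in X^{3}\mid d(x,y)\,d(y,z)\,d(z,x)=0\}$, which I would present as the finite union
\begin{align*}
	X_{0}=h_{1}^{-1}(\{0\})\cup h_{2}^{-1}(\{0\})\cup h_{3}^{-1}(\{0\}),
\end{align*}
where $h_{1}(x,y,z)\vcentcolon=d(x,y)$, $h_{2}(x,y,z)\vcentcolon=d(y,z)$ and $h_{3}(x,y,z)\vcentcolon=d(z,x)$ are each continuous on $X^{3}$ by Lemma \ref{metriccontinuous} (the last two differing from the first only by a continuous permutation of the coordinates). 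Each $h_{i}^{-1}(\{0\})$ is then closed, and a finite union of closed sets is closed, so $X_{0}$ is closed. Equivalently, one may take $X_{0}=h^{-1}(\{0\})$ for the continuous product map $h\vcentcolon=h_{1}h_{2}h_{3}\colon X^{3}\to\R$, since a product of reals vanishes iff one factor does.

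There is essentially no obstacle in this argument; the only point deserving a word of care is that membership in $X_{0}$ is governed by a disjunction of three conditions rather than a single equation, so one should express $X_{0}$ as a finite union of zero sets (or as the zero set of the product of the three pairwise distances) rather than as a single level set of one distance function. Everything else reduces to the already-established continuity of the metric together with "preimage of a closed set under a continuous map is closed".
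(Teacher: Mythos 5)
Your argument is correct. It differs from the paper's proof only in mechanism, not in substance: the paper also splits $X_{0}$ into three pieces (there written as $\mathrm{diag}(X)\times X$, $\{(x,y,x)\mid x,y\in X\}$ and $X\times\mathrm{diag}(X)$, which are exactly your three zero sets $h_{i}^{-1}(\{0\})$) and then argues sequentially, extracting a subsequence lying in one of the three closed pieces, while you invoke ``preimage of a closed set under a continuous map is closed'' applied to the coordinate distance functions, whose continuity is Lemma \ref{metriccontinuous} composed with continuous coordinate projections/permutations. Your route is, if anything, slightly tidier: the paper's Step 2 asserts that the three product-type sets are ``clearly closed'' (which implicitly reuses Step 1 or the continuity of $d$), whereas your presentation makes that reduction explicit and avoids the subsequence extraction altogether. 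The one point you flag --- that $X_{0}$ is a disjunction of three vanishing conditions and hence a finite union of level sets rather than a single one --- is exactly the point the paper's decomposition is handling, so nothing is missing.
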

\begin{proof}
	\textbf{Step 1}
		Let $((x_{n},y_{n}))_{n\in\N}\subset \mathrm{diag}(X)$ be a sequence converging to $(x,y)\in X^{2}$.
		Then $x_{n}=y_{n}$ and since convergence in the product space
		implies convergence of the projections we have $x_{n}=y_{n}\to x=y$, where we have used, that in Hausdorff spaces limits are unique.\\
	\textbf{Step 2}
		The set $X_{0}$ is closed in the product space, because let $((x_{n},y_{n},z_{n}))_{n\in\N}\subset X_{0}$ be a sequence converging to 
		$(x,y,z)\in X^{3}$. Since $X_{0}$
		is the union of the three sets
		\begin{align}\label{threeDeltatimesXlikesets}
			\mathrm{diag}(X)\times X,\quad\{(x,y,x)\in X^{3}\mid x,y\in X\}\quad\text{and}\quad X\times \mathrm{diag}(X)
		\end{align}
		there exists a subsequence converging to the same limit, which is contained in one of these sets. 
		Clearly these sets are closed, so that $X_{0}$ is closed.
\end{proof}

\begin{lemma}[(Cauchy sequence in complete metric spaces)]\label{lemmacauchysequencescompletespace}
	Let $(X,d)$ be a complete metric space, $(x_{n})_{n\in\N}\subset X$ with
	\begin{align*}
		d(x_{n},x_{n+1})\leq a_{n}\quad\text{and}\quad
		\sum_{n=1}^{\infty}a_{n}<\infty.
	\end{align*}
	Then there is $x\in X$, such that $x_{n}\to x$ and 
	\begin{align*}
		d(x_{n},x)\leq \sum_{i=n}^{\infty}a_{i}.
	\end{align*}
\end{lemma}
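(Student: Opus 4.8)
The plan is to argue in the three standard steps: first that $(x_{n})_{n\in\N}$ is Cauchy, then invoke completeness to produce the limit $x$, and finally pass to the limit in a telescoping estimate to get the quantitative bound $d(x_{n},x)\leq\sum_{i=n}^{\infty}a_{i}$.

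First I would record the telescoping inequality: for any $m>n$, the triangle inequality applied repeatedly gives
\begin{align*}
	d(x_{n},x_{m})\leq\sum_{i=n}^{m-1}d(x_{i},x_{i+1})\leq\sum_{i=n}^{m-1}a_{i}\leq\sum_{i=n}^{\infty}a_{i}.
\end{align*}
Since $\sum_{i=1}^{\infty}a_{i}<\infty$, the tails $R_{n}\vcentcolon=\sum_{i=n}^{\infty}a_{i}$ satisfy $R_{n}\to 0$ as $n\to\infty$, so for every $\epsilon>0$ we can pick $N$ with $R_{N}<\epsilon$, whence $d(x_{n},x_{m})\leq R_{\min\{n,m\}}\leq R_{N}<\epsilon$ for all $n,m\geq N$. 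Thus $(x_{n})_{n\in\N}$ is a Cauchy sequence, and by completeness of $(X,d)$ there is $x\in X$ with $x_{n}\to x$.

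For the final bound, I would fix $n\in\N$ and let $m\to\infty$ in the estimate $d(x_{n},x_{m})\leq\sum_{i=n}^{\infty}a_{i}$. Because the metric is continuous — concretely $y\mapsto d(x_{n},y)$ is continuous, as follows from the reverse triangle inequality exactly as in Lemma \ref{metriccontinuous} — and $x_{m}\to x$, we obtain
\begin{align*}
	d(x_{n},x)=\lim_{m\to\infty}d(x_{n},x_{m})\leq\sum_{i=n}^{\infty}a_{i},
\end{align*}
which is the claimed inequality. There is no real obstacle here; the only point requiring a word of justification is the passage to the limit in the last inequality, which rests on continuity of the metric, and this is why I single it out explicitly rather than treating it as immediate.
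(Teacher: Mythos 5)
Your proof is correct and follows essentially the same route as the paper: telescoping the triangle inequality to get the Cauchy estimate, invoking completeness, and then passing to the limit for the quantitative tail bound. The only (immaterial) difference is in the last step, where you use continuity of $y\mapsto d(x_{n},y)$ to let $m\to\infty$ in $d(x_{n},x_{m})\leq\sum_{i=n}^{\infty}a_{i}$, whereas the paper writes $d(x_{n},x)\leq\sum_{i=n}^{N-1}a_{i}+d(x_{N},x)$ and lets $N\to\infty$; both are valid.
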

\begin{proof}
	Let $\epsilon>0$ and $N$ be large enough for $\sum_{i=N}^{\infty}a_{i}\leq \epsilon$, which is possible, as $\sum_{i=N}^{\infty}a_{i}\to 0$.
	Let $m,n\geq N$, without loss of generality $m>n$. Then
	\begin{align*}
		d(x_{n},x_{m})\leq \sum_{i=n}^{m-1}d(x_{i},x_{i+1})\leq \sum_{i=n}^{m-1}a_{i}\leq \sum_{i=n}^{\infty}a_{i}\leq \epsilon,
	\end{align*}
	so that $(x_{n})_{n\in\N}$ is a Cauchy sequence in the complete space $(X,d)$ and hence convergent. This means there is $x\in X$, such that $x_{n}\to x$. Then for all $N>n$ we have
	\begin{align*}
		d(x_{n},x)\leq \sum_{i=n}^{N-1}d(x_{i},x_{i+1})+d(x_{N},x)\leq \sum_{i=n}^{N-1}a_{i}+d(x_{N},x)
	\end{align*}
	and hence
	\begin{align*}
		d(x_{n},x)=\lim_{N\to\infty}d(x_{n},x)\leq \lim_{N\to\infty}\Big[ \sum_{i=n}^{N-1}a_{i}+d(x_{N},x) \Big]=\sum_{i=n}^{\infty}a_{i}.
	\end{align*}
\end{proof}

\section{Curvature energies under removal of acc. point}\label{sectionaccumulationpointremoved}

\begin{lemma}[($\kappa_{i}$ if accumulation point is removed)]\label{Raccpointremoved}
	Let $(X,d)$ be a metric space and $x\in X$. If $x$ is an accumulation point of $X$ then
	\begin{align*}
		\kappa_{i}^{X}(y,z)=\kappa_{i}^{X\backslash \{x\}}(y,z),\quad\text{for all }y,z\in X\backslash\{x\}, y\not=z.
	\end{align*}
\end{lemma}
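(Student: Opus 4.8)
The plan is to work throughout with the supremum representation of the intermediate curvature, namely $\kappa_{i}^{Y}(y,z)=\sup_{w\in Y\backslash\{y,z\}}\kappa(y,z,w)$ for an arbitrary metric space $Y$ (with $y\neq z$), which follows from the definition of $\rho$ together with the conventions $1/0=\infty$ and $1/\infty=0$. Fix $y,z\in X\backslash\{x\}$ with $y\neq z$. Since $x\notin\{y,z\}$, the index set splits as $X\backslash\{y,z\}=(X\backslash\{x,y,z\})\dcup\{x\}$, and as the supremum over a disjoint union is the maximum of the suprema (both pieces being nonempty, because $x$ being an accumulation point forces $X\backslash\{x,y,z\}\neq\emptyset$), this gives the identity
\begin{align*}
	\kappa_{i}^{X}(y,z)=\max\{\kappa(y,z,x),\ \kappa_{i}^{X\backslash\{x\}}(y,z)\}.
\end{align*}
In particular the inequality $\kappa_{i}^{X}(y,z)\geq\kappa_{i}^{X\backslash\{x\}}(y,z)$ is immediate, and the whole statement reduces to the one estimate $\kappa(y,z,x)\leq\kappa_{i}^{X\backslash\{x\}}(y,z)$.

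For this estimate I would exploit that $x$ is an accumulation point of $X$: choose a sequence $(x_{n})_{n\in\N}\subset X$ with $x_{n}\neq x$ and $x_{n}\to x$. Because $x\neq y$ and $x\neq z$, all but finitely many $x_{n}$ lie outside $\{y,z\}$; after discarding those finitely many initial terms we may assume $x_{n}\in X\backslash\{x,y,z\}$ for every $n$. Each such $x_{n}$ is then an admissible competitor in the supremum defining $\kappa_{i}^{X\backslash\{x\}}(y,z)$, so $\kappa(y,z,x_{n})\leq\kappa_{i}^{X\backslash\{x\}}(y,z)$ for all $n$.

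Finally I would invoke the lower semi-continuity of $\kappa$ on $X^{3}$ established in Lemma \ref{reciprocalradiiofcurvaturearemeasurable}: since $(y,z,x_{n})\to(y,z,x)$ in $X^{3}$, we get
\begin{align*}
	\kappa(y,z,x)\leq\liminf_{n\to\infty}\kappa(y,z,x_{n})\leq\kappa_{i}^{X\backslash\{x\}}(y,z),
\end{align*}
which is precisely the missing inequality; combining it with the displayed identity yields $\kappa_{i}^{X}(y,z)=\kappa_{i}^{X\backslash\{x\}}(y,z)$. I do not expect a genuine obstacle here — the argument is short — but the point demanding care is that the approximating sequence be realizable inside $X\backslash\{x,y,z\}$, which is exactly why the hypothesis that $x$ be an \emph{accumulation} point (and not merely a point of $X$), as well as $y,z\neq x$, enters the proof.
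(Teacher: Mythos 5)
Your proof is correct and hinges on exactly the same step as the paper's: approximating $x$ by a sequence in $X\backslash\{x,y,z\}$ (which exists precisely because $x$ is an accumulation point) and passing to the limit via the continuity of $r$, resp.\ the lower semi-continuity of $\kappa$, from Lemmata \ref{curvatureradiiareupersemi-continuous} and \ref{reciprocalradiiofcurvaturearemeasurable}. The only difference is organizational: you reduce cleanly to the single inequality $\kappa(y,z,x)\leq\kappa_{i}^{X\backslash\{x\}}(y,z)$ via the max-decomposition of the supremum, whereas the paper runs a case distinction on a maximizing sequence; both arguments are sound.
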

\begin{proof}
	Let $y,z\in X\backslash\{x\}$, $y\not=z$. Then there is a sequence $(x_{n})_{n\in\N}\subset X\backslash\{y,z\}$, such that
	\begin{align*}
		\frac{1}{r(x_{n},y,z)}\xrightarrow[n\to\infty]{}\kappa_{i}(y,z).
	\end{align*}
	If there is a subsequence $x_{n_{k}}\not= x$ for all $k\in\N$ the proposition is clear, so we assume $x_{n}=x$ for all $n\geq N$. 
	We then find a sequence
	$\overline x_{n}\in X\backslash\{x,y,z\}$, such that $\overline x_{n}\to x$ and as $r$ is continuous, 
	see Lemma \ref{curvatureradiiareupersemi-continuous} (i), 
	this gives us
	\begin{align*}
		\frac{1}{r(\overline x_{n},y,z)}\xrightarrow[n\to\infty]{}\frac{1}{r(x,y,z)}=\kappa_{i}(y,z).
	\end{align*}
\end{proof}

\begin{lemma}[($\kappa_{G}$ if accumulation point is removed)]\label{RGaccpointremoved}
	Let $(X,d)$ be a metric space and $x\in X$. If $x$ is an accumulation point of $X$ then
	\begin{align*}
		\kappa^{X}_{G}(z)=\kappa^{X\backslash \{x\}}_{G}(z),\quad\text{for all }z\in X\backslash \{x\}.
	\end{align*}
\end{lemma}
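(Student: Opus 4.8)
The plan is to mimic the proof of Lemma~\ref{Raccpointremoved}, the only new feature being that two points, rather than one, are free in the infimum defining $\rho_{G}$. Writing $\kappa_{G}^{X}(z)=\sup\{1/r(z,v,w)\mid v,w\in X\setminus\{z\},\ v\neq w\}$ (with $1/0=\infty$, $1/\infty=0$), one inequality is immediate: the family of admissible pairs for $\kappa_{G}^{X\setminus\{x\}}(z)$ is a subfamily of the one for $\kappa_{G}^{X}(z)$, so $\kappa_{G}^{X\setminus\{x\}}(z)\le\kappa_{G}^{X}(z)$. Hence everything reduces to the reverse inequality $\kappa_{G}^{X\setminus\{x\}}(z)\ge\kappa_{G}^{X}(z)$, which is trivial if the right-hand side is $0$, so I may assume it is positive.

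First I would fix a maximising sequence of pairs $(v_{n},w_{n})\in(X\setminus\{z\})^{2}$ with $v_{n}\neq w_{n}$ and $1/r(z,v_{n},w_{n})\to\kappa_{G}^{X}(z)$. If $x\notin\{v_{n},w_{n}\}$ for infinitely many $n$, then along that subsequence $(v_{n},w_{n})$ is admissible for $\kappa_{G}^{X\setminus\{x\}}(z)$ and the reverse inequality drops out. Otherwise $x\in\{v_{n},w_{n}\}$ for all large $n$, and after swapping the two entries where necessary (allowed since $r$ is symmetric in its arguments) I may assume $v_{n}=x$ and $w_{n}\in X\setminus\{x,z\}$ for all large $n$, with $1/r(z,x,w_{n})\to\kappa_{G}^{X}(z)$.

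The one real obstacle is now to move $x$ off itself even though the third vertex $w_{n}$ keeps changing with $n$: the $w_{n}$ need not converge, so there is no single limit of $r$ to exploit. The fix is to choose the perturbation index by index. For fixed large $n$ the triple $(z,x,w_{n})$ has pairwise distinct entries, hence lies in $X^{3}\setminus X_{0}$; by continuity of $r$ there (Lemma~\ref{curvatureradiiareupersemi-continuous}) and continuity of $t\mapsto 1/t$, the map $y\mapsto 1/r(z,y,w_{n})$ is continuous on the open neighbourhood $X\setminus\{z,w_{n}\}$ of $x$; and since $x$ is an accumulation point of the metric, hence $T_{1}$, space $X$, every ball around $x$ contains infinitely many points of $X$. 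Therefore I can select $\bar x_{n}\in X\setminus\{x,z,w_{n}\}$ with $d(\bar x_{n},x)$ small enough that $\bar x_{n}\to x$ and $|1/r(z,\bar x_{n},w_{n})-1/r(z,x,w_{n})|<1/n$. Each $(z,\bar x_{n},w_{n})$ is admissible for $\kappa_{G}^{X\setminus\{x\}}(z)$, so
\[
\kappa_{G}^{X\setminus\{x\}}(z)\;\ge\;\frac{1}{r(z,\bar x_{n},w_{n})}\;>\;\frac{1}{r(z,x,w_{n})}-\frac1n\;\longrightarrow\;\kappa_{G}^{X}(z)\qquad(n\to\infty),
\]
which gives $\kappa_{G}^{X\setminus\{x\}}(z)\ge\kappa_{G}^{X}(z)$ and completes the proof.

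One could instead derive the lemma from Lemma~\ref{Raccpointremoved} via the splitting $\rho_{G}^{X}(z)=\min\{\rho^{X}(z,x),\ \inf_{v\in X\setminus\{x,z\}}\rho^{X}(z,v)\}$, which reduces matters to checking $\rho_{G}^{X\setminus\{x\}}(z)\le\rho^{X}(z,x)$; however, proving the latter still requires exactly the same per-index perturbation of $x$, so nothing is gained.
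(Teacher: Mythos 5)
Your proof is correct and follows essentially the same route as the paper's: the easy inequality from shrinking the index set, a maximising sequence, and an index-by-index perturbation of $x$ using the continuity of $r$ on $X^{3}\backslash X_{0}$ together with the accumulation-point property. Your choice of perturbation error $<1/n$ at step $n$ is a small streamlining that collapses the paper's separate treatment of the cases $\kappa_{G}(z)<\infty$ and $\kappa_{G}(z)=\infty$ into one limit, but the underlying idea is identical.
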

\begin{proof}
	We may without loss of generality assume that $\# X\geq 3$, as otherwise $\kappa_{G}\equiv 0$ and $\kappa_{i}\equiv 0$ 
	for both $X$ and $X\backslash \{x\}$.
	Let $z\in X\backslash\{x\}$ then there are sequences $(x_{n})_{n\in\N}$ and $(y_{n})_{n\in\N}$ in $X\backslash \{z\}$ 
	with $x_{n}\not= y_{n}$ for all $n\in\N$, such that
	\begin{align*}
		\frac{1}{r(x_{n},y_{n},z)}\to \kappa_{G}(z).
	\end{align*}
	If there is a subsequence $(n_{k})_{k\in\N}$, such that $x_{n_{k}},y_{n_{k}}\not= x$ the proposition is clear. Let $x_{n_{l}}=x$ for all $l\in\N$. 
	For fixed $l$ there exists a sequence $(x_{k}^{l})_{k\in\N}$ with $x_{k}^{l}\to x$, $k\to\infty$, such that $x_{k}^{l}\not\in\{x,y_{n_{l}},z\}$.
	As $r$ is continuous, see Lemma \ref{curvatureradiiareupersemi-continuous} (i), and $\#\{x_{k}^{l},y_{n_{l}},z\}=3$ we have
	\begin{align}\label{limitinverseradiussubsequences}
		\frac{1}{r(x_{k}^{l},y_{n_{l}},z)}\xrightarrow[k\to\infty]{}\frac{1}{r(x,y_{n_{l}},z)}.
	\end{align}
	\textbf{Case 1}
		Assume $\kappa_{G}(z)<\infty$. Then for all $\epsilon>0$ there exists $l\in\N$ and $k\in\N$, such that
		\begin{align*}
			\MoveEqLeft \abs*{\kappa_{G}(z)-\frac{1}{r(x_{k}^{l},y_{n_{l}},z)}}
			\leq \abs*{\kappa_{G}(z)-\frac{1}{r(x,y_{n_{l}},z)}}+\abs*{\frac{1}{r(x,y_{n_{l}},z)}-\frac{1}{r(x_{k}^{l},y_{n_{l}},z)}}\\
			&\leq\epsilon+\epsilon=2\epsilon.
		\end{align*}
	\textbf{Case 2}
		Assume $\kappa_{G}(z)=\infty$. If there is an $l$, such that $1/r(x,y_{n_{l}},z)=\infty$, 
		then the proposition is clear by (\ref{limitinverseradiussubsequences}).
		We therefore assume that $1/r(x,y_{n_{l}},z)<\infty$ for all $l\in\N$. Then there exists a $K_{l}>0$ such that
		\begin{align*}
			\abs*{\frac{1}{r(x,y_{n_{l}},z)}-\frac{1}{r(x_{k}^{l},y_{n_{l}},z)}}\leq 1\quad\text{for all }k\geq K_{l}.
		\end{align*}
		Furthermore for all $M>0$ there is a $L_{M}>0$, such that
		\begin{align*}
			M\leq \frac{1}{r(x,y_{n_{l}},z)}\quad\text{for all }l\geq L_{M}.
		\end{align*}
		Hence for all $M>1$ there are $l_{0}$ and $k_{0}$, such that
		\begin{align*}
			\MoveEqLeft M-1\leq \abs*{\frac{1}{r(x,y_{n_{l}},z)}}-\abs*{\frac{1}{r(x,y_{n_{l}},z)}-\frac{1}{r(x_{k}^{l},y_{n_{l}},z)}}\\
			&\leq \frac{1}{r(x_{k}^{l},y_{n_{l}},z)},
		\end{align*}
		so that $\kappa_{G}^{X\backslash\{x\}}(z)=\infty$.
\end{proof}

\begin{lemma}[($\mathcal{F}$ if accumulation point is removed)]\label{curvatureenergiesifaccpointisremovedlemma}
	Let $(X,d)$ be a metric space, $x\in X$, $\he\in (0,\infty)$, $p\in (0,\infty)$ and $\mathcal{F}\in\{\U_{p}^{\he},\mathcal{I}_{p}^{\he},\M_{p}^{\he}\}$. 
	If $x$ is an accumulation point of $X$ then
	\begin{align*}
		\mathcal{F}(X)=\mathcal{F}(X\backslash\{x\}).
	\end{align*}
\end{lemma}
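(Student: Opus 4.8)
The strategy is to exploit the two preceding lemmata, Lemma \ref{Raccpointremoved} and Lemma \ref{RGaccpointremoved}, which already tell us that removing the accumulation point $x$ does not change the functions $\kappa_{i}$ and $\kappa_{G}$ on the remaining triples/pairs. So the integrands of $\U_{p}^{\he}$, $\mathcal{I}_{p}^{\he}$ and $\mathcal{M}_{p}^{\he}$ are unchanged away from $x$; the only thing to check is that the contribution of $x$ itself to each integral vanishes. Since $x$ is a single point, this is a statement about the Hausdorff measure of singletons: $\HM^{\he}(\{x\})=0$ for $\he\in(0,\infty)$.

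First I would dispose of the trivial case $\#X\le 2$, where all the $\kappa$-functions vanish identically on both $X$ and $X\backslash\{x\}$ (as in the proof of Lemma \ref{RGaccpointremoved}); so assume $\#X\ge 3$. Next, recall $\HM^{\he}(\{x\})=0$, hence also $\HM^{\he}$-a.e.\ product set involving the slice $\{x\}$ has measure zero: for instance $\HM^{\he}\times\HM^{\he}(\,(\{x\}\times X)\cup(X\times\{x\})\,)=0$ and similarly for the triple product. For $\mathcal{M}_{p}^{\he}$ I would write, using Lemma \ref{decompositionoftripleintegral} with the decomposition $X=\{x\}\dcup(X\backslash\{x\})$ (valid since $\HM^{\he}(\{x\})=0$ makes the overlap null, and the integrand measurability hypotheses hold by Lemma \ref{reciprocalradiiofcurvaturearemeasurable} and Lemma \ref{variousintegrandfuntionsarelsc}),
\[
\mathcal{M}_{p}^{\he}(X)=\sum_{i,j,k\in\{0,1\}}\int_{X_{k}}\int_{X_{j}}\int_{X_{i}}\kappa^{p}\dHM\dHM\dHM,
\]
where $X_{0}\vcentcolon=\{x\}$, $X_{1}\vcentcolon=X\backslash\{x\}$; every summand with at least one index equal to $0$ integrates over a set of $\HM^{\he}$-measure zero in that variable and hence vanishes, leaving only the term $(i,j,k)=(1,1,1)$, which by Lemma \ref{Raccpointremoved} equals $\mathcal{M}_{p}^{\he}(X\backslash\{x\})$. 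The same argument — splitting the double integral for $\mathcal{I}_{p}^{\he}$ and the single integral for $\U_{p}^{\he}$ over $\{x\}$ versus $X\backslash\{x\}$, discarding the null pieces, and invoking Lemma \ref{Raccpointremoved} resp.\ Lemma \ref{RGaccpointremoved} on the surviving piece — handles the other two energies. (For the single and double integrals one does not even need Lemma \ref{decompositionoftripleintegral}; plain additivity of the integral over $\{x\}\dcup(X\backslash\{x\})$ suffices, since a set of measure zero contributes nothing to the integral of a non-negative measurable function.)

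There is essentially no obstacle here: the only mild point of care is justifying the decomposition of the triple integral, i.e.\ checking that the measurability hypotheses of Lemma \ref{decompositionoftripleintegral} are met — but these are exactly what Lemma \ref{reciprocalradiiofcurvaturearemeasurable} and Lemma \ref{variousintegrandfuntionsarelsc} provide — and noting that $\{x\}$ and $X\backslash\{x\}$ are both $\HM^{\he}$-measurable with null intersection. Everything else is the bookkeeping of throwing away measure-zero slices. I would therefore expect the proof to be short, with the ``main step'' being merely the invocation of the two previous lemmata plus $\HM^{\he}(\{x\})=0$.
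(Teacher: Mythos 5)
Your proposal is correct and follows essentially the same route as the paper: invoke Lemmata \ref{Raccpointremoved} and \ref{RGaccpointremoved} to see the integrands are unchanged, and use $\HM^{\he}(\{x\})=0$ to discard the contribution of the removed point (the paper does not even bother with the explicit decomposition via Lemma \ref{decompositionoftripleintegral}). The only detail the paper adds that you omit is the appeal to Lemma \ref{subspacehausdorffmeasure}, i.e.\ $\HM^{\he}_{X\backslash\{x\}}=\HM^{\he}_{X}\rest_{X\backslash\{x\}}$, which is needed because the energies are defined with respect to the Hausdorff measure of the set as a metric space in its own right, so one must identify the measure used in $\mathcal{F}(X\backslash\{x\})$ with the restriction of the measure used in $\mathcal{F}(X)$.
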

\begin{proof}
	For all metric spaces we have $\HM^{\he}(\{x\})=0$, because for all $\epsilon>0$ we can cover $\{x\}$ with $B_{\epsilon}(x)$, 
	which has diameter $2\epsilon$.
	From the definition of the integral it is now clear that for $\HM^{\he}$ 
	integration we can neglect singletons, hence
	together with Lemma \ref{Raccpointremoved} and Lemma \ref{RGaccpointremoved} we have $\mathcal{F}(X)=\mathcal{F}(X\backslash\{x\})$ 
	for all $p\in (0,\infty)$, if we recall  that by Lemma \ref{subspacehausdorffmeasure} we also have 
	$\HM^{\he}_{X\backslash\{x\}}=\HM^{\he}\rest_{X\backslash\{x\}}$.
\end{proof}

\begin{lemma}[(Hausdorff measure on subspaces)]\label{subspacehausdorffmeasure}
	Let $(X,d)$ be a metric space $A\subset X$ and $(A,d_{A})$ the associated metric subspace. Then for all $\he>0$
	\begin{align*}
		\HM^{\he}_{(A,d_{A})}=\HM^{\he}_{(X,d)} \rest A.
	\end{align*}
\end{lemma}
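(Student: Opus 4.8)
The plan is to unwind the definition of the $\he$-dimensional Hausdorff measure and to observe that the entire construction only sees \emph{diameters} of covering sets, and diameters are unchanged when one passes to a metric subspace. Recall that for a metric space $(Y,\varrho)$, a set $E\subseteq Y$ and $\delta>0$ one puts
\[
	\HM^\he_{(Y,\varrho),\delta}(E)\vcentcolon=\inf\Big\{\sum_{i\in\N}(\diam_\varrho U_i)^\he\,\Big|\,E\subseteq\bigcup_{i\in\N}U_i,\ U_i\subseteq Y,\ \diam_\varrho U_i\le\delta\Big\}
\]
(with the usual conventions $\diam\emptyset=0$, $\inf\emptyset=\infty$, and up to a dimensional normalising constant that plays no role here), and then $\HM^\he_{(Y,\varrho)}(E)=\lim_{\delta\downarrow 0}\HM^\he_{(Y,\varrho),\delta}(E)=\sup_{\delta>0}\HM^\he_{(Y,\varrho),\delta}(E)$. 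Since $\HM^\he_{(X,d)}\rest A$ is by definition the outer measure $E\mapsto\HM^\he_{(X,d)}(E)$ on sets $E\subseteq A$, it suffices to show $\HM^\he_{(A,d_A)}(E)=\HM^\he_{(X,d)}(E)$ for every $E\subseteq A$, and for that it is enough to prove $\HM^\he_{(A,d_A),\delta}(E)=\HM^\he_{(X,d),\delta}(E)$ for all $\delta>0$.

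Fix $E\subseteq A$ and $\delta>0$. The key point is that $d_A$ is the restriction of $d$, hence $\diam_{d_A}U=\diam_d U$ for every $U\subseteq A$. First I would note that any admissible covering $(U_i)_{i\in\N}$ of $E$ by subsets of $A$ with $\diam_{d_A}U_i\le\delta$ is simultaneously an admissible covering of $E$ by subsets of $X$ with $\diam_d U_i\le\delta$ and the same sum $\sum_i(\diam U_i)^\he$; taking the infimum over all such coverings gives $\HM^\he_{(X,d),\delta}(E)\le\HM^\he_{(A,d_A),\delta}(E)$. Conversely, given an admissible covering $(V_i)_{i\in\N}$ of $E$ by subsets of $X$ with $\diam_d V_i\le\delta$, I would set $U_i\vcentcolon=V_i\cap A\subseteq A$; since $E\subseteq A$ the $U_i$ still cover $E$, and $\diam_{d_A}U_i=\diam_d U_i\le\diam_d V_i\le\delta$, so $(U_i)$ is an admissible covering of $E$ in $(A,d_A)$ with $\sum_i(\diam_{d_A}U_i)^\he\le\sum_i(\diam_d V_i)^\he$. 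Taking infima yields $\HM^\he_{(A,d_A),\delta}(E)\le\HM^\he_{(X,d),\delta}(E)$, and therefore equality.

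Letting $\delta\downarrow 0$ then gives $\HM^\he_{(A,d_A)}(E)=\HM^\he_{(X,d)}(E)=(\HM^\he_{(X,d)}\rest A)(E)$ for all $E\subseteq A$, which is the assertion. There is no substantial obstacle: the whole proof is a matter of matching up the two covering problems, and the only points requiring a little care are the harmless conventions built into the definition (the value of $\diam$ on $\emptyset$, the dimensional normalising constant, and the reduction to countable coverings) — none of which is affected by replacing $(X,d)$ with the subspace $(A,d_A)$.
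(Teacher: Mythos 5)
Your proof is correct and follows essentially the same route as the paper: both directions are obtained by comparing the $\delta$-premeasures, viewing coverings by subsets of $A$ as coverings in $X$ for one inequality and intersecting coverings in $X$ with $A$ for the other, using that diameters are unchanged under restriction of the metric. No gaps.
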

\begin{proof}
	Let $M\subset A$.\\
	\textbf{Step 1}
		Let $(C_{n})_{n\in\N}$ be a $\delta$-covering of $M$ in $(X,d)$, then $(D_{n})_{n\in\N}$ defined 
		by $D_{n}\vcentcolon=C_{n}\cap A$ is a $\delta$-covering of $M$ 
		in $(A,d_{A})$, such that
		\begin{align*}
			\diam_{A}(D_{n})\leq\diam_{X}(C_{n})\quad\text{for all }n\in\N,
		\end{align*}
		which implies
		\begin{align*}
			\HM^{\he}_{A,\delta}(M)\leq \HM^{\he}_{X,\delta}(M)
		\end{align*}
		and thereby ``$\leq$''.\\
	\textbf{Step 2}
		Let $(D_{n})_{n\in\N}$ be a $\delta$-covering of $M$ in $(A,d_{A})$, then $(D_{n})_{n\in\N}$ 
		is also a $\delta$-covering of $M$ in $(X,d)$, which gives us
		\begin{align*}
			\HM^{\he}_{X,\delta}(M)\leq \HM^{\he}_{A,\delta}(M)
		\end{align*}
		and therefore ``$\geq$''.
\end{proof}

\section{Estimate of integrals and $\dist(L_{x,y},0)$}\label{sectionestintegral}

\begin{lemma}[(Distance $\dist(L_{x,y},0)$ in terms of $\measuredangle(x,0,y)$)]\label{distanceintermsofangle}
	Let $x,y\in\R^{n}\backslash \{0\}$, $x\not= y$ such that $\epsilon\vcentcolon=\arccos(x\cdot y/(\norm{x}\norm{y}))\in(0,\pi)$ 
	and $L_{x,y}$ denote the straight line 
	connecting $x$ and $y$. Then
	\begin{align*}
		\dist(L_{x,y},0)\geq \frac{\sin(\epsilon)}{2} \min\{\norm{x},\norm{y}\}.
	\end{align*}
\end{lemma}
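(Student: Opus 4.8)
The plan is to prove the inequality $\dist(L_{x,y},0)\geq \tfrac{\sin(\epsilon)}{2}\min\{\norm{x},\norm{y}\}$ by elementary trigonometry in the two-dimensional plane spanned by $x$ and $y$ (everything in sight lives in the affine plane through $0$, $x$, $y$, so we lose nothing by working in $\R^2$). First I would recall that, by the formula for the circumradius in Remark \ref{differentformulaskappa}, one has $\dist(0,L_{x,y}) = \tfrac{\norm{x}\norm{y}}{2\,r(x,y,0)}$ whenever the three points are non-degenerate, and that $r(x,y,0) = \tfrac{\norm{x-y}}{2\abs{\sin(\measuredangle(x,0,y))}} = \tfrac{\norm{x-y}}{2\sin\epsilon}$ since $\epsilon = \measuredangle(x,0,y)\in(0,\pi)$. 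Combining these gives the exact identity
\[
\dist(L_{x,y},0) = \frac{\norm{x}\norm{y}\sin\epsilon}{\norm{x-y}}.
\]
So the whole statement reduces to showing $\norm{x-y}\leq 2\max\{\norm{x},\norm{y}\}$, which is immediate from the triangle inequality: $\norm{x-y}\leq\norm{x}+\norm{y}\leq 2\max\{\norm{x},\norm{y}\}$. Then dividing yields
\[
\dist(L_{x,y},0) = \frac{\norm{x}\norm{y}\sin\epsilon}{\norm{x-y}} \geq \frac{\norm{x}\norm{y}\sin\epsilon}{2\max\{\norm{x},\norm{y}\}} = \frac{\sin\epsilon}{2}\min\{\norm{x},\norm{y}\},
\]
which is exactly the claim.

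An alternative, even more self-contained route avoids invoking the circumradius formula: in the plane through $0,x,y$ drop the perpendicular from $0$ to the line $L_{x,y}$, and note that the area of the triangle $\{0,x,y\}$ equals $\tfrac12\norm{x-y}\cdot\dist(L_{x,y},0)$ on one hand and $\tfrac12\norm{x}\norm{y}\sin\epsilon$ on the other; equating gives the same identity $\dist(L_{x,y},0) = \norm{x}\norm{y}\sin\epsilon/\norm{x-y}$, after which the triangle inequality finishes it as above. I would probably present this area-based derivation since it keeps the lemma independent and transparent.

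I do not expect any genuine obstacle here; the only points requiring a word of care are that $\epsilon\in(0,\pi)$ guarantees the triangle $\{0,x,y\}$ is non-degenerate (so $\norm{x-y}>0$, the line $L_{x,y}$ is well-defined, and $\sin\epsilon>0$), and that the case distinction in $\min$ versus $\max$ is handled correctly — but since the identity is exact and symmetric in $x,y$, the estimate $\norm{x-y}\leq 2\max\{\norm{x},\norm{y}\}$ dispatches both cases simultaneously. The hypothesis $x\neq y$ is subsumed by $\epsilon\in(0,\pi)$, so no separate treatment is needed.
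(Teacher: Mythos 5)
Your proposal is correct, and the area-based derivation you say you would present is exactly the paper's proof: reduce to the plane of $0,x,y$, equate the two expressions $\tfrac12\norm{x-y}\dist(L_{x,y},0)=\tfrac12\norm{x}\norm{y}\sin(\epsilon)$ for the triangle's area, and bound $\norm{x-y}\leq 2\max\{\norm{x},\norm{y}\}$. The circumradius route you mention first is just a repackaging of the same identity, so there is nothing substantively different here.
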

\begin{proof}
	Without loss of generality we might assume that $0,x,y\in\R^{2}$. Now we compute the area of the triangle given by $0,x,y$ as
	\begin{align*}
		\frac{1}{2}\sin(\epsilon)\norm{x}\norm{y}=\frac{1}{2}\norm{x-y}\dist(L_{x,y},0)
	\end{align*}
	and obtain
	\begin{align*}
		\dist(L_{x,y},0)=\sin(\epsilon)\frac{\norm{x}\norm{y}}{\norm{x-y}}
		\geq \sin(\epsilon)\frac{\norm{x}\norm{y}}{2\max\{\norm{x},\norm{y}\}}
		= \frac{\sin(\epsilon)}{2} \min\{\norm{x},\norm{y}\}.
	\end{align*}
\end{proof}

\begin{lemma}[(Integral I)]\label{integral2}
	For $y,z> 0$ and $p\geq 2$ we have
	\begin{align*}
		\int_{0}^{1}\frac{x^{p}}{(x^{2}+y^{2})^{p/2}(x^{2}+z^{2})^{p/2}}\dx\leq\frac{\pi}{2^{p}}(zy)^{-(p-1)/2}.
	\end{align*}
\end{lemma}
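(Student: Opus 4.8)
The plan is to reduce this two-parameter estimate to a single one-variable integral by a convexity trick on the denominator, and then to evaluate that integral via a trigonometric substitution.

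First I would note that $(x^{2}+y^{2})(x^{2}+z^{2})=x^{4}+x^{2}(y^{2}+z^{2})+y^{2}z^{2}\geq x^{4}+2x^{2}yz+y^{2}z^{2}=(x^{2}+yz)^{2}$, since $y^{2}+z^{2}\geq 2yz$. Raising to the power $p/2\geq 0$ gives $(x^{2}+y^{2})^{p/2}(x^{2}+z^{2})^{p/2}\geq (x^{2}+yz)^{p}$, hence
\begin{align*}
	\int_{0}^{1}\frac{x^{p}}{(x^{2}+y^{2})^{p/2}(x^{2}+z^{2})^{p/2}}\dx\leq\int_{0}^{1}\frac{x^{p}}{(x^{2}+yz)^{p}}\dx.
\end{align*}
Substituting $x=\sqrt{yz}\,t$ (so $\dx=\sqrt{yz}\,\dt$ and $x^{2}+yz=yz(1+t^{2})$) turns the right-hand side into $(yz)^{-(p-1)/2}\int_{0}^{1/\sqrt{yz}}t^{p}(1+t^{2})^{-p}\dt$, which I would then bound from above by $(yz)^{-(p-1)/2}\int_{0}^{\infty}t^{p}(1+t^{2})^{-p}\dt$, the integrand being nonnegative and the improper integral converging at infinity because $p\geq 2>1$.

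It remains to show $\int_{0}^{\infty}t^{p}(1+t^{2})^{-p}\dt\leq\pi/2^{p}$. Here I would substitute $t=\tan\theta$, so that $1+t^{2}=\sec^{2}\theta$ and $\dt=\sec^{2}\theta\,d\theta$, obtaining $\int_{0}^{\pi/2}\sin^{p}\theta\cos^{p-2}\theta\,d\theta$. Now write $\sin^{p}\theta\cos^{p-2}\theta=(\sin\theta\cos\theta)^{p-2}\sin^{2}\theta=(\tfrac12\sin 2\theta)^{p-2}\sin^{2}\theta$. Since $p\geq 2$ we have $(\tfrac12\sin 2\theta)^{p-2}\leq 2^{-(p-2)}$, so the integral is at most $2^{-(p-2)}\int_{0}^{\pi/2}\sin^{2}\theta\,d\theta=2^{-(p-2)}\cdot\tfrac{\pi}{4}=\pi/2^{p}$. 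Chaining the three estimates gives the claim.

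There is no serious obstacle here; the only point requiring care is that the hypothesis $p\geq 2$ enters exactly at the last step, where it guarantees both convergence of the extended integral over $(0,\infty)$ and the bound $(\tfrac12\sin 2\theta)^{p-2}\leq 2^{-(p-2)}$ (for $p<2$ the factor $(\tfrac12\sin 2\theta)^{p-2}$ blows up near $\theta=0$ and $\theta=\pi/2$, which is consistent with the sharpness phenomena discussed earlier for $\M_{p}^{1}$ and $\mathcal{I}_{p}^{1}$). One should also simply check that the substitution $x=\sqrt{yz}\,t$ handles the finite upper limit correctly, which it does since enlarging the domain of integration to $(0,\infty)$ only adds a nonnegative quantity.
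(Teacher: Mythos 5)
Your proof is correct, and after the common opening move it takes a genuinely different route from the paper's. Both arguments begin identically, using $y^{2}+z^{2}\geq 2yz$ to replace the denominator by $(x^{2}+yz)^{p}$. From there the paper writes the integrand as $(x+\tfrac{yz}{x})^{-2}\cdot(x+\tfrac{yz}{x})^{-(p-2)}$, bounds the second factor by $(2\sqrt{yz})^{-(p-2)}$ via AM--GM, and then evaluates $\int_{0}^{1}(x+\tfrac{yz}{x})^{-2}\dx$ in closed form through an explicit antiderivative (the auxiliary Lemma~\ref{integral1}), finally estimating $\arctan(1/\sqrt{yz})\leq\pi/2$ and discarding a negative term. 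You instead rescale by $x=\sqrt{yz}\,t$ to pull out the factor $(yz)^{-(p-1)/2}$ at once, enlarge the domain to $(0,\infty)$, and bound the universal integral $\int_{0}^{\infty}t^{p}(1+t^{2})^{-p}\dt$ by $\pi/2^{p}$ via $t=\tan\theta$ and $(\tfrac12\sin 2\theta)^{p-2}\leq 2^{-(p-2)}$. Your version makes the homogeneity in $yz$ transparent, needs no auxiliary antiderivative lemma, and isolates where $p\geq 2$ enters in exactly the same way the paper's AM--GM step does; the paper's version stays on the compact interval $[0,1]$ and avoids improper integrals, at the cost of the explicit arctan computation. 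Both yield the same constant, and your observation that the estimate degenerates for $p<2$ is consistent with the paper's sharpness discussion.
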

\begin{proof}
	We have
	\begin{align*}
		\MoveEqLeft \int_{0}^{1}\frac{x^{p}}{(x^{2}+y^{2})^{p/2}(x^{2}+z^{2})^{p/2}}\dx
		=\int_{0}^{1}\frac{x^{p}}{(x^{4}+(y^{2}+z^{2})x^{2}+y^{2}z^{2} )^{p/2}}\dx\\
		&\stackrel{\mathclap{y^{2}+z^{2}\geq 2yz}}{\leq}\quad \int_{0}^{1}\frac{x^{p}}{(x^{4}+2yzx^{2}+y^{2}z^{2} )^{p/2}}\dx
		=\int_{0}^{1}\frac{x^{p}}{(x^{2}+yz)^{2p/2}}\dx
		=\int_{0}^{1}\frac{x^{p}}{(x^{2}+yz)^{p}}\dx\\
		&=\int_{0}^{1}\frac{1}{(x+\frac{yz}{x})^{p}}\dx=\int_{0}^{1}\frac{1}{(x+\frac{yz}{x})^{2}}\frac{1}{(x+\frac{yz}{x})^{p-2}}\dx\\
		&\stackrel{\mathclap{x+zy/x\geq 2\sqrt{zy}}}{\leq}\quad\;\; \int_{0}^{1}\frac{1}{(x+\frac{yz}{x})^{2}}\frac{1}{(2\sqrt{zy})^{p-2}}\dx\\
		&\stackrel{\mathclap{\text{Lemma \ref{integral1}}}}{=}\quad\;\frac{1}{2^{p-2}}\frac{1}{(zy)^{p/2-1}}
		\frac{1}{2}\Bigg( \frac{\arctan{(\frac{1}{\sqrt{zy}}})}{\sqrt{zy}} -\underbrace{\frac{1}{1+zy}}_{\geq 0}\Bigg)
		\leq \frac{1}{2^{p-2}}\frac{1}{(zy)^{p/2-1}}
		\frac{1}{2} \frac{\pi}{2}\frac{1}{\sqrt{zy}}\\
		&=\frac{\pi}{2^{p}}(zy)^{-(p-1)/2}.
	\end{align*}
\end{proof}

\begin{lemma}[(Integral II)]\label{integral1}
	We have
	\begin{align*}
		\int_{0}^{1}\frac{1}{(x+\frac{zy}{x})^{2}}\dx=\frac{1}{2}\Bigg( \frac{\arctan{(\frac{1}{\sqrt{zy}}})}{\sqrt{zy}} -\frac{1}{1+zy}\Bigg).
	\end{align*}
\end{lemma}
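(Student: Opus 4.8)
The plan is to compute the integral
$\int_{0}^{1}(x+\tfrac{zy}{x})^{-2}\dx$ by the substitution that turns it into a standard arctangent form. First I would rewrite the integrand by clearing the inner fraction: since $x+\tfrac{zy}{x}=\tfrac{x^{2}+zy}{x}$, we have $(x+\tfrac{zy}{x})^{-2}=\tfrac{x^{2}}{(x^{2}+zy)^{2}}$. So the task reduces to evaluating $\int_{0}^{1}\tfrac{x^{2}}{(x^{2}+zy)^{2}}\dx$, a rational function of $x$ with the positive constant $a^{2}\vcentcolon= zy$ in the denominator.

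Next I would find an antiderivative of $\tfrac{x^{2}}{(x^{2}+a^{2})^{2}}$. The cleanest route is to write $\tfrac{x^{2}}{(x^{2}+a^{2})^{2}}=\tfrac{1}{x^{2}+a^{2}}-\tfrac{a^{2}}{(x^{2}+a^{2})^{2}}$, then use the two standard formulas $\int\tfrac{\dx}{x^{2}+a^{2}}=\tfrac{1}{a}\arctan(\tfrac{x}{a})$ and $\int\tfrac{\dx}{(x^{2}+a^{2})^{2}}=\tfrac{1}{2a^{2}}\bigl(\tfrac{x}{x^{2}+a^{2}}+\tfrac{1}{a}\arctan(\tfrac{x}{a})\bigr)$. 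Combining, the antiderivative is $\tfrac{1}{2a}\arctan(\tfrac{x}{a})-\tfrac{1}{2}\tfrac{x}{x^{2}+a^{2}}$. (Alternatively one can differentiate the proposed right-hand side directly and check it matches the integrand, which avoids quoting the reduction formula; either way is routine.)

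Finally I would evaluate this antiderivative between $x=0$ and $x=1$. At $x=0$ both terms vanish. At $x=1$ we get $\tfrac{1}{2a}\arctan(\tfrac{1}{a})-\tfrac{1}{2}\tfrac{1}{1+a^{2}}$. Substituting back $a=\sqrt{zy}$ gives exactly
$\tfrac{1}{2}\bigl(\tfrac{\arctan(1/\sqrt{zy})}{\sqrt{zy}}-\tfrac{1}{1+zy}\bigr)$, as claimed. There is no real obstacle here; the only mild subtlety is making sure the partial-fraction/reduction-formula step is stated correctly, but it is entirely mechanical and the result can be verified a posteriori by differentiation.
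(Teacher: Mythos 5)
Your computation is correct and reaches the antiderivative $\tfrac{1}{2a}\arctan(\tfrac{x}{a})-\tfrac{1}{2}\tfrac{x}{x^{2}+a^{2}}$ with $a=\sqrt{zy}$, which is exactly the function the paper works with; the paper simply verifies it by differentiating the right-hand side (your stated alternative), whereas you construct it via the decomposition $\tfrac{x^{2}}{(x^{2}+a^{2})^{2}}=\tfrac{1}{x^{2}+a^{2}}-\tfrac{a^{2}}{(x^{2}+a^{2})^{2}}$ and the standard reduction formula. This is the same argument read in the opposite direction, so there is nothing to add beyond noting that $zy>0$ is implicitly assumed, as it is in the paper.
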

\begin{proof}
	\begin{align*}
		\Bigg[\frac{1}{2}\Bigg( \frac{\arctan{(\frac{x}{\sqrt{zy}}})}{\sqrt{zy}} -\frac{x}{x^{2}+zy}\Bigg)\Bigg]'
		&=\frac{1}{2}\bigg( \underbrace{\frac{1}{\sqrt{zy}(1+(\frac{x}{\sqrt{zy}})^{2})}
		\frac{1}{\sqrt{zy}}}_{=\frac{1}{zy+x^{2}}} -\frac{1}{x^{2}+zy}+\frac{2x^{2}}{(x^{2}+zy)^{2}} \bigg)\\
		&=\frac{x^{2}}{(x^{2}+zy)^{2}}=\frac{1}{(x+\frac{zy}{x})^{2}}.
	\end{align*}
\end{proof}

\end{appendix}

\bibliography{smalllibrary.bib}{}
\bibliographystyle{amsalpha}
\bigskip
\noindent
\parbox[t]{.8\textwidth}{
Sebastian Scholtes\\
Institut f{\"u}r Mathematik\\
RWTH Aachen University\\
Templergraben 55\\
D--52062 Aachen, Germany\\
sebastian.scholtes@rwth-aachen.de}

\end{document}